\documentclass[12pt]{amsart}
\usepackage{amsmath}
\usepackage{amsthm}
\usepackage{amssymb}
\usepackage{graphics}
\usepackage{bm}
\usepackage{setspace}

\usepackage{amsfonts}
\usepackage{eucal}
\usepackage{latexsym}
\usepackage{mathdots}
\usepackage{mathrsfs}
\usepackage{enumitem}


\usepackage{fullpage}
\usepackage{color}




\newcommand{\be}{\begin{equation}}
\newcommand{\ee}{\end{equation}}
\newcommand{\ba}{\begin{eqnarray}}
\newcommand{\ea}{\end{eqnarray}}
\newcommand{\bal}{\begin{align}}
\newcommand{\eal}{\end{align}}
\newcommand{\baln}{\begin{align*}}
\newcommand{\ealn}{\end{align*}}
\newcommand{\bi}{\begin{itemize}}
\newcommand{\ei}{\end{itemize}}
\newcommand{\bn}{\begin{enumerate}}
\newcommand{\en}{\end{enumerate}}
\newcommand{\bbm}{\begin{bmatrix}}
\newcommand{\ebm}{\end{bmatrix}}
\newcommand{\bpm}{\begin{pmatrix}}
\newcommand{\epm}{\end{pmatrix}}
\newcommand{\bp}{\begin{proof}}
\newcommand{\ep}{\end{proof}}
\newcommand{\nn}{\nonumber}


\newcommand{\mr}{\ensuremath{\mathrm}}
\newcommand{\scr}{\ensuremath{\mathscr}}
\newcommand{\mbf}{\ensuremath{\mathbf}}
\newcommand{\mc}{\ensuremath{\mathcal}}
\newcommand{\mf}{\ensuremath{\mathfrak}}
\newcommand{\ov}{\ensuremath{\overline}}

\newcommand{\wt}{\ensuremath{\widetilde}}


\newcommand{\ga}{\ensuremath{\gamma}}
\newcommand{\Om}{\ensuremath{\Omega}}

\newcommand{\La}{\ensuremath{\Lambda }}
\newcommand{\la}{\ensuremath{\lambda }}

\newcommand{\eps}{\ensuremath{\epsilon }}


\def\C{\mathbb{C}}

\def\D{\mathbb{D}}

\def\N{\mathbb{N}}
\def\B{\mathbb{B}}

\def\A{\mathcal{A} _d}

\def\fp{\mathbb{C} \{ \mathfrak{z} _1 , ... , \mathfrak{z} _d \} }

\renewcommand{\H}{\ensuremath{\mathcal{H} }}
\newcommand{\J}{\ensuremath{\mathcal{J} }}

\newcommand{\K}{\ensuremath{\mathcal{K} }}
\newcommand{\F}{\ensuremath{\mathbb{F} }}

\newcommand{\verteq}{\rotatebox{90}{$\,=$}}


\newcommand{\ip}[2]{\ensuremath{\langle {#1} , {#2} \rangle}}
\newcommand{\ipcn}[2]{\ensuremath{\left( {#1} , {#2} \right) _{\C ^n}}}

\newcommand{\dom}[1]{\ensuremath{\mathrm{Dom} ({#1}) }}

\newcommand{\ran}[1]{\ensuremath{\mathrm{Ran} \left( {#1} \right) }}

\renewcommand{\ker}[1]{\ensuremath{\mathrm{Ker} ({#1}) }}

\newcommand{\re}[1]{\ensuremath{\mathrm{Re} \left( {#1} \right) }}


\numberwithin{equation}{section}
\numberwithin{subsection}{section}

\newtheorem{thm}[subsection]{Theorem}
\newtheorem{claim}[subsection]{Claim}

\newtheorem{lemma}[subsection]{Lemma}
\newtheorem{prop}[subsection]{Proposition}
\newtheorem{cor}[subsection]{Corollary}
\newtheorem*{thm*}{Theorem}

\theoremstyle{definition}
\newtheorem{defn}[subsection]{Definition}
\newtheorem{remark}[subsection]{Remark}

\title[NC Lebesgue Decomposition]{Lebesgue Decomposition of Non-commutative measures}

\author{Michael T. Jury}
\address{University of Florida}
\email{mtjury@ad.ufl.edu}

\author{Robert T.W. Martin}
\address{University of Manitoba}
\email{Robert.Martin@umanitoba.ca}

\thanks{First named author partially supported by NSF grant DMS-1900364.}

\begin{document}
\small

\bibliographystyle{unsrt}
\maketitle
\onehalfspace

\begin{abstract}
The Riesz-Markov theorem identifies any positive, finite, and regular Borel measure on the complex unit circle with a positive linear functional on the continuous functions. By the Weierstrass approximation theorem, the continuous functions are obtained as the norm closure of the Disk Algebra and its conjugates.  Here, the Disk Algebra can be viewed as the unital norm-closed operator algebra of the shift operator on the Hardy Space, $H^2$ of the disk.  

Replacing square-summable Taylor series indexed by the non-negative integers, \emph{i.e.} $H^2$ of the disk, with square-summable power series indexed by the free (universal) monoid on $d$ generators, we show that the concepts of absolutely continuity and singularity of measures, Lebesgue Decomposition and related results have faithful extensions to the setting of `non-commutative measures' defined as positive linear functionals on a non-commutative multi-variable `Disk Algebra' and its conjugates.
\end{abstract}

\section{Introduction}

The results of this paper extend the Lebesgue decomposition of any finite, positive and regular Borel measure, with respect to Lebesgue measure on the complex unit circle, from one to several non-commuting (NC) variables. In \cite{JM-NCFatou}, we extended the concepts of absolute continuity and singularity of positive measures with respect to Lebesgue measure, the Lebesgue Decomposition, and the Radon-Nikodym formula of Fatou's Theorem to the non-commutative, multi-variable setting of `NC measures', \emph{i.e.} positive linear functionals on a certain operator system, the \emph{Free Disk System}. Here, the free disk system, $\A + \A ^*$, is the operator system of the \emph{Free Disk Algebra}, $\A := \mr{Alg} (I , L ) ^{-\| \cdot \| }$, the norm-closed operator algebra generated by the \emph{left free shifts} on the \emph{NC Hardy space}. (Equivalently, the left creation operators on the full Fock space over $\C ^d$.) We will recall in some detail below why this is the appropriate (and even canonical) extension of the concept of a positive measure on the circle to several non-commuting variables. The primary goal of this paper is to further develop the NC Lebesgue Decomposition Theory of an arbitrary (positive) NC measure with respect to NC Lebesgue measure (the `vacuum state' on the Fock space), by proving that our concepts of absolutely continuous (AC) and singular NC measures define positive hereditary cones, and hence that the Lebesgue Decomposition commutes with summation. That is, the Lebesgue Decomposition of the sum of any two NC measures is the sum of the Lebesgue Decompositions. (Here, we say a positive cone, $\mc{P} _0 \subset \mc{P}$ is \emph{hereditary} in a larger positive cone $\mc{P}$ if  $p_0 \in \mc{P} _0$, and $p_0 \geq p$ for any $p \in \mc{P}$ implies that $p \in \mc{P} _0$. The sets of absolutely continuous and singular positive, finite, regular Borel measures on the circle, $\partial \D$, with respect to another fixed positive measure, are clearly positive hereditary sub-cones.) In this paper we focus on positive NC measures and their Lebesgue decomposition with respect to NC Lebesgue measure. The study of more general complex NC measures, and the Lebesgue Decomposition of an arbitrary positive NC measure with respect to another will be the subject of future research. 

By the Riesz-Markov Theorem, any finite positive Borel measure, $\mu$, on $\partial \D$, can be identified with a positive linear functional, $\hat{\mu }$ on $\scr{C} ( \partial \D )$, the commutative $C^*-$algebra of continuous functions on the circle. By the Weierstrass Approximation Theorem,  $ \scr{C} (\partial \D ) = \left( \mc{A} (\D ) + \mc{A} (\D ) ^* \right) ^{-\| \cdot \| _\infty}, $  where $\mc{A} (\D )$ is the \emph{Disk Algebra}, the algebra of all analytic functions in the complex unit disk, $\D$, with continuous extensions to the boundary. In the above formula, elements of $\mc{A} (\D )$ are identified with their continuous boundary values and $\| \cdot \| _\infty$ denotes the supremum norm for continuous functions on the circle. The disk algebra can also be viewed as the norm-closed unital operator algebra generated by the shift, $S := M_z$, $\mc{A} (\D ) = \mr{Alg} (I, S) ^{-\| \cdot \| }$ (with equality of norms).  The shift is the isometry of multiplication by $z$ on the Hardy space, $H^2 (\D )$, and plays a central role in the theory of Hardy spaces. Here recall that the Hardy Space, $H^2 (\D )$, is the space of all analytic functions in $\D$ with square-summable MacLaurin series coefficients (and with the $\ell ^2$ inner product of these Taylor series coefficients at $0 \in \D$).  The positive linear functional $\hat{\mu}$ is then completely determined by the moments of the measure $\mu$: 
$$ \hat{\mu}  (S^k ) := \int _{\partial \D} \zeta ^k \mu  (d\zeta ). $$ 

The shift on $H^2 (\D )$ is isomorphic to the unilateral shift on $\ell ^2 (\N _0 )$, where $\N _0$, the non-negative integers, is the universal monoid on one generator.  A canonical several-variable extension of $\ell ^2 (\N _0 )$ is then $\ell ^2 (\F ^d )$, where $\F ^d$ is the free (and universal) monoid on $d$ generators, the set of all words in $d$ letters. One can define a natural $d-$tuple of isometries on $\ell ^2 (\F ^d)$, the \emph{left free shifts}, $L_k$, $1\leq k \leq d$ defined by
$L_k e_\alpha = e_{k \alpha}$ where $\alpha \in \F ^d$ and $\{ e_\alpha \}$ is the standard orthonormal basis. These left free shifts have pairwise orthogonal ranges so that the row operator $L := (L_1, ..., L_d ) : \ell ^2 (\F ^d ) \otimes \C ^d \rightarrow \ell ^2 (\F ^d)$ is an isometry from $d$ copies of $\ell ^2 (\F ^d )$ into one copy which we call the \emph{left free shift}. This Hilbert space of free square-summable sequences can also be identified with a `NC Hardy Space' of `non-commutative analytic functions' in a non-commutative open unit disk or ball of several matrix variables. Under this identification, the left free shifts become left multiplication by independent matrix-variables, see Section \ref{back}.  The immediate analogue of a positive measure in this non-commutative (NC) multi-variable setting is then a positive linear functional, or \emph{NC measure}, on the \emph{Free Disk System}:
$$ \left( \A  + \A  ^* \right) ^{-\| \cdot \| }, $$ where $\A := \mr{Alg} (I, L) ^{-\| \cdot \|}$ is the \emph{Free Disk Algebra}, the operator norm-closed unital operator algebra generated by the left free shifts. As in the classical theory, elements of the Free Disk Algebra can be identified with bounded (matrix-valued) analytic functions (in several non-commuting matrix variables) which extend continuously from the interior to the boundary of a certain non-commutative multi-variable open unit disk or ball. 

There is a fundamental connection between this work and the theory of row isometries (isometries from several copies of a Hilbert space into itself), or equivalently to the representation theory of the Cuntz-Toeplitz $C^*-$algebra, $\mc{E} _d = C^* (I , L )$ (the $C^*-$ algebra generated by the left free shifts), the universal $C^*-$algebra generated by a $d-$tuple of isometries with pairwise orthogonal ranges, and of the Cuntz $C^*-$algebra, $\mc{O} _d$, the universal $C^*-$algebra of an onto row isometry \cite{Cuntz}. Namely, applying the Gelfand-Naimark-Segal (GNS) construction to $(\mu , \A )$, where $\mu$ is any (positive) NC measure yields a GNS Hilbert space, $F^2 _d (\mu)$, and a $*-$representation $\pi _\mu$ of $\mc{E} _d$ so that $\Pi _\mu := \pi _\mu (L)$ is a row-isometry on $F^2 _d (\mu )$. A Lebesgue Decomposition for bounded linear functionals on the Free Disk Algebra, $\A$, has been developed by Davidson, Li and Pitts in the theory of Free Semigroup Algebras, \emph{i.e.} $WOT-$closed (weak operator topology closed) operator algebras generated by row isometries \cite{DLP-ncld,DKP-structure,DP-inv}. Building on this, Kennedy has constructed a Lebesgue Decomposition for row isometries \cite{MK-rowiso}, and we will explicitly work out the relationship between this theory and our Lebesgue Decomposition.

\subsection{Three approaches to classical Lebesgue Decomposition Theory}
\label{classmeas}

There are three approaches to classical Lebesgue Decomposition theory which will provide natural and equivalent extensions to NC measures. 

Let $\mu $ be an arbitrary finite, positive, and regular Borel measure on $\partial \D$, and as before, $m$ denotes normalized Lebesgue measure on the circle. As we will prove, one can construct the Lebesgue decomposition of  $\mu$ with respect to $m$ using reproducing kernel Hilbert space theory. Namely, setting $H^2 (\mu)$ to be the closure of the analytic polynomials in $L^2 (\mu , \partial \D )$, let 
$\scr{H} ^+ (H _\mu )$ be the space of all Cauchy Transforms of elements in $H^2 (\mu )$: If $h \in H^2 (\mu )$, 
$$ (\mc{C} _\mu h ) (z) := \int _{\partial \D } \frac{1}{1-z\ov{\zeta}} h (\zeta) \mu (d\zeta ). $$ Equipped with the inner product of $H^2 (\mu )$, this is a reproducing kernel Hilbert space of analytic functions in $\D$, the classical \emph{Herglotz Space}  with reproducing kernel: 
$$ K^\mu (z,w) = \frac{1}{2} \frac{H_\mu (z) + H_\mu (w) ^*}{1-zw^*} = \int _{\partial \D} \frac{1}{1-z\ov{\zeta}}\frac{1}{1-\zeta \ov{w}} \mu (d\zeta ), $$ and 
\ba H_\mu (z) &:= & \int _{\partial \D } \frac{1+z\ov{\zeta}}{1-z\ov{\zeta}} h (\zeta) \mu (d\zeta ) \nn \\
& = & 2 (\mc{C} _\mu 1 ) (z) - \mu (\partial \D ), \nn \ea is the Riesz-Herglotz integral transform of $\mu$, an analytic function with non-negative real part in $\D$ (see \cite[Chapter 1]{dB-SS}, or \cite[Chapter 1, Section 5]{dB-entire}). It is not hard to verify that domination of finite, positive, and regular Borel measures is equivalent to domination of the Herglotz kernels for their reproducing kernel Hilbert spaces of Cauchy Transforms:
$$ 0 \leq \mu \leq t^2 \la \quad \Leftrightarrow \quad K^\mu \leq t^2 K^\la; \quad \quad t>0. $$ 
Moreover, by a classical result of Aronszajn, domination of the reproducing kernels $K^\mu \leq t^2 K^\la$ is equivalent to bounded containment of the corresponding Herglotz spaces on $\D$, $\scr{H} ^+ (H_\mu ) \subseteq \scr{H} ^+ (H _\la)$, and the least such $t>0$ is the norm of the embedding map $\mr{e} _\mu : \scr{H} ^+ (H_\mu ) \hookrightarrow \scr{H} ^+ ( H_\la )$ \cite[Theorem I, Section 7]{Aron-RKHS}. Absolute continuity of measures on $\partial \D$ can also be recast in terms of containment of reproducing kernel Hilbert spaces. Namely, given two finite, positive, regular Borel measures $\la , \mu$, recall that $\mu$ is absolutely continuous with respect to $\la$ if there is a non-decreasing sequence of finite, positive, regular Borel measures $\mu _n$, which are each dominated by $\la$, and increase monotonically to $\mu$:
\begin{align*} 0 \leq \mu _n \leq \mu,  & \quad \quad  \mu _n \uparrow \mu, \\ 
 \mu _n \leq t_n ^2 \la, & \quad \quad t_n >0. \end{align*} 
Reproducing kernel Hilbert space theory then implies that each space of $\mu _n -$Cauchy transforms is contractively contained in the space of $\mu-$Cauchy transforms, and their linear span is dense in $\scr{H} ^+ ( H _\mu )$ since the $\mu _n$ increase to $\mu$,
$$ \bigvee \scr{H} ^+ (H _{\mu _n} ) = \scr{H} ^+ (H _\mu ). $$ Since each $\mu _n \leq t_n ^2 \la$, is dominated by $\la$, it also follows that each space of $\mu _n -$Cauchy transforms is boundedly contained in the space of $\la -$Cauchy transforms, and the intersection space:
$$ \mr{int} (\mu , \la ) := \scr{H} ^+ (H _\mu ) \bigcap \scr{H} ^+ (H _\la ), $$ is dense in the space of $\mu-$Cauchy transforms. In the case where $\la =m$ is normalized Lebesgue measure, one can check that $H_m \equiv 1$ is constant, so that $\scr{H} ^+ (H _m ) = H^2 (\D )$ is the classical Hardy space of the disk. It follows that one can take this as a starting point, and simply define a measure, $\mu$, to be absolutely continuous or singular (with respect to $m$) depending on whether the intersection space $$ \mr{int} (\mu , m ) := \scr{H} ^+ (H _\mu ) \bigcap H^2 (\D ), $$  is dense or trivial, respectively, in the space of $\mu-$Cauchy Transforms.  In this way one can develop Lebesgue Decomposition Theory using reproducing kernel techniques. It appears that this approach is new, even in this classical setting, and as shown in Corollary \ref{classLD}, this recovers the Lebesgue decomposition of any finite, positive and regular Borel measure on the unit circle with respect to normalized Lebesgue measure.

As discussed in the introduction, any positive, finite, regular Borel measure, $\mu$, on $\partial \D$, can be viewed as a positive linear functional, $\hat{\mu}$, on $\mc{A} (\D ) + \mc{A} (\D ) ^*$. Equivalently, $\mu$ (or $\hat{\mu}$) can be identified with the (generally unbounded) positive quadratic or sesquilinear form, 
$$ q_\mu (a_1 , a_2 ) := \int _{\partial \D } \ov{a_1 (\zeta )} a_2 (\zeta) \mu (d\zeta ); \quad \quad a_1, a_2 \in \mc{A} (\D ), $$ densely-defined in $H^2 (\D )$. Applying the theory of Lebesgue Decomposition of quadratic forms due to B. Simon yields:
$$ q_\mu = q_{ac} +q_s, $$ where $q_{ac}$ is the maximal positive form absolutely continuous to $q_m$, where $m$ is normalized Lebesgue measure, and $q_s$ is singular \cite{Simon1}. In this theory, a positive quadratic form with dense domain in a Hilbert space, $\H$, is said to be absolutely continuous if it is closable, \emph{i.e.} it has a closed extension. Here, a positive semi-definite quadratic form, $q$, is closed if its domain, $\dom{q}$ is complete in the norm $$ \| \cdot \| _{q+1} := \sqrt{ q (\cdot, \cdot ) + \ip{\cdot}{\cdot} _\H }.$$ Closed positive semi-definite forms obey an extension of the Riesz Representation Lemma: A positive semi-definite densely-defined quadratic form, $q$, is closed if and only if $q$ is the quadratic form of a closed, densely-defined, positive semi-definite operator, $T \geq 0$:
$$ q(h,g) = \ip{\sqrt{T}h}{\sqrt{T}g} _\H; \quad \quad h, g \in \dom{\sqrt{T}} = \dom{q}, $$
see \cite[Chapter VI, Theorem 2.21, Theorem 2.23]{Kato}.
If $q= q_\mu$, we will prove that $q_{ac} = q_{\mu _{ac}}$, and $q_s = q_{\mu _s}$ where 
$$ \mu = \mu _{ac} + \mu _s, $$ is the classical Lebesgue Decomposition of $\mu$ with respect to $m$, see Corollary \ref{classLD}. Indeed, if one instead defines $q_\mu$ as a quadratic form densely-defined in $L^2 (\partial \D )$, then it follows without difficulty in this case that $T$ is affiliated to $L^\infty (\partial \D )$ so that 
$$ q_\mu (f,g) = \int _{\partial \D } \ov{f(\zeta)} g(\zeta ) |h (\zeta ) | ^2 m (d\zeta ), $$ where $\sqrt{T} 1 = |h| \in L^2 (\partial \D )$. This shows that $|h| ^2 \in L^1 (\partial \D )$ is the Radon-Nikodym derivative of $\mu$ with respect to normalized Lebesgue measure, $m$. The Lebesgue Decomposition of quadratic forms in \cite{Simon1} is similar in this case to von Neumann's proof of the Lebesgue Decomposition theory \cite[Lemma 3.2.3]{vN3}. In \cite{JM-NCFatou}, we applied this quadratic form decomposition to the quadratic form, $q_\mu$, of any (positive) NC measure $\mu $ to construct an NC Lebesgue decomposition of $\mu$, $\mu = \mu _{ac} + \mu _s$ into absolutely continuous and singular NC measures $\mu _{ac}$ and  $\mu _s$, $0 \leq \mu _{ac} , \mu _s \leq \mu $, where $q_{\mu } = q_{\mu _{ac} } + q _{\mu _s}$ is the Lebesgue decomposition of the quadratic form $q_\mu$ \cite[Theorem 5.9]{JM-NCFatou}.

A third approach to Lebesgue Decompositon theory is to define a positive, finite, regular, Borel measure $\mu$, on $\partial \D$ to be absolutely continuous if the corresponding linear functional $\hat{\mu}$ on $\scr{C} (\partial \D ) = \left( \mc{A} (\D ) + \mc{A} (\D ) ^* \right) ^{-\| \cdot \| }$ has a weak$-*$ continuous extension to a linear functional on $\left( H^\infty (\D ) + H^\infty (\D ) ^* \right) ^{-wk-*} = L^\infty (\partial \D )$. 

This notion of absolute continuity for bounded linear functionals on $\A$ extends the classical notion of absolute continuity of a measure with respect to normalized Lebesgue measure on $\partial \D$, if one identifies finite positive Borel measures on $\partial \D$ with positive linear functionals on the classical Disk Algebra $\mc{A} _1 = \mc{A} (\D) \subset H^\infty (\D )$. Indeed in the case where $d=1$, $L^\infty _1 = H^\infty (\D)$, and  
$$ \left( H^\infty (\D ) + H^\infty (\D ) ^* \right) ^{-weak-*} \simeq L^\infty (\partial \D ),$$ a commutative von Neumann algebra. In this case, if $\hat{\mu} \in (\mc{A} (\D ) ^\dag ) _+ = \scr{C} (\partial \D ) ^\dag _+$ is any positive linear functional, the Riesz-Markov-Kakutani Theorem implies it is given by integration against a positive finite Borel measure, $\mu$, on $\partial \D$, and to say it has a weak$-*$ continuous extension to $(H^\infty (\D ) + H^\infty (\D ) ^* ) ^\dag _+ \simeq L^\infty (\partial \D ) ^\dag _+$ is equivalent to $\hat{\mu}$ being the restriction of a positive $\hat{\mu } \in L^\infty (\partial \D ) _*  \simeq L^1 (\partial \D )$. Equivalently, 
$$ \mu (d\zeta ) = \frac{\mu (d\zeta )}{m (d\zeta )} m (d\zeta ); \quad m-a.e., \quad \quad \frac{\mu (d\zeta )}{m (d\zeta )} \in L^1 (\partial \D ), $$ \emph{i.e.} $\mu$ is absolutely continuous with respect to Lebesgue measure. 

This definition of absolute continuity has an obvious generalization to the non-commutative setting of NC measures, \emph{i.e.} positive linear functionals on the Free Disk System, and this gives essentially the same definition of absolute continuity for linear functionals on the Free Disk Algebra introduced by Davidson-Li-Pitts \cite{DLP-ncld}. We will show that all three of these approaches extend naturally to the NC setting and yield the same Lebesgue Decomposition of any positive NC measure with respect to NC Lebesgue measure. 

\section{Background: The Free Hardy Space} \label{back}

We will use the same notation as in \cite{JM-NCFatou}, and we refer to \cite[Section 2]{JM-NCFatou} for a detailed introduction to the NC Hardy space and background theory.

The free monoid, $\F ^d$ is the set of all words in $d$ letters $\{ 1, ... , d \}$. This is the universal monoid on $d$ generators, with product given by concatenation of words, and unit $\emptyset$, the empty word containing no letters. The Hilbert space of square summable sequences indexed by $\F ^d$, $\ell ^2 (\F ^d )$, and the full Fock space over $\C ^d$,
$$ F^2 _d  := \bigoplus _{k=0} ^\infty \left( \C ^d \right) ^{k \cdot \otimes}  = \C \oplus \C ^d \oplus \left( \C ^d \otimes \C ^d \right) \oplus \left( \C ^d \otimes \C ^d \otimes \C^d \right) \oplus \cdots, $$ are naturally isomorphic.  This isomorphism is implemented by the unitary map $e _{i_1 \cdots i_k} \mapsto e_{i_1} \otimes \cdots \otimes e_{i_k}$, $i_k \in \{ 1, ... , d \}$, and $e_\emptyset \mapsto 1$ where $\{ e_{j} \}$ denotes the standard basis of $\C ^d$, and $1$ is the vacuum vector of the Fock space (which spans the subspace $\C \subset F^2 _d$). The free square-summable sequences, $\ell ^2 (\F ^d )$, can also be viewed as a Hilbert space of \emph{Free Non-commutative functions} on a \emph{non-commutative set} \cite{KVV,Pop-freeholo,SSS}. Namely, we can identify any $f \in \ell ^2 (\F ^d )$ with a formal power series in $d$ non-commuting variables $\mf{z} := (\mf{z} _1 , ... , \mf{z} _d )$, 
$$ f (\zeta ) := \sum _{\alpha \in \F ^d} \hat{f} _\alpha \mf{z} ^\alpha. $$ Here, if $\alpha = i_1 i_2 \cdots i_n$, $i _k \in \{ 1, ... ,d  \}$, we use the standard notation $\mf{z} ^\alpha = \mf{z} _{i_1} \mf{z} _{i_2} \cdots \mf{z} _{i_d}$. Foundational work of Popescu has shown that if $Z := (Z_1 , ... , Z_d ) : \H \otimes \C ^d \rightarrow \H$ is any strict (row) contraction on a Hilbert space, $\H$, then the above formal power series for $f$ converges absolutely in operator norm when evaluated at $Z$ (and uniformly on compacta) \cite{Pop-freeholo,SSS}. It follows that any $f \in \ell ^2 (\F ^d)$, can be viewed as a function in the Non-commutative (NC) open unit ball: 
$$ \B ^d _\N := \bigsqcup _{n=1} ^\infty \B ^d _n; \quad \B ^d _n := \left( \C ^{n\times n} \otimes \C ^d \right) _1, $$ where $\B ^d _n$ is the set of all strict row contractions on $\C ^n$. Moreover any such $f$ is a locally bounded \emph{Free Non-commutative function}, in the sense of \cite{Taylor2,KVV,Ag-Mc}. That is, it respects the grading, direct sums and similarities.  Any locally bounded free NC function (under mild, minimal assumptions on its NC domain) is automatically holomorphic, \emph{i.e.} it is both G\^{a}teaux and Fr\'{e}chet differentiable at any point $Z \in \B ^d _\N$ and has a convergent Taylor-type power series expansion about any point \cite[Chapter 7]{KVV}. It follows that we can identify $\ell ^2 (\F ^d)$ with the \emph{Free Hardy Space}:
$$ H^2 (\B ^d _\N ) := \left\{  f \in \mr{Hol} (\B ^d _\N )  \left| \ f(Z) = \sum _{\alpha \in \F ^d } \hat{f} _\alpha Z^\alpha, \ \sum |\hat{f} _\alpha | ^2 < \infty \right.  \right\}, $$ the Hilbert space of all (locally bounded hence holomorphic) NC functions in the NC unit ball $\B ^d _\N$ with square-summable Taylor-MacLaurin series coefficients. In the sequel, we will identify $F^2 _d$, $ \ell ^2 (\F ^d )$, and the free or \emph{NC Hardy space} $H^2 (\B ^d _\N )$, and use the terms Fock space and NC Hardy space interchangeably. 

As described in the introduction, the NC Hardy Space is equipped with a canonical left free shift, $L := M^L _{Z}$, the row isometry of left multiplication by the NC variables $Z = (Z_1, \cdots , Z_d ) \in \B ^d _\N$. Each component left free shift, $L_k$, $1 \leq k \leq d$ is an isometry on $H^2 (\B ^d _\N) $ and these have pairwise orthogonal ranges. Viewing the $L_k$ as isometries on $\ell ^2 (\F ^d)$, $L_k e_\alpha = e_{k \alpha}$, and the $L_k$ are also unitarily equivalent to the left creation operators on the Fock space, $F^2 _d$. One can also define isometric right multipliers, $R_k = M^R _{Z_k}$, the \emph{right free shifts} (which append letters to the right of words indexing the canonical orthonormal basis), and these are unitarily equivalent to the left free shifts via the transpose unitary on $\ell ^2 (\F ^d )$, $U_\dag$,
$$ U_\dag e_\alpha := e_{\alpha ^\dag}, $$ where if $\alpha = i_1 \cdots i_n \in \F ^d$, then
$ \alpha ^\dag := i_n \cdots i_1, $ its transpose. 

As in the single-variable setting, the Free Hardy Space $H^2 (\B ^d _\N )$ can be equivalently defined using (non-commutative) reproducing kernel theory \cite{BMV}. All NC-RKHS in this paper will be Hilbert spaces of free NC functions on the NC unit disk or ball, $\B ^d _\N$. Any Hilbert space, $\H$ of NC functions on $\B ^d _\N$, is a NC-RKHS if the linear point evaluation map, $K_Z ^* : \H  \rightarrow \left( \C ^{n\times n}, \mr{tr} _n \right)$ is bounded for any $Z \in \B ^d _n$. We will let $K_Z$, the \emph{NC kernel map}, denote the Hilbert space adjoint of $K_Z ^*$, and, for any $y, v \in \C ^n$, 
$$ K \{ Z , y , v \} := K_Z (yv^*) \in \H. $$ Furthermore, given $Z \in \B ^d _n, y, v \in \C ^n$ and $W \in \B ^d _m,  x, u \in \C ^m$ the linear map
$$ K(Z,W) [ \cdot ] : \C ^{n\times m } \rightarrow \C ^{n\times m}, $$ defined by 
$$ \ipcn{y}{K(Z,W)[vu^*]x} := \ip{K \{Z , y, v \} }{ K \{ W , x, u \} } _{\H}, $$ is completely bounded for any fixed $Z,W$ and completely positive if $Z=W$. This map is called the completely positive non-commutative (CPNC) kernel of $\H$. As in the classical theory there is a bijection between CPNC kernel functions on a given NC set and NC-RKHS on that set \cite[Theorem 3.1]{BMV}, and if $K$ is a given CPNC kernel on an NC set, we will use the notation $\H _{nc} (K)$ for the corresponding NC-RKHS of NC functions.  The NC Hardy space, $H^2 (\B ^d _\N )$, is then the non-commutative reproducing kernel Hilbert space (NC-RKHS) corresponding to the completely positive non-commutative (CPNC) Szeg\"{o} kernel on the NC unit ball, $\B ^d _\N$:
$$ K(Z,W) [ \cdot ] := \sum _{\alpha \in \F ^d} Z^\alpha [ \cdot ]  (W^\alpha) ^*; \quad H^2 (\B ^d _\N ) = \H _{nc} (K). $$ 

All NC-RKHS in this paper will consist of free holomorphic functions in the NC unit ball $\B ^d _\N$ so that any $f \in \H _{nc} (K)$, has a convergent Taylor-MacLaurin series at $0 \in \B ^d _1$, 
$$ f(Z) = \sum _{\alpha \in \F ^d } Z^\alpha \hat{f} _\alpha ; \quad \quad Z \in \B ^d _n, $$ 
and the linear coefficient evaluation functionals: 
$$  f \stackrel{\ell _\alpha}{\rightarrow} \hat{f} _\alpha; \quad \quad \alpha \in \F ^d, $$ are all bounded. We will let $K_\alpha $ denote the \emph{coefficient evaluation vector}:
$$ \ip{K_\alpha}{f}_{\H _{nc} (K)} = \ell _\alpha (f) = \hat{f} _\alpha, \quad \quad \alpha \in \F ^d, $$ and we will typically write $\ell _\alpha =: K_\alpha ^*$. If $K$ is the NC-Szeg\"{o} kernel of the Free Hardy Space, then
$$ K_\alpha (Z) = Z^\alpha, $$ \emph{i.e.} $K_\alpha$ can be identified with the free monomial $L^\alpha 1 \in F^2 _d$.

If $\H _{nc} (K)$ is an NC-RKHS of NC functions on $\B ^d _\N$, NC functions $F,G$ on $\B ^d _\N$ are said to be left or right NC multipliers, respectively, if for any $f \in \H _{nc} (K)$, $F \cdot f$, or $f \cdot G$ belong to $\H _{nc} (K)$. As in the classical theory any left or right multiplier defines a bounded linear operator on $\H _{nc} (K)$, 
$$ (M^L _F f) (Z) := F(Z) f(Z), \quad \quad (M^R _G f ) (Z) := f(Z) G(Z), $$ and under this identification the left and right multiplier algebras of $\H _{nc} (K)$ are unital and closed in the weak operator topology (WOT).  These NC multiplier algebras are denoted by $\mr{Mult} _L (\H _{nc} (K) )$ or $\mr{Mult} _R (\H _{nc} (K) )$, respectively. The left multiplier algebra of the Free Hardy Space provides a non-commutative generalization of $H^\infty (\D) = \mr{Mult} (H^2 (\D ) )$:
$$ H^\infty (\B ^d _\N ) :=   \left\{ f \in \mr{Hol} (\B ^d _\N )  \left| \ \sup _{Z \in \B ^d _\N } \| f (Z ) \| < \infty \right. \right\} = \mr{Mult} _L ( H^2 (\B ^d _\N ) ).$$  This left multiplier algebra can also be identified with 
$$ L^\infty _d := \mr{Alg} (I, L_1, ... , L_d ) ^{-weak-*} = \mr{Alg} (I , L_1, ... , L_d ) ^{-WOT}, $$ the (left) \emph{analytic Toeplitz algebra}.  Here, note that the weak operator (WOT) and weak$-*$ topologies coincide on $L^\infty _d$, \cite[Corollary 2.12]{DP-inv}.  Here, and throughout, we write $\A + \A ^*$ in place of $(\A + \A ^* ) ^{- \| \cdot \| }$ to simplify notation. We also define $R^\infty _d= \mr{Alg} (I, R_1, ..., R_d ) ^{-WOT}$, the right free analytic Toeplitz algebra, and $R^\infty _d = U_\dag (L^\infty _d ) U_\dag$ is the image of $L^\infty _d $ under adjunction by the transpose unitary of $F^2 _d$. As in \cite{DP-inv,Pop-entropy} a left (or right) free multiplier of the Free Hardy Space will be called \emph{inner} if the corresponding (left or right) multiplication operator is an isometry, and \emph{outer} if the corresponding (left or right) multipication operator has dense range.

\section{Non-commutative measures} \label{NCmeasuresect}

\begin{defn} \label{NCmeasure}
Let $(\A ^\dag ) _+$ denote the set of all positive linear functionals on the (norm-closure of the) operator system $\A + \A ^*$, the \emph{Free Disk System}.  We will call such a functional a non-commutative or NC measure.
\end{defn}
 
\begin{defn} \label{rightHerglotzdefn}
A free holomorphic function, $H$ in $\B _\N ^d$ is a (right) NC Herglotz function if and only if the NC kernel:
$$ K^H (Z,W) := \frac{1}{2} K(Z,W)\left[ H(Z) (\cdot) + (\cdot) H(W) ^* \right] \geq 0,$$ is a CPNC kernel on $\B ^d _\N$, where $K(Z,W)$ is the free Szeg\"{o} kernel.
\end{defn}
As in the classical setting, there is a natural bijection between NC Herglotz functions and NC measures. Given any NC measure $\mu \in (\A ^\dag ) _+$, its moments define an NC Herglotz function:
$$ H_\mu (Z) := \mu (I) + \frac{1}{2} \sum _{\alpha \neq \emptyset } Z^\alpha \mu (L^\alpha ) ^*.$$ Conversely, any NC Herglotz function has the MacLaurin series expansion,
$$ H_\mu (Z) := H_\emptyset + \sum _{\alpha \neq \emptyset} Z^\alpha H_\alpha, $$ and setting
$$ \mu _H (I) = \re{H_\emptyset}, \quad \mbox{and} \quad  \mu _H (L^\alpha ) := 2 H_\alpha ^*, $$ defines a (positive) NC measure \cite{JM-freeCE}. (This Taylor-Maclaurin series converges absolutely in $\B ^d _\N$, and uniformly on $r \B ^d _\N$ for any $0<r<1$.)

Given any NC Herglotz function, $H$, the corresponding NC-RKHS $\H _{nc} (K^H)$ is then a Hilbert space of NC holomorphic functions in $\B ^d _\N$ by NC-RKHS theory \cite{BMV}. If $\mu \in (\A ^\dag ) _+$ is the unique NC measure corresponding to $H$, we will usually write $K ^H = K ^\mu$, and we will use the notation $\scr{H} ^+ (H_\mu ) := \H _{nc} (K ^\mu )$ for the right Free Herglotz Space of $H_\mu$. Here, we will also write $H = H_\mu$ (or sometimes $\mu = \mu _H$). As described in \cite{JM-freeCE,JM-freeAC}, if $H = H _\mu$, there is a natural onto isometry, the (right) \emph{Free Cauchy Transform}, $\mc{C} _\mu : F^2 _d (\mu ) \rightarrow \scr{H} ^+ (H_\mu )$, defined as follows: For any free polynomial $p \in \C \{ L_1 , ... , L_d \} \subseteq  F^2 _d (\mu )$
\ba (\mc{C} _\mu p ) (Z) & = & (\mr{id} _n \otimes \mu  ) \left( (I_{n\times F^2} -Z(\dag \circ L)^*) ^{-1} (I_n \otimes p(L)) \right) \nn \\
& := & \sum _{\alpha \in \F ^d } Z^\alpha \mu \left( (L^\alpha ) ^* p(L)  \right) \nn \\
& = &  \sum _{\alpha \in \F ^d } Z^\alpha \ip{L^\alpha}{p(\Pi _\mu ) ( I + N _\mu )} _\mu. \label{FreeCT}    \ea
In the above, for any $Z \in \B ^d _\N$, $ZL^* := Z_1 \otimes L_1 ^* + ... + Z_d \otimes L_d ^*$ is a strict contraction. The map $\mr{id} _n$ is the identity map on $\C ^{n\times n}$, and $I_{n \times F^2} := I_n \otimes I _{F^2 _d}$. The final formula above extends to arbitrary $x \in F^2 _d (\mu)$.   (In the first line of the formula above, the $\dag$ symbol means that one needs to take the transpose of all words in the geometric sum of $(I _{n \times F^2} - ZL^* ) ^{-1}$ to obtain the second line.)

\subsection{Non-commutative Lebesgue measure} 
\label{NCLebesgue}
Classically, the Riesz-Herglotz transform, $H_m (z)$, of normalized Lebesgue measure, $m$ on $\partial \D$ is the constant function $H_m \equiv 1$. It is then natural to expect that in the NC multi-variable theory, the role of normalized Lebesgue measure should be played by the unique NC measure corresponding to the constant NC Herglotz function: 
$$ H (Z) := I _n; \quad \quad Z \in \B ^d _n. $$ It is easy to check that the unique NC measure (which we also denote by $m$), $m = \mu _H$, corresponding to the NC function $H(Z) = I_n $ is the vacuum state on the Fock space:
$$ m (L^\alpha ) := \ip{1}{L^\alpha 1} _{F^2} = \delta _{\alpha, \emptyset}. $$

\begin{defn}
    The vacuum state $m \in (\A ^\dag ) _+$ will be called (normalized) \emph{NC Lebesgue measure}.
\end{defn}

\subsection{Left regular represenations of the Cuntz-Toeplitz algebra}

If $\mu \in (\A ^\dag ) _+$, the Gelfand-Naimark-Segal (GNS) space $F^2 _d (\mu )$ is the the Hilbert space completion of $\A$ modulo zero length vectors with respect to this pre-inner product:
$$ \ip{a_1}{a_2} _\mu := \mu (a_1 ^* a_2); \quad \quad a_1, a_2 \in \A. $$ We will typically write $a + N_\mu$ for the equivalence class of $a$ in $F^2 _d (\mu )$, where $N_\mu \subseteq \A$ is the left ideal of all elements of zero length.  Moreover, the left regular representation: $\pi _\mu : \A  \rightarrow \mc{L} (F^2 _d (\mu  ) )$, 
$$ \pi _\mu (L^\alpha) (a + N_\mu) := L^\alpha a + N _\mu, $$ is completely isometric and extends uniquely to a $*-$representation of the Cuntz-Toeplitz algebra $\mc{E} _d = C^* (I, L)$ on $\mc{L} (F^2 _d (\mu ) ).$ In particular, 
$$ \Pi _\mu = \pi _\mu (L) := (\pi _\mu (L_1) , ... , \pi _\mu (L_d) ) : F^2 _d (\mu ) \otimes \C ^d \rightarrow F^2 _d (\mu ), $$ is a (row) isometry, and we write $(\Pi _\mu) _k := \pi _\mu (L_k).$  Again, if $d=1$ then 
$$ F^2 _1 (\hat{\mu } ) \simeq H^2 (\mu ), \quad \mbox{and} \quad \Pi _{\hat{\mu}} \simeq M_\zeta | _{H^2 (\mu)}, $$ where $\hat{\mu}$ is, as before, the positive linear functional corresponding to the positive measure, $\mu$.

\begin{remark}
It is not difficult to see that one can obtain (up to unitary equivalence) any cyclic row isometry with the above construction, \emph{i.e.} any cyclic row isometry is the left regular GNS representation coming from an NC  measure. 

More generally one can construct any $*-$representation of the Cuntz-Toeplitz algebra (up to unitary equivalence), by considering Stinespring-GNS representations of operator-valued NC measures, \emph{i.e.} completely positive operator-valued maps on the free disk system.
\end{remark}

\subsection{Image of GNS row isometry under Free Cauchy Transform} \label{CTofGNS}

The image of the GNS row isometry $\Pi _\mu$ under the Free Cauchy Transform is a natural isometry on the Free Herglotz space:
\be V _\mu := \mc{C} _\mu \Pi _\mu (\mc{C}  _\mu ) ^*. \ee 

The range $\mc{R}$ of the row isometry $V _\mu $ is:
\be \mc{R} :=\bigvee \left( K^{H_\mu} \{ Z , y , v \} - K^{H _\mu} \{ 0 _n , y, v \} \right) = \bigvee _{\alpha \neq \emptyset} K^{H \mu } _\alpha, \ee and
for any $Z \in \B ^d _n, \ v, y \in \C^n$,
\be 
V_\mu   ^* \left( K^{H_\mu} \{ Z , y , v \} - K^{H_\mu} \{ 0 _n , y, v \} \right) = K ^{H_\mu} \{ Z , Z ^* y ,  v \}  \ee
(so that the span of all such vectors is dense in $\scr{H} ^+  (H_\mu) \otimes \C ^d$).

The image of $\ran{V_\mu}$ under $(\mc{C} _\mu ) ^*$ is:
\be \label{nconmon} F^2 _d (\mu ) _0 = \bigvee _{\alpha \neq \emptyset} L^\alpha + N _\mu, \ee  the closed linear span of the non-constant free monomials in $F^2 _d (\mu )$. If $F \in \scr{H} ^+ (H_\mu)$ is orthogonal to $\ran{V_\mu}$, then $F$ is a constant NC function: For any $Z \in \B ^d _n$, 
$$ F(Z) = I_n F(0)  $$ \emph{i.e.} $F \equiv F(0) \in \C$ is constant-valued. See \cite[Section 4.4]{JM-freeCE} for details.

\begin{remark}
Recall that if $\mu = m$ is normalized NC Lebesgue measure (the vacuum state), then $H_\mu (Z) = I_n $ for any $Z \in \B ^d _n$ so that the NC Herglotz kernel, $K^{H_m} = K$ reduces to the NC Szeg\"{o} kernel and $\scr{H} ^+ (H_m) = H^2 (\B ^d _\N ) $ is simply the Free Hardy Space. In this case $V_m = M^L _{Z} \simeq L$ is the left free shift. 
\end{remark}

\section{Cauchy Transforms of NC measures}

\label{FreeCTsect}

The goal of this section is to define absolutely continuous and singular NC measures, and to show that any positive NC measure $\mu \in (\A ^\dag) _+$ has a unique Lebesgue Decomposition, $\mu = \mu _{ac} + \mu _s$, into absolutely continuous and singular parts, $\mu _{ac}, \mu _s \in (\A ^\dag ) _+$. 

Recall, as discussed in Section \ref{classmeas}, that domination and absolute continuity of any finite, positive, regular Borel measure, $\mu$, on $\partial \D$, can be described in terms of the intersection of the RKHS of $\mu-$Cauchy transforms with the Hardy space, $H^2 (\D )$. In particular, domination of measures is equivalent to domination of the reproducing kernels for their spaces of Cauchy transforms so that the following NC analogue of a reproducing kernel theory result due to Aronszajn applies, see \cite[Theorem 5.1]{Paulsen-rkhs} \cite[Theorem I, Section 7]{Aron-RKHS}: 

\begin{thm} \label{NCdom}
Let $K_1, K_2$ be CPNC kernels on an NC set $\Om := \bigsqcup \Om _n$, where 
$$ \Om _n := \Om \bigcap \left( \C ^{n \times n} \otimes \C ^d \right). $$ Then $K_1 \leq t^2 K_2$ for some $t>0$ if and only if  
$$ \mc{H} _{nc} (K_1 ) \subseteq \mc{H} _{nc} (K_2), $$ and the norm of the 
embedding $\mr{e} : \mc{H} _{nc} (K_1 ) \hookrightarrow \mc{H} _{nc} (K_2)$ is at most $t$.
\end{thm}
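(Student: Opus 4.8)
The plan is to adapt the classical Aronszajn argument, using the Kolmogorov-type decomposition of a CPNC kernel (as in \cite{BMV}) to linearize the problem, and then invoke the standard operator-range / factorization lemma (Douglas's lemma) in a suitably vectorized form. First I would fix the Kolmogorov decompositions of $K_1$ and $K_2$: for $i=1,2$ there is a Hilbert space $\mc{D} _i$ and, for each $Z \in \Om _n$ and $v \in \C ^n$, a bounded operator $B ^{(i)} _Z v : \C ^n \to \mc{D} _i \otimes \C ^n$ (say) realizing
$$ \ipcn{y}{K_i(Z,W)[vu^*]x} = \ip{B ^{(i)} _Z (yv^*)}{B ^{(i)} _W (xu^*)} _{\mc{D} _i \otimes \cdots}, $$
with the ranges of the $B ^{(i)}$ total; equivalently, the map $B ^{(i)} _Z{}^* : \mc{D}_i \otimes \C^n \to \mc{H}_{nc}(K_i)$ sends the universal space onto $\mc{H}_{nc}(K_i)$ and carries kernel sections to kernel sections, exactly as $K_Z^*$ does. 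This is the NC analogue of writing $K_i(z,w) = \langle b^i_w, b^i_z\rangle$.

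Next, for the ``only if'' direction, assume $K_1 \le t^2 K_2$. Then for every finitely supported choice of points $Z^{(j)} \in \Om$ and vectors $y_j, v_j$, the Gram matrix inequality
$$ \sum_{j,l} \ipcn{y_j}{\bigl(t^2 K_2 - K_1\bigr)(Z^{(j)},Z^{(l)})[v_j v_l^*] y_l} \ge 0 $$
holds. This is precisely the statement that the map $U$ sending $K_1\{Z,y,v\} \mapsto t\, K_2\{Z,y,v\}$ (defined on the dense span of $K_1$-kernel sections, with values in the dense span of $K_2$-kernel sections) extends to a contraction $U : \mc{H}_{nc}(K_1) \to \mc{H}_{nc}(K_2)$ --- the computation $\|\sum c_j K_1\{Z^{(j)},y_j,v_j\}\|^2 \ge \|\sum c_j\, t\,K_2\{Z^{(j)},y_j,v_j\}\|^2$ (reading the norm on the right side) is exactly the displayed inequality with the roles of $K_1, t^2K_2$ swapped, so one verifies it carefully in both directions. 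Then I would check that $U$ acts as the identity on the level of NC functions: since $K_i\{Z,y,v\}$ is the NC function whose value against a point evaluation reproduces coefficients, and $U$ intertwines the two families of kernel sections, $(U f)(Z) = f(Z)$ for all $Z$, so $U$ is the inclusion map, $\mc{H}_{nc}(K_1) \subseteq \mc{H}_{nc}(K_2)$ set-theoretically, with embedding norm $\|\mr{e}\| = \|U\| \le t$.

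For the ``if'' direction, suppose $\mc{H}_{nc}(K_1) \subseteq \mc{H}_{nc}(K_2)$. By the closed graph theorem the inclusion $\mr{e}$ is bounded (a sequence converging in $\mc{H}_{nc}(K_1)$ converges pointwise, hence coefficient-wise, hence its $\mc{H}_{nc}(K_2)$-limit agrees with it), with norm $t := \|\mr{e}\|$. Then for any $Z\in\Om_n$, $y,v\in\C^n$, the functional $f \mapsto \ipcn{y}{f(Z)v}$ is represented in $\mc{H}_{nc}(K_1)$ by $K_1\{Z,y,v\}$ and in $\mc{H}_{nc}(K_2)$ by $K_2\{Z,y,v\}$; since $\ipcn{y}{f(Z)v} = \ip{K_2\{Z,y,v\}}{\mr{e} f}_{K_2} = \ip{\mr{e}^* K_2\{Z,y,v\}}{f}_{K_1}$ for all $f$, we get $\mr{e}^* K_2\{Z,y,v\} = K_1\{Z,y,v\}$. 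Feeding this into the Gram computation, for any finite sum
$$ \sum_{j,l} \ipcn{y_j}{K_1(Z^{(j)},Z^{(l)})[v_j v_l^*] y_l} = \Bigl\| \sum_j K_1\{Z^{(j)},y_j,v_j\}\Bigr\|^2_{K_1} = \Bigl\| \mr{e}^* \sum_j K_2\{Z^{(j)},y_j,v_j\}\Bigr\|^2_{K_1} \le t^2 \Bigl\| \sum_j K_2\{Z^{(j)},y_j,v_j\}\Bigr\|^2_{K_2}, $$
which is exactly $K_1 \le t^2 K_2$ as CPNC kernels (testing positivity of $t^2 K_2 - K_1$ on all such finite data, which characterizes the CPNC order by \cite{BMV}).

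The main obstacle I anticipate is purely bookkeeping rather than conceptual: making sure the ``Gram matrix'' positivity of a CPNC kernel difference $t^2K_2 - K_1$ is correctly formulated in the completely positive (matrix-weighted) sense --- one must test against blocks $v_j u_j^*$ and $\C^{n_j}$-vectors of varying sizes $n_j$, and confirm that the scalarization via $\ipcn{y_j}{\cdot\, x_l}$ captures this, exactly as in \cite[Theorem 3.1]{BMV}. Once the correspondence ``CPNC order $\leftrightarrow$ joint positivity of scalarized Gram matrices'' is pinned down, both implications are the standard RKHS argument verbatim. I would cite \cite[Theorem 3.1]{BMV} for the kernel--space bijection and for this order characterization, and \cite[Theorem 5.1]{Paulsen-rkhs} as the scalar template being transcribed.
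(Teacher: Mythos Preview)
The paper does not actually supply a proof of this theorem; it is stated with a citation to the classical Aronszajn result \cite[Theorem 5.1]{Paulsen-rkhs}, \cite[Theorem I, Section 7]{Aron-RKHS} and treated as a routine transcription to the NC-RKHS setting of \cite{BMV}. Your overall plan --- define a bounded map on kernel sections using the Gram inequality, identify its adjoint as the embedding, and run the closed graph theorem plus $\mr{e}^* K_2\{Z,y,v\}=K_1\{Z,y,v\}$ for the converse --- is exactly that transcription, and your ``if'' direction is correct as written.

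There is, however, a genuine directional slip in your ``only if'' direction. The hypothesis $K_1 \le t^2 K_2$ gives, for any finite tuple,
\[
\Bigl\|\sum_j K_1\{Z^{(j)},y_j,v_j\}\Bigr\|_{K_1}^2 \;\le\; t^2\,\Bigl\|\sum_j K_2\{Z^{(j)},y_j,v_j\}\Bigr\|_{K_2}^2,
\]
which is the \emph{opposite} of the inequality needed to make your map $U:K_1\{Z,y,v\}\mapsto t\,K_2\{Z,y,v\}$ a well-defined contraction (a $K_1$-combination can vanish while the corresponding $K_2$-combination does not). The correct move is to define the map the other way: let
\[
T:\mc{H}_{nc}(K_2)\to\mc{H}_{nc}(K_1),\qquad T\,K_2\{Z,y,v\}:=K_1\{Z,y,v\},
\]
which the displayed inequality shows is well-defined with $\|T\|\le t$. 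Then for any $f\in\mc{H}_{nc}(K_1)$,
\[
\ip{K_2\{Z,y,v\}}{T^*f}_{K_2}=\ip{K_1\{Z,y,v\}}{f}_{K_1}=\ipcn{y}{f(Z)v},
\]
so $T^*f$ and $f$ agree as NC functions, i.e.\ $T^*=\mr{e}$ is the inclusion with $\|\mr{e}\|=\|T^*\|\le t$. With this one correction your argument goes through and matches the standard proof the paper is implicitly invoking.
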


Moreover, as in the single-variable setting, it is easy to verify that domination of (positive) NC measures $\mu, \la \in (\A ^\dag )_+$ is equivalent to domination of the NC kernels for their spaces of Cauchy Transforms:  If $\mu , \la \in (\A ^\dag ) _+$ are positive NC measures and $\mu$ is dominated by $\la$, \emph{i.e.} there is a $t>0$ so that $\mu \leq t^2 \la$, then there is a linear embedding, $E_\mu : F^2 _d (\la ) \hookrightarrow F^2 _d (\mu )$ defined by 
$$ E_\mu ( p(L) + N_\la ) = p(L) + N_\mu, \quad \quad p \in \fp, $$ with norm at most $t$.

\begin{lemma}{ (\cite[Lemma 5.3]{JM-NCFatou})} \label{embeddings}
Given $\mu, \la \in (\A ^\dag ) _+$, there is a $t>0$ so that $K^\mu \leq t^2 K^\la$ if and only if $\mu \leq t^2 \la$. If $\mu \leq t^2 \la$, then the linear embeddings $\mr{e} _\mu : \scr{H} ^+ (H _\mu ) \hookrightarrow \scr{H} ^+ (H _\la )$ and 
$E_\mu : F^2 _d (\la ) \hookrightarrow F^2 _d (\mu)$ have norm at most $t>0$ and are related by:
$$ E_\mu = \mc{C} _\mu ^* \mr{e} _\mu ^* \mc{C} _\la. $$
\end{lemma}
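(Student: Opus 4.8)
The plan is to deduce the entire statement from three facts already available in the excerpt: the affine bijection $\mu \leftrightarrow H_\mu$ between positive NC measures and NC Herglotz functions recalled in Section~\ref{NCmeasuresect}, the fact that $H \mapsto K^{H}$ depends linearly on $H$, and the NC analogue of Aronszajn's inclusion theorem, Theorem~\ref{NCdom}. Everything else is bookkeeping around the free Cauchy transform isometries $\mc{C}_\mu$ and $\mc{C}_\la$.

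First I would settle the equivalence $K^\mu \le t^2 K^\la \iff \mu \le t^2\la$. For $\mu,\la \in (\A^\dag)_+$ the functional $\nu := t^2\la - \mu$ on $\A + \A^*$ is Hermitian, and since both $\nu \mapsto H_\nu$ and $H \mapsto K^H$ are linear we have $t^2 K^\la - K^\mu = K^{t^2 H_\la - H_\mu} = K^{H_\nu}$. Now $\mu \le t^2\la$ says exactly that $\nu$ is a \emph{positive} NC measure, which by the bijection holds iff $H_\nu = t^2 H_\la - H_\mu$ is an NC Herglotz function, which by Definition~\ref{rightHerglotzdefn} holds iff $t^2 K^\la - K^\mu$ is a CPNC kernel, i.e.\ $K^\mu \le t^2 K^\la$. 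For the implication $\Leftarrow$ one invokes that an NC Herglotz function determines the moments $(\cdot)(I)$ and $(\cdot)(L^\alpha)$, hence the whole Hermitian functional, so the positive NC measure furnished by the bijection must coincide with $\nu$ itself.

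Assuming $\mu \le t^2\la$, the bound $\|\mr{e}_\mu\| \le t$ is then immediate from the previous step together with Theorem~\ref{NCdom}: since $K^\mu \le t^2 K^\la$ we get $\scr{H}^+(H_\mu) = \H_{nc}(K^\mu) \subseteq \H_{nc}(K^\la) = \scr{H}^+(H_\la)$ with embedding norm at most $t$. For $E_\mu$ I would argue directly: for a free polynomial $p$, the Cuntz--Toeplitz relations $L_i^* L_j = \delta_{ij} I$ collapse the product $p(L)^* p(L)$ to an element of the operator system $\A + \A^*$ which is a positive operator, hence a positive element of $\A + \A^*$, so $\mu\big(p(L)^* p(L)\big) \le t^2 \la\big(p(L)^* p(L)\big)$, i.e.\ $\|p(L) + N_\mu\|_\mu \le t\,\|p(L) + N_\la\|_\la$; since free polynomials are dense, $E_\mu$ extends to a bounded operator of norm at most $t$.

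It remains to establish $E_\mu = \mc{C}_\mu^* \mr{e}_\mu^* \mc{C}_\la$. I would prove the equivalent identity $\mc{C}_\la E_\mu^* = \mr{e}_\mu \mc{C}_\mu$ of bounded operators $F^2_d(\mu) \to \scr{H}^+(H_\la)$; applying $\mc{C}_\la^*$ on the left and using that $\mc{C}_\mu, \mc{C}_\la$ are onto isometries then gives $E_\mu^* = \mc{C}_\la^* \mr{e}_\mu \mc{C}_\mu$, equivalently the stated formula, and re-confirms $\|E_\mu\| = \|\mr{e}_\mu\| \le t$. Both sides are continuous, so it suffices to test on the dense set of vectors $q(L) + N_\mu$ with $q$ a free polynomial. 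By the series formula \eqref{FreeCT}, read for an arbitrary GNS vector, the $\alpha$-th Taylor--MacLaurin coefficient of $\mc{C}_\la\big(E_\mu^*(q(L)+N_\mu)\big)$ is $\ip{L^\alpha + N_\la}{E_\mu^*(q(L)+N_\mu)}_\la = \ip{E_\mu(L^\alpha + N_\la)}{q(L)+N_\mu}_\mu = \ip{L^\alpha + N_\mu}{q(L)+N_\mu}_\mu = \mu\big((L^\alpha)^* q(L)\big)$, which is exactly the $\alpha$-th coefficient of $\mc{C}_\mu(q(L)+N_\mu)$; since $\mr{e}_\mu$ is the inclusion of function spaces, $\mr{e}_\mu \mc{C}_\mu(q(L)+N_\mu)$ is the same NC function, and NC functions in these reproducing kernel spaces are determined by their coefficients. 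I do not anticipate a real obstacle: the only genuine content is the first step, where the order relation on NC measures is traded for the order relation on Herglotz kernels through the bijection and then fed into the NC Aronszajn theorem; the rest requires only care in tracking which Hilbert space each of $E_\mu^*$, $\mr{e}_\mu^*$, $\mc{C}_\mu^*$, $\mc{C}_\la^*$ acts between and in using \eqref{FreeCT} in its form valid beyond polynomials.
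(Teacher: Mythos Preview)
Your proposal is correct. Note that the paper does not actually supply a proof of this lemma; it is quoted from \cite[Lemma 5.3]{JM-NCFatou}, so there is no in-paper argument to compare against. Your approach---reducing the order equivalence to the linearity of $\mu \mapsto H_\mu \mapsto K^{H_\mu}$ together with the bijection between positive NC measures and NC Herglotz functions, invoking Theorem~\ref{NCdom} for the embedding norm, and verifying $\mc{C}_\la E_\mu^* = \mr{e}_\mu \mc{C}_\mu$ by matching Taylor--MacLaurin coefficients via \eqref{FreeCT}---is the natural one and is essentially what the cited reference does.
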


Motivated by the discussion of Section \ref{classmeas}, we define: 
\begin{defn} \label{ncldDef}
A positive NC measure $\mu \in (\A ^\dag ) _+$ is \emph{absolutely continuous} (AC) (with respect to NC Lebesgue measure, $m$) if the intersection of its space of Cauchy Transforms, $\scr{H} ^+ (H _\mu )$, with the Free Hardy Space is dense: 
$$ \scr{H} ^+ ( H _\mu ) =  \left( \scr{H} ^+ (H _\mu ) \bigcap H^2 (\B ^d _\N ) \right) ^{-\| \cdot \| _{H_\mu}}. $$ 
The NC measure $\mu$ is \emph{singular} (again with respect to NC Lebesgue measure) if 
$$ \mr{int} (\mu , m) := \scr{H} ^+ (H _\mu ) \bigcap H^2 (\B ^d _\N ) =\{ 0 \}. $$

The sets of all absolutely continuous and singular positive NC measures will be denoted by 
$AC (\A ^\dag ) _+$ and $\mr{Sing} (\A ^\dag ) _+$, respectively.
\end{defn}
Here, recall that the space of all Cauchy transforms with respect to NC Lebesgue measure, $m$, $\scr{H} ^+ (H _m )$ is the NC Hardy Space $H^2 (\B ^d _\N )$. Corollary \ref{classLD} will show that this definition recovers the classical Lebesgue decomposition of any finite, positive and regular Borel measure on the circle with respect to Lebesgue measure, in the single-variable setting.

Our goal now is to decompose any positive NC measure, $\mu \in (\A ^\dag ) _+ $ into absolutely continuous and singular parts using reproducing kernel theory by considering the intersection of the space of NC $\mu-$Cauchy transforms with the NC Hardy space. For any (positive) NC measures $\mu,\la$, one has that
$H_{\mu +\la} = H_\mu + H_\la$, and it follows that the NC Herglotz kernel of the NC measure $\ga := \mu +\la$ obeys:
$$ K^{\ga } (Z,W) = K^\mu (Z,W) + K ^\la (Z,W). $$ In particular, one can prove the following NC analogue of a result on sums of reproducing kernels due to Aronszajn (applied to the special case of NC Herglotz Spaces), \cite[Section 6]{Aron-RKHS}, \cite[Theorem 5.7]{Paulsen-rkhs}:

\begin{thm} \label{NCsum}
If $\mu, \la \in (\A ^\dag ) _+$ then $\scr{H} ^+ (H_{\mu +\la} ) = \scr{H} ^+ (H_\mu ) + \scr{H} ^+ (H _\la )$ and the NC reproducing kernel of $\scr{H} ^+ (H _{\mu +\la })$ is $K^{\mu +\la} (Z,W) = K^\mu (Z,W) + K ^\la (Z,W)$. The norm of any $h \in \scr{H} ^+ (H _{\mu +\la} )$ is:
$$ \| h \| ^2 _{H_{\mu +\la}} = \mr{min} \left\{ \left. \| h_1 \| ^2 _{H_\mu} + \| h_2 \| ^2 _{H_\la} \right| h_1 \in \scr{H} ^+ ( H_\mu ), \ h_2 \in \scr{H} ^+ (H_\la ), \ \mbox{and} \  h = h_1 + h_2 \right\}. $$
In particular, $$\scr{H} ^+ ( H_{\mu +\la} ) \simeq \scr{H} ^+ ( H_\mu ) \oplus \scr{H} ^+ (H_\la )$$ if and only if the intersection space: $$ \mr{int} (\mu, \la ) := \scr{H} ^+ ( H_\mu ) \bigcap \scr{H} ^+ ( H_\la ) = \{ 0 \}, \quad \mbox{is trivial.} $$
\end{thm}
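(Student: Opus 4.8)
The plan is to reduce Theorem \ref{NCsum} to the classical reproducing kernel sum theorem of Aronszajn (stated in the NC setting as cited, \cite{Paulsen-rkhs,Aron-RKHS}), using the observation already recorded in the excerpt that $K^{\mu+\la}=K^\mu+K^\la$. The first and most substantive point is to verify that the sum of two CPNC kernels is again a CPNC kernel and that the associated NC-RKHS is exactly the sum space $\scr{H}^+(H_\mu)+\scr{H}^+(H_\la)$ with the stated minimal-norm formula; this is precisely the content of the NC Aronszajn sum theorem, so once $K^{\mu+\la}=K^\mu+K^\la$ is in hand (which follows from $H_{\mu+\la}=H_\mu+H_\la$ and the formula in Definition \ref{rightHerglotzdefn}), the identification $\scr{H}^+(H_{\mu+\la})=\H_{nc}(K^\mu+K^\la)=\scr{H}^+(H_\mu)+\scr{H}^+(H_\la)$ and the norm formula come for free. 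So the main work is to make the NC sum theorem applicable: one checks that $\scr{H}^+(H_\mu)$ and $\scr{H}^+(H_\la)$ are NC-RKHS of NC functions on the \emph{same} NC set $\B^d_\N$, which holds because both are Free Herglotz spaces by construction.

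Next I would prove the ``in particular'' clause. One direction is immediate: if $\scr{H}^+(H_{\mu+\la})\simeq\scr{H}^+(H_\mu)\oplus\scr{H}^+(H_\la)$ isometrically (via the natural map $h_1\oplus h_2\mapsto h_1+h_2$), then for $h\in\mr{int}(\mu,\la)$ we have $h=h+0=0+h$, and the minimality/uniqueness of the decomposition in the orthogonal-direct-sum case forces $h=0$, so the intersection is trivial. For the converse, suppose $\mr{int}(\mu,\la)=\{0\}$. Then the decomposition $h=h_1+h_2$ achieving the minimum in the norm formula is \emph{unique}: if $h=h_1+h_2=h_1'+h_2'$ are two such decompositions, then $h_1-h_1'=h_2'-h_2\in\scr{H}^+(H_\mu)\cap\scr{H}^+(H_\la)=\{0\}$. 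Uniqueness of the minimizer, together with the fact that the minimum of $\|h_1\|^2_{H_\mu}+\|h_2\|^2_{H_\la}$ over a coset of a subspace is attained at the unique point orthogonal to the ``diagonal'' relation, gives that the map $\scr{H}^+(H_\mu)\oplus\scr{H}^+(H_\la)\to\scr{H}^+(H_{\mu+\la})$, $(h_1,h_2)\mapsto h_1+h_2$, is injective with isometric inverse on its range; since it is also surjective by the first part, it is a unitary, i.e. $\scr{H}^+(H_{\mu+\la})\simeq\scr{H}^+(H_\mu)\oplus\scr{H}^+(H_\la)$.

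A cleaner way to organize the converse, which I would probably adopt, is the pull-back/push-out picture standard in the RKHS sum construction: form the external direct sum $\scr{H}^+(H_\mu)\oplus\scr{H}^+(H_\la)$, let $\Sigma$ be the bounded ``addition'' operator $(h_1,h_2)\mapsto h_1+h_2$ into the space of NC functions, and note its kernel is $\{(g,-g): g\in\mr{int}(\mu,\la)\}$. The NC sum theorem identifies $\scr{H}^+(H_{\mu+\la})$ with $(\ker\Sigma)^\perp$ carrying the quotient norm, and the decomposition achieving the minimum is the image of the unique representative in $(\ker\Sigma)^\perp$. When $\mr{int}(\mu,\la)=\{0\}$ we get $\ker\Sigma=\{0\}$, hence $\Sigma$ itself is the desired isometric isomorphism onto $\scr{H}^+(H_{\mu+\la})$.

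I expect the main obstacle to be purely bookkeeping: confirming that the NC Aronszajn results quoted (which are stated for CPNC kernels on a general NC set) apply verbatim to the pair $K^\mu, K^\la$ — in particular that the relevant operator-range/quotient arguments behave as in the scalar case when the kernel values are completely positive maps on matrix algebras rather than scalars. Since $K^{\mu+\la}=K^\mu+K^\la$ is already established in the excerpt and the cited theorem is explicitly ``the NC analogue of a result on sums of reproducing kernels due to Aronszajn,'' this should be routine, and the genuinely new content of the statement is just the equivalence with triviality of $\mr{int}(\mu,\la)$, which is the short argument above.
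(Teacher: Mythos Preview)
Your proposal is correct and matches the paper's own proof: both reduce to the classical Aronszajn sum theorem via $K^{\mu+\la}=K^\mu+K^\la$, and for the ``in particular'' clause the paper constructs the isometry $W:K_Z^{\mu+\la}\mapsto K_Z^\mu\oplus K_Z^\la$ and identifies $\ran{W}^\perp=\{f\oplus(-f):f\in\mr{int}(\mu,\la)\}$, which is exactly the adjoint picture of your addition map $\Sigma$ with $\ker\Sigma=\{(g,-g):g\in\mr{int}(\mu,\la)\}$. The two arguments are the same up to taking adjoints.
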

Applying the inverse free Cauchy Transform, one has $\scr{H} ^+ ( H _{\mu +\la } ) \simeq \scr{H} ^+ (H _\mu ) \oplus \scr{H} ^+ (H_\la )$ if and only if 
$$ F^2 _d (\mu +\la) \simeq F^2 _d (\mu ) \oplus F^2 _d (\la ). $$ 
\begin{proof}
The proof is similar to the classical RKHS result, see \cite[Theorem 5.7]{Paulsen-rkhs}. Since $H_{\mu +\la} = H_\mu + H_\la$, it follows as in the classical theory that 
$K^{\mu +\la} (Z,W ) = K^\mu (Z,W) + K ^\la (Z,W), $ that $\scr{H} ^+ (H _{\mu +\la} ) = \scr{H} ^+ (H_\mu) + \scr{H} ^+ (H _\la ), $ and that the map $W$ from $\scr{H} ^+ (H_{\mu +\la} )$ into the direct sum $\scr{H} ^+ (H _\mu ) \oplus \scr{H} ^+ (H _\la )$ defined by
$$ W K_Z ^{\mu +\la} := K^\mu _Z \oplus K^\la _Z, $$ is an isometry onto the subspace
$$ S := \bigvee K^\mu \{ Z , y , v \} \oplus K^\la \{ Z , y , v \}, $$ with orthogonal complement 
$$ S^\perp = \{ f \oplus -f | \ f \in \scr{H} ^+ (H _\mu ) \bigcap \scr{H} ^+ (H _\la ) \}. $$
In particular, one has the direct sum decomposition if and only if the intersection space is trivial.
\end{proof}

\begin{thm} \label{NCintersect}
Given any two (positive) NC measures $\mu, \la \in (\A ^\dag ) _+$, the intersection space
$$ \mr{int} (\mu , \la ) := \scr{H} ^+ ( H _\mu ) \bigcap \scr{H} ^+ (H _\la ), $$ is 
both $V_\mu$ and $V_\la$ co-invariant, and 
$$ V_\mu ^* | _{\mr{int} (\mu , \la )} = V_\la ^* | _{\mr{int} (\mu , \la )}. $$
\end{thm}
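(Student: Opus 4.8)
The plan is to make $V_\mu^*$ completely explicit as an operation on NC functions, and then to observe that on $\mr{int}(\mu,\la)$ this operation is \emph{intrinsic}: it involves only the Taylor--MacLaurin coefficients of a function, not the measure $\mu$. Once this is known, $V_\mu^*$ and $V_\la^*$ must agree on $\mr{int}(\mu,\la)$, and the common value lands in both Herglotz spaces, which is exactly the assertion. To carry this out, fix $G \in \scr{H}^+(H_\mu)$ and write $V_\mu^* G = \big((V_\mu)_1^* G, \dots, (V_\mu)_d^* G\big)$, each component lying in $\scr{H}^+(H_\mu)$. Because $K^{H_\mu}\{Z,y,v\} - K^{H_\mu}\{0_n,y,v\} \in \ran{V_\mu}$ (this is the description of $\ran{V_\mu}$ recorded in Section~\ref{CTofGNS}), pairing the identity $V_\mu^*\big(K^{H_\mu}\{Z,y,v\} - K^{H_\mu}\{0_n,y,v\}\big) = K^{H_\mu}\{Z, Z^* y, v\}$ against $G$ and using the reproducing property $\langle K^{H_\mu}\{W,a,b\}, F\rangle = \ipcn{a}{F(W)b}$ together with $Z_k Z^\beta = Z^{k\beta}$ yields the pointwise identity
\[
G(Z) - G(0) I_n \;=\; \sum_{k=1}^d Z_k\, \big((V_\mu)_k^* G\big)(Z), \qquad Z \in \B^d_\N .
\]
Since every NC function on $\B^d_\N$ has a unique convergent Taylor--MacLaurin series, and since the factorization $g(Z) = \sum_k Z_k g_k(Z)$ of an NC function with $g(0)=0$ is unique (the word $k\beta$ determines the pair $(k,\beta)$), this forces $\big((V_\mu)_k^* G\big)(Z) = \sum_{\beta \in \F^d} Z^\beta\, \hat G_{k\beta}$. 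Thus $(V_\mu)_k^*$ is, at the level of coefficients, deletion of the letter $k$ from the front of each word, and the NC function $(V_\mu)_k^* G$ depends only on the coefficients of $G$.

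With this identification the theorem follows quickly. Let $F \in \mr{int}(\mu,\la) = \scr{H}^+(H_\mu) \cap \scr{H}^+(H_\la)$. Applying the formula above to $\mu$ and to $\la$, the NC functions $(V_\mu)_k^* F$ and $(V_\la)_k^* F$ have the same $\beta$-coefficient $\hat F_{k\beta}$ for every $\beta$ and every $k$, hence are equal as NC functions; this gives $V_\mu^*|_{\mr{int}(\mu,\la)} = V_\la^*|_{\mr{int}(\mu,\la)}$. Moreover $(V_\mu)_k^* F \in \scr{H}^+(H_\mu)$ since $(V_\mu)_k^*$ is bounded on $\scr{H}^+(H_\mu)$, while the same function equals $(V_\la)_k^* F \in \scr{H}^+(H_\la)$; so $(V_\mu)_k^* F \in \scr{H}^+(H_\mu) \cap \scr{H}^+(H_\la) = \mr{int}(\mu,\la)$ for each $k$, which is precisely $V_\mu$-co-invariance of $\mr{int}(\mu,\la)$, and by symmetry $V_\la$-co-invariance.

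The only part that is more than bookkeeping is the first paragraph's passage from the kernel identity to the pointwise relation and the invocation of uniqueness of NC power series to read off the coefficients; I expect this to be the main (if mild) obstacle, the one point needing care being that $K^{H_\mu}\{Z,y,v\} - K^{H_\mu}\{0_n,y,v\}$ really does range over a dense subset of $\ran{V_\mu} = \bigvee_{\alpha \neq \emptyset} K^{H_\mu}_\alpha$, so that pairing against $G$ picks up nothing from the one-dimensional cokernel of $V_\mu$ (the constants). An alternative, slightly slicker route is via Lemma~\ref{embeddings}: setting $\ga := \mu + \la$, the contractive inclusion maps $\mr{e}_\mu : \scr{H}^+(H_\mu) \hookrightarrow \scr{H}^+(H_\ga)$ and $\mr{e}_\la : \scr{H}^+(H_\la) \hookrightarrow \scr{H}^+(H_\ga)$ intertwine $V_\mu^*$ and $V_\la^*$ with $V_\ga^*$ --- using $E_\mu = \mc{C}_\mu^* \mr{e}_\mu^* \mc{C}_\ga$ and the evident intertwining $E_\mu \Pi_\ga = \Pi_\mu(E_\mu \otimes I_d)$ of GNS row isometries --- so that $V_\mu^* = V_\ga^* = V_\la^*$ on $\mr{int}(\mu,\la)$, and co-invariance follows as before; I would nonetheless present the first, self-contained argument.
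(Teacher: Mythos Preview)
Your proof is correct and follows essentially the same route as the paper's: both arguments derive the pointwise identity $G(Z)-G(0)I_n=\sum_k Z_k\,((V_\mu)_k^*G)(Z)$ from the kernel relation in Section~\ref{CTofGNS}, and then invoke uniqueness of the factorization $g(Z)=\sum_k Z_k g_k(Z)$ for $g(0)=0$ (the paper packages this as the lemma ``$Z\mathbf{h}(Z)\equiv 0\Rightarrow \mathbf{h}\equiv 0$'') to conclude that $(V_\mu)_k^*$ is intrinsic on the intersection. Your explicit coefficient formula $((V_\mu)_k^*G)(Z)=\sum_\beta Z^\beta \hat G_{k\beta}$ is a slight elaboration of the same step, and your alternative via $\ga=\mu+\la$ and Lemma~\ref{embeddings} is a nice aside but not needed.
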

\begin{lemma}
Let $\mbf{h} \in \mr{Hol} (\B ^d _\N ) \otimes \C ^d$. Then $Z \mbf{h} (Z) = 0_n$ for all $Z \in \B ^d _n$ implies that $\mbf{h} \equiv 0$.
\end{lemma}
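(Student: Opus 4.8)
The plan is to pass to Taylor--MacLaurin series and compare coefficients. First I would write $\mbf{h} = (h_1, \dots, h_d)$ with each $h_j \in \mr{Hol}(\B^d_\N)$, and expand $h_j(Z) = \sum_{\alpha \in \F^d} Z^\alpha c^{(j)}_\alpha$ with $c^{(j)}_\alpha \in \C$; as recalled in Section \ref{back}, this series converges absolutely on $\B^d_\N$ and uniformly on $r\B^d_\N$ for each $0<r<1$. Interpreting $Z\mbf{h}(Z) = \sum_{j=1}^d Z_j h_j(Z)$ and using $Z_j Z^\alpha = Z^{j\alpha}$, one gets
\[ Z\mbf{h}(Z) \ = \ \sum_{j=1}^d \sum_{\alpha \in \F^d} c^{(j)}_\alpha \, Z^{j\alpha} \ = \ \sum_{\beta \neq \emptyset} c^{(j_\beta)}_{\gamma_\beta}\, Z^\beta, \]
where each nonempty word $\beta$ is written uniquely as $\beta = j_\beta\gamma_\beta$ with $j_\beta \in \{1,\dots,d\}$ its first letter and $\gamma_\beta \in \F^d$ the remaining word, and the assignment $(j,\alpha) \mapsto j\alpha$ is a bijection of $\{1,\dots,d\} \times \F^d$ onto the set of nonempty words. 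The rearrangement is legitimate by absolute convergence on $r\B^d_\N$, and the resulting series is exactly the Taylor--MacLaurin expansion of the free holomorphic function $F(Z) := Z\mbf{h}(Z)$.

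Next I would invoke uniqueness of Taylor--MacLaurin coefficients: a free holomorphic function on $\B^d_\N$ vanishing identically has all coefficients equal to $0$. Granting this, the hypothesis $F \equiv 0$ forces $c^{(j_\beta)}_{\gamma_\beta} = 0$ for every nonempty $\beta$, which, via the bijection above, is precisely the statement that $c^{(j)}_\alpha = 0$ for all $j \in \{1,\dots,d\}$ and all $\alpha \in \F^d$. Hence each $h_j \equiv 0$, so $\mbf{h} \equiv 0$.

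The only non-formal point — and the place I expect the (small amount of) work to go — is the coefficient-uniqueness statement used above. It is part of the standard NC Taylor series theory (\cite[Chapter 7]{KVV}), but it can also be checked directly by evaluating on truncated left-shift nilpotents: fixing $\delta \in \F^d$ of length $L$, let $V := \{\alpha \in \F^d : |\alpha| \le L\}$ and on $\C^V \cong \C^{|V|}$, with orthonormal basis $\{e_\alpha\}_{\alpha \in V}$, define the nilpotent $d$-tuple $Z^{(0)} = (Z^{(0)}_1,\dots,Z^{(0)}_d)$ by $Z^{(0)}_k e_\alpha := e_{k\alpha}$ when $|k\alpha| \le L$ and $Z^{(0)}_k e_\alpha := 0$ otherwise. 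Then $(Z^{(0)})^\beta e_\emptyset = e_\beta$ for $|\beta|\le L$ while $(Z^{(0)})^\beta = 0$ for $|\beta| > L$, so for all sufficiently small $\eps > 0$ the tuple $\eps Z^{(0)}$ lies in $\B^d_{|V|}$ and $\ip{e_\delta}{F(\eps Z^{(0)})e_\emptyset} = \eps^{L}$ times the $Z^\delta$-coefficient of $F$. Since $F \equiv 0$, this coefficient vanishes; as $\delta$ was arbitrary, every coefficient of $F$ vanishes, which closes the argument.
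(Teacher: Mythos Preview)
Your proof is correct and follows essentially the same approach as the paper: pass to Taylor--MacLaurin series and use that prepending a letter to a word is injective. The paper phrases this via the homogeneous decomposition and the isometry of the left free shift $L$ on $F^2_d$, while you work directly coefficient-by-coefficient via the bijection $(j,\alpha)\mapsto j\alpha$ and supply a self-contained nilpotent-evaluation argument for coefficient uniqueness; the underlying mechanism is identical.
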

\begin{proof}
This follows from basic NC analytic function theory. Let $g(Z) = Z \mbf{h} (Z) \in \mr{Hol} (\B ^d _\N )$, so that $g \equiv 0$. Any $g \in \mr{Hol} (\B ^d _n )$ has the Taylor-Taylor series expansion about $0_n$:
$$ g(Z)  = \sum _{k=0} ^\infty \frac{1}{k!} (\partial _{Z} ^k g) (0_n), $$ where 
$$ (\partial _{Z} g) (W) := \left. \frac{d}{dt} g(W + t Z ) \right| _{t=0}, $$ is the G\^{a}teaux derivative of $g$ at $W$ in the direction of $Z$, and the $\partial _Z ^k$ are the higher order G\^{a}teaux derivatives. This is a homogeneous polynomial decomposition, setting 
$$ g ^{(k)} (Z) := (\partial _Z ^k g ) (0_n), $$ each $g ^{(k)} (Z)$ is a homogeneous free polynomial of degree $k$. It follows that if 
$$ \mbf{h} = \bpm h_1 \\ \vdots \\ h_d \epm, $$ and each $h_j (Z)$ is the sum of homogeneous polynomials $h_j  ^{(k)} (Z)$, then, 
$$ g^{(k)} (Z) = Z_1 h_1 ^{(k-1)} (Z) + \cdots + Z_d h_d ^{(k-1)} (Z) ; \quad \quad k \geq 1. $$
Since $g$ vanishes identically, so do all of the $g^{(k)} (Z) = (\partial ^k _Z g ) (0_n ) $, for $k \geq 0$. It further follows that each of the $h_j ^{(k)}$ vanish identically. Indeed, one easy way to see this is that each $h_j ^{(k)}$ is a homogeneous free polynomial in the Fock Space $F^2 _d$, and 
$$ g^{(k)} (Z) = (L \mbf{h} ^{(k)} ) (Z); \quad \quad \mbf{h} ^{(k)} (Z) := \bpm h^{(k)} _1 \\ \vdots \\ h^{(k)} _d \epm. $$ It follows that each $\mbf{h} ^{(k)}$ is in the kernel of the left free shift. Since the left free shift is an isometry, each $h ^{(k)} _j \equiv 0$ vanishes identically for $1\leq j \leq d$.
\end{proof}
\begin{proof}{ (of Theorem \ref{NCintersect})}
If $f \in \scr{H} ^+ (H _\mu ) \bigcap \scr{H} ^+ (H _\la )$ then observe that
\ba Z (V _\mu ^* f ) (Z) & = & (V _\mu K_Z ^\mu Z^* ) f \nn \\
& = & (K_Z ^\mu - K_{0_n} ^\mu ) ^* f = f(Z) - f(0_n ) \nn \\
& = & Z (V_\la ^* f ) (Z). \nn \ea By the previous lemma it follows that
$$ (V_{\mu , k} ^* f ) (Z) = (V_{\la, k} ^* f ) (Z); \quad \quad 1\leq k \leq d, $$ agree
so that $V_{\la, k} ^* f = V _{\mu , k } ^* f \in \scr{H} ^+ (H _\mu ) \cap \scr{H} ^+ (H _\la )$ for each $1 \leq k \leq d$, and the intersection space is both $V_\mu$ and $V_\la-$co-invariant.
\end{proof}

\begin{thm}\label{redintersect}
If $\mathcal M$ is a closed subspace of $\scr{H}^+(H_\mu)$ which is reducing for $V_\mu$, then there exists an NC measure $\gamma\leq \mu$ such that 
\[
\mathcal M = \scr H^+(H_\gamma).
\]
\end{thm}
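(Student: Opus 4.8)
The plan is to exploit the structure of reducing subspaces of row isometries together with the correspondence, via the Free Cauchy Transform, between the Herglotz space $\scr{H}^+(H_\mu)$ and the GNS space $F^2_d(\mu)$. Recall from Section \ref{CTofGNS} that $\mc{C}_\mu : F^2_d(\mu) \to \scr{H}^+(H_\mu)$ is an onto isometry intertwining the GNS row isometry $\Pi_\mu$ with $V_\mu$, i.e. $V_\mu = \mc{C}_\mu \Pi_\mu \mc{C}_\mu^*$. So a closed subspace $\mc{M} \subseteq \scr{H}^+(H_\mu)$ is reducing for $V_\mu$ if and only if $\wt{\mc{M}} := \mc{C}_\mu^* \mc{M} \subseteq F^2_d(\mu)$ is reducing for $\Pi_\mu$. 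It therefore suffices to produce an NC measure $\gamma \leq \mu$ with $F^2_d(\gamma) \simeq \wt{\mc{M}}$ in a way compatible with the Cauchy transforms; then Theorem \ref{NCsum} (or rather its proof) and Lemma \ref{embeddings} will translate this back to $\mc{M} = \scr{H}^+(H_\gamma)$.

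First I would observe that, since $\Pi_\mu = \pi_\mu(L)$ is a row isometry and $\wt{\mc{M}}$ reduces it, the restriction $\Pi_\mu|_{\wt{\mc{M}}}$ is again a row isometry on $\wt{\mc{M}}$, and $\pi_\mu$ restricts to a $*$-representation of $\mc{E}_d = C^*(I,L)$ on $\wt{\mc{M}}$. Let $P$ denote the orthogonal projection of $F^2_d(\mu)$ onto $\wt{\mc{M}}$; because $\wt{\mc{M}}$ is reducing, $P$ commutes with $\pi_\mu(\mc{E}_d)$, and in particular with every $\pi_\mu(L^\alpha)$ and $\pi_\mu(L^\alpha)^*$. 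Now define the candidate functional on the Free Disk System by
\[
\gamma(a) := \ip{P(I + N_\mu)}{\pi_\mu(a)\,(I+N_\mu)}_\mu = \ip{P(I+N_\mu)}{\pi_\mu(a)\,P(I+N_\mu)}_\mu, \qquad a \in \A + \A^*,
\]
where $I + N_\mu \in F^2_d(\mu)$ is the image of the unit. The second equality uses that $P$ commutes with $\pi_\mu(a)$ and $P^2 = P$. This $\gamma$ is manifestly a positive linear functional: for $a \in \A$, $\gamma(a^*a) = \|\pi_\mu(a)P(I+N_\mu)\|_\mu^2 \geq 0$. Moreover $\mu(a^*a) - \gamma(a^*a) = \|\pi_\mu(a)(I-P)(I+N_\mu)\|^2 \geq 0$ (again using that $I - P$ commutes with $\pi_\mu(a)$), so $\gamma \leq \mu$ as positive NC measures. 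It remains to identify $F^2_d(\gamma)$ with $\wt{\mc{M}}$: the GNS construction for $\gamma$ gives a space spanned by $\{\pi_\mu(a)P(I+N_\mu) : a \in \A\}$, i.e. by $P\pi_\mu(\A)(I+N_\mu)$, which is exactly $P \cdot \overline{\pi_\mu(\A)(I+N_\mu)} = P F^2_d(\mu) = \wt{\mc{M}}$ (the span of $\pi_\mu(\A)(I+N_\mu)$ is dense in $F^2_d(\mu)$ by construction of the GNS space), provided the closure and the projection can be interchanged — which holds because $P$ is bounded. One checks the GNS inner product matches: $\ip{a_1 + N_\gamma}{a_2 + N_\gamma}_\gamma = \gamma(a_1^*a_2) = \ip{P\pi_\mu(a_1)(I+N_\mu)}{\pi_\mu(a_2)(I+N_\mu)}$, so the natural map $a + N_\gamma \mapsto P\pi_\mu(a)(I+N_\mu)$ is a well-defined onto isometry $U: F^2_d(\gamma) \to \wt{\mc{M}}$ intertwining $\Pi_\gamma$ with $\Pi_\mu|_{\wt{\mc{M}}}$.

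Finally I would transport this back through the Cauchy transforms. By Lemma \ref{embeddings}, since $\gamma \leq \mu$, the embedding $\mr{e}_\gamma : \scr{H}^+(H_\gamma) \hookrightarrow \scr{H}^+(H_\mu)$ is bounded and $E_\gamma = \mc{C}_\mu^* \mr{e}_\gamma^* \mc{C}_\gamma$; unwinding, the composition $\mc{C}_\mu \circ U \circ \mc{C}_\gamma^* : \scr{H}^+(H_\gamma) \to \scr{H}^+(H_\mu)$ is an isometry whose range is $\mc{C}_\mu \wt{\mc{M}} = \mc{M}$, and it agrees with $\mr{e}_\gamma$ (both send the coefficient-evaluation/kernel vectors of $\scr{H}^+(H_\gamma)$ into the corresponding vectors of $\scr{H}^+(H_\mu)$ — this is where one uses the explicit formula \eqref{FreeCT} for $\mc{C}_\gamma$ and the analogous one for $\mc{C}_\mu$, checking agreement on the dense set of free polynomials). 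Hence $\scr{H}^+(H_\gamma)$ is isometrically contained in $\scr{H}^+(H_\mu)$ with range exactly $\mc{M}$, so $\mc{M} = \scr{H}^+(H_\gamma)$ as claimed. The main obstacle I anticipate is the bookkeeping in this last step: verifying that the abstractly-constructed isometry $U$ on GNS spaces really does intertwine correctly with the concrete Cauchy transform maps, so that $\mc{C}_\mu \wt{\mc{M}}$ equals $\scr{H}^+(H_\gamma)$ isometrically (not merely isomorphically) rather than just as a reducing subspace of $\scr{H}^+(H_\mu)$ abstractly isomorphic to it. Everything else is the standard GNS/commutant argument for compressions of a row isometry to a reducing subspace.
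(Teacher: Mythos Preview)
Your proposal is correct and follows essentially the same route as the paper: transfer to $F^2_d(\mu)$ via the Cauchy transform, let $P$ be the projection onto the reducing subspace, define $\gamma$ by compressing the GNS vector functional by $P$, and identify $\wt{\mc{M}}$ with $F^2_d(\gamma)$. Your positivity argument is in fact slightly more streamlined than the paper's (you use directly that $P \in \pi_\mu(\mc{E}_d)'$ so $\gamma$ is the restriction of a positive functional on $\mc{E}_d$, whereas the paper reduces to free polynomials and the identity $p(L)^*p(L) = u(L) + u(L)^*$), and your worry about the final identification is unfounded: the coefficient computation $\langle \Pi_\mu^\alpha(I+N_\mu), P(p(L)+N_\mu)\rangle_\mu = \gamma((L^\alpha)^*p(L))$ shows $\mc{C}_\mu \circ U = \mc{C}_\gamma$ on the nose, so $\mc{M} = \scr{H}^+(H_\gamma)$ as sets of NC functions with the same norm.
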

\begin{proof} It is easier to work in the $F^2_d (\mu )$ model, the conclusions then carry over to $\scr{H}^+(H_\mu)$ via the NC Cauchy transform.  If $\mathcal M\subset F^2_d (\mu )$ is any reducing subspace for $\Pi_\mu$, letting $P$ be the orthogonal projection on $\mathcal M$, we can define a new NC measure $\gamma$ by the formula
\[
\gamma( L^\alpha )= \ip{I+N_\mu }{P \Pi _\mu ^\alpha (I + N_\mu ) } _\mu = \langle I + N_\mu , P (L^\alpha + N _\mu)  \rangle_\mu , \quad \alpha \in \F ^d.
\]
We extend $\ga$ in the natural way to a linear functional on the free disk system by $\ga ( (L^\alpha ) ^* ) := \ga (L^\alpha ) ^*$. It remains to check that $\ga$ is a positive linear functional. By \cite[Lemma 4.6]{JM-freeAC}, any positive element in the free disk system is the norm-limit of sums of squares of free polynomials, so that it suffices to check that $\ga (p(L)^* p(L) ) \geq 0$ for any $p \in \fp$. Given any $p \in \fp$, let $u \in \fp$ be such that $p(L) ^* p(L) = u(L) + u(L) ^*$. Using that the orthgonal projection, $P$, commutes with the GNS representation $\Pi _\mu$, it is then not difficult to verify that
\ba \ga (p(L)^* p(L) ) & = & \ga (u (L) ) ^* + \ga (u (L) ) \nn \\
& = & \ip{p(L) + N_\mu}{P (p(L) + N_\mu)}_\mu \geq 0, \nn \ea so that $\ga \in (\A ^\dag ) _+$.
It is then evident $\mathcal M$ is isometrically identified with $F^2_d (\gamma )$ and that the image of $\mathcal M\subset F^2_d (\mu )$ under the Cauchy transform is equal to $\scr H^+(H_\gamma)$. In particular, $\ga \leq \mu$.
\end{proof}

\begin{prop} \label{reduceprop}
Given $\la , \mu \in (\A ^\dag )_+$, if $\scr{H} ^+ (H _\la)$ contains the constant NC functions, then $\scr{H} ^+ (H _\mu ) \bigcap \scr{H} ^+ (H _\la)$ is reducing for $V_\mu$.
\end{prop}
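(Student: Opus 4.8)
The plan is to deduce the result from Theorem~\ref{NCintersect} together with the pointwise (NC-function-level) description of the row isometry $V_\mu$. Write $\mathcal{M} := \mr{int}(\mu,\la) = \scr{H}^+(H_\mu) \cap \scr{H}^+(H_\la)$. By Theorem~\ref{NCintersect} the space $\mathcal{M}$ is already co-invariant for $V_\mu$ (in fact $V_\mu^*|_{\mathcal{M}} = V_\la^*|_{\mathcal{M}}$), so the only thing that remains is to show that $\mathcal{M}$ is $V_\mu$-invariant, i.e. $V_\mu(\mathcal{M} \otimes \C^d) \subseteq \mathcal{M}$; reducedness of $\mathcal{M}$ for $V_\mu$ is then immediate.

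First I would record how $V_\mu$ acts on functions. Pairing the identity $V_\mu^*\bigl(K^{H_\mu}\{Z,y,v\} - K^{H_\mu}\{0_n,y,v\}\bigr) = K^{H_\mu}\{Z, Z^*y, v\}$ from Section~\ref{CTofGNS} with an arbitrary $\mbf{h} \in \scr{H}^+(H_\mu) \otimes \C^d$ and using the reproducing property gives
\[
(V_\mu \mbf{h})(Z) = Z\mbf{h}(Z) + (V_\mu \mbf{h})(0_n), \qquad Z \in \B^d_n,
\]
where $\mbf{h} = (h_1,\dots,h_d)$ and $Z\mbf{h}(Z) := \sum_{k=1}^d Z_k h_k(Z)$; the analogous formula holds for $V_\la$. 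The content of this formula is that, apart from its degree-zero term, $V_\mu\mbf{h}$ is determined by $\mbf{h}$ alone, independently of which Herglotz space is ambient.

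Now take $\mbf{h} \in \mathcal{M} \otimes \C^d$. Since $\mathcal{M} \subseteq \scr{H}^+(H_\mu)$ and $\mathcal{M} \subseteq \scr{H}^+(H_\la)$, both $V_\mu\mbf{h} \in \scr{H}^+(H_\mu)$ and $V_\la\mbf{h} \in \scr{H}^+(H_\la)$ are defined, and they are built from the same $d$-tuple of NC functions $\mbf{h}$. By the displayed formula, $\mbf{c} := V_\mu\mbf{h} - V_\la\mbf{h}$ is a constant NC function (a map $Z \mapsto I_n c$ for some scalar $c$). By hypothesis $\scr{H}^+(H_\la)$ contains all constant NC functions, so $\mbf{c} \in \scr{H}^+(H_\la)$; hence $V_\mu\mbf{h} = V_\la\mbf{h} + \mbf{c} \in \scr{H}^+(H_\la)$, and since also $V_\mu\mbf{h} \in \scr{H}^+(H_\mu)$ we conclude $V_\mu\mbf{h} \in \mathcal{M}$. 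This proves the $V_\mu$-invariance of $\mathcal{M}$, and combined with the co-invariance supplied by Theorem~\ref{NCintersect} it shows $\mathcal{M}$ is reducing for $V_\mu$. There is no serious obstacle here; the only point requiring care is that $V_\mu\mbf{h} - V_\la\mbf{h}$ genuinely is a degree-zero constant NC function — this is precisely what the formula $(V_\mu\mbf{h})(Z) = Z\mbf{h}(Z) + (V_\mu\mbf{h})(0_n)$ records, and it is where the NC holomorphic structure is used — after which the hypothesis enters solely to absorb that constant back into $\scr{H}^+(H_\la)$.
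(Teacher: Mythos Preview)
Your proof is correct and follows essentially the same approach as the paper: both use Theorem~\ref{NCintersect} for co-invariance, then establish $V_\mu$-invariance by showing that $V_\mu$ and $V_\la$ applied to an intersection element differ only by a constant NC function, which the hypothesis absorbs into $\scr{H}^+(H_\la)$. The only cosmetic difference is that the paper works componentwise with each $V_{\mu,k}$ acting on a single $f \in \mathcal{M}$, whereas you phrase it via the row operator $V_\mu$ acting on $\mathcal{M} \otimes \C^d$.
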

Clearly this applies to $\la =m$ since $H^2 (\B ^d _\N ) = \scr{H} ^+ (H _m )$ contains the constant NC functions.
\begin{proof}
Theorem \ref{NCintersect} shows that this intersection space is co-invariant for $V_\mu$. Conversely, given $f \in \scr{H} ^+ (H _\mu ) \bigcap \scr{H} ^+ (H _\la )$, observe that
\ba (V _{\mu, k} f ) (Z) - (V _{\mu , k} f ) (0_n ) & = & Z_k f(Z) \nn \\
& = & (V _{\la , k} f ) (Z) - (V _{\la, k} f ) (0_n ), \nn \ea so that 
$$ (V _{\mu , k} f ) (Z) = (V _{\la ,k} f ) (Z) + c I_n, $$ where $c := (V_{\mu, k} f ) (0) - (V_{\la, k} f) (0)$ is constant. Since $\scr{H} ^+ (H_\la)$ contains all the constant NC functions, it follows that $$ V_{\mu , k} f \in \scr{H} ^+ (H _\la) \bigcap \scr{H} ^+ (H _\mu ) $$  also belongs to the intersection space.
\end{proof}

\begin{remark} \label{NC-CTfail}
The conclusion of Proposition~\ref{reduceprop} can fail without the hypothesis about the constant functions, indeed this failure occurs even in one variable---this will be the case if we take, for example, $\mu$ and $\lambda$ to be the restriction of Lebesgue measure to the upper and lower semicircle respectively.  Moreover, in this example $\mu$ and $\lambda$ are mutually singular, but it is easy to check that the intersection space $\scr{H}^+(\mu)\cap \scr{H}^+(\lambda)$ is non-trivial, so our current Cauchy transform method is in general inadequate for the problem of computing the Lebesgue decomposition for arbitrary pairs of measures on the circle. 
\end{remark}

\begin{thm} \label{ncld2thm}
Any positive NC measure $\mu \in (\A ^\dag ) _+$ has the Lebesgue Decomposition $\mu = \mu _{ac} + \mu _s$, where $ 0 \leq \mu _{ac} , \mu _s \leq \mu$ are the (positive) absolutely continuous and singular NC measures defined by
$$ \scr{H} ^+ (H _{\mu _{ac}} ) := \left( \scr{H} ^+ (H _\mu ) \bigcap H^2 (\B ^d _\N ) \right) ^{-\| \cdot \| _{H_\mu}}, $$   and
$$ \scr{H} ^+ (H _{\mu _s} ) := \scr{H} ^+ (H _\mu ) \ominus \scr{H} ^+ (H _{\mu _{ac}} ). $$
Both $\scr{H} ^+ (H _{\mu _{ac}} )$ and $\scr{H} ^+ (H _{\mu _s})$ are reducing for $V_\mu$ and
$$ \scr{H} ^+ (H _\mu ) = \scr{H} ^+ (H _{\mu _{ac}} ) \oplus \scr{H} ^+ (H _{\mu _s} ). $$
\end{thm}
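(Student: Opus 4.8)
The plan is to obtain $\mu_{ac}$ and $\mu_s$ by applying Theorem~\ref{redintersect} to a complementary pair of reducing subspaces of the row isometry $V_\mu$ on $\scr{H}^+(H_\mu)$. First I would set $\mathcal{M}_{ac} := (\mr{int}(\mu,m))^{-\|\cdot\|_{H_\mu}}$, the closure in $\scr{H}^+(H_\mu)$ of the intersection space $\mr{int}(\mu,m) = \scr{H}^+(H_\mu)\cap H^2(\B^d_\N)$, and $\mathcal{M}_s := \scr{H}^+(H_\mu)\ominus \mathcal{M}_{ac}$. Since $H^2(\B^d_\N) = \scr{H}^+(H_m)$ contains the constant NC functions, Proposition~\ref{reduceprop}, together with the co-invariance supplied by Theorem~\ref{NCintersect}, shows that $\mr{int}(\mu,m)$ is carried into itself by every $V_{\mu,k}$ and every $V_{\mu,k}^*$; because these operators are bounded, the closure $\mathcal{M}_{ac}$ is a closed subspace with the same invariance property, hence reduces $V_\mu$, and then so does its orthogonal complement $\mathcal{M}_s$.

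Theorem~\ref{redintersect} now produces NC measures $\mu_{ac},\mu_s\in(\A^\dag)_+$ with $\mu_{ac},\mu_s\leq\mu$, $\scr{H}^+(H_{\mu_{ac}}) = \mathcal{M}_{ac}$ and $\scr{H}^+(H_{\mu_s}) = \mathcal{M}_s$ as spaces of NC functions, the Herglotz norms of $\mu_{ac}$ and $\mu_s$ coinciding with the norms inherited from $\scr{H}^+(H_\mu)$ (this is part of the isometric identification in that theorem). The direct sum $\scr{H}^+(H_\mu) = \scr{H}^+(H_{\mu_{ac}})\oplus\scr{H}^+(H_{\mu_s})$, and the fact that both summands reduce $V_\mu$, are then immediate. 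To check $\mu = \mu_{ac}+\mu_s$ I would transport everything to $F^2_d(\mu)$ via the free Cauchy transform $\mc{C}_\mu$: writing $P_{ac}$ for the orthogonal projection onto $\mc{C}_\mu^*(\mathcal{M}_{ac})$ and $P_s = I - P_{ac}$ for the projection onto $\mc{C}_\mu^*(\mathcal{M}_s)$, the defining formula for $\gamma$ in Theorem~\ref{redintersect} gives $\mu_{ac}(L^\alpha)+\mu_s(L^\alpha) = \ip{I+N_\mu}{(P_{ac}+P_s)(L^\alpha+N_\mu)}_\mu = \mu(L^\alpha)$ for all $\alpha\in\F^d$, and this identity extends to the adjoints $(L^\alpha)^*$ by conjugation, hence to all of $\A+\A^*$.

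It remains to verify absolute continuity of $\mu_{ac}$ and singularity of $\mu_s$ in the sense of Definition~\ref{ncldDef}. Since $\mathcal{M}_{ac}\subseteq\scr{H}^+(H_\mu)$, one has $\scr{H}^+(H_{\mu_{ac}})\cap H^2(\B^d_\N) = \mathcal{M}_{ac}\cap H^2(\B^d_\N) = \mr{int}(\mu,m)$, which by construction is dense in $\mathcal{M}_{ac} = \scr{H}^+(H_{\mu_{ac}})$; as the relevant norms agree, $\mu_{ac}$ is AC. For singularity, any $f\in\scr{H}^+(H_{\mu_s})\cap H^2(\B^d_\N)$ lies in $\scr{H}^+(H_\mu)\cap H^2(\B^d_\N) = \mr{int}(\mu,m)\subseteq\mathcal{M}_{ac}$, while also $f\in\mathcal{M}_s = \mathcal{M}_{ac}^\perp$, so $f = 0$ and $\mr{int}(\mu_s,m) = \{0\}$.

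The delicate points I expect are the two norm/bookkeeping issues. First, one must confirm that the intrinsic reproducing-kernel norm of $\scr{H}^+(H_{\mu_{ac}})$ appearing in Definition~\ref{ncldDef} is genuinely the restriction of $\|\cdot\|_{H_\mu}$ to $\mathcal{M}_{ac}$, so that the density statement in the AC part is meaningful; this is really the content of the isometric identification in the proof of Theorem~\ref{redintersect} and should be invoked explicitly. Second, one must unwind Theorem~\ref{redintersect} together with the complementarity $P_{ac}+P_s = I$ carefully enough to obtain additivity $\mu = \mu_{ac}+\mu_s$ as functionals on the whole free disk system, not merely on the free monomials $L^\alpha$. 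Everything else follows directly from Proposition~\ref{reduceprop}, Theorem~\ref{NCintersect}, and Theorem~\ref{redintersect}.
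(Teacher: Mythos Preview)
Your proposal is correct and follows essentially the same approach as the paper, which states the result as ``an immediate consequence of Theorem~\ref{redintersect} and Proposition~\ref{reduceprop}.'' You have simply unpacked the details that the paper leaves implicit---in particular the explicit verification that $\mu = \mu_{ac}+\mu_s$ via $P_{ac}+P_s = I$, and the check that $\mu_{ac}$ is AC and $\mu_s$ is singular in the sense of Definition~\ref{ncldDef}---and correctly flagged the norm-identification coming from Theorem~\ref{redintersect} as the point requiring care.
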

The direct sum decomposition of this theorem implies, by inverse Cauchy Transform, that
$$ F^2 _d (\mu ) = F^2 _d (\mu _{ac} ) \oplus F^2 _d (\mu _s ), $$ and these orthogonal subspaces are both reducing for $\Pi _\mu$.  
\begin{proof}
This is an immediate consequence of Theorem \ref{redintersect} and Proposition \ref{reduceprop}.
\end{proof}

\begin{thm} \label{ACcone}
The set $AC (\A ^\dag ) _+$ is a positive cone.
\end{thm}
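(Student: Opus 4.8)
The plan is to verify the two defining properties of a positive cone --- closure under multiplication by positive scalars and closure under addition --- noting first that $0 \in AC (\A ^\dag ) _+$ trivially. The scalar case is immediate: for $c>0$ the Herglotz function scales, $H _{c\mu } = c H _\mu$, hence so does the Herglotz kernel, $K ^{c\mu } = c K ^\mu$, so that $\scr{H} ^+ (H _{c\mu })$ and $\scr{H} ^+ (H _\mu )$ are the same space of NC functions on $\B ^d _\N$ up to the equivalent (in fact proportional) norm. Consequently $\scr{H} ^+ (H _{c\mu }) \cap H^2 (\B ^d _\N ) = \scr{H} ^+ (H _\mu ) \cap H^2 (\B ^d _\N )$, and density of this intersection is unaffected, so $c\mu \in AC (\A ^\dag ) _+$ whenever $\mu$ is.

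The content is in showing that $\mu , \la \in AC (\A ^\dag ) _+$ implies $\ga := \mu + \la \in AC (\A ^\dag ) _+$. First I would invoke Theorem \ref{NCsum} to get $\scr{H} ^+ (H _\ga ) = \scr{H} ^+ (H _\mu ) + \scr{H} ^+ (H _\la )$ as an algebraic sum of spaces of NC functions, all sitting inside $\scr{H} ^+ (H _\ga )$. Since $\mu \leq \ga$ and $\la \leq \ga$, Lemma \ref{embeddings} (with $t=1$) shows that the embeddings $\mr{e} _\mu : \scr{H} ^+ (H _\mu ) \hookrightarrow \scr{H} ^+ (H _\ga )$ and $\mr{e} _\la : \scr{H} ^+ (H _\la ) \hookrightarrow \scr{H} ^+ (H _\ga )$ are contractive. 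Setting $\mathcal{D} := \scr{H} ^+ (H _\ga ) \cap H^2 (\B ^d _\N )$, the goal is precisely to show that $\mathcal{D}$ is dense in $\scr{H} ^+ (H _\ga )$.

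The key observation is that $\scr{H} ^+ (H _\mu ) \cap H^2 (\B ^d _\N ) \subseteq \mathcal{D}$, and likewise with $\la$: an NC function lying in both $\scr{H} ^+ (H _\mu )$ and $H^2 (\B ^d _\N )$ lies in $\scr{H} ^+ (H _\ga )$, since $\scr{H} ^+ (H _\mu ) \subseteq \scr{H} ^+ (H _\ga )$, and it still lies in $H^2 (\B ^d _\N )$. Now, given any $h \in \scr{H} ^+ (H _\mu )$, absolute continuity of $\mu$ furnishes $f_n \in \scr{H} ^+ (H _\mu ) \cap H^2 (\B ^d _\N )$ with $f_n \to h$ in $\| \cdot \| _{H_\mu }$; contractivity of $\mr{e} _\mu$ then gives $f_n \to h$ in $\| \cdot \| _{H_\ga }$ as well, and since each $f_n \in \mathcal{D}$ this shows $h \in \ov{\mathcal{D}}$, the closure being taken in $\scr{H} ^+ (H _\ga )$. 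Hence $\scr{H} ^+ (H _\mu ) \subseteq \ov{\mathcal{D}}$ and, symmetrically, $\scr{H} ^+ (H _\la ) \subseteq \ov{\mathcal{D}}$. Because $\ov{\mathcal{D}}$ is a closed subspace of $\scr{H} ^+ (H _\ga )$ containing both summands, it contains $\scr{H} ^+ (H _\mu ) + \scr{H} ^+ (H _\la ) = \scr{H} ^+ (H _\ga )$ by Theorem \ref{NCsum}, so $\mathcal{D}$ is dense and $\ga$ is absolutely continuous.

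I do not expect a genuine obstacle here; the only point requiring care is the identification of NC functions across the contractive embeddings $\mr{e} _\mu , \mr{e} _\la$ --- that norm convergence in $\scr{H} ^+ (H _\mu )$ transfers to norm convergence in $\scr{H} ^+ (H _\ga )$, which is exactly the content of the contractivity in Lemma \ref{embeddings}, and that the approximating functions, regarded as honest NC functions on $\B ^d _\N$, simultaneously belong to $H^2 (\B ^d _\N )$ and to $\scr{H} ^+ (H _\ga )$. An alternative route, should one prefer the Fock-space picture, is to transport everything through the inverse Cauchy transform and argue with the reducing subspaces $F^2 _d (\mu ), F^2 _d (\la ) \subseteq F^2 _d (\ga )$ supplied by Theorem \ref{NCsum} together with Theorem \ref{redintersect}, but the RKHS argument above seems the most direct.
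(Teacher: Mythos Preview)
Your proposal is correct and follows essentially the same route as the paper: decompose $\scr{H} ^+ (H _\ga ) = \scr{H} ^+ (H _\mu ) + \scr{H} ^+ (H _\la )$ via Theorem \ref{NCsum}, use the contractive embeddings $\mr{e} _\mu , \mr{e} _\la$ from Lemma \ref{embeddings}, and approximate within each summand by elements of the intersection with $H^2 (\B ^d _\N )$. The only organizational difference is that the paper writes an arbitrary $h \in \scr{H} ^+ (H _\ga )$ as $h = f + g$ and approximates both pieces simultaneously (checking pointwise convergence to identify the limit), whereas you show each summand space separately lies in $\ov{\mathcal{D}}$ and then take their sum --- a slightly cleaner packaging of the same idea.
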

\begin{proof}
Suppose that $\la , \mu \in AC (\A ^\dag ) _+$ and let $\ga = \la +\mu$. Then by Theorem \ref{NCintersect}, 
$$ \scr{H} ^+ (H _\ga ) = \scr{H} ^+ (H _\mu ) + \scr{H} ^+ (H _\la ), $$ and both $\scr{H} ^+ (H_\la ), \scr{H} ^+ (H _\mu )$ are contractively contained in $\scr{H} ^+ (H _\ga )$ by Theorem \ref{NCdom}, so that any $h \in \scr{H} ^+ (H _\ga )$ can be decomposed as $h = f +g$ for $f \in \scr{H} ^+ (H _\mu )$ and $g \in \scr{H} ^+ (H _\la )$. Since both $\la, \mu$ are AC, there is a $H_\mu -$norm convergent sequence $(f_n ) \subset \scr{H} ^+ (H _\mu ) \bigcap H^2 (\B ^d _\N )$ so that $f_n \rightarrow f$ in $\scr{H} ^+ (H_\mu )$. Similarly  there is a sequence $(g_n) \subset \scr{H} ^+ (H _\la ) \bigcap H^2 (\B ^d _\N )$ so that $g_n \rightarrow g$ in $\scr{H} ^+ (H _\la )$. Let $\mr{e} _\mu , \mr{e} _\la$ be the contractive embeddings of $\scr{H} ^+ (H _\mu ), \scr{H} ^+ (H _\la )$ into $\scr{H} ^+ (H _\ga )$. The sequence,
$$ h_n := \mr{e}_\mu f_n + \mr{e}_\la g_n \in \scr{H} ^+ (H _\ga ) \bigcap H^2 (\B ^d _\N ), $$
is then Cauchy in $\scr{H} ^+ (H _\ga )$,
\ba \| h_n - h_m \| _{H_\ga} & \leq & \| \mr{e} _\mu (f_n - f_m ) \| _{H_\ga} + \| \mr{e} _\la (g_n -g_m ) \| _{H_\ga} \nn \\
& \leq & \| f_n - f_m \| _{H_\mu} + \| g_n - g_m \| _{H _\la} \rightarrow 0. \nn \ea
For any $Z \in \B ^d _\N$, 
$$ h_n (Z) = f_n (Z) + g_n (Z) \rightarrow f(Z) + g(Z) = h(Z), $$ and it follows that $h$ is the limit of the Cauchy sequence $(h_n)$. This proves that
$$ H^2 (\B ^d _\N ) \bigcap \scr{H} ^+ (H _\ga ), $$ is dense in $\scr{H} ^+ (H_\ga )$, and $\ga = \la +\mu$ is then an absolutely continuous NC measure.
\end{proof}

\begin{lemma} \label{singhered}
The set of singular NC measures is hereditary: If $\mu \in \mr{Sing} (\A ^\dag ) _+$, $\la$ is any positive NC measure and $\mu \geq \la$, then $\la$ is also singular.
\end{lemma}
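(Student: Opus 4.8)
The plan is to read off the conclusion directly from the reproducing-kernel description of singularity in Definition~\ref{ncldDef}, combined with the NC Aronszajn domination theorem (Theorem~\ref{NCdom}) and the comparison of embeddings in Lemma~\ref{embeddings}. The point is that singularity of a positive NC measure $\nu$ is the assertion $\scr{H}^+(H_\nu)\cap H^2(\B^d_\N)=\{0\}$, and this property is visibly \emph{monotone downwards}: shrinking the measure shrinks its Herglotz space \emph{as a set of NC functions on $\B^d_\N$}, hence shrinks its intersection with the Free Hardy space.

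Concretely, I would argue as follows. First, since $0\leq\la\leq\mu$, apply Lemma~\ref{embeddings} with the constant $t=1$: domination of NC measures, $\la\leq 1^2\,\mu$, is equivalent to domination of the associated NC Herglotz kernels, $K^\la\leq K^\mu$. Second, apply Theorem~\ref{NCdom} to the CPNC kernels $K_1:=K^\la$ and $K_2:=K^\mu$ (note $K^\la\leq K^\mu\leq 1^2\,K^\mu$); this yields the containment of NC-RKHS $\scr{H}^+(H_\la)=\H_{nc}(K^\la)\subseteq\H_{nc}(K^\mu)=\scr{H}^+(H_\mu)$, where the embedding $\mr{e}_\la$ is the set-theoretic inclusion of spaces of NC functions on $\B^d_\N$. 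In particular, every NC function lying in $\scr{H}^+(H_\la)$ also lies in $\scr{H}^+(H_\mu)$. Third, intersect both sides with the Free Hardy space $H^2(\B^d_\N)$: since $\scr{H}^+(H_\la)\subseteq\scr{H}^+(H_\mu)$ as function spaces,
$$ \mr{int}(\la,m)=\scr{H}^+(H_\la)\cap H^2(\B^d_\N)\ \subseteq\ \scr{H}^+(H_\mu)\cap H^2(\B^d_\N)=\mr{int}(\mu,m). $$
By hypothesis $\mu$ is singular, so the right-hand side is $\{0\}$; hence $\mr{int}(\la,m)=\{0\}$ and $\la\in\mr{Sing}(\A^\dag)_+$, as desired.

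I do not expect a serious obstacle here; in contrast to Theorem~\ref{ACcone}, no approximation argument is needed. The only point demanding care is the assertion, in the second step, that the containment of Herglotz spaces provided by the NC Aronszajn theorem is a genuine inclusion of spaces of NC functions on $\B^d_\N$ (so that intersecting with $H^2(\B^d_\N)$ is monotone), and not merely a bounded embedding after some identification; but that is precisely what Theorem~\ref{NCdom} records, and likewise the equivalence of the two flavors of domination is exactly Lemma~\ref{embeddings}, so both ingredients are in hand.
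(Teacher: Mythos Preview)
Your proof is correct and uses essentially the same mechanism as the paper: domination $\la\leq\mu$ gives the set-theoretic containment $\scr{H}^+(H_\la)\subseteq\scr{H}^+(H_\mu)$ of Herglotz spaces, and hence of their intersections with $H^2(\B^d_\N)$. The paper phrases this contrapositively via the Lebesgue decomposition of $\la$ (arguing that $\la_{ac}\neq 0$ would force $\scr{H}^+(H_\mu)\cap H^2(\B^d_\N)\neq\{0\}$), whereas your direct argument avoids invoking Theorem~\ref{ncld2thm} and is marginally cleaner.
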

\begin{proof}
    If $\la$ is not singular then $\mu \geq \la \geq \la _{ac} \neq 0$. It follows that 
$$ \{ 0 \} \subsetneqq \scr{H} ^+ (H _{\la _{ac} } ) \bigcap H^2 (\B ^d _\N ) \subset \scr{H} ^+ (H _\mu ), $$ so that the space of free Cauchy Transforms of $\mu$ has non-trivial intersection with the Free Hardy Space.  This contradicts the assumption that $\mu$ is singular.
\end{proof}

\section{AC measures and closable $L-$Toeplitz forms}
\label{ACformsect}

Any positive NC measure $\mu \in (\A ^\dag ) _+$ can be identified with a densely-defined, positive semi-definite quadratic form, $q _\mu$ on the Fock space. In \cite{JM-NCFatou} we applied B. Simon's Lebesgue decomposition theory for quadratic forms to $q_\mu$ \cite[Section 2]{Simon1} to construct a non-commutative Lebesgue decomposition of any NC measure into absolutely continuous and singular parts. In this section we prove that this `Lebesgue form decomposition' of any $\mu \in (\A ^\dag ) _+$ and the Lebesgue decomposition developed in the previous section using (NC) reproducing kernel techniques are the same. A good reference for the theory of potentially unbounded quadratic forms on Hilbert space is \cite{Kato}, see also \cite[Section VIII.6]{RnS1}. We refer to \cite[Section 4]{JM-NCFatou} for more detail on the quadratic forms arising from NC measures.

\begin{defn} \label{LToeform}
A densely-defined positive semi-definite quadratic (sesquilinear) form, $q$, with dense domain $\dom{q} := \A \subseteq F^2 _d$ is called 
an \emph{$L-$Toeplitz form} if there is a (positive) NC measure, $\mu \in (\A ^\dag ) _+$, so that 
$$ q ( a_1  , a_2  ) = \mu \left( (a_1 (L) ) ^* a_2 (L) \right) =: q_\mu (a_1 , a_2); \quad a_1, a_2 \in \A.$$  

Given any positive semi-definite quadratic form, $q$, with dense form domain $\dom{q} = \A \subset F^2 _d$, we define the (generally non-positive) linear functional, $\hat{q} : \A + \A ^* \rightarrow \C$, by 
$$ \hat{q} (a_1 +a_2 ^*) := q (1 , a_1 ) + q (a_2 , 1 ). $$ 
\end{defn}

Recall, that we defined closed positive semi-definite quadratic forms in Subsection \ref{classmeas}, and that a positive semi-definite quadratic form, $q$, with dense domain in $\H$ is closed if and only if
$$ q (h ,g ) = q_A (h, g) := \ip{ \sqrt{A} h }{\sqrt{A} g} _\H; \quad \quad g,h \in \dom{q} =\dom{\sqrt{A}}, $$
for some closed, positive semi-definite operator $A$.  A positive quadratic form, $q$, is \emph{closable} if it has a closed extension. If $q$ is closable, then it has a minimal closed extension, $\ov{q}$, with $\dom{\ov{q}} \subseteq \H$ equal to the set of all $h \in \H$ so that there is a sequence $h_n \in \dom{q}$, such that $h_n \rightarrow h$ and $(h_n)$ is Cauchy in the norm of $\H (q+1)$. A dense set $\mc{D} \subseteq \dom{q}$ is called a form core for a closed form $q$ if $\mc{D}$ is a dense linear subspace in $\H (q +1 )$. If $q$ is closable with closure (minimal closed extension) $\ov{q}$, then $\dom{q}$ is a form core for $\ov{q}$ \cite[Chapter VI, Theorem 1.21]{Kato}. If $q = q_A$ is a closed, positive semi-definite quadratic form, then $\mc{D}$ is a form core for $q$ if and only if $\mc{D}$ is a core for $\sqrt{A}$. In particular, $\dom{A}$ is a form core for $q$.

\begin{defn} \label{LToepdef}
A closed, positive semi-definite operator $T$ with domain $\dom{T} \subseteq F^2 _d$ will be called $L-$Toeplitz if:
\bn 
\item $\dom{\sqrt{T}}$ is $L-$invariant and $\fp \subseteq \dom{\sqrt{T}}$ is a core for $\sqrt{T}$,
\item The associated quadratic form 
$$ q _{T} ( a_1, a_2 ) := \ip{\sqrt{T} a_1 (L) 1 }{\sqrt{T} a_2 (L) 1} _{F^2 _d}; \quad \quad a_1, a_2 \in \A $$ is $L-$Toeplitz.
\en
\end{defn}
If $T$ is a bounded $L-$Toeplitz operator, then
$$ L_k ^* T L_j = \delta _{k,j} T, $$ so that $T$ is $L-$Toeplitz in the sense of Popescu, see \cite[Section 1.1]{Pop-entropy}. In \cite[Section 2]{Simon1}, B. Simon proved that any densely-defined and positive semi-definite quadratic form, $q$, acting in a Hilbert space $\H$, has a unique Lebesgue Decomposition: $$ q = q_{ac} + q_s, $$ 
where $q_{ac}$ is the maximal closable form dominated by $q$, and $q_s = q - q_{ac}$. It follows that any $\mu \in (\A ^\dag ) _+$  has the \emph{Lebesgue Form Decomposition}: 
\be \mu =  \hat{q} _{ac} + \hat{q} _s, \label{LDform} \ee where $\hat{q} _{ac}, \hat{q} _s $ are (a priori not necessarily positive) linear functionals on the free disk system, see Definition \ref{LToeform}. By \cite[Equation (5.2)]{JM-NCFatou}, the NC measure $\hat{q} _{ac} \in (\A ^\dag ) _+$ is given by the formula:
\be \hat{q} _{ac} (L^\alpha ) = \ip{(I + N_{\mu +m})}{(I -Q) \Pi _{\mu +m} ^\alpha (I + N_{\mu +m})}_{\mu +m}, \label{formACpart} \ee where 
$Q$ is the orthgonal projection onto the kernel of the contractive embedding $E : F^2 _d (\mu +m ) \hookrightarrow F^2 _d$. In \cite[Theorem 5.9]{JM-NCFatou}, we proved that $ \hat{q} _{ac}$ and $ \hat{q} _s$ are positive NC measures, so that this yields a `quadratic form' Lebesgue decomposition of $\mu$ and an altenative definition of `absolutely continuous' and `singular' positive NC measures. (The next theorem shows that these potentially different decompositions and definitions are the same.) 

\begin{thm} \label{ACismaxclosed}
An NC measure $\mu \in (\A ^\dag ) _+$ is absolutely continuous if and only if $q_\mu \geq 0$ is a closable quadratic form.  If $\mu$ is absolutely continuous and $q =q_T$ is the closure of $q_{\mu}$, then the positive semi-definite operator $T$ is $L-$Toeplitz.
\end{thm}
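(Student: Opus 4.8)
The plan is to present $q_\mu$ as the form $h\mapsto\|G_0 h\|_\mu^2$ of a single densely-defined operator $G_0$ from the Fock space $F^2_d$ into the GNS space $F^2_d(\mu)$, and then to invoke the elementary fact that $G_0$ is closable if and only if $G_0^*$ is densely defined, after identifying $\dom{G_0^*}$ with a unitary copy of the intersection space $\mr{int}(\mu,m)$. First I would reduce to the polynomial core: since the free polynomials $\fp$ are dense in $\A$ in the graph norm $\|\cdot\|_{q_\mu+1}$ (if $p_n(L)\to a(L)$ in operator norm then $q_\mu(p_n-a,p_n-a)+\|(p_n-a)(L)1\|^2\leq(\|\mu\|+1)\|(p_n-a)(L)\|^2\to 0$), the form $q_\mu$ with domain $\A$ is closable, with the same closure, exactly when its restriction to $\fp$ is. So I would define $G_0:F^2_d\supseteq\fp\cdot 1\to F^2_d(\mu)$ by $G_0(p(L)1):=p(L)+N_\mu$; this is well defined since the vector $p(L)1=\sum_\alpha\hat p_\alpha e_\alpha$ determines $p$, its domain is dense in $F^2_d$, and $q_\mu(p_1,p_2)=\ip{G_0(p_1(L)1)}{G_0(p_2(L)1)}_\mu$. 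Hence $q_\mu$ closable $\iff$ $G_0$ closable $\iff$ $\dom{G_0^*}$ is dense in $F^2_d(\mu)$.

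The heart of the argument is the computation of $\dom{G_0^*}$. Here I would use that the Free Cauchy transform $\mc{C}_\mu:F^2_d(\mu)\to\scr{H}^+(H_\mu)$ is unitary and that $\mc{C}_\mu(L^\alpha+N_\mu)=K^{H_\mu}_\alpha$, the coefficient-evaluation vector of $\scr{H}^+(H_\mu)$ --- both sides have inner product $\mu((L^\alpha)^*L^\beta)$ against the dense family $\{\mc{C}_\mu(L^\beta+N_\mu)\}_{\beta\in\F^d}$. Pairing $y\in F^2_d(\mu)$ against $G_0(p(L)1)$ gives $\ip{y}{G_0(p(L)1)}_\mu=\sum_\alpha\hat p_\alpha\,\overline{\ip{L^\alpha+N_\mu}{y}_\mu}=\sum_\alpha\hat p_\alpha\,\overline{(\widehat{\mc{C}_\mu y})_\alpha}$, so $y\in\dom{G_0^*}$ precisely when $\big((\widehat{\mc{C}_\mu y})_\alpha\big)_{\alpha\in\F^d}$ is square-summable, i.e. when the NC function $\mc{C}_\mu y\in\scr{H}^+(H_\mu)$ also lies in $H^2(\B^d_\N)$; and in that case $G_0^*y$ is the vector of $F^2_d=H^2(\B^d_\N)$ with those same Taylor coefficients, i.e. $G_0^*y=\mc{C}_\mu y$ under the identification $F^2_d=H^2(\B^d_\N)$. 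Thus $\dom{G_0^*}=\mc{C}_\mu^*\big(\scr{H}^+(H_\mu)\cap H^2(\B^d_\N)\big)=\mc{C}_\mu^*\,\mr{int}(\mu,m)$, and since $\mc{C}_\mu$ is unitary, $\dom{G_0^*}$ is dense if and only if $\mr{int}(\mu,m)$ is dense in $\scr{H}^+(H_\mu)$ --- i.e. if and only if $\mu$ is absolutely continuous in the sense of Definition \ref{ncldDef}. This settles the equivalence in both directions simultaneously.

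For the second assertion, assume $\mu$ is AC, so $G_0$ has a closure $\overline{G_0}$. The graph norm of $\overline{G_0}$ is exactly $\|\cdot\|_{q_\mu+1}$, so Kato's representation theorem (\cite[Chapter VI, Theorems 2.21, 2.23]{Kato}) identifies the closure $\overline{q_\mu}=q_T$ with $T=\overline{G_0}^*\overline{G_0}\geq 0$, $\sqrt T=|\overline{G_0}|$, and $\dom{\sqrt T}=\dom{\overline{G_0}}$. I would then check the clauses of Definition \ref{LToepdef}: (i) $\fp\cdot 1$ is a core for $\overline{G_0}$ by construction, hence a core for $\sqrt T$ because the graph norms of $\overline{G_0}$ and $\sqrt T$ coincide ($\|\overline{G_0}h\|_\mu=\||\overline{G_0}|h\|$ via the polar decomposition); (ii) the core identity $G_0(L_kp(L)1)=(\Pi_\mu)_k\,G_0(p(L)1)$ extends, by boundedness of $L_k$ and $(\Pi_\mu)_k$, to $\overline{G_0}\,L_k=(\Pi_\mu)_k\,\overline{G_0}$ on $\dom{\overline{G_0}}$, and since $\overline{G_0}$ is closed this forces $\dom{\sqrt T}=\dom{\overline{G_0}}$ to be $L$-invariant; (iii) for $a_1,a_2\in\A$ one has $a_j(L)1\in\dom{\overline{G_0}}$ with $\overline{G_0}(a_j(L)1)=a_j(L)+N_\mu$ (approximate $a_j$ by polynomials in operator norm), so the associated form equals $\ip{\sqrt T a_1(L)1}{\sqrt T a_2(L)1}=\ip{\overline{G_0}a_1(L)1}{\overline{G_0}a_2(L)1}_\mu=\mu(a_1(L)^*a_2(L))=q_\mu(a_1,a_2)$, which is $L$-Toeplitz, witnessed by $\mu$ itself. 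Hence $T$ is an $L$-Toeplitz operator in the sense of Definition \ref{LToepdef}.

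I expect the only genuinely delicate point to be the adjoint computation --- in particular, pinning down $\mc{C}_\mu(L^\alpha+N_\mu)=K^{H_\mu}_\alpha$ and the resulting statement that $G_0^*y$ ``is'' $\mc{C}_\mu y$ read inside $H^2(\B^d_\N)$; the polynomial-core reduction and the final bookkeeping (graph norm $=$ form norm, boundedness of the shift operators, passage to closures) are routine. It is worth noting that this reproducing-kernel route is complementary to, and should be checked for consistency with, the quadratic-form Lebesgue decomposition of \cite[Theorem 5.9]{JM-NCFatou}: closability of $q_\mu$ must there correspond to the vanishing of the singular form-part, so that the present theorem also shows the two \emph{a priori} distinct notions of absolute continuity --- ``$\mr{int}(\mu,m)$ dense'' and ``$q_\mu$ closable'' --- and hence the two Lebesgue decompositions of $\mu$ agree, as promised in the introduction.
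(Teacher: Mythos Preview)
Your proof is correct and follows a genuinely different, more direct route than the paper's. The paper first invokes \cite[Corollary 5.6]{JM-NCFatou} to obtain that $q_\mu$ is closable if and only if $\mu+m$ is absolutely continuous, and then spends the bulk of the argument showing $\mu+m$ is AC $\iff$ $\mu$ is AC: one direction uses that $AC(\A^\dag)_+$ is a cone (Theorem~\ref{ACcone}), and for the converse the paper assumes $q_\mu$ closable with closure $q_T$, takes $\check y\in\dom{T}$, approximates by polynomials, and computes that the Cauchy transform of the corresponding $y\in F^2_d(\mu)$ equals $(T\check y)(Z)\in H^2(\B^d_\N)$, whence $\mr{int}(\mu,m)$ is dense. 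For the $L$-Toeplitz structure of $T$ the paper again defers to \cite[Theorem 5.8]{JM-NCFatou}.

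Your argument bypasses the $\mu+m$ detour entirely: by realizing $q_\mu$ as the form of the single operator $G_0$ and identifying $\dom{G_0^*}=\mc{C}_\mu^*\,\mr{int}(\mu,m)$ via the Cauchy-coefficient computation, you get the equivalence in one stroke, and the $L$-Toeplitz verification via $T=\overline{G_0}^*\overline{G_0}$ and the intertwining $\overline{G_0}L_k=(\Pi_\mu)_k\overline{G_0}$ is self-contained. What the paper's approach buys is an explicit link to the machinery of \cite{JM-NCFatou} (in particular the $\mu+m$ formula \eqref{formACpart} used in Theorem~\ref{sameNCLD}); what your approach buys is independence from Theorem~\ref{ACcone} and from the external references, and a cleaner conceptual picture---indeed, your $G_0^*$ is precisely the inverse Cauchy transform of the closed embedding $\mr{e}_{ac}$ that the paper only introduces later in Theorem~\ref{aciswc}. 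Your closing remark slightly overreaches: equality of the two Lebesgue decompositions (Theorem~\ref{sameNCLD}) needs more than closability of $q_\mu$ in the AC case, but you correctly flag this as something to be checked rather than claimed.
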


\begin{proof}
By \cite[Corollary 5.6]{JM-NCFatou}, an NC measure $\mu \in (\A ^\dag ) _+$ generates a closable quadratic form, $q_\mu$ if and only if the intersection of the space of NC Cauchy Transforms of $\mu + m$ with the NC Hardy space is dense in $\scr{H} ^+ (H _{\mu +m} )$, that is, if and only if $\mu +m$ is an absolutely continuous NC measure in the sense of Definition \ref{ncldDef}. 

We claim that $\mu +m$ is absolutely continuous if and only if $\mu$ is absolutely continuous so that these two definitions of absolute continuity are equivalent. First, by Theorem \ref{ACcone}, $AC (\A ^\dag ) _+$ is a positive cone so that if $\mu$ is AC, so is $\mu +m$. Conversely, if $\mu +m$ is AC, this is equivalent to $q _{\mu }$ being a closable quadratic form, so that 
$\ov{q_\mu} =: q_T$ is the quadratic form of a unique, positive semi-definite, $L-$Toeplitz $T \geq 0$, and $\fp$ is a core for $\sqrt{T}$ by \cite[Theorem 5.8]{JM-NCFatou}. Suppose that $x \in \dom{T} \subseteq \dom{\sqrt{T}}$. Then since $\fp$ is a core for $\sqrt{T}$, we can find a sequence of free polynomials, $p_n$, so that 
$$ p_n \rightarrow x, \quad \mbox{and} \quad \sqrt{T} p_n \rightarrow \sqrt{T} x. $$ 
In particular, the sequence $p_n (L) + N _\mu$ is Cauchy in $F^2 _d (\mu )$, and converges to a vector $\hat{x} \in F^2 _d (\mu )$:
$$ \| p_n - p_m + N _\mu \| _{\mu} =  \| \sqrt{T} (p_n - p_m ) \| _{F^2 _d} \rightarrow 0. $$
It follows that we can identify $\dom{T}$ with a linear subspace (generally non-closed), $\mc{D} _\mu (T) \subset F^2 _d (\mu )$. We claim that any vector $y \in \mc{D} _\mu (T)$ is such that
$\mc{C} _\mu y \in H^2 (\B ^d _\N )$. Indeed, as above, given $y \in \mc{D} _\mu (T)$, there is a vector $\check{y} \in \dom{T}$ and a sequence of free polynomials $p_n$ so that $p_n \rightarrow \check{y}$, $\sqrt{T} p_n \rightarrow \sqrt{T} \check{y}$, and  $p_n (L) + N_\mu \rightarrow y$ in $F^2 _d (\mu)$. The free Cauchy Transform of $y$ is:
\ba ( \mc{C} _\mu y ) (Z) & = & \sum _{\alpha \in \F ^d } Z^\alpha \ip{\Pi _\mu ^\alpha (I + N_\mu )}{y}_\mu \nn \\
& = & \lim _{n \rightarrow \infty} \sum Z^\alpha \ip{\Pi _\mu ^\alpha (I + N_\mu )}{p_n (L) + N_\mu} _\mu \nn \\
& = & \lim _n \sum Z^\alpha \ip{\sqrt{T} L^\alpha 1}{\sqrt{T} p_n (L) 1} _{F^2 _d} \nn \\
& = &  \sum Z^\alpha \ip{\sqrt{T} L^\alpha 1}{\sqrt{T} \check{y}}_{F^2 _d} \nn \\
& = & \sum Z^\alpha \ip{L^\alpha 1}{T \check{y}}_{F^2 _d}. \nn  \\
& = & (T\check{y}) (Z). \nn \ea 
Since $T\check{y} \in H^2 (\B ^d _\N ) = F^2 _d$ this proves our claim. Moreover, by general facts about closed operators, $\dom{T}$ is a core for $\sqrt{T}$, and it follows that $\mc{D} _\mu (T)$ is norm-dense in $F^2 _d (\mu )$. This proves that 
$$ \scr{H} ^+ (H _\mu ) \bigcap H^2 (\B ^d _\N ), $$ is dense in $\scr{H} ^+ (H _\mu )$, so that $\mu$ is, by definition, an absolutely continuous NC measure.
\end{proof}

\begin{thm}  \label{sameNCLD}
Given $\mu \in (\A ^\dag ) _+$, the Lebesgue Form Decomposition and Lebesgue Decomposition of $\mu$ coincide. That is, the quadratic form of $\mu _{ac}$ is the maximal closable quadratic form bounded above by $q_\mu$.
\end{thm}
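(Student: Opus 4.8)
The plan is to show that the two positive NC measures $\mu_{ac}$ (from Theorem \ref{ncld2thm}) and $\hat q_{ac}$ (from the Lebesgue Form Decomposition \eqref{LDform}) coincide; since both are decompositions of $\mu$, the equality $\mu_s = \hat q_s$ will then follow by subtraction. I would first recall (Definition \ref{LToeform} and \cite[Theorem 5.9]{JM-NCFatou}) that $\hat q_{ac}$ is the positive NC measure whose quadratic form is Simon's maximal closable form $q_{ac}$ dominated by $q_\mu$, and set up two routine translations between NC measures and their forms: for $\nu_1,\nu_2 \in (\A^\dag)_+$ one has $\nu_1 \le \nu_2$ if and only if $q_{\nu_1} \le q_{\nu_2}$ on $\A$ (using that every positive element of the free disk system is a norm-limit of sums of squares $p(L)^*p(L)$ with $p \in \fp$, by \cite[Lemma 4.6]{JM-freeAC}), and a positive NC measure is determined by its quadratic form because $\nu(L^\alpha) = q_\nu(1, L^\alpha)$. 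With these in hand, $\mu_{ac} = \hat q_{ac}$ would be established through two opposite inequalities.

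The first inequality, $\mu_{ac} \le \hat q_{ac}$, is the easy one: $\mu_{ac}$ is absolutely continuous by construction, so by Theorem \ref{ACismaxclosed} its form $q_{\mu_{ac}}$ is closable, and $q_{\mu_{ac}} \le q_\mu$ since $\mu_{ac} \le \mu$; the maximality of the regular part in Simon's decomposition then gives $q_{\mu_{ac}} \le q_{ac}$, that is, $\mu_{ac} \le \hat q_{ac}$. The reverse inequality is where the work lies, and the key step would be a separate claim: $\mu_{ac}$ is the \emph{largest} absolutely continuous NC measure dominated by $\mu$. To prove this, let $\nu \in AC(\A^\dag)_+$ with $\nu \le \mu$. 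By Lemma \ref{embeddings} and Theorem \ref{NCdom} (with $t=1$), $\scr{H}^+(H_\nu)$ is a contractively contained subspace of $\scr{H}^+(H_\mu)$, the embedding being the honest inclusion of NC function spaces. Absolute continuity of $\nu$ makes $\scr{H}^+(H_\nu) \cap H^2(\B^d_\N)$ dense in $\scr{H}^+(H_\nu)$ in the $H_\nu$-norm, and this dense set is carried into $\scr{H}^+(H_\mu) \cap H^2(\B^d_\N)$, whose $H_\mu$-closure is $\scr{H}^+(H_{\mu_{ac}})$ by Theorem \ref{ncld2thm}; continuity of the embedding then forces $\scr{H}^+(H_\nu) \subseteq \scr{H}^+(H_{\mu_{ac}})$, and since $\scr{H}^+(H_{\mu_{ac}})$ sits isometrically (indeed reducingly) inside $\scr{H}^+(H_\mu)$ this inclusion is still contractive, so Theorem \ref{NCdom} and Lemma \ref{embeddings} give $K^\nu \le K^{\mu_{ac}}$ and hence $\nu \le \mu_{ac}$. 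Applied to $\nu = \hat q_{ac}$ --- which is absolutely continuous by Theorem \ref{ACismaxclosed}, its form $q_{ac}$ being closable, and satisfies $\hat q_{ac} \le \mu$ because $\hat q_s = \mu - \hat q_{ac} \ge 0$ --- this yields $\hat q_{ac} \le \mu_{ac}$. Combining the two inequalities gives $\mu_{ac} = \hat q_{ac}$, hence $q_{\mu_{ac}} = q_{ac}$ is the maximal closable form bounded above by $q_\mu$, and $\mu_s = \mu - \mu_{ac} = \mu - \hat q_{ac} = \hat q_s$.

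I expect the main obstacle to be the maximality claim for $\mu_{ac}$: one has to be careful that every reproducing-kernel embedding used is genuinely the inclusion map of NC function spaces, so that membership in the Free Hardy space $H^2(\B^d_\N)$ is preserved, and that factoring through the reducing subspace $\scr{H}^+(H_{\mu_{ac}}) \subseteq \scr{H}^+(H_\mu)$ does not change norms, so that Theorem \ref{NCdom} can be invoked with constant $1$. Beyond that, and modulo the identification of $\hat q_{ac}$ with the NC measure attached to $q_{ac}$ supplied by \cite{JM-NCFatou}, the argument is formal bookkeeping with Theorems \ref{ncld2thm} and \ref{ACismaxclosed}.
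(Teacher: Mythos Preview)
Your argument is correct and takes a genuinely different route from the paper's. You argue abstractly by maximality: using Theorem~\ref{ACismaxclosed} to identify ``absolutely continuous NC measure'' with ``NC measure whose form is closable,'' you show that both $\mu_{ac}$ and $\hat q_{ac}$ satisfy the same extremal property (largest AC measure dominated by $\mu$), so they must coincide. The paper instead works concretely with the formula~\eqref{formACpart} for $\hat q_{ac}$ in terms of the projection onto $\ker E$, where $E:F^2_d(\mu+m)\hookrightarrow F^2_d$ is the contractive embedding; it proves an auxiliary lemma that $(\mu+m)_{ac}=\mu_{ac}+m$ and $(\mu+m)_s=\mu_s$, then identifies $\ker E$ with $F^2_d(\mu_s)$ by analyzing $\overline{\ran{\mr e}}$ and $\ker{\mr e^*}$, which by~\eqref{formACpart} gives $\hat q_{ac}=\mu_{ac}$ directly.

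Your approach is cleaner and more conceptual: it avoids the somewhat delicate lemma about the Lebesgue decomposition of $\mu+m$ and the explicit computation of $\ker E$, trading these for a single reproducing-kernel argument that $\mu_{ac}$ dominates every AC measure below $\mu$. The cost is that you must lean on Theorem~\ref{ACismaxclosed} in both directions and on the identification $q_{\hat q_{ac}}=q_{ac}$ from \cite[Theorem~5.9]{JM-NCFatou}; you flag both of these, and they are indeed available at this point in the paper. The paper's approach, by contrast, yields as a byproduct the explicit description $\ker E = F^2_d(\mu_s)$ and the decomposition of $\mu+m$, which may be of independent use. The worry you raise about embeddings being honest inclusions of NC function spaces is legitimate but harmless: Theorem~\ref{NCdom} and Lemma~\ref{embeddings} do give set-theoretic inclusions of Herglotz spaces, and since $\scr H^+(H_{\mu_{ac}})$ sits isometrically (as a reducing subspace) in $\scr H^+(H_\mu)$, the contractivity constant passes through unchanged.
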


\begin{lemma}
Given $\mu \in (\A ^\dag ) _+$ with Lebesgue Decomposition $\mu = \mu _{ac} + \mu _s$, if $\la = \mu + m$, then $\la$ has Lebesgue decomposition:
$$ \la = \underbrace{\mu_{ac} +m}_{=\la _{ac}} + \underbrace{\mu _s}_{=\la _s}.$$
\end{lemma}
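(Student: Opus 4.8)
The plan is to prove the lemma by combining the additivity of the Lebesgue Decomposition under sums (which is the content of Theorem~\ref{ACcone} together with Lemma~\ref{singhered}) with the observation that NC Lebesgue measure $m$ is itself absolutely continuous, and that the Lebesgue Decomposition of any NC measure is \emph{unique}. First I would record that $m \in AC(\A^\dag)_+$: indeed $H^2(\B^d_\N) = \scr{H}^+(H_m)$ is trivially dense in itself, so by Definition~\ref{ncldDef} the vacuum state $m$ is absolutely continuous, with $m_{ac} = m$ and $m_s = 0$. Likewise, by Theorem~\ref{ncld2thm}, $\mu$ has its Lebesgue Decomposition $\mu = \mu_{ac} + \mu_s$ with $\mu_{ac}$ absolutely continuous, $\mu_s$ singular, and $0 \le \mu_{ac}, \mu_s \le \mu$.

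Next I would assemble the candidate decomposition of $\la = \mu + m$. Set $\la_{ac} := \mu_{ac} + m$ and $\la_s := \mu_s$. Since $AC(\A^\dag)_+$ is a positive cone by Theorem~\ref{ACcone}, and both $\mu_{ac}$ and $m$ are absolutely continuous, $\la_{ac} = \mu_{ac} + m$ is absolutely continuous. The measure $\la_s = \mu_s$ is singular by hypothesis. Clearly $\la_{ac} + \la_s = \mu_{ac} + m + \mu_s = (\mu_{ac} + \mu_s) + m = \mu + m = \la$, and all summands are positive NC measures. So $\la = \la_{ac} + \la_s$ is a decomposition of $\la$ into a sum of an absolutely continuous and a singular positive NC measure.

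The remaining step is to invoke uniqueness of the Lebesgue Decomposition. The Lebesgue Decomposition of Theorem~\ref{ncld2thm} is characterized: $\scr{H}^+(H_{\la_{ac}})$ must equal $\bigl(\scr{H}^+(H_\la) \cap H^2(\B^d_\N)\bigr)^{-\|\cdot\|_{H_\la}}$ and $\scr{H}^+(H_{\la_s})$ its orthogonal complement inside $\scr{H}^+(H_\la)$. Concretely, by Theorem~\ref{NCsum} applied to $\la = \la_{ac} + \la_s$ with $\mathrm{int}(\la_{ac},\la_s) = \{0\}$ (since $\la_s = \mu_s$ is singular, while $\la_{ac}$ contains the constants — in fact $\scr{H}^+(H_{\la_{ac}}) \supseteq \scr{H}^+(H_m) = H^2(\B^d_\N)$, so any $f$ in the intersection lies in $\scr{H}^+(H_{\mu_s}) \cap H^2(\B^d_\N) = \{0\}$, using $\scr{H}^+(H_{\mu_s}) \subseteq \scr{H}^+(H_\mu)$ and singularity of $\mu_s$), one gets $\scr{H}^+(H_\la) = \scr{H}^+(H_{\la_{ac}}) \oplus \scr{H}^+(H_{\la_s})$. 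Then $\scr{H}^+(H_{\la_{ac}}) \supseteq H^2(\B^d_\N)$ is absolutely continuous with $\scr{H}^+(H_{\la_{ac}}) \cap H^2(\B^d_\N) = H^2(\B^d_\N)$ dense in it, while the complementary summand $\scr{H}^+(H_{\la_s})$ has trivial intersection with $H^2(\B^d_\N)$; this matches exactly the defining reducing-subspace splitting in Theorem~\ref{ncld2thm}, so by uniqueness $\la_{ac}$ and $\la_s$ are \emph{the} AC and singular parts of $\la$.

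The main obstacle I anticipate is the verification that $\mathrm{int}(\la_{ac}, \la_s) = \{0\}$, i.e.\ that no nonzero Cauchy transform is shared between $\scr{H}^+(H_{\mu_s})$ and $\scr{H}^+(H_{\mu_{ac}+m})$; one must check that the AC part $\mu_{ac}+m$ does not ``leak'' any singular mass, which amounts to showing $\scr{H}^+(H_{\mu_{ac}+m}) \cap \scr{H}^+(H_{\mu_s})$ is trivial. This follows because $\scr{H}^+(H_{\mu_{ac}+m}) \supseteq \scr{H}^+(H_m) = H^2(\B^d_\N) \supseteq \scr{H}^+(H_{\mu_{ac}+m}) \cap H^2(\B^d_\N)$ and any element of the contested intersection would lie in $\scr{H}^+(H_\mu) \cap H^2(\B^d_\N)$ after contractive embedding — but $\mu_s$ singular forces this to be $\{0\}$; one has to be a little careful tracking the contractive embeddings $\mr{e}_{\mu_s}$, $\mr{e}_{\mu_{ac}}$ into $\scr{H}^+(H_\la)$ and that they are injective with the intersection taken inside the ambient space $\scr{H}^+(H_\la)$. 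Once this is in hand, the uniqueness argument closes the proof cleanly.
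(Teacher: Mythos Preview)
Your overall strategy is the same as the paper's: show that $\scr{H}^+(H_{\mu_{ac}+m})$ and $\scr{H}^+(H_{\mu_s})$ give an orthogonal direct sum decomposition of $\scr{H}^+(H_\la)$, then identify this with the defining splitting of Theorem~\ref{ncld2thm}. The gap is precisely in the step you flagged as the obstacle, and your sketch of how to resolve it is incorrect.

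You write that for $f \in \scr{H}^+(H_{\mu_s}) \cap \scr{H}^+(H_{\mu_{ac}+m})$, ``any $f$ in the intersection lies in $\scr{H}^+(H_{\mu_s}) \cap H^2(\B^d_\N) = \{0\}$'' because $\scr{H}^+(H_{\mu_{ac}+m}) \supseteq H^2(\B^d_\N)$. But the containment goes the wrong way for this conclusion: membership in $\scr{H}^+(H_{\mu_{ac}+m})$ does \emph{not} force $f \in H^2(\B^d_\N)$. All you get from Theorem~\ref{NCsum} is that $f = g + h$ with $g \in \scr{H}^+(H_{\mu_{ac}})$ and $h \in H^2(\B^d_\N)$. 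Your fallback formulation (``would lie in $\scr{H}^+(H_\mu) \cap H^2(\B^d_\N)$ after contractive embedding --- but $\mu_s$ singular forces this to be $\{0\}$'') is also off: singularity of $\mu_s$ gives $\scr{H}^+(H_{\mu_s}) \cap H^2(\B^d_\N) = \{0\}$, not $\scr{H}^+(H_\mu) \cap H^2(\B^d_\N) = \{0\}$, and in general the latter is nontrivial.

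The paper's fix is short but uses an idea you are missing. With $f = g + h$ as above, observe that both $f \in \scr{H}^+(H_{\mu_s}) \subseteq \scr{H}^+(H_\mu)$ and $g \in \scr{H}^+(H_{\mu_{ac}}) \subseteq \scr{H}^+(H_\mu)$, so $h = f - g \in \scr{H}^+(H_\mu) \cap H^2(\B^d_\N)$. By the \emph{defining} property of the absolutely continuous part (Theorem~\ref{ncld2thm}), this intersection sits inside $\scr{H}^+(H_{\mu_{ac}})$. Hence $f = g + h \in \scr{H}^+(H_{\mu_{ac}})$, and since $\scr{H}^+(H_{\mu_{ac}})$ and $\scr{H}^+(H_{\mu_s})$ are orthogonal in $\scr{H}^+(H_\mu)$, $f = 0$. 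Once this is in place, your remaining identification of the two direct-sum splittings goes through (the paper argues this by showing $\scr{H}^+(H_{\la_{ac}}) \cap H^2(\B^d_\N) \subseteq \scr{H}^+(H_{\mu_{ac}+m})$ and using density). Note also that appealing to an abstract ``uniqueness of the Lebesgue Decomposition'' is not available at this point in the paper --- the uniqueness of decompositions into AC plus singular summands is only established later --- so you do need to match your splitting to the concrete construction of Theorem~\ref{ncld2thm}, as the paper does.
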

\begin{proof}
By Theorem \ref{NCintersect}, 
$$ \scr{H} ^+ (H _{\la}) = \scr{H} ^+ (H_{\mu _{ac}}) + \underbrace{\scr{H} ^+ (H _m )}_{=H^2 (\B ^d _\N )} + \scr{H} ^+ (H _{\mu _s} ), $$ and each of the spaces of this decomposition is contractively contained in $\scr{H} ^+ (H _\la )$, with $\la = \mu + m$. Since $AC (\A ^\dag ) _+$ is a positive cone, by Theorem \ref{ACcone}, $\mu _{ac} + m$ is AC. One can show, as in the proof of Theorem \ref{ACcone},
$$ H^2 (\B ^d _\N) \cap  \scr{H} ^+ (H _{\mu _{ac} +m } ), $$ is dense in the subspace
$$ \left( \scr{H} ^+ (H_{\mu _{ac} +m } ) \right) ^{-\| \cdot \| _{H_\la}}, $$ and it follows that $\mu _{ac} + m \leq (\mu + m ) _{ac} = \la _{ac}$. Also, since $\mu _{s}$ is the singular part of $\mu$, we know that both
$$ \scr{H} ^+ (\mu _s ) \bigcap \scr{H} ^+ (\mu _{ac} ) = \{ 0 \}, $$ by Theorem \ref{ncld2thm}, and 
$$ \scr{H} ^+ (\mu _s ) \bigcap H^2 (\B ^d _\N ) = \{ 0 \}, $$ by definition. We claim also that
$$ \scr{H} ^+ (\mu _s ) \bigcap \scr{H} ^+ ( H_{\mu _{ac} +m } ) = \{ 0 \}. $$ 
Indeed, we have as above that
$$ \scr{H} ^+ (H _{\mu _{ac} +m } ) = \scr{H} ^+ ( H_{\mu _{ac}} ) + H^2 (\B ^d _\N ), $$ 
as vector spaces, so that if $f \in \scr{H} ^+ (\mu _s ) \bigcap \scr{H} ^+ (H _{\mu _{ac} +m })$, then 
$$ f = g + h; \quad \quad g \in \scr{H} ^+ (H _{\mu _{ac}} ), \ h \in H^2 (\B ^d _\N ). $$ 
However, this would imply that 
$$ f-g = h \in H^2 (\B ^d _\N ) \bigcap \scr{H} ^+ (H _\mu ) \subseteq \scr{H} ^+ (H _{\mu _{ac}} ), $$ by the definition of the absolutely continuous part of $\mu$, so that $g, f-g$, and hence $f$ belong to $\scr{H} ^+ ( H_{\mu _{ac} } )$. Since the Herglotz space of $\mu _{ac}$ is by construction orthogonal to $\scr{H} ^+ (H_{\mu _s} )$, $f = 0$, and this proves that the intersection of $\scr{H} ^+ (H _{\mu _s} )$ with $\scr{H} ^+ (H _{\mu _{ac} +m } )$ is empty. 
By Theorem \ref{NCintersect} we then have the direct sum decompositions:
$$ \begin{array}{llll} \scr{H} ^+ (H_\la ) & = \scr{H} ^+ (H _{\mu _{ac} +m } ) & \oplus & \scr{H} ^+ (H _{\mu _s} ) \\ 
 & = \scr{H} ^+ ( H_{\la _{ac}} ) & \oplus & \scr{H} ^+ (H _{\la _s} ). 
\end{array} $$ 
The first decomposition, implies, in particular, that $\scr{H} ^+ (H_{\mu _{ac} +m } )$ is contained isometrically in $\scr{H} ^+ (H _\la )$, and since $\mu _{ac} + m \leq \la _{ac}$, it is contained isometrically inside $\scr{H}^+ (H _{\la _{ac} } )$. However, by definition,
$$ \scr{H} ^+ (H _{\la _{ac} } ) \bigcap H^2 (\B ^d _\N )$$ is dense in $\scr{H} ^+ (H _{\la _{ac}} )$, and 
\ba  \scr{H} ^+ (H _{\la _{ac} } ) \bigcap H^2 (\B ^d _\N ) & \subseteq & H^2 (\B ^d _\N ) \\ 
& \subseteq & H^2 (\B ^d _\N ) + \scr{H} ^+ (H _{\mu _{ac}} ) \nn \\
& = & \scr{H} ^+ (H _{\mu _{ac} +m } ), \nn \ea so that 
$$ \scr{H} ^+ (H _{\la _{ac} } ) \bigcap \scr{H} ^+ ( H_{\mu _{ac} +m } ) $$ is dense in 
$\scr{H} ^+ (H _{\la _{ac} } )$. Since these are both closed subspaces, it must be that $\la _{ac} = \mu _{ac} +m$ and $\mu _s = \la _s$.
\end{proof}

\begin{proof}{ (of Theorem \ref{sameNCLD})}
It remains to prove that if $\mu = \mu _{ac} + \mu _s$ is the Lebesgue decomposition of $\mu$ of Theorem \ref{ncld2thm}, that $\mu _{ac}$ generates the largest closable quadratic form bounded above by $\mu$, so that $\mu _{ac} = \hat{q} _{ac}$ and the Lebesgue decomposition and Lebesgue form decompositions of $\mu$ coincide. Let $\mr{e} : H^2 (\B ^d _\N ) = \scr{H} ^+ (H _m) \hookrightarrow \scr{H} ^+ (H _{\mu +m } )$ be the contractive embedding (since $m \leq \mu + m$). By Lemma \ref{embeddings},
$$ E = \mc{C} _m ^* \mr{e} ^* \mc{C} _{\mu +m}, $$ and it follows that the kernel of $E$ is the kernel of $\mr{e}^* \mc{C} _{\mu + m}$. 

By Theorem \ref{NCsum}, $\scr{H} ^+ (H _{\mu _{ac} +m} ) = \scr{H} ^+ (H _{\mu _{ac}} ) + H^2 (\B ^d _\N )$ and the NC Hardy space is contractively contained in $\scr{H} ^+ (H _{\mu _{ac} +m})$. Furthermore, by Theorem \ref{ACcone}, $\mu _{ac} +m$ is an absolutely continuous NC measure so that $H^2 (\B ^d _\N) \subseteq H^2 (\B ^d _\N ) \bigcap \scr{H} ^+ (H_{\mu _{ac} +m})$ is norm-dense in the space of $(\mu _{ac} +m)-$Cauchy transforms. Since the previous lemma implies that $(\mu +m ) _{ac} = \mu _{ac} +m$, it follows that the range of $\mr{e}$ is contained in and norm-dense in $\scr{H} ^+ (H _{(\mu +m) _{ac} } )$ so that 
$$ \ov{\ran{\mr{e}}} = \scr{H} ^+ (H _{(\mu +m ) _{ac}} ). $$ Consequently, and again by the previous lemma, 
$$ \ov{\ran{\mr{e}}} ^\perp = \ker{\mr{e} ^*} = \scr{H} ^+ (H _{(\mu +m ) _s } ) = \scr{H} ^+ (H _{\mu _s} ), $$ and 
$$ \ker{E} = F^2 _d (\mu _s ). $$ By Formula (\ref{formACpart}), it follows that $\hat{q} _{ac} = \mu _{ac}$. 
\end{proof}

\section{Lebesgue Decomposition for Row Isometries}
\label{NCLDsect}

The concept of absolute continuity, singularity, and Lebesgue Decomposition for bounded linear functionals on $\A$ was first defined and studied in the context of free semigroup algebra theory \cite{DLP-ncld,DKP-structure,MK-rowiso}.  Recall, a free semigroup algebra
is any $WOT-$closed unital operator algebra generated by a row isometry. If $\Pi$ is a row isometry on a Hilbert space, $\H$, we denote the free semigroup algebra of $\Pi$ by
$$ \F ^d (\Pi) := \mr{Alg} (I, \Pi ) ^{-WOT}. $$ As proven in \cite{DP-inv}, the weak$-*$, and WOT-closures of $\A$ coincide so that the left free analytic Toeplitz algebra, $L^\infty _d = H^\infty (\B ^d _\N )$, is a free semigroup algebra. 
\begin{defn}{ (see \cite[Definition 2.1]{DLP-ncld} and \cite[Theorem 2.10]{DP-inv})} \label{posAC}
A bounded linear functional $\varphi  \in \A ^\dag$ is weak$-*$ continuous if it has a weak$-*$ continuous extension to $L^\infty _d$.
\end{defn}
\begin{thm}{ (\cite[Theorem 2.10]{DP-inv})}
A bounded linear functional $\phi \in \A ^\dag$ is weak$-*$ continuous if and only if there are vectors, $x,y \in F^2 _d$ so that 
$$ \phi  (a) = m_{x,y} (a) := \ip{x}{a (L ) y} _{F^2 _d}. $$ 
\end{thm}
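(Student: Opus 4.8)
The plan is to dispose of the easy implication by a topology remark, and to prove the substantive one by representing a weak$-*$ continuous functional by a trace-class operator, packaging it as a vector functional of (possibly) infinite multiplicity, and then collapsing the multiplicity using the Wold structure of the left free shift.

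\emph{The easy direction.} Suppose $\phi = m_{x,y}|_\A$. The functional $T \mapsto \ip{x}{Ty}_{F^2 _d}$ on $\mc{L}(F^2 _d)$ is a vector functional, hence $WOT$-continuous on $\mc{L}(F^2 _d)$ and a fortiori on $L^\infty _d$. Since $WOT$ and weak$-*$ coincide on $L^\infty _d$ (Corollary 2.12 of \cite{DP-inv}, quoted above), $m_{x,y}|_{L^\infty _d}$ is a weak$-*$ continuous extension of $\phi$, so $\phi$ is weak$-*$ continuous by definition. (And $\phi$ is bounded, $\|\phi\| \le \|x\|\|y\|$.)

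\emph{The main direction.} Assume $\tilde{\phi}$ is a weak$-*$ continuous extension of $\phi$ to $L^\infty _d$. As $L^\infty _d$ is a $WOT$- (hence $\sigma$-weakly) closed subspace of $\mc{L}(F^2 _d)$, its predual is $\mc{L}(F^2 _d)_* / {}^\perp L^\infty _d$; thus $\tilde{\phi}(T) = \tr(tT)$ for some trace-class $t$, which we take of minimal trace norm in its coset, so $\|t\|_1 = \|\tilde{\phi}\| = \|\phi\|$. Its Schmidt decomposition $t = \sum_i \sigma_i\, \xi_i \eta_i^*$ ($\sigma_i > 0$, $\sum_i \sigma_i < \infty$, $\{\xi_i\},\{\eta_i\}$ orthonormal) yields
$$ \phi(a) = \sum_i \ip{\sqrt{\sigma_i}\,\eta_i}{a(L)\,\sqrt{\sigma_i}\,\xi_i}_{F^2 _d}, \qquad a \in \A. $$
Packaging $x := \bigoplus_i \sqrt{\sigma_i}\,\xi_i$ and $y := \bigoplus_i \sqrt{\sigma_i}\,\eta_i$ in $F^2 _d \otimes \ell^2$ (so $\|x\|^2 = \|y\|^2 = \|t\|_1 = \|\phi\|$), this reads $\phi(a) = \ip{y}{(a(L)\otimes I_{\ell^2})\,x}$.

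\emph{Collapsing the multiplicity.} Assume $x \neq 0$ (else $\phi \equiv 0 = m_{0,0}$) and let $\mc{M} := \overline{(L^\infty _d \otimes I_{\ell^2})\,x} \subseteq F^2 _d \otimes \ell^2$ be the cyclic invariant subspace generated by $x$. The left free shift $L$ is a pure row isometry (it has no Cuntz/onto part), and the same is therefore true of $L \otimes I_{\ell^2}$ and of its restriction to the invariant subspace $\mc{M}$; by Popescu's Wold decomposition for row isometries, $\mc{M}$ is unitarily equivalent, as a module over $\A$, to $F^2 _d \otimes \mc{W}$, with $\mc{W} := \mc{M} \ominus \bigvee_k (L_k \otimes I_{\ell^2})\mc{M}$. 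I claim $\dim \mc{W} \le 1$. Transport $x$ to a cyclic vector $v = \sum_{\alpha \in \F ^d} (L^\alpha 1) \otimes w_\alpha \in F^2 _d \otimes \mc{W}$. For $a \in L^\infty _d$ and $e \in \mc{W}$ one computes
$$ \ip{1 \otimes e}{(a(L) \otimes I_{\mc{W}})\,v} = \sum_\alpha \ip{1}{a(L)\,L^\alpha 1}_{F^2 _d}\,\ip{e}{w_\alpha} = \ip{1}{a(L)1}_{F^2 _d}\,\ip{e}{w_\emptyset}, $$
because $\ip{1}{a(L)\,L^\alpha 1} = 0$ for $\alpha \neq \emptyset$ (the vector $L^\alpha 1$ lies in the range of a right free shift, which commutes with $a(L)$, and the adjoint of a right shift annihilates the vacuum). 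Hence the projection of $\overline{L^\infty _d\, v}$ onto the constants $\C\, 1 \otimes \mc{W}$ is contained in $\C\,(1 \otimes w_\emptyset)$, so cyclicity of $v$ forces $\C\,1\otimes\mc{W} = \C\,(1\otimes w_\emptyset)$, i.e. $\dim \mc{W} \le 1$; since $\mc{M} \neq \{0\}$ this gives a module unitary $U \colon F^2 _d \to \mc{M}$ with $U L_k = (L_k \otimes I_{\ell^2})|_{\mc M}\, U$. Putting $x' := U^* x$ and $y' := U^* P_{\mc M}\, y$ in $F^2 _d$, and noting $(a(L)\otimes I_{\ell^2})x \in \mc{M}$ for $a \in \A$ (invariance, $x \in \mc M$),
$$ \phi(a) = \ip{y}{(a(L)\otimes I_{\ell^2})x} = \ip{P_{\mc M} y}{(a(L)\otimes I_{\ell^2})|_{\mc M}\, x} = \ip{y'}{a(L)\,x'}_{F^2 _d} = m_{y',x'}(a). $$
(Moreover $\|x'\|\|y'\| \le \|x\|\|y\| = \|\phi\|$, whence $\|\phi\| = \|x'\|\|y'\|$ — the property $\mathbb{A}_1(1)$ of $L^\infty _d$.)

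The only non-formal step is the multiplicity collapse: it uses that the left free shift is pure and that a cyclic submodule of $F^2 _d \otimes \mc{W}$ carries only a one-dimensional space of ``constants'', rather than any soft functional-analytic fact. In the scalar case $d=1$ this reduces to the classical statement that $H^\infty(\D)$ has property $\mathbb{A}_1(1)$, usually obtained from the Riesz factorization $H^1 = H^2 \cdot H^2$; the argument above substitutes the Wold/Beurling structure of $F^2 _d$ for that factorization and so works uniformly in $d$.
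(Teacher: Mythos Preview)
The paper does not give its own proof of this theorem; it is quoted verbatim from Davidson--Pitts \cite[Theorem 2.10]{DP-inv} and used as a black box. So there is no in-paper proof to compare against.

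Your argument is correct and is essentially the Davidson--Pitts proof of property $\mathbb{A}_1(1)$ for $L^\infty_d$. The trace-class step is routine; the substantive part is the multiplicity collapse, and your cyclicity computation is exactly the mechanism behind the NC Beurling theorem for cyclic subspaces (a cyclic $L\otimes I$--invariant subspace of $F^2_d\otimes\ell^2$ has one-dimensional wandering space). Two small remarks: (i) the commutation $a(L)R_j = R_j a(L)$ you use for $a\in L^\infty_d$ is legitimate because $R^\infty_d = (L^\infty_d)'$, not merely for polynomial $a$; (ii) the case $w_\emptyset = 0$ is implicitly excluded by your hypothesis $x\neq 0$, since then your inner-product identity would force $1\otimes e\perp \overline{(L^\infty_d\otimes I)v}=F^2_d\otimes\mc{W}$ for every $e\in\mc{W}$, giving $\mc{W}=\{0\}$ and hence $\mc{M}=\{0\}$. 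Both points are fine as stated, just worth making explicit.
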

A natural extension of the above definition to (positive) NC measures on the Free Disk System is then:
\begin{defn} \label{DLP-AC}
A bounded positive linear functional (or NC measure) $\phi \in (\A ^\dag) _+$, is weak$-*$ continuous if it has a weak$-*$ continuous extension to the (left) \emph{Toeplitz System} $\left( L^\infty _d + (L^\infty _d )^* \right) ^{-\mr{WOT}} = \left( \A + \A ^* \right) ^{-\mr{WOT}}$. Let $WC (\A  ^\dag ) _+$ denote the positive cone of all weak$-*$ continuous NC measures.
\end{defn}
Clearly $WC (\A ^\dag ) _+$ is a positive cone since positive linear combinations of positive weak$-*$ continuous linear functionals are again weak$-*$ continuous and positive. We will prove that any (positive) NC measure is weak$-*$ continuous in the above Davidson-Li-Pitts sense if and only if it is absolutely continuous in the sense of Definition \ref{ncldDef}, see Theorem \ref{sameLD}.

\begin{defn} \label{starext}
A representation $\pi : \A  \rightarrow \mc{L} (\H )$ on a separable Hilbert space, $\H$, is called $*-$extendible if and only if it is the restriction of a unital $*-$representation of the Cuntz-Toeplitz $C^*$-algebra, $\mc{E} _d = C^* (I , L)$ to $\A $. 
\end{defn}
A unital homomorphism $\pi : \A \rightarrow \mc{L} (\H)$ is $*-$extendible if and only if $\Pi _k := \pi (L_k)$ is a row isometry.  The following concept of a weak$-*$ continuous vector will be important for our investigations:
\begin{defn}{ (\cite[Definition 2.4]{DLP-ncld})} \label{WCdef}
A vector $h \in \H$ is called a weak$-*$ continuous (WC) vector for a $*-$representation, $\pi$, of $\mc{E} _d$, if
$$ \phi _h (L^\alpha ) := \ip{h}{\pi (L ) ^\alpha h} _\H, $$ is a weak$-*$ continuous functional on $\A $. The set of all weak$-*$ continuous vectors for $\pi$ is denoted by $WC (\pi)$, or $WC (\mu)$ if $\pi = \pi _\mu$ is the GNS representation of an NC measure.
\end{defn}
\begin{defn}{ (\cite[Definition 2.6]{DLP-ncld})} \label{intertwiner}
A bounded linear map $X : F^2 _d \rightarrow \H$ is called an \emph{intertwiner} for a $*-$extendible representation $\pi$ if $X L^\alpha = \Pi ^\alpha X$. The set of all intertwiners is denoted $\chi (\pi)$ (or $\chi (\mu)$ if $\pi = \pi _\mu$ for an NC measure $\mu$).
\end{defn}
Weak$-*$ continuous vectors are characterized by the following theorem \cite[Theorem 2.7]{DLP-ncld}:
\begin{thm} \label{DLP-intertwiner}
Let $\pi$ be a $*-$extendible representation of $\A $ on $\H$. Then $WC (\pi)$ is a $\Pi := \pi (L)-$invariant, closed subspace and $WC (\pi) = \chi (\pi) F^2 _d$. Given any $x,y \in WC (\pi )$, $$ \mu _{x,y} (p(L)) := \ip{x}{\pi (p(L)) y } _\H; \quad \quad p \in \fp, $$ defines a weak$-*$ continuous functional on $\A $. 
\end{thm}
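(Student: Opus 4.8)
The plan is to prove the set equality $WC(\pi) = \chi(\pi)F^2_d$ by establishing the two inclusions separately, and then to read off the remaining assertions --- that $WC(\pi)$ is a norm-closed, $\Pi$-invariant subspace, and that $\mu_{x,y}$ is weak-$*$ continuous --- directly from the intertwiner description.

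For the easy inclusion I would first observe that the defining identity $XL^\alpha = \Pi^\alpha X$ of an intertwiner $X\in\chi(\pi)$ extends, by linearity and norm-continuity (the free polynomials are norm-dense in $\A$ and $\pi$ is completely contractive on $\A$ since it is $*$-extendible), to $\pi(a)X = Xa(L)$ for every $a\in\A$. Consequently, for $X,Y\in\chi(\pi)$, $\xi,\eta\in F^2_d$ and $a\in\A$,
\[ \mu_{X\xi,\,Y\eta}(a) = \ip{X\xi}{\pi(a)Y\eta}_{\H} = \ip{X\xi}{Ya(L)\eta}_{\H} = \ip{(Y^*X)\xi}{a(L)\eta}_{F^2_d} = m_{(Y^*X)\xi,\,\eta}\big(a(L)\big), \]
which is weak-$*$ continuous on $\A$ by the Davidson--Pitts characterization recalled above (\cite[Theorem 2.10]{DP-inv}). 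Taking $X=Y$ and $\xi=\eta$ shows $\chi(\pi)F^2_d\subseteq WC(\pi)$, and once the reverse inclusion is in hand the same display (now with $x=X\xi$, $y=Y\eta$) gives the final assertion of the theorem.

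The reverse inclusion $WC(\pi)\subseteq\chi(\pi)F^2_d$ is the crux. Given $h\in WC(\pi)$, I would pass to the cyclic subspace $\mathcal M := \ov{\mr{span}}\{\Pi^\alpha h:\alpha\in\F^d\}$, which is $\Pi$-invariant, so that $\Pi|_{\mathcal M}$ is again a row isometry and $(\pi|_{\mathcal M},h)$ is unitarily equivalent to the GNS representation $(\pi_\nu,I+N_\nu)$ of the positive NC measure $\nu:=\phi_h$. By hypothesis $\nu|_{L^\infty_d}$ is weak-$*$ continuous, hence lies in the predual of the free semigroup algebra $L^\infty_d$. Invoking the Davidson--Pitts description of that predual in terms of Fock-space vectors, together with the fact that the right free analytic Toeplitz algebra $R^\infty_d$ supplies intertwiners in abundance (if $X\in\chi(\pi)$ and $R\in R^\infty_d$ then $XR\in\chi(\pi)$, since each right free shift commutes with every $L^\alpha$), one manufactures a bounded intertwiner $X\in\chi(\pi)$ and a vector $\zeta\in F^2_d$ with $X\zeta=h$ --- the role of the module structure over $R^\infty_d$ being precisely that the ``weight'' produced by this construction can be absorbed into $\zeta$ rather than forced onto the image of the vacuum. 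This step, and the supporting structure theory of the noncommutative Toeplitz algebra, is the main obstacle; everything else is bookkeeping. This is the content of \cite[Theorem 2.7]{DLP-ncld}, which I would simply cite.

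Granting $WC(\pi)=\chi(\pi)F^2_d$, the structural claims follow. $\Pi$-invariance is immediate from $\Pi_k(X\zeta)=(\Pi_k X)\zeta=(XL_k)\zeta=X(L_k\zeta)\in\ran{X}$. That $WC(\pi)$ is a linear subspace follows because, writing $h=X\xi$ and $g=Y\eta$, the functional $\phi_{h+g}$ expands as the sum of the four functionals $m_{X^*X\xi,\,\xi}(a(L))$, $m_{Y^*X\xi,\,\eta}(a(L))$, $m_{X^*Y\eta,\,\xi}(a(L))$ and $m_{Y^*Y\eta,\,\eta}(a(L))$, each weak-$*$ continuous, so $h+g\in WC(\pi)$. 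Finally $WC(\pi)$ is norm-closed: the map $h\mapsto\phi_h$ is norm-continuous from $\H$ into $\A^\dag$, by the estimate $\|\phi_{h_n}-\phi_h\|_{\A^\dag}\le\|h_n-h\|\,(\|h_n\|+\|h\|)$ (using $\|\pi(a)\|\le\|a\|_{\A}$), and the weak-$*$ continuous functionals form a norm-closed subspace of $\A^\dag$.
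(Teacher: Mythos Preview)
The paper does not prove this theorem at all: it is stated with the citation \cite[Theorem 2.7]{DLP-ncld} attached and no proof is given. Your proposal therefore already goes beyond what the paper offers---you supply a correct argument for the easy inclusion $\chi(\pi)F^2_d\subseteq WC(\pi)$ and for the structural consequences ($\Pi$-invariance, linearity, norm-closedness, and weak-$*$ continuity of $\mu_{x,y}$), while explicitly citing \cite[Theorem 2.7]{DLP-ncld} for the hard inclusion $WC(\pi)\subseteq\chi(\pi)F^2_d$, exactly as the paper does. One small point: the claim that the weak-$*$ continuous functionals form a norm-closed subspace of $\A^\dag$ deserves a citation or a remark (it follows from the weak-$*$ density of the unit ball of $\A$ in that of $L^\infty_d$, so that the predual norm agrees with the $\A^\dag$-norm on restrictions), but this is established in the Davidson--Pitts theory you are already invoking.
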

In \cite{MK-rowiso}, M. Kennedy extended and applied these notions to develop a Lebesgue decomposition of row isometries. Namely, let $\Pi$ denote an arbitrary row-isometry on a Hilbert space $\H$. By the Kennedy-Wold-Lebesgue decomposition $\Pi$ and $\H$ decompose as direct sums:
$$ \Pi =: \Pi _L \oplus \Pi _{C-L} \oplus \Pi _{vN} \oplus \Pi _{dil}, $$ on 
$$ \H = \H _L \oplus \H _{C-L} \oplus \H _{vN} \oplus \H _{dil}. $$ where $\Pi _L$ is \emph{pure type$-L$}, $\Pi _{C-L}$ is called \emph{Cuntz type$-L$}, $\Pi _{vN}$ is purely singular or of \emph{von Neumann type}, and $\Pi _{dil}$ is of \emph{dilation type}. These classes of row isometries are defined as follows:

\begin{defn} \label{rowisotype}
	A row-isometry, $\Pi$, on $\H$ is:
\bn
\item \emph{type$-L$} if it is unitarily equivalent to a vector-valued left free shift $L \otimes I_\K$ for some Hilbert space $\K$. 
\item \emph{Cuntz type$-L$}  if it is an onto row isometry (also called a Cuntz unitary) and the free semigroup algebra generated by $\Pi$, $\F ^d (\Pi ) = \mr{Alg} (I, \Pi ) ^{-WOT}$, is isomorphic to $L^\infty _d$, \emph{i.e.} if the map $\Pi _k \mapsto L_k$ extends to a completely isometric isomorphism and weak$-*$ continuous homeomorphism of $\F ^d (\Pi)$ onto $L^\infty _d$.
\item \emph{weak$-*$ continuous} (WC),  if it is a direct sum of type-L and Cuntz type-L row isometries. 
\item \emph{von Neumann type} if it has no weak$-*$ continuous restriction to an invariant subspace.
\item \emph{dilation type} if $\Pi$ has no direct summand which is one of the previous types. 
\item \emph{weak$-*$ singular} (WS) if $\Pi $ is a direct sum of von Neumann and dilation-type row isometries.
\en
\end{defn}

\begin{remark}
Von Neumann and dilation-type row isometries are necessarily Cuntz unitary. Any dilation-type row isometry can be decomposed in the form:
$$ \Pi \simeq \bpm T & 0 \\ * & L \otimes I_\H \epm, $$ (so that the restricion of $\Pi$ to an invariant subspace is unitarily equivalent to several copies of $L$). As shown in \cite{MK-rowiso}, $\Pi$ is of von Neumann type if and only if the $WOT-$closed algebra generated by $\Pi$ (\emph{i.e.} the free semigroup algebra of $\Pi$) is self-adjoint, \emph{i.e.} a von Neumann algebra. Von Neumann type row isometries are at this point rather mysterious and poorly understood. There is essentially only one known example of a von Neumann type row isometry due to C. Read \cite{Read,KRD-Read} which constructs an example of a two-component row isometry $\Pi = (\Pi _1, \Pi _2)$ on a separable Hilbert space, $\H$, so that the $WOT-$closed algebra generated by $\Pi$ is all of $\mc{L} (\H )$.  In particular, it is unknown whether one can generate other types of von Neumann algebras in this way.
\end{remark}
\begin{remark}
In the free semigroup algebra literature, several variations of the concept of a weak$-*$ continuous row isometry (as we have defined it above) or $*-$representation of $\mc{E} _d$ were introduced in \cite{DLP-ncld} to describe when the weak$-*$ closure of a free semigroup algebra of a row isometry or Cuntz-Toeplitz $*-$representation is similar in structure to $L^\infty _d$, see \cite[Theorem 3.4]{DLP-ncld}. There is also no clear consensus on terminology see \emph{e.g.} \cite[Theorem 3.4]{DLP-ncld} and \cite[Definition 3.2, Definition 3.6]{MK-rowiso}. Eventually, the work of several authors showed that all these variations of type$-L$ row isometries were the same \cite[Definition 3.1, Theorem 3.4]{DLP-ncld}, \cite[Definition 3.2, Definition 3.6, Theorem 4.16]{MK-rowiso}, \cite{DY-freesemi}:
\begin{thm} \label{ACrowiso}
Let $\Pi$ be a row isometry on a Hilbert space, $\H$. The following are equivalent:
\bn
    \item $\Pi$ is weak$-*$ continuous.
    \item The representation $L_k \mapsto \Pi _k$ induced by $\Pi$ is the restriction to $\A$ of a weak$-*$ continuous representation of $L^\infty _d$.
    \item Every vector in $\H$ is a weak$-*$ continuous vector for $\Pi$, $\H = WC (\Pi )$.
\en
\end{thm}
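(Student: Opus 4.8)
The plan is to run the cycle of implications $(1)\Rightarrow(2)\Rightarrow(3)\Rightarrow(1)$. The first two implications are essentially formal manipulations with the definitions and the weak$-*$ topology; the substantive direction is $(3)\Rightarrow(1)$, where the free semigroup algebra structure theory of \cite{DLP-ncld,MK-rowiso,DY-freesemi} is needed, and this is the step I expect to be the main obstacle.

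For $(1)\Rightarrow(2)$: by Definition \ref{rowisotype}, a weak$-*$ continuous $\Pi$ decomposes as $\Pi_1\oplus\Pi_2$ with $\Pi_1$ of type$-L$ and $\Pi_2$ of Cuntz type$-L$. If $\Pi_1\simeq L\otimes I_\K$, the induced representation of $\A$ is the restriction of the weak$-*$ continuous representation $L^\infty_d\ni f\mapsto f(L)\otimes I_\K$; if $\Pi_2$ is Cuntz type$-L$ then, by definition, $\Pi_{2,k}\mapsto L_k$ extends to a weak$-*$ continuous isomorphism of $\F^d(\Pi_2)$ onto $L^\infty_d$ whose inverse is the required weak$-*$ continuous representation, and a direct sum of such representations is again weak$-*$ continuous. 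For $(2)\Rightarrow(3)$: if $\rho:L^\infty_d\to\mc{L}(\H)$ is weak$-*$ continuous with $\rho(L^\alpha)=\Pi^\alpha$, then for any $h\in\H$ the composition of $\rho$ with the WOT$-$continuous (hence weak$-*$ continuous) functional $T\mapsto\ip{h}{Th}$ on $\mc{L}(\H)$ is a weak$-*$ continuous functional on $L^\infty_d$ which agrees with $\phi_h$ on each $L^\alpha$, hence on all of $\A=\mr{Alg}(I,L)^{-\|\cdot\|}$; thus $\phi_h$ extends weak$-*$ continuously and $h\in WC(\Pi)$. Since $h$ was arbitrary, $\H=WC(\Pi)$.

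For $(3)\Rightarrow(1)$ I would invoke the Kennedy$-$Wold$-$Lebesgue decomposition $\Pi=\Pi_L\oplus\Pi_{C-L}\oplus\Pi_{vN}\oplus\Pi_{dil}$ on $\H=\H_L\oplus\H_{C-L}\oplus\H_{vN}\oplus\H_{dil}$. Restricting to vectors supported in a single summand shows that $\H=WC(\Pi)$ forces $\H_{vN}=WC(\Pi_{vN})$ and $\H_{dil}=WC(\Pi_{dil})$, so it suffices to rule out a nonzero von Neumann type or dilation type summand. For the von Neumann type part I would use Theorem \ref{DLP-intertwiner} to write any nonzero weak$-*$ continuous vector as $h=X\eta$ with $X\in\chi(\Pi_{vN})$; then $\overline{\mr{Ran}\,X}$ is a nonzero $\Pi_{vN}-$invariant subspace on which $\Pi_{vN}$ restricts to a weak$-*$ continuous row isometry, contradicting the defining property of von Neumann type in Definition \ref{rowisotype}(4), so $\Pi_{vN}=0$. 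For the dilation type part I would appeal to the fact, due to Kennedy \cite{MK-rowiso}, that $WC(\Pi_{dil})$ is always a proper (invariant, non$-$reducing) subspace of $\H_{dil}$ when $\Pi_{dil}\neq0$, forcing $\Pi_{dil}=0$. Consequently $\Pi=\Pi_L\oplus\Pi_{C-L}$ is weak$-*$ continuous.

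The hard part, and the only place where something beyond bookkeeping is required, is the identification inside $(3)\Rightarrow(1)$ of $WC(\Pi)$ with the type$-L$ plus Cuntz type$-L$ part of $\Pi$: concretely, that a weak$-*$ continuous vector generates a weak$-*$ continuous invariant subspace, and that dilation type row isometries always possess vectors that are not weak$-*$ continuous. These are exactly the contents of the free semigroup algebra structure theorems \cite[Theorem 3.4]{DLP-ncld} and \cite[Theorem 4.16]{MK-rowiso} (completed by \cite{DY-freesemi}), so rather than reproving them I would cite them and reconcile the various formulations of ``type$-L$'' used there with the one in Definition \ref{rowisotype}.
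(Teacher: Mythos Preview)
Your proposal is correct and relies on the same external inputs as the paper, namely the structure theorems of \cite{DLP-ncld}, \cite{MK-rowiso}, and \cite{DY-freesemi}; the difference is organizational. For the easy implications you give explicit elementary arguments, whereas the paper simply cites \cite[Theorem 4.16]{MK-rowiso} for $(1)\Leftrightarrow(2)$ and \cite[Theorem 3.4]{DLP-ncld} (or \cite[Theorem 4.17]{MK-rowiso}) for $(1)\Rightarrow(3)$. For the hard direction $(3)\Rightarrow(1)$ the paper does \emph{not} pass through the Kennedy--Wold--Lebesgue decomposition at all: it invokes \cite{DY-freesemi} directly to conclude that $\H=WC(\Pi)$ forces the infinite ampliation $\Pi^{(\infty)}$ to be weak$-*$ continuous, and then argues that the weak$-*$ closure of $\F^d(\Pi)$ is therefore completely isometrically isomorphic and weak$-*$ homeomorphic to $L^\infty_d$, which gives $(2)$ and hence $(1)$. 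Your route through the Kennedy decomposition is valid but slightly roundabout, and one attribution should be adjusted: the statement that $WC(\Pi_{dil})\subsetneq\H_{dil}$ whenever $\Pi_{dil}\neq 0$ is not a theorem in \cite{MK-rowiso} per se---it is precisely the point where \cite{DY-freesemi} (applied to $\Pi_{dil}$, together with the observation that a reducing summand of a weak$-*$ continuous row isometry is again weak$-*$ continuous) is required, as you correctly acknowledge in your final paragraph.
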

\begin{proof}
The equivalence of the first two items is \cite[Theorem 4.16]{MK-rowiso} (see also Definitions $3.2$ and $3.6$). If $\Pi$ is a weak$-*$ continuous row isometry (as we have defined it) then the fact that $WC (\Pi ) = \H$ follows from \cite[Theorem 3.4]{DLP-ncld}, or equivalently from \cite[Theorem 4.17]{MK-rowiso} which proves the stronger statement that $\H$ is spanned by wandering vectors for $\Pi$.

Conversely, the main result of \cite{DY-freesemi} is that if $\H = WC (\Pi )$, then the infinite ampliation, $\Pi ^{(\infty)} \simeq \Pi \otimes I _{\ell ^2 (\N _0)}$ is a weak$-*$ continuous row isometry. In this case, as observed in \cite{DY-freesemi}, the weak$-*$ closure of the free semigroup algebra of $\Pi$ is completely isometrically isomorphic and weak$-*$ homeomorphic to the free semigroup algebra of $\Pi ^{(\infty)}$ (recall a general free semigroup algebra is a priori only $WOT-$closed, not necessarily weak$-*$ closed, by definition), and hence to $L^\infty _d$, since $\Pi ^{(\infty )}$ is weak$-*$ continuous. However, this implies that the representation $\pi : \A \rightarrow \mc{L} (\H )$ induced by $\Pi$, $\pi (L_k ) := \Pi _k$, is the restriction of a weak$-*$ continuous representation of $L^\infty _d$, and hence by \cite[Definition 3.2, Definition 3.6, Theorem 4.16]{MK-rowiso}, the free semigroup algebra of $\Pi$ is isomorphic to $L^\infty _d$. As described in \cite{DY-freesemi} algebraic isomorphism necessarily implies the much stronger property that they are completely isometrically isomorphic and weak$-*$ homeomorphic. By Definition \ref{rowisotype} above, $\Pi$ is then a weak$-*$ continuous row isometry. 
\end{proof}
\end{remark}

We now apply the Kennedy-Wold-Lebesgue decomposition of row isometries to (positive) NC measures:
\begin{defn} \label{NCLDdef2}
Given $\mu \in (\A ^\dag) _+$, we say $\mu$ is one of the six types of Definition \ref{rowisotype} if its GNS row isometry $\Pi _\mu$ is of that corresponding type. The Kennedy-Wold-Lebesgue decomposition of $\mu$ is:
$$ \mu = \mu _L + \mu _{C-L} + \mu _{vN} + \mu _{dil}, $$ where each $\mu _{\mr{type}}  \in (\A  ^\dag ) _+$ is positive and bounded above by $\mu$ and
$$ F^2 _d (\mu ) = F^2 _d (\mu ) _L \oplus F^2 _d (\mu )_{C-L} \oplus F^2 _d (\mu ) _{vN} \oplus F^2 _d (\mu ) _{dil}, $$ is the Kennedy-Wold-Lebesgue direct sum decomposition. If $P _{L}, P_{C-L}, P_{vN}, P _{dil}$ are the corresponding reducing projections, 
$$ \mu _{\mr{type}} ( \cdot  ) := \ip{I+N_\mu }{\pi _\mu (\cdot ) P _{\mr{type}} (I + N_\mu) } _\mu, $$ where $\ip{\cdot}{\cdot} _\mu$ is the GNS inner product of $\mu$ and $\mr{type} \in \{ L, C-L, wc, vN, dil, ws \}$.

The weak$-*$ Lebesgue Decomposition  of $\mu$ is then: 
$$ \mu =:  \underbrace{\mu _L + \mu _{C-L} }_{=: \mu _{wc}} +\underbrace{\mu _{vN} + \mu _{dil} }_{=: \mu _{ws} } = \mu _{wc} + \mu _{ws}, $$ $\mu _{wc}, \mu _{ws} \in (\A ^\dag ) _+$ are called the weak$-*$ continuous and weak$-*$ singular parts of $\mu$, respectively, and are both bounded above by $\mu$. We will let $WC(\A ^\dag ) _+$, $WS(\A ^\dag ) _+$ denote the sets of weak$-*$ continuous and weak$-*$ singular NC measures, respectively. 

Similarly we write $F ^2 _d(\mu ) _{wc} := F^2 _d (\mu) _L \oplus F^2 _d (\mu) _{C-L}$ and $F^2 _d (\mu) _{ws} = F^2 _d (\mu) _{vN} \oplus F^2 _d (\mu ) _{dil}$ so that $F^2 _d (\mu ) _{wc}$ and $F^2 _d (\mu ) _{ws}$ are reducing subspaces for $\Pi _\mu $ with orthogonal projections $P_{wc} = P_L \oplus P _{C-L}$, $P_{ws} = P_{vN} \oplus P_{dil}$ and then 
$$ F^2 _d (\mu) = F^2 _d (\mu ) _{wc} \oplus F^2 _d (\mu  ) _{ws}. $$ 
\end{defn}
The spaces $F^2 _d (\mu ) _{\mr{type}}$ and $ F^2 _d (\mu _{\mr{type}})$ are naturally isomorphic. We will ultimately show that $\mu _{wc} = \mu _{ac}$ and $\mu _{ws} = \mu _s$ so that Lebesgue Decomposition and weak$-*$ Lebesgue Decomposition of any positive NC measure coincide.

\begin{cor}
The weak$-*$ continuous subspace, $F^2 _d (\mu _{wc} ) \subseteq F^2 _d (\mu )$, is the largest $\Pi _\mu -$reducing subspace of weak$-*$ continuous vectors for $\mu$. The $\Pi _\mu -$invariant subspace of WC vectors for $\mu$ is $WC (\mu ) = F^2 _d (\mu _{wc}) \oplus \left( F^2 _d (\mu _{dil} ) \cap WC (\mu ) \right). $
\end{cor}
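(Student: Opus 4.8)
The plan is to translate the statement into the language of the GNS row isometry $\Pi_\mu$ and then exploit the structure theory recalled above, using throughout the following elementary observation: if $\mc{M}\subseteq F^2_d(\mu)$ is $\Pi_\mu$-reducing and $h\in\mc{M}$, then $\{\Pi_\mu^\alpha h\}\subseteq\mc{M}$, so the functional $L^\alpha\mapsto\ip{h}{\Pi_\mu^\alpha h}_\mu$ is the same whether computed from $\Pi_\mu$ or from $\Pi_\mu|_{\mc{M}}$; hence $h$ is a WC vector for $\mu$ if and only if it is a WC vector for the row isometry $\Pi_\mu|_{\mc{M}}$. In particular, for each of the four Kennedy--Wold--Lebesgue reducing summands, $WC(\mu)\cap F^2_d(\mu)_{\mr{type}}=WC(\Pi_{\mr{type}})$, where $\Pi_{\mr{type}}:=\Pi_\mu|_{F^2_d(\mu)_{\mr{type}}}$.

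I would first establish two facts. (i) $WC(\mu)$ is invariant under each of the reducing projections $P_L,P_{C-L},P_{vN},P_{dil}$, so $WC(\mu)=\bigoplus_{\mr{type}}\bigl(WC(\mu)\cap F^2_d(\mu)_{\mr{type}}\bigr)$. This follows from Theorem~\ref{DLP-intertwiner}: $WC(\mu)=\chi(\mu)F^2_d$, and for any intertwiner $X\in\chi(\mu)$ the operator $P_{\mr{type}}X$ is again an intertwiner (since $P_{\mr{type}}$ commutes with $\Pi_\mu$), so $P_{\mr{type}}WC(\mu)\subseteq WC(\mu)$; combined with $\sum P_{\mr{type}}=I$ this gives the splitting. (ii) If $\Pi$ is a von Neumann type row isometry, then $WC(\Pi)=\{0\}$: the subspace $WC(\Pi)$ is $\Pi$-invariant (Theorem~\ref{DLP-intertwiner}), and applying the observation above to this invariant subspace shows $WC(\Pi|_{WC(\Pi)})=WC(\Pi)$, so by Theorem~\ref{ACrowiso} the row isometry $\Pi|_{WC(\Pi)}$ is weak-$*$ continuous; since a von Neumann type row isometry has no weak-$*$ continuous restriction to a nonzero invariant subspace, $WC(\Pi)=\{0\}$. (Both (i) and (ii), and the maximality statement below, could alternatively be quoted from \cite{MK-rowiso}.)

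The second assertion is now immediate: combining (i), (ii), the fact that $\Pi_L,\Pi_{C-L}$ are weak-$*$ continuous so that $WC(\Pi_L)=F^2_d(\mu)_L$ and $WC(\Pi_{C-L})=F^2_d(\mu)_{C-L}$ by Theorem~\ref{ACrowiso}, and $WC(\Pi_{dil})=F^2_d(\mu_{dil})\cap WC(\mu)$ by definition, gives $WC(\mu)=F^2_d(\mu_{wc})\oplus\bigl(F^2_d(\mu_{dil})\cap WC(\mu)\bigr)$. For the first assertion, $F^2_d(\mu)_{wc}=F^2_d(\mu)_L\oplus F^2_d(\mu)_{C-L}$ is $\Pi_\mu$-reducing and $\Pi_\mu|_{F^2_d(\mu)_{wc}}=\Pi_L\oplus\Pi_{C-L}$ is weak-$*$ continuous, so every vector of it is a WC vector for $\mu$ by Theorem~\ref{ACrowiso} together with the observation above; thus it is a $\Pi_\mu$-reducing subspace of WC vectors. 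Conversely, if $\mc{M}$ is any $\Pi_\mu$-reducing subspace of WC vectors, then $\mc{M}=WC(\Pi_\mu|_{\mc{M}})$, so $\Pi_\mu|_{\mc{M}}$ is weak-$*$ continuous; writing $\mc{M}=\chi(\Pi_\mu|_{\mc{M}})F^2_d$ and composing intertwiners with $P_{dil}$ as in (i) shows $\ov{P_{dil}\mc{M}}$ is a $\Pi_\mu$-reducing subspace of $F^2_d(\mu)_{dil}$ consisting of WC vectors for $\Pi_{dil}$, so $\Pi_{dil}$ restricts to a weak-$*$ continuous row isometry on it by Theorem~\ref{ACrowiso}; were it nonzero, this would furnish a type-$L$ or Cuntz type-$L$ direct summand of $\Pi_{dil}$ (Definition~\ref{rowisotype}(3)), contradicting that $\Pi_{dil}$ is of dilation type. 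Hence $P_{dil}\mc{M}=\{0\}$, and since $\mc{M}\subseteq WC(\mu)=F^2_d(\mu)_{wc}\oplus(F^2_d(\mu_{dil})\cap WC(\mu))$ by (i) and (ii), we conclude $\mc{M}\subseteq F^2_d(\mu)_{wc}$.

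The main obstacle is keeping the reducing/invariant distinction straight: $WC(\mu)$ is only $\Pi_\mu$-invariant, never reducing unless $\mu$ is absolutely continuous, so each invocation of Theorem~\ref{ACrowiso} or Definition~\ref{rowisotype} must be preceded by passing to a genuinely reducing subspace via the commuting projections $P_{\mr{type}}$, and one must use that ``no weak-$*$ continuous restriction to an invariant subspace'' (the defining property of von Neumann type) is strictly stronger than the analogous statement for reducing subspaces. Everything else is routine bookkeeping with intertwiners.
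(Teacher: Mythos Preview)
Your proof is correct and is essentially a detailed unpacking of the paper's one-line justification (``immediate consequence of Theorem~\ref{ACrowiso} and the definitions''); the extra ingredient you invoke, Theorem~\ref{DLP-intertwiner} via $WC(\mu)=\chi(\mu)F^2_d$, is exactly what is needed to make the splitting $WC(\mu)=\bigoplus_{\mr{type}}(WC(\mu)\cap F^2_d(\mu)_{\mr{type}})$ rigorous. One small point to clean up: you state the opening ``elementary observation'' for \emph{reducing} $\mc{M}$ but then apply it in (ii) to the merely invariant subspace $WC(\Pi)$; the observation in fact holds for any $\Pi$-invariant $\mc{M}$ (only $\Pi^\alpha h\in\mc{M}$ is used), so just say ``invariant'' from the start.
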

This is an immediate consequence of Theorem \ref{ACrowiso} and the definitions.

\begin{remark}
It is natural that the weak$-*$ continuous part of an NC measure $\mu $ should include $\mu _L + \mu _{C-L}$, and that the weak$-*$ singular part of $\mu$ should include $\mu _{vN}$. It may not seem immediately obvious that the dilation part of $\mu$ should be included in the singular part of $\mu$ since any dilation-type row isometry has a weak$-*$ continuous restriction to an invariant subspace by definition (\emph{i.e.} it has weak$-*$ continuous vectors). However, our results will show that this definition is consistent and justified.

If $\mu \in (\A ^\dag ) _+$ is an NC measure, our weak$-*$ Lebesgue Decomposition of $\mu$ differs from the Lebesgue Decomposition for $\mu | _{\A }$, as defined in \cite[Proposition 5.9]{DLP-ncld}. Indeed, the Davidson-Li-Pitts Lebesgue decomposition of $\mu$ as a functional on $\A$ is: $\mu = \check{\mu} _{wc} + \check{\mu} _s$, where 
$$ \check{\mu } _{wc} (L^\alpha ) = \ip{I}{\Pi _\mu ^\alpha Q_{wc} I }_\mu, \quad \mbox{and} \quad \check{\mu } _{wc} (L^\alpha ) = \ip{I}{\Pi _\mu ^\alpha Q_{ws} I }_\mu, $$ $Q _{wc}$ is the projection onto the invariant subspace of all weak$-*$ continuous vectors for $\pi _\mu$, and $Q_{ws} = I - Q_{wc}$. This differs from our weak$-*$ Lebesgue Decomposition, in general, since our $P_{wc} = P_L + P _{C-L} \leq Q_{wc}$. The decompositions are the same if and only if $\Pi _\mu$ has no direct summand of dilation-type.

As Theorem \ref{sameLD} will show, the $\mu _{wc}$ from our decomposition is the maximal weak$-*$ continuous functional which is both positive and bounded above by the original NC measure $\mu$. One can check that if $\mu$ is a positive NC measure, that (since $Q_{wc}$ is $\Pi _\mu -$invariant) the functional $\check{\mu} _{wc}$ extends to a positive NC measure on $\A + \A ^*$:
$$ \check{\mu} _{wc} (a^* a) = \ip{ I + N _\mu }{Q_{wc} \pi _\mu (a^* a) Q_{wc} (I +N_\mu) } _\mu. $$ However, the operator $$\pi _\mu (a) ^* \pi _\mu (a) - Q_{wc} \pi _\mu (a) ^* \pi _\mu (a) Q_{wc}, $$ need not be positive semi-definite, so that $\check{\mu} _{wc} $ need not be bounded above by the original NC measure $\mu$.  Indeed, since our $\mu _{wc} = \mu _{ac}$ is the maximal absolutely continuous NC measure bounded above by $\mu$ (see Theorem \ref{sameLD}), it must be that $\check{\mu} _{wc}$ is not bounded above by $\mu$ unless $\check{\mu} _{wc} = \mu _{wc} \ ( = \mu _{ac} ) $ and $(P_{ac} =) \ P_{wc} = Q_{wc}$ is reducing for $\Pi _\mu$.
\end{remark}

\begin{cor} \label{ACfun}
An NC measure $\mu \in (\A ^\dag ) _+$ is weak$-*$ continuous  if and only if it is given by a positive vector functional on the Fock Space, \emph{i.e.} $\mu = m_{x,y} = m_{y,x} \geq 0$ for $x,y \in F^2 _d$ where
$$ m_{x,y} (L^\alpha ) := \ip{x}{L^\alpha y}_{F^2 _d}.$$ Equivalently $\mu$ is weak$-*$ continuous  if and only if $\Pi _\mu$ is a weak$-*$ continuous row isometry.
\end{cor}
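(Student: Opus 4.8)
The plan is to derive the corollary from the structure theory of weak-$*$ continuous row isometries by passing through the cyclic GNS vector. Write $\xi_0 := I + N_\mu \in F^2_d(\mu)$, so that $F^2_d(\mu) = \bigvee_{\alpha \in \F^d} \Pi_\mu^\alpha \xi_0$ and $\mu(L^\alpha) = \ip{\xi_0}{\Pi_\mu^\alpha \xi_0}_\mu$; in particular $\mu|_\A$ is precisely the functional $\phi_{\xi_0}$ of Definition~\ref{WCdef}. I would prove the string of equivalences: $\mu$ is weak-$*$ continuous $\iff$ $\mu|_\A$ is weak-$*$ continuous on $\A$ $\iff$ $\xi_0 \in WC(\mu)$ $\iff$ $WC(\mu) = F^2_d(\mu)$ $\iff$ $\Pi_\mu$ is a weak-$*$ continuous row isometry. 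The three ``soft'' middle equivalences come first: $\mu|_\A$ weak-$*$ continuous $\iff \xi_0 \in WC(\mu)$ is immediate from Definition~\ref{WCdef} and $\mu|_\A = \phi_{\xi_0}$; since $WC(\mu)$ is a closed $\Pi_\mu$-invariant subspace by Theorem~\ref{DLP-intertwiner}, it contains $\xi_0$ iff it contains $\bigvee_\alpha \Pi_\mu^\alpha \xi_0 = F^2_d(\mu)$, giving $\xi_0 \in WC(\mu) \iff WC(\mu) = F^2_d(\mu)$; and $WC(\mu) = F^2_d(\mu) \iff \Pi_\mu$ weak-$*$ continuous is exactly the equivalence $(1)\Leftrightarrow(3)$ of Theorem~\ref{ACrowiso} applied to $\Pi = \Pi_\mu$.

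It then remains to link weak-$*$ continuity of $\mu$ on the self-adjoint operator system $\A + \A^*$ with weak-$*$ continuity of the restriction $\mu|_\A$ to the non-self-adjoint algebra $\A$. The direction $(\Rightarrow)$ is routine: restrict a weak-$*$ continuous extension of $\mu$ on $(\A + \A^*)^{-\mathrm{WOT}}$ first to $L^\infty_d$ and then to $\A$. For $(\Leftarrow)$ I would feed the previous chain back in: if $\mu|_\A$ is weak-$*$ continuous then $\Pi_\mu$ is a weak-$*$ continuous row isometry, so by Theorem~\ref{ACrowiso}(2) the induced representation of $\A$ on $F^2_d(\mu)$ is the restriction of a weak-$*$ continuous unital representation $\rho$ of $L^\infty_d$; then $S + T^* \mapsto \ip{\xi_0}{(\rho(S) + \rho(T)^*)\xi_0}_\mu$ is well defined on $L^\infty_d + (L^\infty_d)^*$ (here one uses $L^\infty_d \cap (L^\infty_d)^* = \mathbb{C}I$), is weak-$*$ continuous, and agrees with $\mu$ on $\A + \A^*$; since a weak-$*$ continuous functional on a subspace extends weak-$*$ continuously to its weak-$*$ closure, $\mu$ is weak-$*$ continuous. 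This closes the loop and gives all the asserted equivalences, including ``$\mu$ weak-$*$ continuous $\iff \Pi_\mu$ weak-$*$ continuous row isometry.''

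For the vector-functional description: once $\mu|_\A$ is known to be weak-$*$ continuous, the Davidson--Pitts theorem (\cite[Theorem 2.10]{DP-inv}, quoted just before Definition~\ref{DLP-AC}) supplies $x, y \in F^2_d$ with $\mu(L^\alpha) = \ip{x}{L^\alpha y}_{F^2_d}$, and self-adjointness of $\mu$ gives $\mu((L^\alpha)^*) = \overline{\mu(L^\alpha)} = \ip{y}{(L^\alpha)^* x}_{F^2_d}$, i.e.\ $\mu = m_{x,y} = m_{y,x}$, with $\mu \geq 0$ the standing hypothesis; conversely a positive vector functional of this form is patently weak-$*$ continuous. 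Alternatively, $x,y$ may be read off an intertwiner: $WC(\mu) = \chi(\mu) F^2_d$ by Theorem~\ref{DLP-intertwiner}, so $\xi_0 = Xv$ for some $X \in \chi(\mu)$ and $v \in F^2_d$, whence $\mu(L^\alpha) = \ip{Xv}{\Pi_\mu^\alpha Xv}_\mu = \ip{X^* X v}{L^\alpha v}_{F^2_d}$.

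I expect the only genuinely delicate point to be the first equivalence: transferring a positive functional between $\A + \A^*$ and $\A$ and manufacturing a weak-$*$ continuous — and positive — extension back to the Toeplitz system, where the triviality of $L^\infty_d \cap (L^\infty_d)^*$ (and, if one argues the positivity bookkeeping directly, the fact that positive elements of the Free Disk System are norm-limits of sums $p(L)^* p(L) = u(L) + u(L)^*$) is used. Everything else is a direct application of Theorems~\ref{DLP-intertwiner} and~\ref{ACrowiso} together with the Davidson--Pitts representation of weak-$*$ continuous functionals on $\A$.
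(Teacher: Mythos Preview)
Your chain of equivalences ``$\mu$ weak-$*$ continuous $\Leftrightarrow \mu|_{\A}$ weak-$*$ continuous $\Leftrightarrow \xi_0 \in WC(\mu) \Leftrightarrow WC(\mu) = F^2_d(\mu) \Leftrightarrow \Pi_\mu$ weak-$*$ continuous'' is correct, and each step is argued essentially as the paper does (cyclicity of $\xi_0$, closedness and $\Pi_\mu$-invariance of $WC(\mu)$, Theorem~\ref{ACrowiso}). Your $\rho$-based construction for ``$\mu|_{\A}$ weak-$*$ continuous $\Rightarrow$ $\mu$ weak-$*$ continuous on the Toeplitz system'' also works, but it is more elaborate than necessary: the paper sidesteps it by first producing the vector form $\mu = m_{f,g}$, which is manifestly WOT-continuous on all of $\mc{L}(F^2_d)$ and hence on $(\A + \A^*)^{-\mr{WOT}}$.

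The real gap is in the vector-functional claim $\mu = m_{x,y} = m_{y,x}$. Davidson--Pitts supplies $x,y \in F^2_d$ with $\mu(L^\alpha) = \ip{x}{L^\alpha y}$, and your self-adjointness computation correctly yields $\mu((L^\alpha)^*) = \ip{y}{(L^\alpha)^* x} = m_{y,x}((L^\alpha)^*)$. But this only says $\mu|_{\A} = m_{x,y}|_{\A}$ and $\mu|_{\A^*} = m_{y,x}|_{\A^*}$; it does \emph{not} imply $m_{x,y} = m_{y,x}$, nor that the single vector functional $m_{x,y}$ on $\A + \A^*$ coincides with $\mu$. For that one needs $m_{x,y}$ itself to be self-adjoint, i.e.\ $\ip{x}{(L^\alpha)^* y} = \ip{y}{(L^\alpha)^* x}$ for all $\alpha$, which is not automatic from the Davidson--Pitts representation. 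Your intertwiner alternative, reaching $\mu(L^\alpha) = \ip{X^*Xv}{L^\alpha v}$, is precisely the paper's starting point, but you stop one step short: the paper invokes Kennedy's $L$-Toeplitz factorization (Lemma~\ref{MK-factor}) to write the bounded positive $L$-Toeplitz operator $X^*X$ as $F(R)^*G(R) = G(R)^*F(R)$ with $F,G \in R^\infty_d$. Setting $f := F(R)v$ and $g := G(R)v$ then gives $m_{f,g}(L^\alpha) = \ip{G(R)^*F(R)v}{L^\alpha v} = \ip{F(R)^*G(R)v}{L^\alpha v} = m_{g,f}(L^\alpha)$, so that $\mu = m_{f,g} = m_{g,f}$ holds on $\A$ and hence, by self-adjointness, on all of $\A + \A^*$. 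Without this factorization step your argument establishes only the weaker statement that $\mu|_{\A}$ is a vector functional.
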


Any strictly positive $L-$Toeplitz operator which is bounded above and below has an analytic outer factorization:
\begin{thm}{ (Popescu \cite[Theorem 1.5]{Pop-entropy})} \label{Popfactor}
Any positive $L-$Toeplitz $T \in \mc{L} (F^2 _d )$ which is bounded below, $T \geq \eps I$ can be factored as:
$T = F (R) ^* F (R)$ for some outer $F \in R^\infty _d $.  
\end{thm}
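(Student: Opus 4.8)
The plan is to recast the factorization as a change-of-inner-product problem and then feed it into the noncommutative Wold decomposition together with the identification of the commutant of the left free shift. Since $T \in \mc{L}(F^2_d)$ is bounded and $L$-Toeplitz we have $L_k^* T L_j = \delta_{k,j}T$, and since $\eps I \le T \le \|T\| I$ the form $\langle x,y\rangle_T := \langle Tx, y\rangle_{F^2_d}$ is an inner product whose norm is equivalent to $\|\cdot\|_{F^2_d}$. The relation $L_k^* T L_j = \delta_{k,j}T$ says precisely that $\langle L_k x, L_j y\rangle_T = \delta_{k,j}\langle x,y\rangle_T$, i.e.\ $L = (L_1,\dots,L_d)$ is again a row isometry on the Hilbert space $\mathcal{H}_T := (F^2_d, \langle\cdot,\cdot\rangle_T)$.

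First I would verify that $L$ acting on $\mathcal{H}_T$ is a \emph{pure} row isometry with one-dimensional wandering space. Purity: for each $n$ the subspace $\bigvee_{|\alpha|=n} L^\alpha F^2_d$ is, as a linear subspace of the underlying vector space $F^2_d$, exactly the set of power series whose coefficients of order $<n$ vanish, and this is unchanged if we pass to the equivalent $T$-norm; hence $\bigcap_n \bigvee_{|\alpha|=n} L^\alpha F^2_d = \{0\}$, which is purity of $L$ on $\mathcal{H}_T$. For the wandering space: $\overline{\mathrm{span}}\{L_k f : 1\le k\le d,\ f\in F^2_d\}$ is the closed (in either norm) subspace of non-constant power series, and its $T$-orthogonal complement is $\{w \in F^2_d : \langle Tw, g\rangle = 0 \text{ for all non-constant } g\} = \{w : Tw \in \C\cdot 1\} = \C\, T^{-1}1$, which is one-dimensional.

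Next, by the noncommutative Wold decomposition for row isometries, a pure row isometry with one-dimensional wandering space is unitarily equivalent to the left free shift $L$ on $F^2_d$; explicitly, normalizing a wandering vector $w_0 \in \C\, T^{-1}1$ (so $\|w_0\|_T = 1$) and extending $L^\alpha w_0 \mapsto L^\alpha 1$ over $\alpha \in \F^d$ yields a unitary $W : \mathcal{H}_T \to F^2_d$ with $W L_k = L_k W$ for all $k$. Because the two norms are equivalent, $W$ is a bounded operator on $F^2_d$; being bounded and commuting with every $L_k$, it lies in the commutant of the left free shift, which is $R^\infty_d$, so $W = F(R) = M^R_F$ for some $F \in R^\infty_d$. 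Unitarity of $W$ reads $\langle Wx, Wy\rangle_{F^2_d} = \langle x,y\rangle_T = \langle Tx, y\rangle_{F^2_d}$, that is $W^*W = T$, whence $T = F(R)^* F(R)$. Finally $W$ is onto, so $M^R_F$ has dense range and $F$ is outer.

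The substance is carried by the two black boxes — the noncommutative Wold decomposition and the commutant identification $\{L_1,\dots,L_d\}' = R^\infty_d$ — both standard in this setting, so there is no serious computational obstacle. The only point requiring care is the transfer of purity and of the dimension of the wandering subspace from $F^2_d$ to $\mathcal{H}_T$: this is where the two-sided bound $\eps I \le T \le \|T\| I$ is essential, boundedness above to keep $W$ a genuine bounded (hence right-multiplier) operator and boundedness below to keep $W$ invertible, which is exactly what forces $F$ to be outer.
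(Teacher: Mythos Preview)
The paper does not prove this theorem; it merely quotes it as Popescu's result \cite[Theorem 1.5]{Pop-entropy}. Your argument is correct and is in fact the standard one: pass to the equivalent $T$-inner product, observe that $L$ remains a pure row isometry there with one-dimensional wandering subspace $\C\, T^{-1}1$, invoke the noncommutative Wold decomposition to obtain a unitary $W : \mathcal{H}_T \to F^2_d$ intertwining the $L_k$, and then use the commutant identification $\{L_1,\dots,L_d\}' = R^\infty_d$ to write $W = F(R)$, so that $T = W^*W = F(R)^*F(R)$ with $F$ outer since $W$ is bijective. The only cosmetic point is that outer in this setting usually means dense range rather than surjective, but your $W$ is genuinely invertible here, which is stronger and certainly suffices.
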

If $T\geq 0$ is an arbitrary positive semi-definite $L-$Toeplitz operator, it is still possible to obtain an asymmetric factorization $T = F(R) ^* G(R) = G(R) ^* F(R)$ with $F,G \in R^\infty _d$:
\begin{lemma}{ (\cite[Lemma 3.2, Lemma 3.3]{MK-wand})} \label{MK-factor}
If $d \geq 2$, $R^\infty _d + (R^\infty _d) ^*$ is precisely equal to the set of bounded $L-$Toeplitz operators and any bounded $L-$Toeplitz operator, $T$, can be factored as $T = F(R) ^* G(R)$ for $F,G \in R^\infty _d$ which are bounded below. If $T \geq 0$, and $A(R) ^* A(R) = I +T$, one can choose 
$$ F(R) := R_1 A(R) + R_2, \quad \mbox{and} \quad G(R) = R_1 A(R) - R_2, $$ so that 
$$ F(R) ^* F(R) = G(R) ^* G(R) = 2I + T \geq 2 I, $$ and 
$$ F(R) ^* G(R) = I + T - I = T \geq 0. $$ 
\end{lemma}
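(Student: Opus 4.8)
The plan is to treat the two assertions of the lemma separately: first the identification of the bounded $L$-Toeplitz operators with $R^\infty _d + (R^\infty _d )^*$, and then the two factorisations, which become short computations with row isometries once that identification is available. The easy half of the identification is immediate: each $F(R) \in R^\infty _d$ commutes with every left shift $L_j$ (left and right multiplications commute, and $R^\infty _d$ is the $WOT$-closed algebra generated by the $R_k$), so $L_i ^* F(R) L_j = L_i ^* L_j F(R) = \delta _{i,j} F(R)$ using $L_i ^* L_j = \delta _{i,j} I$; taking adjoints shows $F(R) ^*$ is $L$-Toeplitz too, and since the bounded $L$-Toeplitz operators form a linear subspace containing $I$, the whole sum $R^\infty _d + (R^\infty _d )^*$ consists of bounded $L$-Toeplitz operators.

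The substantive point, and the place where the hypothesis $d \geq 2$ is essential, is the reverse inclusion: every bounded $L$-Toeplitz operator $T$ lies in $R^\infty _d + (R^\infty _d )^*$. (For $d = 1$ this is false: the bounded $L$-Toeplitz operators are exactly the classical Toeplitz operators $\{ T_\vi \ | \ \vi \in L^\infty (\partial \D ) \}$, while $R^\infty _1 + (R^\infty _1 )^*$ corresponds only to the proper, non-closed subspace $H^\infty (\D ) + \ov{H^\infty (\D )} \subsetneq L^\infty (\partial \D )$.) I would deduce the $d \geq 2$ statement from Popescu's structure theory of multi-Toeplitz operators on Fock space \cite{Pop-entropy}: in the orthonormal basis $\{ e_\alpha \}$ the relation $L_i ^* T L_j = \delta _{i,j} T$ forces $\ip{e_{i\alpha}}{T e_{j\beta}} = \delta _{i,j} \ip{e_\alpha}{T e_\beta}$, so that $T$ is completely determined by the two vectors $T e_\emptyset$, $T^* e_\emptyset$ and splits formally into an ``analytic'' and a ``co-analytic'' part; Popescu's theorem is precisely the assertion that (for $d \geq 2$) these two parts are honest bounded operators of norm at most $\|T\|$, lying in $R^\infty _d$ and $(R^\infty _d )^*$ respectively, recovered as limits of noncommutative Fej\'er/Poisson means on $F^2 _d$. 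This is the one external ingredient I would quote rather than reprove, and it is the main obstacle.

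Granting the identification, for a general bounded $L$-Toeplitz operator write $T = X(R) ^* + Y(R)$ with $X, Y \in R^\infty _d$. Using $d \geq 2$, so that $R_1 , R_2 \in R^\infty _d$, and that $R^\infty _d$ is an algebra, set
$$ F(R) := R_1 X(R) + R_2 , \qquad G(R) := R_1 + R_2 Y(R) , $$
both in $R^\infty _d$. Using only that $(R_1 , \dots , R_d )$ is a row isometry, i.e. $R_i ^* R_j = \delta _{i,j} I$, one computes $F(R) ^* G(R) = X(R) ^* + Y(R) = T$, together with $F(R) ^* F(R) = I + X(R)^* X(R) \geq I$ and $G(R) ^* G(R) = I + Y(R)^* Y(R) \geq I$, so $F(R)$ and $G(R)$ are bounded below; this proves the general factorisation.

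Finally, for the sharper symmetric factorisation when $T \geq 0$: then $I + T$ is bounded, $L$-Toeplitz (a sum of $L$-Toeplitz operators, including $I$), and bounded below since $I + T \geq I$, so by Theorem \ref{Popfactor} there is an outer $A \in R^\infty _d$ with $A(R) ^* A(R) = I + T$, which is then automatically bounded below. Taking $F(R) := R_1 A(R) + R_2$ and $G(R) := R_1 A(R) - R_2$ (both in $R^\infty _d$), the same identities $R_i ^* R_j = \delta _{i,j} I$ give
$$ F(R) ^* F(R) = G(R) ^* G(R) = A(R) ^* A(R) + I = 2I + T \geq 2I $$
and
$$ F(R) ^* G(R) = G(R) ^* F(R) = A(R) ^* A(R) - I = T , $$
which is exactly the asserted formula, with $F(R), G(R)$ bounded below by $\sqrt{2}$. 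To summarise, everything except the reverse inclusion of the first assertion reduces to the row-isometry identities $R_i ^* R_j = \delta _{i,j} I$ and an appeal to Popescu's factorisation theorem; the one genuinely non-trivial input is the rigidity fact that, for $d \geq 2$, there are no bounded $L$-Toeplitz operators outside $R^\infty _d + (R^\infty _d )^*$.
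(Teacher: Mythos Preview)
Your proposal is correct and matches the paper's treatment, which does not give a separate proof: the lemma is quoted from Kennedy \cite{MK-wand}, with the verification of the $T\geq 0$ factorisation formulas simply written into the statement. Your computations for that case are identical to the paper's, and your general factorisation $F(R)=R_1X(R)+R_2$, $G(R)=R_1+R_2Y(R)$ is a valid (and slightly cleaner) variant of the same row-isometry trick.

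One minor point of attribution: you credit the reverse inclusion---that for $d\geq 2$ every bounded $L$-Toeplitz operator lies in $R^\infty_d+(R^\infty_d)^*$---to Popescu's multi-Toeplitz structure theory. Popescu indeed provides the Fourier-type expansion and the Ces\`aro/Poisson approximation machinery, but the specific observation that for $d\geq 2$ the analytic and co-analytic parts are each individually bounded by $\|T\|$ (so that the sum $R^\infty_d+(R^\infty_d)^*$ is already norm-closed and exhausts the $L$-Toeplitz operators) is due to Kennedy \cite{MK-wand}, which is why the paper cites that reference. This does not affect the mathematics of your argument.
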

\begin{proof}{ (of Corollary \ref{ACfun})}
If $\mu \in (\A ^\dag ) _+$ is weak$-*$ continuous, then by \cite[Theorem 2.10]{DP-inv}, it is given by a vector state on the Fock space, $\mu = m_{x,y}$ for $x,y \in F^2 _d$. Alternatively, if $\mu$ is weak$-*$ continuous, then by the GNS representation, 
$$ \mu (L^\alpha ) = \ip{I + N _\mu}{ \Pi _\mu ^\alpha (I +  N _\mu )} _\mu, $$ so that by Definition \ref{WCdef}, $I + N _\mu$ is a weak$-*$ continuous vector for $\Pi _\mu$. Theorem \ref{DLP-intertwiner} then implies that there is a bounded intertwiner, $X : F^2 _d \rightarrow F^2 _d (\mu)$ and a vector $y \in F^2 _d$ so that 
$$ I + N _\mu = X y. $$ Hence, 
\ba \mu (a) & = & \ip{Xy}{\pi _\mu (a) Xy} _{\mu} \nn \\
& = & \ip{y}{X^* X a(L) y}_{F^2 _d}, \nn \ea and since $X L^\alpha = \Pi _\mu ^\alpha X$ is an intertwiner, $X^*X =T \geq 0$ is a bounded positive semi-definite $L-$Toeplitz operator. By Lemma \ref{MK-factor}, there are $F, G \in R^\infty _d$ so that $F(R) ^* G(R) = X^*X$. Setting $f := F(R) y$ and $g:=G(R) y$, we obtain
$$ \mu ( a ) = \ip{f}{a(L) g} _{F^2 _d} =m_{f,g} (a), $$ and $\mu$ is a vector state on the Fock Space. Conversely any positive vector state on the Fock Space is clearly weak$-*$ continuous.

If $\mu = m_{x,y}$ is weak$-*$ continuous (and positive), it is clear that the map $\Pi _k \mapsto L_k$ extends to a weak$-*$ homeomorphism since this is a WOT and hence weak$-*$ continuous functional on $\mc{L} (F^2 _d)$. Hence $\Pi _\mu$ is weak$-*$ continuous. 

If $\Pi _\mu$ is weak$-*$ continuous, Theorem \ref{ACrowiso} implies that $F^2 _d (\mu ) = WC (\Pi _\mu )$ so that every $h \in F^2 _d (\mu )$ is weak$-*$ continuous for $\Pi _\mu$. In particular, since $I + N _\mu$ is a weak$-*$ continuous vector for $\Pi _\mu$, we can repeat the above argument to show that $\mu = m_{f,g}$ is a vector state, hence weak$-*$ continuous.
\end{proof}

\begin{lemma} \label{domAC}
The positive cone of all weak$-*$ continuous NC measures $\La \in (\A ^\dag ) _+$ is \emph{hereditary}: If $\la, \La \in (\A ^\dag ) _+$, $\La $ is weak$-*$ continuous and $\la \leq \La$ then $\la $ is also weak$-*$continuous
\end{lemma}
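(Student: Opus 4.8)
The plan is to combine the intertwiner description of weak$-*$ continuity (Theorem~\ref{DLP-intertwiner}) with the domination embedding of Lemma~\ref{embeddings}. Since $\La$ is a weak$-*$ continuous NC measure, its restriction to $\A$ is a weak$-*$ continuous functional, and because $\La(L^\alpha) = \ip{I + N_\La}{\Pi_\La ^\alpha (I+N_\La)} _\La$ by the GNS construction, Definition~\ref{WCdef} says that the cyclic vector $I + N_\La$ is a weak$-*$ continuous vector for $\Pi_\La$. By Theorem~\ref{DLP-intertwiner}, $WC(\Pi_\La) = \chi(\Pi_\La) F^2 _d$, so there is a bounded intertwiner $X \in \chi(\Pi_\La)$, i.e. $X L^\alpha = \Pi_\La ^\alpha X$, and a vector $y \in F^2 _d$ with $I + N_\La = Xy$.

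Since $\la \leq \La$, Lemma~\ref{embeddings} supplies a contractive embedding $E_\la : F^2 _d (\La) \hookrightarrow F^2 _d (\la)$ determined by $E_\la(p(L) + N_\La) = p(L) + N_\la$ for $p \in \fp$. The key observation is that $E_\la$ intertwines the two GNS row isometries: on the dense set of polynomial classes one has $E_\la \Pi_\La ^\alpha (p(L)+N_\La) = L^\alpha p(L) + N_\la = \Pi_\la ^\alpha E_\la(p(L) + N_\La)$, and this extends to all of $F^2 _d(\La)$ by boundedness. Consequently $Y := E_\la X : F^2 _d \to F^2 _d(\la)$ is again a bounded intertwiner, since $Y L^\alpha = E_\la \Pi_\La ^\alpha X = \Pi_\la ^\alpha E_\la X = \Pi_\la ^\alpha Y$, and $I + N_\la = E_\la(I+N_\La) = E_\la X y = Yy$. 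Thus $I + N_\la \in \chi(\Pi_\la) F^2 _d = WC(\Pi_\la)$, i.e. the GNS cyclic vector of $\la$ is a weak$-*$ continuous vector for $\Pi_\la$.

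It remains to upgrade this to weak$-*$ continuity of $\la$ itself. Since $WC(\Pi_\la)$ is a closed, $\Pi_\la$-invariant subspace (Theorem~\ref{DLP-intertwiner}) containing the cyclic vector $I + N_\la$, it contains every $\Pi_\la ^\alpha(I + N_\la) = L^\alpha + N_\la$ and hence their closed span, which is all of $F^2 _d(\la)$. Therefore $F^2 _d(\la) = WC(\Pi_\la)$, so $\Pi_\la$ is a weak$-*$ continuous row isometry by Theorem~\ref{ACrowiso}, and Corollary~\ref{ACfun} then gives that $\la$ is a weak$-*$ continuous NC measure, as desired.

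All the computations above are routine; the substantive points are the verification that the domination embedding $E_\la$ is an intertwiner of GNS row isometries --- so that composing it with $X$ yields an intertwiner into $F^2 _d(\la)$ --- and the appeal to the structure theory (Theorems~\ref{DLP-intertwiner} and~\ref{ACrowiso}, Corollary~\ref{ACfun}) needed to pass from ``$I + N_\la$ is a weak$-*$ continuous vector'' to ``$\la$ is weak$-*$ continuous.'' An essentially equivalent alternative would be to note that $T := Y^* Y$ is a bounded positive $L$-Toeplitz operator on $F^2 _d$, factor it as $T = F(R)^* G(R)$ with $F, G \in R^\infty _d$ via Lemma~\ref{MK-factor}, and use the commutation of left and right multipliers to exhibit $\la = m_{g,f}$ directly as a positive vector functional on the Fock space, as in the proof of Corollary~\ref{ACfun}.
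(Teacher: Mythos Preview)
Your proof is correct and shares the same opening move as the paper: use Theorem~\ref{DLP-intertwiner} to write $I+N_\La = Xy$ for a bounded intertwiner $X$, then exploit the domination $\la \leq \La$. The paper proceeds from there by representing $\la$ inside $F^2_d(\La)$ via the $\La$-Toeplitz contraction $D$ (so that $\la(a) = \ip{y}{X^*DX\,a(L)y}_{F^2}$), observing that $X^*DX$ is bounded $L$-Toeplitz, and factoring it via Lemma~\ref{MK-factor} to exhibit $\la$ directly as a vector functional $m_{f,g}$. Your primary route instead composes $X$ with the embedding $E_\la$ to get an intertwiner $Y = E_\la X$ into $F^2_d(\la)$, deduces $WC(\Pi_\la) = F^2_d(\la)$ from cyclicity, and then invokes the heavier structure theory (Theorem~\ref{ACrowiso} and Corollary~\ref{ACfun}) to finish. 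Since $D = E_\la^* E_\la$, your $Y^*Y$ is exactly the paper's $X^*DX$, and the alternative you sketch in your final paragraph is precisely the paper's argument. The paper's route is slightly more self-contained (it avoids the appeal to Theorem~\ref{ACrowiso}, which bundles results of Davidson--Yang and Kennedy), while yours is a bit more structural; both are perfectly valid.
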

\begin{proof}
If $\La$ is weak$-*$ continuous, then by Theorem \ref{DLP-intertwiner}, there is an intertwiner $X$ and a $y \in F^2 _d$ so that $X y = 1 \in F^2 _d (\La )$ and
$\La (a) = \ip{y}{X^* X a(L) y } _{F^2} = \ip{Xy}{\pi _\La (a) Xy} _{\La }$. Now, assuming that $\la \leq \La$, there is a positive $\La$-Toeplitz contraction $D$ (\emph{i.e.} $\pi _\La (L_k) ^* D \pi _\La (L_j ) = \delta _{k,j} D$) so that $$ \la (a) = \ip{1}{D \pi _{\La } (a) 1} _{\La} = \ip{Xy}{D \pi _\La (a) Xy} _{\La} = \ip{y}{X^* D X a(L) y } _{F^2}. $$
Since $D$ is $\La$-Toeplitz and $X$ is an intertwiner, $X^* D X$ is $L$-Toeplitz, and by Lemma \ref{MK-factor}, $X^* D X = X(R) ^* Y(R)$ for some $X(R), Y(R) \in R^\infty _d$. It follows that $\la =m_{f,g}$ is also a vector state, with $f = X(R) y$, $g= Y(R) y$, so that it is also weak$-*$ continuous.
\end{proof}

\begin{remark}
A natural question is whether any positive weak$-*$ continuous functional $\mu = m_{x,y}$ on the Free Disk System necessarily has the symmetric form $\mu = m_h := m_{h,h}$ for some $h \in F ^2 _d$. We will say that any such positive weak$-*$ continuous non-commutative measure is \emph{asymmetric} if there is no $h \in F^2 _d$ so that $\mu = m_{x,y} = m_{h,h}$, and \emph{symmetric} if $x=y$, and we write $m_x = m_{x,x}$ in this case. It is a curious fact that if $\mu$ is of Cuntz type$-L$ then no such $h$ exists, so that $\mu$ is asymmetric, see Corollary \ref{ACasym}.
\end{remark}

\begin{thm} \label{symistypeL}
If $\mu = m_x$ is symmetric and weak$-*$ continuous, then $\mu$ is type$-L$. Assuming that $x = x(R)1$ where $x(R)$ is outer, the distance from $I + N_\mu$ to $F^2 _d (m_x) _0$ is $|x (0) |$.
\end{thm}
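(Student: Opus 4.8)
The plan is to realize the GNS space $F^2_d(\mu)$ isometrically inside the Fock space. Define $W : F^2_d(\mu) \to F^2_d$ on the dense subspace spanned by the classes $L^\alpha + N_\mu$ by $W(L^\alpha + N_\mu) := L^\alpha x$ for $\alpha \in \F^d$, extended linearly and by continuity. Because $\mu = m_x$ is the vector functional $T \mapsto \ip{x}{Tx}_{F^2_d}$ on the Free Disk System $\A + \A^*$ — note that $p(L)^* q(L)$ really does lie in $\A + \A^*$, since $L_k^* L_j = \delta_{k,j}I$ forces any such product to collapse to a single word in the $L$'s or in the $L^*$'s — we obtain
\[
\ip{L^\alpha + N_\mu}{L^\beta + N_\mu}_\mu = \mu\big((L^\alpha)^* L^\beta\big) = \ip{L^\alpha x}{L^\beta x}_{F^2_d},
\]
so $W$ is a well-defined isometry whose range is the $L$-invariant subspace $\mc{M} := \bigvee_{\alpha \in \F^d} L^\alpha x$. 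Checking on the dense subspace gives $W\pi_\mu(L_k) = L_k W$, so $W$ implements a unitary equivalence of $\Pi_\mu$ with $L|_{\mc{M}}$. The restriction of the left free shift to the invariant subspace $\mc{M}$ is again a pure row isometry: each $L_k|_{\mc{M}}$ is an isometry, these have pairwise orthogonal ranges, and $\bigcap_n \bigvee_{|\alpha|=n} L^\alpha \mc{M} \subseteq \bigcap_n \bigvee_{|\alpha| \geq n} \C e_\alpha = \{0\}$. By the Wold decomposition of a row isometry (Popescu) a pure row isometry is unitarily equivalent to $L \otimes I_\K$, with $\K := \mc{M} \ominus \bigvee_k L_k \mc{M}$ its wandering subspace, so $\Pi_\mu$ is type-$L$. (The hypotheses ``symmetric'' and ``weak-$*$ continuous'' together just amount to $\mu = m_x$, and such an $m_x$ is automatically a positive, weak-$*$ continuous NC measure.)

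For the distance formula, assume moreover that $x = x(R)1$ with $x(R)$ outer. Since left and right multipliers of $F^2_d$ commute, $L^\alpha x = L^\alpha M^R_x 1 = M^R_x L^\alpha 1 = x(R) e_\alpha$, so $\mc{M} = \bigvee_\alpha x(R) e_\alpha = \ov{\ran{M^R_x}} = F^2_d$, the last equality being exactly the statement that $x(R)$ is outer. Thus $W$ is onto, $\Pi_\mu \cong L$, and the wandering subspace collapses to $\K = F^2_d \ominus \bigvee_k L_k F^2_d = \C 1$. Now transport the relevant vectors through the unitary $W$: taking $\alpha = \emptyset$ gives $W(I + N_\mu) = x$, and, since every nonempty word has the form $k\beta$,
\[
W\big(F^2_d(\mu)_0\big) = W\Big(\bigvee_{\alpha \neq \emptyset} L^\alpha + N_\mu\Big) = \bigvee_k L_k \mc{M} = \bigvee_k L_k F^2_d = F^2_d \ominus \C 1.
\]
As $W$ is an isometry it preserves distances, and so, recalling that $F^2_d(m_x)_0 = F^2_d(\mu)_0$ and that the empty-word Fock coefficient $\hat{x}_\emptyset$ of $x$ equals its value $x(0)$ at $0 \in \B^d_1$,
\[
\mr{dist}\big(I + N_\mu,\, F^2_d(\mu)_0\big) = \mr{dist}\big(x,\, F^2_d \ominus \C 1\big) = \|P_{\C 1} x\| = |\ip{1}{x}_{F^2_d}| = |\hat{x}_\emptyset| = |x(0)|.
\]

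Almost everything here is bookkeeping once the intertwining isometry $W$ is in hand; the two load-bearing points are (i) the identity $\mc{M} = F^2_d$, which is exactly where the outer hypothesis is used and is what forces the wandering subspace down to $\C 1$, and (ii) the identification $W(F^2_d(\mu)_0) = \bigvee_k L_k F^2_d$, which converts the abstract distance into the explicit number $|x(0)|$. The only place I anticipate needing care is domain bookkeeping if one wishes to permit $x(R)$ to be an unbounded right multiplier; for a bounded right multiplier $x(R)$ — the case covered by the notion of ``outer'' recalled earlier — one simply has $\mc{M} = \ov{x(R) F^2_d}$ by density of the free polynomials and boundedness of $M^R_x$, and the argument goes through verbatim.
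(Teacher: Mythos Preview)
Your proof is correct and follows essentially the same approach as the paper: you define the same intertwining isometry $W$ (the paper calls it $U_x$), identify $\Pi_\mu$ with the restriction of $L$ to the $L$-cyclic subspace generated by $x$, and compute the distance by transporting through $W$ and projecting onto the constant vector $1$. The one small difference is that for the type-$L$ claim the paper first invokes the inner--outer factorization (Remark~\ref{wlogouter}) to assume without loss of generality that $x$ is $L$-cyclic, so that $W$ is onto and $\Pi_\mu \cong L$ directly; you instead argue that $L|_{\mc{M}}$ is pure for any $L$-invariant $\mc{M}$ and apply the Wold decomposition, which avoids the reduction. Both are fine, and your distance computation is exactly the paper's $\Delta_x$ calculation rephrased.
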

Recall here that $F^2 _d (m_x )_0$ denotes the closed linear span of the non-constant free monomials in $F^2 _d (m_x)$, see Equation (\ref{nconmon}). 

\begin{remark} \label{wlogouter}
There is no loss in generality in assuming that $x$ is outer. By Davidson-Pitts, any $x \in F^2 _d$ factors as $x= \Theta (R) y$, where $y \in F^2 _d$ is $L-$cyclic, \emph{i.e.} right-outer, and $\Theta (R) =  M^R _{\Theta ^\dag}$ is right-inner, \emph{i.e.} an isometry, so that for any $a_1, a_2 \in \A$,
\ba m_x (a_1 ^* a_2 ) & = & \ip{a_1(L) x}{a_2 (L) x}_{F^2} \nn \\
& = & \ip{a_1 (L) y}{\Theta (R) ^* \Theta (R) a_2 (L) y}_{F^2} \nn \\
& = & \ip{a_1 (L) y}{a_2 (L) y}_{F^2} = m_y (a_1 ^* a_2 ). \nn \ea 
\end{remark}
\begin{proof}
Define $U _x : F^2 _d (\mu ) \rightarrow F^2 _d$ by 
$$ U _x ( L^\alpha + N_\mu ) := L^\alpha x \in F^2 _d. $$ This is an isometry, which is onto since $x$ is $L-$cyclic (since $x(R)$ is outer). It follows that $U _x \Pi _\mu U _x ^* = L$, so that $\Pi _\mu$ is pure type$-L$, and hence $\Pi _\mu$ is not Cuntz.

However, we can say more: Consider,
$$  \Delta _x  :=  \inf _{p(0) =0} \| (I - p (L ) ) + N_\mu \| _\mu  ^2, $$ this is the distance (squared) from $I +N _\mu$ to $F^2 _d (\mu ) _0 = \bigvee _{\alpha \neq \emptyset} (L^\alpha + N _\mu ). $ 
Hence, $\Delta _x =0$ if and only if the distance from $I + N_\mu$ to $F^2 _d (\mu ) _0$ vanishes, \emph{i.e.} if and only if $\mu$ is column-extreme in the sense of \cite[Definition 6.1, Theorem 6.4]{JM-freeCE}. Then, calculating as in \cite[Theorem 1.3]{Pop-entropy},
\ba  \Delta _x & = & \inf _{p(0) =0}  \| (I - p (\Pi _\mu ) ) (I + N _\mu ) \| _\mu ^2  \nn \\
& = &  \inf _{p (0)  = 0 } \| (I - p(L)  ) x \| ^2 _{F^2 _d } \nn \\
& = & \inf _{\mbf{q} \in \C \{ \mf{z} _1 , .. \mf{z} _d \} \otimes \C ^d} 
\| x  - L \mbf{q}(L) x  \| ^2 _{F^2 _d} \nn \\
& = & \inf _{\mbf{y} \in F^2 _d \otimes \C ^d } \| x - L \mbf{y} \| ^2 _{F^2 _d} \quad \quad \mbox{(Since $x$ is cyclic.)} \nn \\
& = & \| P _{\ran{L}} ^\perp x \| ^2 _{F^2 _d} \nn \\
& = & \| P _{\{1 \} } x \| _{F^2 _d} ^2 \nn \\
& = & |x (0) | ^2. \nn \ea 
\end{proof}

\begin{cor} \label{ACasym}
    If $\mu \in ( \A ^\dag ) _+$ is column-extreme (\emph{i.e.} $\Pi _\mu$ is Cuntz) and weak$-*$ continuous, then there is no $x \in F^2 _d$ so that $\mu = m_x$.
\end{cor}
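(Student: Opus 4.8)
The plan is to read the statement off Theorem \ref{symistypeL}, arguing by contradiction. So suppose $\mu = m_x = m_{x,x}$ for some $x \in F^2 _d$. The first step is to reduce to the case where $x$ is $L$-cyclic (right-outer): by Remark \ref{wlogouter} one may write $x = \Theta(R) y$ with $\Theta(R)$ a right-inner (isometric) multiplier and $y \in F^2 _d$ right-outer, and then $m_x = m_y$; replacing $x$ by $y$, we may assume $x$ is outer.

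The second step is to apply Theorem \ref{symistypeL} to $\mu = m_x$, which is now symmetric, weak-$*$ continuous, with $x$ outer: it follows that $\Pi _\mu$ is type-$L$ in the sense of Definition \ref{rowisotype}, i.e. unitarily equivalent to $L \otimes I_\K$ for some Hilbert space $\K$ (in fact, inspecting the proof, $\Pi _\mu$ is unitarily equivalent to $L$ itself via the map $U_x$). But $\ran{L \otimes I_\K} = \left( F^2 _d \ominus \C 1 \right) \otimes \K$ is a proper subspace of $F^2 _d \otimes \K$ whenever $\K \neq \{0\}$, so $\Pi _\mu$ is not onto, hence not a Cuntz unitary. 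This contradicts the hypothesis that $\mu$ is column-extreme, i.e. that $\Pi _\mu$ is a Cuntz unitary. The only escape, $\K = \{0\}$, forces $F^2 _d (\mu ) = \{0\}$, i.e. $\mu = 0$, which is excluded.

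An equivalent and slightly more hands-on route bypasses the structure theory of row isometries and instead uses the quantitative half of Theorem \ref{symistypeL}: with $x$ outer, the distance from $I + N _\mu$ to $F^2 _d (\mu ) _0$ equals $|x(0)|$, and $\mu$ being column-extreme means exactly that this distance is $0$; thus $x(0) = 0$, i.e. $x \in \ran{L}$. Since $\ran{L}$ is $L$-invariant, $\bigvee _{\alpha \in \F ^d} L^\alpha x \subseteq \ran{L} \subsetneq F^2 _d$, contradicting cyclicity of $x$ (again unless $x = 0$).

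I do not foresee any real difficulty here: the corollary is essentially immediate from Theorem \ref{symistypeL} once the passage to outer $x$ via Remark \ref{wlogouter} is made, so there is no single ``hard step''. The only point meriting a word of care is the degenerate measure $\mu = 0$, which must be understood as excluded from the statement (for it, $x = 0$ vacuously represents $\mu$).
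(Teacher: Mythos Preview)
Your proof is correct and follows exactly the approach implicit in the paper: the corollary is stated without proof because it is immediate from Theorem \ref{symistypeL} (via the outer reduction of Remark \ref{wlogouter}), and you have spelled out precisely that derivation. Your second route, using the distance formula $\Delta_x = |x(0)|^2$ to force $x \in \ran{L}$ and thereby contradict cyclicity, is a pleasant alternative that also works.
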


There are many examples of absolutely continuous and column-extreme $\mu \in (\A ^\dag ) _+$, see, \emph{e.g.} \cite[Example 2.11]{DLP-ncld}. (This provides an example of a cyclic and absolutely continuous Cuntz row isometry, which is therefore not unitarily equivalent to copies of the left free shift. The fact that it is cyclic implies that it is unitarily equivalent to the GNS row isometry of a Cuntz type$-L$ NC measure.)
\begin{cor} \label{symchar}
$\Pi _\mu$ is of pure type$-L$ if and only if $\mu = m_x$ is symmetric and weak$-*$ continuous.
\end{cor}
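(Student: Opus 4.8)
The plan is to prove the two directions separately. For the implication $\Leftarrow$, I would simply invoke Theorem~\ref{symistypeL}: if $\mu=m_x$ is symmetric and weak$-*$ continuous, then (after using Remark~\ref{wlogouter} to replace $x$ by its $L$-cyclic, i.e.\ right-outer, factor, which does not change $m_x$) the map $U_x\colon F^2_d(\mu)\to F^2_d$, $U_x(L^\alpha+N_\mu)=L^\alpha x$, constructed in that proof is a unitary intertwining $\Pi_\mu$ with $L$. Hence $\Pi_\mu\simeq L$; since $L$ is a pure row isometry (its component ranges do not span, so it has no Cuntz summand), $\Pi_\mu$ is pure type$-L$.

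For the converse $\Rightarrow$, suppose $\Pi_\mu$ is pure type$-L$, so there is a nonzero Hilbert space $\K$ and a unitary $W\colon F^2_d(\mu)\to F^2_d\otimes\K$ with $W(\Pi_\mu)_kW^*=L_k\otimes I_\K$ for $1\le k\le d$. The first key point is that the GNS construction makes $I+N_\mu$ a cyclic vector for $\Pi_\mu$: since $\A=\mr{Alg}(I,L)^{-\|\cdot\|}$, the vectors $\{\Pi_\mu^\alpha(I+N_\mu)\}_{\alpha\in\F^d}$ have dense span in $F^2_d(\mu)$. Hence $w:=W(I+N_\mu)$ is a cyclic vector for $L\otimes I_\K$. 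The second, and main, step is a ``vacuum-slice'' argument: writing $w=\sum_{\beta\in\F^d}e_\beta\otimes k_\beta$, for every nonempty word $\alpha$ the vector $(L\otimes I_\K)^\alpha w=\sum_\beta e_{\alpha\beta}\otimes k_\beta$ has zero $e_\emptyset$-component, so the orthogonal projection of $\bigvee_\alpha(L\otimes I_\K)^\alpha w$ onto $e_\emptyset\otimes\K$ is contained in $\C\,k_\emptyset$; cyclicity of $w$ then forces $e_\emptyset\otimes\K=\C\,k_\emptyset$, i.e.\ $\dim{\K}=1$.

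With $\dim{\K}=1$ we identify $F^2_d\otimes\K$ with $F^2_d$, so $W\colon F^2_d(\mu)\to F^2_d$ satisfies $W(\Pi_\mu)_kW^*=L_k$, and $x:=W(I+N_\mu)\in F^2_d$ is $L$-cyclic. Then for all $\alpha\in\F^d$,
$$\mu(L^\alpha)=\ip{I+N_\mu}{\Pi_\mu^\alpha(I+N_\mu)}_\mu=\ip{x}{L^\alpha x}_{F^2_d}=m_x(L^\alpha),$$
so $\mu=m_x$ is symmetric. Finally, $m_x$ is the restriction to the free disk system of the $\mr{WOT}$-continuous vector functional $T\mapsto\ip{x}{Tx}$ on $\mc{L}(F^2_d)$, hence weak$-*$ continuous (this is the easy half of Corollary~\ref{ACfun}).

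I expect the only genuinely non-formal step to be the vacuum-slice computation that pins down $\dim{\K}=1$ (equivalently: a type$-L$ row isometry of multiplicity greater than one is not cyclic); everything else is the defining property of the GNS construction together with Theorem~\ref{symistypeL}, Remark~\ref{wlogouter}, and Corollary~\ref{ACfun}.
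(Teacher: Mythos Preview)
Your proof is correct and follows essentially the same approach as the paper. The paper's proof simply asserts that since $\Pi_\mu$ has a cyclic vector it must be unitarily equivalent to a single copy of $L$, whereas you supply the vacuum-slice argument that actually establishes $\dim\K=1$; the remaining steps (transporting $I+N_\mu$ through the unitary to obtain $\mu=m_x$, and citing Theorem~\ref{symistypeL} for the reverse direction) are identical.
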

\begin{proof}
One direction is in the proof of the previous theorem, Theorem \ref{symistypeL}. Namely if $\mu = m_x$ then $\Pi _\mu$ is of type$-L$. 

Conversely if $\Pi _\mu$ is type$-L$, then $\Pi _\mu$ is unitarily equivalent to copies of $L$. But, since $\Pi _\mu$ has a cyclic vector, it is unitarily equivalent to $L$. If $U : F^2 _d \rightarrow F^2 _d (\mu )$ is the unitary so that $U L_k = \pi _\mu (L_k ) U$, then, choosing $h \in F^2 _d$ so that $Uh = I + N_\mu$ yields:
\ba \mu (L^\alpha ) & = & \ip{I+N\mu}{\Pi _\mu ^\alpha (I+ N _\mu )}_\mu \nn \\
& = & \ip{Uh}{\Pi _\mu ^\alpha Uh} _\mu \nn \\
& = & \ip{h}{L^\alpha h}_{F^2 _d} = m_h (L^\alpha ). \nn \ea
\end{proof}

\subsection{Type$-L$ NC measures: The Helson-Lowdenslager approach} \label{ACsect}

Given an NC measure $\mu \in (\A ^\dag ) _+$, let $P_0$ denote the orthogonal projection of $F^2 _d (\mu )$ onto 
$$ F^2 _d (\mu ) _0 = \bigvee _{\alpha \neq \emptyset} (L^\alpha + N_\mu). $$ The following lemma is motivated by \cite[Chapter 4, Section 1]{Hoff}:
\begin{lemma}
There is a constant $c^2 \geq 0$ so that 
$$ c^2  m (L^\alpha ) = \ip{(I-P_0) (I + N_\mu ) }{\Pi _\mu ^\alpha (I- P_0 ) (I+N _\mu )} _\mu, $$ where $m$ is (normalized) NC Lebesgue measure.
\end{lemma}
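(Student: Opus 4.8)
The plan is to set $c^2 := \| (I - P_0)(I + N_\mu) \|_\mu^2 \geq 0$ and to check the asserted identity one word at a time, using that NC Lebesgue measure is the vacuum state, so $m(L^\alpha) = \delta_{\alpha,\emptyset}$. Write $v := (I - P_0)(I + N_\mu) \in F^2_d(\mu)$; by the definition of $P_0$ this vector is orthogonal to $F^2_d(\mu)_0 = \bigvee_{\beta \neq \emptyset}(L^\beta + N_\mu)$. For the empty word the right-hand side of the claimed formula is $\ip{v}{v}_\mu = c^2 = c^2\, m(I)$, so there is nothing to prove there. Hence the entire content is to show that for every nonempty $\alpha \in \F^d$ the right-hand side vanishes, which matches $c^2\, m(L^\alpha) = 0$.

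The key step is the observation that, for a nonempty word $\alpha$, the range of the isometry $\Pi_\mu^\alpha$ is contained in $F^2_d(\mu)_0$. First I would recall that the non-commutative monomials $L^\beta + N_\mu$ ($\beta \in \F^d$) have dense linear span in $F^2_d(\mu)$, since free polynomials are dense in $\A$. On these spanning vectors one has $\Pi_\mu^\alpha(L^\beta + N_\mu) = L^{\alpha\beta} + N_\mu$, and $\alpha\beta$ is again a nonempty word, so this lies in $F^2_d(\mu)_0$. Because $\Pi_\mu^\alpha$ is a product of the isometries $(\Pi_\mu)_k$, hence itself an isometry and in particular bounded, it carries the closed span of the $L^\beta + N_\mu$ — all of $F^2_d(\mu)$ — into the closed subspace $F^2_d(\mu)_0$; equivalently $\ran{\Pi_\mu^\alpha} \subseteq \bigvee_\beta (L^{\alpha\beta} + N_\mu) \subseteq F^2_d(\mu)_0$ whenever $\alpha \neq \emptyset$.

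With this in hand the conclusion is immediate: for nonempty $\alpha$ we have $\Pi_\mu^\alpha v \in \ran{\Pi_\mu^\alpha} \subseteq F^2_d(\mu)_0$, while $v \perp F^2_d(\mu)_0$, so
$$ \ip{(I - P_0)(I + N_\mu)}{\Pi_\mu^\alpha (I - P_0)(I + N_\mu)}_\mu = \ip{v}{\Pi_\mu^\alpha v}_\mu = 0 = c^2\, m(L^\alpha). $$
Together with the empty-word case this gives the identity for all $\alpha \in \F^d$, with $c^2 = \| (I - P_0)(I + N_\mu) \|_\mu^2$; note that $c$ is exactly the distance in $F^2_d(\mu)$ from $I + N_\mu$ to $F^2_d(\mu)_0$, which also explains why $c$ may well be $0$ (it vanishes precisely when $I + N_\mu \in F^2_d(\mu)_0$, i.e.\ when $\mu$ is column-extreme).

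I do not anticipate a genuine obstacle. The only point that requires a moment's care is the density-plus-boundedness argument of the middle paragraph — that applying any nonempty word in the $\Pi_\mu$'s pushes the whole GNS space into the closed span of the non-constant monomials — and this is purely formal once one recalls that the monomials span $F^2_d(\mu)$ and that $\Pi_\mu^\alpha$ is an isometry. Everything else is bookkeeping with $m(L^\alpha) = \delta_{\alpha,\emptyset}$.
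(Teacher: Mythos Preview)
Your proof is correct and follows essentially the same line as the paper's: the paper records that $F^2_d(\mu)_0$ is $\Pi_\mu$-invariant (equivalently, as you phrase it, that $\ran{\Pi_\mu^\alpha}\subseteq F^2_d(\mu)_0$ for nonempty $\alpha$) and reads off the identity with $c=\|(I-P_0)(I+N_\mu)\|_\mu$. Your middle paragraph simply spells out this invariance in a bit more detail than the paper does.
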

\begin{proof}
This follows immediately from the fact that $F^2 _d (\mu ) _0$ is $\Pi _\mu -$invariant so that 
$$ \ip{(I-P_0) (I + N_\mu )}{\Pi _\mu ^\alpha (I- P_0 ) (I + N_\mu )} _\mu  =  \| (I - P_0 ) (I + N _\mu ) \| ^2 _\mu \delta _{\alpha , \emptyset } =   c ^2 m (L^\alpha), $$ with $c = \| (I - P_0 ) (I + N_\mu ) \| $.
\end{proof}
Define the co-isometry $W : F^2 _d (\mu ) \rightarrow F^2 _d (m ) = F^2 _d$ with initial space 
$$ \ker{W} ^\perp = \bigvee _{\alpha} \Pi _\mu ^\alpha (I - P_0 ) (I + N _\mu ), $$ 
\be W \Pi _\mu ^\alpha P_0 ^\perp (I + N_\mu ) = c L^\alpha + N_\mu. \ee 
\begin{prop}
The vector $P_0 ^\perp (I + N_\mu )$ is wandering for $\Pi _\mu$ so that 
$$ \ker{W} ^\perp = \bigoplus \{ \Pi _\mu ^\alpha P_0 ^\perp (I +N_\mu) \}. $$ The subspace $\ker{W} ^\perp$ is $\Pi _\mu-$reducing, the restriction
of $\Pi _\mu$ to $\ker{W} ^\perp $ is unitarily equivalent to $L$, and $W^* W = P_L$, the projection onto the type$-L$ part of $\mu$.
\end{prop}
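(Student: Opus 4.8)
The plan is to verify, in order, that $w := P_0^\perp (I + N_\mu)$ is wandering for $\Pi_\mu$, that the reducing subspace $\mathcal M := \bigvee_\alpha \Pi_\mu^\alpha w = \ker{W}^\perp$ carries exactly one copy of the left free shift, and that $\mathcal M$ is all of the type-$L$ part $F^2_d(\mu)_L$; the identity $W^*W = P_L$ then follows because $W$ is a partial isometry with initial space $\mathcal M$.

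First I would record that $F^2_d(\mu)_0 = \ran{\Pi_\mu}$: the component ranges $\ran{(\Pi_\mu)_k} = \bigvee_\alpha (L^{k\alpha}+N_\mu)$ are pairwise orthogonal and their closed span is $\bigvee_{\beta\neq\emptyset}(L^\beta+N_\mu)$. Hence $I-P_0 = P_0^\perp = I - \Pi_\mu\Pi_\mu^*$ and $w \in \ker{\Pi_\mu^*}$. Wandering is then the standard Wold-type computation: for distinct $\alpha,\beta\in\F^d$, if neither word is a prefix of the other then $\langle\Pi_\mu^\alpha w,\Pi_\mu^\beta w\rangle_\mu = 0$ because $(\Pi_\mu)_k^*(\Pi_\mu)_j = \delta_{k,j}I$, while if $\beta = \alpha\gamma$ with $\gamma\neq\emptyset$ (the remaining case, up to symmetry) then $\langle\Pi_\mu^\alpha w,\Pi_\mu^\beta w\rangle_\mu = \langle w,\Pi_\mu^\gamma w\rangle_\mu = 0$ since $\Pi_\mu^\gamma w\in\ran{\Pi_\mu}\perp w$. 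As $\|\Pi_\mu^\alpha w\|_\mu = \|w\|_\mu = c$, the family $\{\Pi_\mu^\alpha w\}_{\alpha\in\F^d}$ is orthogonal with common length $c$; this is precisely what makes the defining formula for $W$ consistent, forces $\ker{W}^\perp = \bigvee_\alpha\Pi_\mu^\alpha w = \bigoplus_\alpha\Pi_\mu^\alpha w$, and (when $c>0$) makes $W$ a co-isometry onto $F^2_d$. If $c=0$ then $w=0$ and, since the argument below shows $\ker{\Pi_\mu^*} = \overline{\C w}$ regardless, one gets $\ker{W}^\perp = \{0\} = F^2_d(\mu)_L$ and all assertions are trivial; so assume $c>0$.

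Next I would check that $\mathcal M = \bigoplus_\alpha\Pi_\mu^\alpha w$ reduces $\Pi_\mu$ and carries one copy of $L$. Invariance under each $(\Pi_\mu)_k$ is clear. For co-invariance, $(\Pi_\mu)_k^*\Pi_\mu^\alpha w = \Pi_\mu^{\alpha'}w \in \mathcal M$ when $\alpha = k\alpha'$, and $(\Pi_\mu)_k^*\Pi_\mu^\alpha w = 0$ when $\alpha = \emptyset$ (because $w\in\ker{\Pi_\mu^*}$) or when $\alpha$ begins with a letter $\neq k$; so $\mathcal M$ is $\Pi_\mu^*$-invariant, hence reducing. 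The map $U: F^2_d \to \mathcal M$ carrying the orthonormal basis $\{L^\alpha 1\}$ to the orthonormal basis $\{c^{-1}\Pi_\mu^\alpha w\}$ is unitary and satisfies $UL_k = (\Pi_\mu)_k U$, so $\Pi_\mu|_{\mathcal M}$ is unitarily equivalent to $L$ (indeed $W|_{\mathcal M} = U^{-1}$).

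Finally, and this is the only non-formal step, I would show $\mathcal M = F^2_d(\mu)_L$. Recall that $F^2_d(\mu)_L$, the type-$L$ summand in the Kennedy--Wold--Lebesgue decomposition of the row isometry $\Pi_\mu$, is the pure (shift) part of the Wold decomposition, namely $\bigoplus_\alpha\Pi_\mu^\alpha\ker{\Pi_\mu^*}$ with $\ker{\Pi_\mu^*} = \ran{P_0^\perp}$; so it suffices to prove $\ker{\Pi_\mu^*} = \overline{\C w}$. Certainly $w\in\ker{\Pi_\mu^*}$. Conversely, let $\eta\in\ker{\Pi_\mu^*}$ with $\eta\perp w$, and write $I+N_\mu = w + P_0(I+N_\mu)$ with $P_0(I+N_\mu)\in F^2_d(\mu)_0 = \ran{\Pi_\mu}$. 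For every $\alpha\in\F^d$, $\langle\eta,\Pi_\mu^\alpha(I+N_\mu)\rangle_\mu = \langle\eta,\Pi_\mu^\alpha w\rangle_\mu + \langle\eta,\Pi_\mu^\alpha P_0(I+N_\mu)\rangle_\mu$; the first summand vanishes since $\Pi_\mu^\alpha w\in\ran{\Pi_\mu}\perp\eta$ for $\alpha\neq\emptyset$ while $\langle\eta,w\rangle_\mu = 0$, and the second vanishes since $\Pi_\mu^\alpha P_0(I+N_\mu)\in\ran{\Pi_\mu}\perp\eta$ for $\alpha\neq\emptyset$ while $\langle\eta,P_0(I+N_\mu)\rangle_\mu = \langle P_0\eta,I+N_\mu\rangle_\mu = 0$ as $\eta\perp F^2_d(\mu)_0$. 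Since $I+N_\mu$ is cyclic for $\Pi_\mu$ (polynomials in $L$ being dense in $\A$), $\eta = 0$; hence $\ker{\Pi_\mu^*} = \overline{\C w}$. Therefore $F^2_d(\mu)_L = \bigoplus_\alpha\Pi_\mu^\alpha\overline{\C w} = \bigvee_\alpha\Pi_\mu^\alpha w = \mathcal M$, and $W^*W$, the orthogonal projection onto $\mathcal M = \ker{W}^\perp$, equals $P_L$. The crux of the whole argument is exactly this one-dimensionality of $\ker{\Pi_\mu^*}$ — that a cyclic row isometry contains at most one copy of $L$ in its Wold decomposition — which is the only place the cyclicity of $I+N_\mu$ is used; everything else is bookkeeping with the Wold decomposition.
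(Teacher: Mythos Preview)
Your proof is correct and takes essentially the same route as the paper: verify that $w$ is wandering, show the span $\mathcal M$ is $\Pi_\mu$-reducing, and use cyclicity of $I+N_\mu$ to force the wandering subspace to be one-dimensional, giving $\mathcal M = F^2_d(\mu)_L$. The only organizational difference is that you first identify $F^2_d(\mu)_0 = \ran{\Pi_\mu}$ (so $P_0^\perp = I-\Pi_\mu\Pi_\mu^*$ and $w\in\ker{\Pi_\mu^*}$) and then prove $\ker{\Pi_\mu^*}=\C w$ directly, whereas the paper establishes reducing by checking that $\ker{W}$ is $\Pi_\mu$-invariant and then argues the wandering space of $\Pi_L$ is one-dimensional via cyclicity of $P_L(I+N_\mu)$.
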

\begin{proof}
The vector $w:= P_0 ^\perp (I + N_\mu ) $ is wandering since,
$$ \ip{ \Pi _\mu ^\alpha P_0 ^\perp (I + N_\mu ) }{\Pi _\mu ^\beta P_0 ^\perp (I + N_\mu )}  = \delta _{\alpha ,\beta} c^2. $$ The subspace $\ker{W} ^\perp$ is $\Pi _\mu -$invariant, by construction. Suppose that $h \in \ker{W}$, so that for any $\alpha \in \F ^d$, 
$$ 0 = \ip{ h }{\Pi _\mu ^\alpha (I - P_0 ) (I + N_\mu ) } _\mu. $$ For any $\alpha \neq \emptyset$,
$$ \ip{h}{\pi _\mu (L ^\alpha ) ^* (I - P_0 ) (I + N_\mu ) } _\mu = \ip{(I-P_0) \Pi _\mu ^\alpha h}{I+N_\mu} _\mu =0, $$
since $\Pi _\mu (L^\alpha ) h \in F^2 _d (\mu ) _0$ for any $\alpha \neq \emptyset$. Since $h \in \ker{W}$ was arbitrary it follows that 
$$ \ker{W} ^\perp = \bigvee \pi _\mu (\A + \A ^* ) (I -P_0) (I + N _\mu ), $$ is $\Pi _\mu -$reducing.

Since $W^*W$ is reducing for $\Pi _\mu$ and generated by the wandering vector $P_0 ^\perp (I + N_\mu)$, it follows that $W^* W \leq P_L$. However the vector $I +N_\mu$ is cyclic for $\Pi _\mu$ so that $P_L (I + N_\mu )$ is also cyclic for the type$-L$ row isometry $\Pi _L$, and hence the wandering space of $\Pi _L$ is one-dimensional. Since $P_0 ^\perp (I +N_\mu) \in \ran{W^* W}$ is wandering for $\Pi _\mu$, it spans the wandering space for $\Pi _L$, and we obtain that $ \ker{W} ^\perp = F^2 _d (\mu _L ). $
\end{proof}

\section{Weak$-*$ vs. Absolute Continuity} \label{WCvsAC}

In this section we prove that any weak$-*$ continuous NC measure is an absolutely continuous NC measure. 

\subsection{NC measures dominated by NC Lebesgue measure} 

\begin{prop}
Suppose that $\mu \in (\A ^\dag ) _+$ is dominated by $m$, $\mu \leq t^2 m$. Then $\mu$ is both AC and weak$-*$ continuous. \\

If $E _\mu = (\mc{C} ^\mu) ^* \mr{e} _\mu ^* \mc{C} _m : F^2 _d \rightarrow F^2 _d (\mu )$, then $E _\mu$ is a bounded intertwiner with dense range (and norm at most $t$), 
$E _\mu L_k = \Pi ^\mu _k E _\mu$.
\end{prop}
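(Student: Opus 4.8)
The plan is to first record the asserted properties of the map $E_\mu$ and then deduce absolute continuity and weak$-*$ continuity as quick corollaries. Throughout, use that $H_m \equiv I$, so that $\scr{H}^+(H_m) = H^2(\B^d_\N)$ and $F^2_d(m)$ is identified with $F^2_d$ via the vacuum identification $q(L) + N_m \leftrightarrow q(L)1$, $q \in \fp$. By Lemma~\ref{embeddings}, the hypothesis $\mu \le t^2 m$ is equivalent to $K^\mu \le t^2 K^m$, the embedding $\mr{e}_\mu : \scr{H}^+(H_\mu) \hookrightarrow \scr{H}^+(H_m) = H^2(\B^d_\N)$ has norm at most $t$, and $E_\mu := \mc{C}_\mu^*\, \mr{e}_\mu^*\, \mc{C}_m : F^2_d \to F^2_d(\mu)$ is the bounded operator of norm at most $t$ uniquely determined by $E_\mu(q(L) + N_m) = q(L) + N_\mu$ for $q \in \fp$. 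Since $\fp + N_\mu$ is dense in $F^2_d(\mu)$ by construction of the GNS space, $E_\mu$ automatically has dense range.

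To see that $E_\mu$ is an intertwiner I would compute on the polynomial core: for $q \in \fp$, $E_\mu L_k (q(L)1) = E_\mu\big( (\mf{z}_k q)(L)1 \big) = (\mf{z}_k q)(L) + N_\mu = (\Pi_\mu)_k\big( q(L) + N_\mu \big) = (\Pi_\mu)_k E_\mu(q(L)1)$, and then extend $E_\mu L_k = (\Pi_\mu)_k E_\mu$ to all of $F^2_d$ by boundedness; iterating over words gives $E_\mu L^\alpha = \Pi_\mu^\alpha E_\mu$, so $E_\mu \in \chi(\pi_\mu)$ in the sense of Definition~\ref{intertwiner}. (Alternatively one could extract the intertwining from the identity $V_m^*|_{\scr{H}^+(H_\mu)} = V_\mu^*|_{\scr{H}^+(H_\mu)}$ of Theorem~\ref{NCintersect}, valid here because the intersection space equals all of $\scr{H}^+(H_\mu)$, together with $V_m \simeq L$ from Section~\ref{CTofGNS}, but the direct polynomial computation is cleaner.)

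Absolute continuity is then immediate: $K^\mu \le t^2 K^m$ gives $\scr{H}^+(H_\mu) \subseteq H^2(\B^d_\N)$ by Theorem~\ref{NCdom}, hence $\scr{H}^+(H_\mu) \cap H^2(\B^d_\N) = \scr{H}^+(H_\mu)$ is trivially dense in $\scr{H}^+(H_\mu)$, i.e.\ $\mu \in AC(\A^\dag)_+$ by Definition~\ref{ncldDef}. For weak$-*$ continuity, since $E_\mu \in \chi(\pi_\mu)$, Theorem~\ref{DLP-intertwiner} gives $\ran{E_\mu} \subseteq \chi(\pi_\mu)F^2_d = WC(\pi_\mu)$; as $WC(\pi_\mu)$ is closed and $\ran{E_\mu}$ is dense, $WC(\pi_\mu) = F^2_d(\mu)$, so $\Pi_\mu$ is a weak$-*$ continuous row isometry by Theorem~\ref{ACrowiso}, and therefore $\mu$ is weak$-*$ continuous by Corollary~\ref{ACfun}. (Even more directly: $m = m_{1,1}$ is a positive vector functional on the Fock space, hence weak$-*$ continuous, so $t^2 m$ is weak$-*$ continuous, and $\mu \le t^2 m$ then forces $\mu$ weak$-*$ continuous by the heredity of $WC(\A^\dag)_+$ established in Lemma~\ref{domAC}.)

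I do not expect a genuine obstacle in this proposition; the only point requiring care is matching up the identifications — $F^2_d(m) \simeq F^2_d$, the explicit description of $E_\mu$ coming out of Lemma~\ref{embeddings}, and the action of $L_k$ on free polynomials — so that the intertwining relation $E_\mu L_k = (\Pi_\mu)_k E_\mu$ holds exactly on the polynomial core before one passes to the closure.
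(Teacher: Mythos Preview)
Your proposal is correct and matches the paper's (very terse) proof: weak$-*$ continuity via heredity (Lemma~\ref{domAC}), absolute continuity via Theorem~\ref{NCdom}, and the intertwiner properties via Lemma~\ref{embeddings} and the explicit polynomial action of $E_\mu$. Your primary argument for weak$-*$ continuity through Theorem~\ref{DLP-intertwiner} and Theorem~\ref{ACrowiso} is a legitimate alternative, but your parenthetical heredity argument is exactly what the paper uses.
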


\begin{proof}
If $\mu$ is dominated by $m$, then it is weak$-*$ continuous since the positive cone $WC(\A ^\dag ) _+$ is hereditary by Lemma \ref{domAC}. It is absolutely continuous, by definition since $\scr{H} ^+ (H _\mu ) \subset H^2 (\B ^d _\N )$ by Theorem \ref{NCdom}. The statement about the intertwiner $E_\mu$ follows immediately from Lemma \ref{embeddings}.
\end{proof}

An arbitrary weak$-*$ continuous NC measure $\mu \in WC(\A ^\dag ) _+$ is generally not dominated by $m$, and it is natural to ask whether the previous Cauchy Transform intertwining results can be extended to this general case. This is possible, if one allows for unbounded intertwiners:
\begin{defn}
Let $\Pi$ be a row isometry on a Hilbert space $\H$. A closed, operator $X : \dom{X} \rightarrow \H$, with dense domain in $F^2 _d$, is called an \emph{intertwiner} if $\dom{X}$ is $L-$invariant and 
$$ X L_k x = \Pi _k X x; \quad \quad x \in \dom{X}. $$
\end{defn}
\begin{lemma} \label{ubtwine}
Let $\Pi$ be a row isometry on a Hilbert space $\H$, and let $X : \dom{X} \rightarrow \H$ be a closed, densely-defined intertwiner, $\dom{X} \subseteq F^2 _d$. Any vector $y \in \ran{X} \cap \dom{X ^*} $ is a weak$-*$ continuous vector for $\Pi$.
\end{lemma}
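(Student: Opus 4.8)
The plan is to manufacture two vectors $f,g\in F^2_d$ for which $\ip{y}{\Pi^\alpha y}_{\H}=\ip{f}{L^\alpha g}_{F^2_d}$ for every $\alpha\in\F^d$, and then to invoke the Davidson--Pitts description of the weak$-*$ continuous functionals on $\A$ (\cite[Theorem 2.10]{DP-inv}): any functional of the form $m_{f,g}(a)=\ip{f}{a(L)g}_{F^2_d}$ is weak$-*$ continuous. By Definition~\ref{WCdef}, this is exactly the assertion that $y$ is a weak$-*$ continuous vector for $\Pi$ (recall a row isometry $\Pi$ induces a $*$-representation $\pi$ of $\mc{E}_d$ with $\pi(L_k)=\Pi_k$, so the two notions of weak$-*$ continuous vector agree).

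First I would choose $x\in\dom{X}$ with $Xx=y$. Since $\dom{X}$ is $L$-invariant, an easy induction on word length gives $L^\alpha x\in\dom{X}$ for all $\alpha\in\F^d$, and iterating the intertwining identity $XL_k=\Pi_kX$ (valid on $\dom{X}$) then yields
$$ X(L^\alpha x)=\Pi^\alpha(Xx)=\Pi^\alpha y,\qquad\alpha\in\F^d. $$
So each $\Pi^\alpha y$ lies in $\ran{X}$ with the explicit preimage $L^\alpha x$.

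Next, since $y\in\dom{X^*}$ and $L^\alpha x\in\dom{X}$, the defining relation of the adjoint lets me move $X$ across the inner product:
$$ \ip{y}{\Pi^\alpha y}_{\H}=\ip{y}{X(L^\alpha x)}_{\H}=\ip{X^*y}{L^\alpha x}_{F^2_d}. $$
Putting $f:=X^*y\in F^2_d$ and $g:=x\in F^2_d$, this says $\ip{y}{\Pi^\alpha y}_{\H}=\ip{f}{L^\alpha g}_{F^2_d}=m_{f,g}(L^\alpha)$. Extending by linearity over the free polynomials and then by norm-density in $\A$ (the functional $a\mapsto\ip{y}{\pi(a)y}_{\H}$ is bounded since $\pi|_{\A}$ is completely contractive, and $a\mapsto m_{f,g}(a)$ is bounded by $\|f\|\,\|g\|$ since $a\mapsto a(L)$ is completely isometric), one gets $\phi_y=m_{f,g}$ as functionals on $\A$. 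The cited theorem then shows $\phi_y$ is weak$-*$ continuous, \emph{i.e.} $y\in WC(\Pi)$.

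I do not expect a genuine obstacle here: the entire content is the bookkeeping with the domain of the closed, possibly unbounded operator $X$ --- namely that $L$-invariance of $\dom{X}$ propagates to all of the vectors $L^\alpha x$, and that the hypothesis $y\in\dom{X^*}$ is precisely what licenses the adjoint step and supplies the vector $f=X^*y\in F^2_d$. This is exactly the point at which the lemma goes beyond Theorem~\ref{DLP-intertwiner}, which only deals with bounded intertwiners.
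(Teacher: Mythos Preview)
Your proof is correct and follows essentially the same route as the paper's: use the intertwining relation to write $\Pi^\alpha y = X(L^\alpha x)$, then move $X$ across via the adjoint to get $\ip{y}{\Pi^\alpha y}_\H = \ip{X^*y}{L^\alpha x}_{F^2_d} = m_{f,g}(L^\alpha)$ with $f=X^*y$, $g=x$, and invoke the Davidson--Pitts description of weak$-*$ continuous functionals. Your version is in fact a bit sharper than what the paper writes down: the paper only verifies the conclusion for vectors of the form $Xw$ with $w\in\dom{X^*X}$ (taking $f=X^*Xw$, $g=w$), which is a proper subset of $\ran{X}\cap\dom{X^*}$, whereas you handle an arbitrary $y\in\ran{X}\cap\dom{X^*}$ by choosing any preimage $x\in\dom{X}$ and using only that $y\in\dom{X^*}$.
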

\begin{proof}
If $X$ is densely-defined and closed, then its adjoint, $X^*$ is also densely-defined and closed, and $X^*X$ is densely-defined, closed, and positive semi-definite. Furthermore, $\dom{X^*X}  \subseteq \dom{X}$ is a core for $X$ (hence dense in $F^2 _d$). If $y \in \dom{X^* X}$ then $Xy \in \dom{X^*} \cap \ran{X}$, and 
$$ \ip{Xy}{\Pi ^\alpha Xy} _\H = \ip{X^*Xy}{L^\alpha y} _{F^2 _d}, $$ is a weak$-*$ continuous functional so that $Xy$ is a weak$-*$ continuous vector, by definition.
\end{proof}

\subsection{Symmetric AC functionals}

Before tackling the fully general case of an asymmetric weak$-*$ continuous NC measure, first suppose that $\mu = m_x = m_{x,x}$ is a symmetric positive weak$-*$ continuous functional, where $x \in F^2 _d$. The results of \cite{JM-F2Smirnov,JM-freeSmirnov} show that one can define $x(R)$, where $x(R)1 = x$ as a densely-defined, closed, and potentially unbounded right multiplier on the Fock space with symbol in the (right) \emph{Free Smirnov Class} $\scr{N} _d ^+ (R)$, the set of all ratios of bounded right multipliers, $B(R) A(R) ^{-1}$, with outer (dense range) denominator.  We will write $x(R) \sim R^\infty _d$ to denote that $x(R)$ is an unbounded right multiplier affiliated to the right free analytic Toeplitz algebra $R^\infty _d$ (\emph{i.e.} it commutes with the left free shifts). The (potentially unbounded) left-Toeplitz operator $T := x(R) ^* x(R)$ is then well-defined, closed, positive semi-definite and densely-defined. 

Given $x(R) \sim R^\infty _d$, there is an essentially unique choice of $A,B \in [R^\infty _d ] _1$ so that $x(R) = B(R) A(R) ^{-1}$, and if $\Theta _x (R)$ denotes the two-component column with entries $A,B$, then $\Theta _x (R)$ is an isometric right multiplier (right-inner) from one to two copies of the NC Hardy space and $\ran{\Theta _x (R)} = G(x(R))$, the graph of $x(R)$ \cite[Corollary 4.27, Corollary 5.2]{JM-freeSmirnov}. Moreover, $x = x(R) 1$ belongs to $F^2 _d$ if and only if $A^{-1} := A(R) ^{-1} 1 \in F^2 _d$. In this case $L^\infty _d 1 \subseteq \dom{x(R)}$ \cite[Lemma 5.3]{JM-freeSmirnov}. We can further assume that $\fp$ is a core for $x(R)$ (if not, define $\check{x} (R)$ as the closure of $x(R)$ restricted to $\fp$), so that for any $y \in \dom{x(R)}$, there are free polynomials $p_n \in \fp$ so that $p_n \rightarrow y$ and $x(R) p_n \rightarrow x(R) y$. Recall, by Remark \ref{wlogouter}, we can assume without loss in generality that $x$ is outer, \emph{i.e.} $L-$cyclic, or equivalently, $x(R)$ has dense range. Let $U_x : F^2 _d (\mu ) \rightarrow F^2 _d$ be defined by 
$$ U_x (L^\alpha + N _\mu ) = L^\alpha x. $$ This is clearly an isometry, and since $x$ is assumed to be outer, it is onto $F^2 _d$.
\begin{thm} \label{CTF2}
Let $\mu = m_x \in WC (\A ^\dag ) _+$ be a symmetric weak$-*$ continuous NC measure, where $x \in F^2 _d$ is outer. A vector $y _\mu \in F^2 _d ( \mu )$ is such that $\mc{C} _\mu  y_\mu \in H^2 (\B ^d _\N )$ if and only if $U_x y_\mu =: y \in F^2 _d$ belongs to $\dom{x(R) ^*}$. 
\end{thm}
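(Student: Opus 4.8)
The plan is to compute the Taylor--MacLaurin coefficients of the Cauchy transform $\mc{C}_\mu y_\mu$ explicitly, transport the computation over to $F^2_d$ through the isometry $U_x$, and then recognize the resulting coefficient sequence as the Fourier expansion of $x(R)^*y$. Recall that the Cauchy transform formula \eqref{FreeCT}, extended to all of $F^2_d(\mu)$, gives
$$ (\mc{C}_\mu y_\mu)(Z) = \sum_{\alpha \in \F^d} Z^\alpha \, \ip{L^\alpha + N_\mu}{y_\mu}_\mu, $$
so the $\alpha$-th coefficient of $\mc{C}_\mu y_\mu$ is $c_\alpha := \ip{L^\alpha + N_\mu}{y_\mu}_\mu$. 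Since $U_x$ is an isometry with $U_x(L^\alpha + N_\mu) = L^\alpha x$ and $y := U_x y_\mu$, this becomes $c_\alpha = \ip{L^\alpha x}{y}_{F^2_d}$. Finally, because $x = x(R)1$ and $x(R)$ is a right multiplier affiliated to $R^\infty_d$ (hence commuting with the left free shifts) for which $\fp$ is a core, every monomial vector $e_\alpha := L^\alpha 1$ lies in $\dom{x(R)}$ with $x(R) e_\alpha = L^\alpha x(R)1 = L^\alpha x$, so
$$ c_\alpha = \ip{x(R)\,e_\alpha}{y}_{F^2_d}, \qquad \alpha \in \F^d, $$
where $\{e_\alpha\}_{\alpha \in \F^d}$ is the standard orthonormal basis of $F^2_d$.

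If $y \in \dom{x(R)^*}$, then $c_\alpha = \ip{e_\alpha}{x(R)^*y}_{F^2_d}$, so $\sum_\alpha |c_\alpha|^2 = \|x(R)^* y\|^2_{F^2_d} < \infty$ by Parseval. Since $\mc{C}_\mu y_\mu$ is an NC holomorphic function on $\B^d_\N$, square-summability of its Taylor coefficients is exactly the condition that it belong to $H^2(\B^d_\N) = F^2_d$; in fact $\mc{C}_\mu y_\mu = x(R)^*y$ as elements of $F^2_d$.

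Conversely, assume $z := \mc{C}_\mu y_\mu \in H^2(\B^d_\N) = F^2_d$. Then $\ip{e_\alpha}{z}_{F^2_d} = c_\alpha = \ip{x(R)\,e_\alpha}{y}_{F^2_d}$ for all $\alpha$, so by linearity $\ip{x(R)v}{y}_{F^2_d} = \ip{v}{z}_{F^2_d}$ for every vector $v$ of the form $p(L)1$ with $p \in \fp$. As remarked just before the statement, we may assume $\fp$ is a core for $x(R)$; hence for an arbitrary $v \in \dom{x(R)}$ we may pick $p_n \in \fp$ with $p_n(L)1 \to v$ and $x(R)(p_n(L)1) \to x(R) v$, and passing to the limit in $\ip{x(R)(p_n(L)1)}{y}_{F^2_d} = \ip{p_n(L)1}{z}_{F^2_d}$ yields $\ip{x(R)v}{y}_{F^2_d} = \ip{v}{z}_{F^2_d}$. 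This is precisely the statement that $y \in \dom{x(R)^*}$ (with $x(R)^* y = z$), which closes the argument.

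The only delicate ingredients are the domain bookkeeping inherited from the free Smirnov theory of \cite{JM-freeSmirnov}: that $\fp$ sits inside $\dom{x(R)}$ and may be taken as a core, and that $x(R)$ commutes with the left free shifts on that core so that $x(R)e_\alpha = L^\alpha x$. Granting these, the proof is a transparent Parseval / closed-operator-duality computation; one need only keep careful track of the conjugation conventions fixed by \eqref{FreeCT} and the definition $\ip{a_1}{a_2}_\mu = \mu(a_1^* a_2)$.
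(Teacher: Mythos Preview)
Your proof is correct and follows essentially the same path as the paper's: both compute the Taylor--MacLaurin coefficients of $\mc{C}_\mu y_\mu$ as $\ip{L^\alpha x}{y}_{F^2_d} = \ip{x(R)\,e_\alpha}{y}_{F^2_d}$ via the isometry $U_x$, then use that $\fp$ is a core for $x(R)$ to identify membership in $H^2(\B^d_\N)$ with $y \in \dom{x(R)^*}$ (and $x(R)^*y = \mc{C}_\mu y_\mu$). Your explicit invocation of Parseval in the forward direction and the closing remarks on the Smirnov-class bookkeeping are minor expository additions, not substantive differences.
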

Since $U_x$ and $\mc{C} _\mu$ are unitary and $\dom{x(R)^*}$ is dense, it follows that if $\mu = m_x$ is symmetric and weak$-*$ continuous then  
$$ \mc{C} _\mu U_x ^* \dom{x(R)^*} = H^2 (\B ^d _\N ) \bigcap \scr{H} ^+ ( H_\mu ), $$ is dense in $\scr{H} ^+ (H_\mu )$ so that $\mu = m_x \in AC (\A ^\dag ) _+$ is an absolutely continuous NC measure.
\begin{proof}
Suppose that $y \in \dom{x(R) ^*}$, and consider $\mc{C} _\mu U_x ^* y  \in \scr{H} ^+ ( H _\mu )$. Then, 
\ba \left( \mc{C} _\mu U_x ^* y  \right) (Z) & = & \sum _{\alpha} Z^\alpha \ip{\Pi _\mu ^\alpha (I + N_\mu ) }{U_x ^* y} _\mu \nn \\
& = & \sum Z^\alpha \ip{U_x (L^\alpha + N_\mu) }{y}_{F^2 _d} \nn \\
& = & \sum Z^\alpha \ip{L^\alpha x}{y}_{F^2 _d} \nn \\
& = & \sum Z^\alpha \ip{L^\alpha 1}{x(R) ^* y} _{F^2 _d}. \nn \ea 
This shows that $\mc{C} _\mu U_x ^* y$ has the same Taylor-MacLaurin coefficients as $x(R) ^* y \in F^2 _d$, and hence belongs to $H^2 (\B ^d _\N )$.

Conversely, suppose that $y_\mu \in F^2 _d (\mu )$ is such that $h:= \mc{C} _\mu y_\mu$ belongs to $H^2 (\B ^d _{\N} )$.  Then, setting $y = U_x y_\mu$, and $h:= \mc{C} _\mu y _\mu \in H^2 (\B ^d _\N )$, 
\ba h (Z) & = & \sum _{\alpha} Z^\alpha \ip{\Pi _\mu ^\alpha (I + N_\mu ) }{ y_\mu } _\mu \nn \\
& = & \sum Z^\alpha \ip{U_x (L^\alpha + N_\mu) }{U_x y _\mu }_{F^2 _d} \nn \\
& = & \sum Z^\alpha \ip{L^\alpha x}{y}_{F^2 _d} \nn \\
& = & \sum Z^\alpha \ip{x(R) L^\alpha 1}{y} _{F^2 _d}. \nn \ea 
Identifying $h$ with an element of $F^2 _d$, the Fourier coefficients of $h$ are:
$$ h_\alpha := \ip{L^\alpha 1}{h} = \ip{x(R) L^\alpha 1}{y} _{F^2 _d}, $$ and it follows that 
for any $p \in \fp$, 
$$ \ip{p(L) 1}{h}_{F^2 _d} = \ip{x(R) p(L) 1}{y}_{F^2 _d}. $$ Since free polynomials are a core for $x(R)$, this proves that $y \in \dom{x(R) ^*}$ and that $x(R) ^* y = h$. 
\end{proof}

\begin{cor} \label{embeddense}
If $\mu = m_x$ is a symmetric weak$-*$ continuous NC measure, the intersection space $$\scr{H} ^+ (H _\mu) \bigcap H^2 (\B ^d _\N ) =: \dom{ \mr{e} _\mu } $$ is dense in $\scr{H} ^+ (H _\mu )$ and the embedding $\mr{e} _\mu : \dom{\mr{e} _\mu} \hookrightarrow H^2 (\B ^d _\N )$ is densely-defined and closed. That is, any symmetric weak$-*$ continuous NC measure is absolutely continuous. 
\end{cor}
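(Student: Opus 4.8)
The plan is to read this off directly from Theorem~\ref{CTF2} together with the Free Smirnov class description of $x(R)$. First I would recall that, since $x \in F^2_d$ is outer, the theory of \cite{JM-freeSmirnov} produces $x(R) \sim R^\infty_d$ as a closed, densely-defined (generally unbounded) right multiplier with $\fp$ a core and $L^\infty_d 1 \subseteq \dom{x(R)}$; consequently its adjoint $x(R)^*$ is again closed and densely-defined on $F^2_d$, so $\dom{x(R)^*}$ is a dense linear subspace of $F^2_d$.

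Next I would invoke Theorem~\ref{CTF2}: a vector $y_\mu \in F^2_d(\mu)$ satisfies $\mc{C}_\mu y_\mu \in H^2(\B^d_\N)$ if and only if $U_x y_\mu \in \dom{x(R)^*}$. Since $\mc{C}_\mu : F^2_d(\mu) \to \scr{H}^+(H_\mu)$ and $U_x : F^2_d(\mu) \to F^2_d$ are both unitary, this says precisely that
$$ \dom{\mr{e}_\mu} = \scr{H}^+(H_\mu) \cap H^2(\B^d_\N) = \mc{C}_\mu U_x^* \big( \dom{x(R)^*} \big), $$
which is therefore a dense subspace of $\scr{H}^+(H_\mu)$. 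By Definition~\ref{ncldDef} this is exactly the assertion that $\mu = m_x$ is an absolutely continuous NC measure. For closedness, I would use the computation inside the proof of Theorem~\ref{CTF2}: for $y \in \dom{x(R)^*}$ the NC function $\mc{C}_\mu U_x^* y$ has Taylor--MacLaurin coefficients $\ip{L^\alpha 1}{x(R)^* y}_{F^2_d}$, i.e. under the identification $H^2(\B^d_\N) = F^2_d$ it equals $x(R)^* y$; since $\mr{e}_\mu$ is the inclusion of $\scr{H}^+(H_\mu) \cap H^2(\B^d_\N)$ into $H^2(\B^d_\N)$, this reads $\mr{e}_\mu \, \mc{C}_\mu U_x^* y = x(R)^* y$, hence $\mr{e}_\mu = x(R)^* \, U_x \mc{C}_\mu^*$. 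As $x(R)^*$ is closed and $U_x \mc{C}_\mu^*$ is unitary, $\mr{e}_\mu$ is closed.

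I expect there to be no real obstacle beyond bookkeeping; the only point requiring care is confirming that the function-level identification used in Theorem~\ref{CTF2} (``identify $h$ with an element of $F^2_d$'') is the same one implicit in the embedding $\mr{e}_\mu$. This holds because both $\scr{H}^+(H_\mu)$ and $H^2(\B^d_\N)$ are NC-RKHS of free holomorphic functions on $\B^d_\N$, so an element of the intersection is literally the same NC function viewed in either space and comparison of Taylor--MacLaurin coefficients is unambiguous. If one prefers to avoid the identification with $x(R)^*$, closedness follows directly: if $h_n \to h$ in $\scr{H}^+(H_\mu)$ and $h_n \to g$ in $H^2(\B^d_\N)$, then $h_n(Z) \to h(Z)$ and $h_n(Z) \to g(Z)$ pointwise on $\B^d_\N$ by boundedness of the NC point evaluations, whence $h = g$ and $h \in \dom{\mr{e}_\mu}$ with $\mr{e}_\mu h = g$.
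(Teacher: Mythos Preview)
Your proof is correct and follows the paper's approach: density of $\dom{\mr{e}_\mu}$ comes from Theorem~\ref{CTF2} exactly as you describe, and your alternative closedness argument via pointwise convergence in both NC-RKHS is precisely the paper's own proof. Your primary closedness argument, identifying $\mr{e}_\mu = x(R)^* \, U_x \, \mc{C}_\mu^*$ as a closed operator precomposed with a unitary, is a slight elaboration not in the paper but is also valid and makes the later Corollary~\ref{intertwinesym} more transparent.
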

\begin{proof}
The domain of $\mr{e} _\mu$ is dense by the previous Proposition. It remains to show that $\mr{e} _\mu$ is closed. If $f_n \rightarrow f$ in $\scr{H} ^+  (H _\mu)$ and $\mr{e} _\mu f_n \rightarrow g$ in $F^2 _d$, then in particular, $f_n (Z) = (\mr{e} _\mu f_n) (Z) \rightarrow g(Z)$ for $g \in F^2 _d$ and also $f_n (Z) \rightarrow f(Z)$ so that $f(Z) = g(Z)$ and $f \in \scr{H} ^+  (H _\mu ) \bigcap H^2 (\B ^d _\N ) = \dom{\mr{e} _\mu}$. 
\end{proof}
\begin{cor} \label{intertwinesym}
    The unbounded operator $X _\mu := (\mc{C} _\mu) ^* \mr{e} _\mu ^* : H^2 (\B ^d _\N ) \rightarrow F^2 _d (\mu ) $ is a closed, unbounded intertwiner with dense range and every vector in the dense set $\dom{X_\mu ^*} \cap \ran{X _\mu }$ is a weak$-*$ continuous vector for $\mu$.
    Equivalently, the embedding $E_\mu = X _\mu \mc{C} _m : F^2 _d (m) \rightarrow F^2 _d (\mu )$ is closed, densely-defined and has dense range. 
\end{cor}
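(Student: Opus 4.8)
The plan is to realize $X_\mu$ as the composition of the closed, densely-defined embedding $\mr{e}_\mu$ of Corollary~\ref{embeddense} with the two onto isometries $\mc{C}_\mu\colon F^2_d(\mu)\to\scr{H}^+(H_\mu)$ and $\mc{C}_m\colon F^2_d(m)\to H^2(\B^d_\N)$, and then to transport the co-invariance and intertwining relations of Theorem~\ref{NCintersect} through the adjoint. First I would record the structural facts: by Corollary~\ref{embeddense}, $\mr{e}_\mu$ is closed with dense domain $\dom{\mr{e}_\mu}=\scr{H}^+(H_\mu)\cap H^2(\B^d_\N)$, and it is injective, being inclusion of NC functions. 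Consequently $\mr{e}_\mu^*$ is closed and densely-defined, and $\ov{\ran{\mr{e}_\mu^*}}=\ker{\mr{e}_\mu}^\perp=\scr{H}^+(H_\mu)$, so $\mr{e}_\mu^*$ has dense range. Left-composition with the unitary $\mc{C}_\mu^*$ preserves closedness, density of domain, and density of range, so $X_\mu=\mc{C}_\mu^*\mr{e}_\mu^*$ is closed and densely-defined (with $\dom{X_\mu}=\dom{\mr{e}_\mu^*}$), has dense range, and has adjoint $X_\mu^*=\mr{e}_\mu\mc{C}_\mu$ (using $\mr{e}_\mu^{**}=\mr{e}_\mu$).

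Next I would prove the intertwining relation. Recall from Section~\ref{CTofGNS} that $V_m\simeq L$ on $\scr{H}^+(H_m)=H^2(\B^d_\N)$ and that $(\Pi_\mu)_k=\mc{C}_\mu^*V_{\mu,k}\mc{C}_\mu$. By Theorem~\ref{NCintersect}, the common intersection space $\dom{\mr{e}_\mu}=\mr{int}(\mu,m)$ is invariant under each $V_{\mu,k}^*$, on which $V_{\mu,k}^*f=V_{m,k}^*f=L_k^*f$; since $\mr{e}_\mu$ is the inclusion map this is exactly the identity $L_k^*\mr{e}_\mu=\mr{e}_\mu\bigl(V_{\mu,k}^*|_{\dom{\mr{e}_\mu}}\bigr)$ of operators on $\dom{\mr{e}_\mu}$. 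Taking adjoints: for $g\in\dom{\mr{e}_\mu^*}$ and $f\in\dom{\mr{e}_\mu}$ one computes $\ip{\mr{e}_\mu f}{L_kg}=\ip{L_k^*\mr{e}_\mu f}{g}=\ip{\mr{e}_\mu V_{\mu,k}^*f}{g}=\ip{V_{\mu,k}^*f}{\mr{e}_\mu^*g}=\ip{f}{V_{\mu,k}\mr{e}_\mu^*g}$, and since this holds for all $f\in\dom{\mr{e}_\mu}$ it follows that $L_k\dom{\mr{e}_\mu^*}\subseteq\dom{\mr{e}_\mu^*}$ and $\mr{e}_\mu^*L_k=V_{\mu,k}\mr{e}_\mu^*$ there. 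Applying $\mc{C}_\mu^*$ on the left and using $V_{\mu,k}\mc{C}_\mu\mc{C}_\mu^*=V_{\mu,k}$ then yields $X_\mu L_k=(\Pi_\mu)_kX_\mu$ on the $L$-invariant domain $\dom{X_\mu}$; thus $X_\mu$ is a closed, densely-defined, unbounded intertwiner with dense range.

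The assertion about weak$-*$ continuous vectors is then immediate from Lemma~\ref{ubtwine} applied with $\Pi=\Pi_\mu$, $X=X_\mu$ (identifying $H^2(\B^d_\N)=F^2_d$): every $y\in\dom{X_\mu^*}\cap\ran{X_\mu}$ is a weak$-*$ continuous vector for $\Pi_\mu$, hence for $\mu$. That this set is dense follows because $\dom{X_\mu^*X_\mu}$ is a core for $X_\mu$, so $X_\mu\bigl(\dom{X_\mu^*X_\mu}\bigr)$ is contained in $\dom{X_\mu^*}\cap\ran{X_\mu}$ and is dense in $\ov{\ran{X_\mu}}=F^2_d(\mu)$. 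Finally, for the ``equivalently'' clause: since $\mc{C}_m$ is unitary, $E_\mu=X_\mu\mc{C}_m$ inherits closedness, dense domain, and dense range from $X_\mu$, and on free polynomials $E_\mu$ agrees with the natural embedding $p(L)+N_m\mapsto p(L)+N_\mu$, exactly as in the bounded case of Lemma~\ref{embeddings}. I expect the only genuinely delicate point to be the adjoint passage in the second paragraph --- carefully bookkeeping which vectors lie in $\dom{\mr{e}_\mu^*}$ rather than in $\dom{\mr{e}_\mu}$ --- while everything else is formal manipulation with the unitaries $\mc{C}_\mu$, $\mc{C}_m$ and the reducing/co-invariance structure of Theorem~\ref{NCintersect}.
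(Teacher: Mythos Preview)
Your proposal is correct and follows essentially the same approach as the paper: deduce from Corollary~\ref{embeddense} that $\mr{e}_\mu$ is closed, injective, and densely defined, hence $\mr{e}_\mu^*$ is closed and densely defined with dense range, transport through the unitary $\mc{C}_\mu^*$, and then invoke Lemma~\ref{ubtwine} together with the density of $\dom{X_\mu^*}\cap\ran{X_\mu}$. Your inline density argument (via $X_\mu\bigl(\dom{X_\mu^*X_\mu}\bigr)$) is exactly the content of the paper's Lemma~\ref{denserangelem}, and your explicit verification of the intertwining relation through Theorem~\ref{NCintersect} fills in what the paper simply gestures at by saying the proof ``goes through as in the case where $\mu$ is dominated by $m$.''
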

\begin{lemma} \label{denserangelem}
Let $T$ be a closed, densely-defined linear operator on $\dom{T} \subseteq \H$. If $\ran{T}$ is dense, then $\dom{T^*} \cap \ran{T}$ is dense and contains the dense linear space $\ran{T (I+T^*T) ^{-1}}$. 
\end{lemma}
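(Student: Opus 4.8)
The plan is to reduce everything to the classical structure theory of $T^{*}T$ for a closed, densely-defined operator $T$, as in \cite[Chapter V, Section 3.3]{Kato}. Recall that when $T$ is closed and densely-defined, $T^{*}T$ is self-adjoint and non-negative, the operator $I+T^{*}T$ is a bijection of $\dom{T^{*}T}$ onto $\H$, its inverse $B:=(I+T^{*}T)^{-1}$ is a bounded positive contraction defined on \emph{all} of $\H$, the subspace $\dom{T^{*}T}$ is a core for $T$, and $TB$ extends to a bounded operator on $\H$ (with $\|TB\|\leq\frac{1}{2}$). I would take these facts as given. Note in particular that $\ran{B}=\dom{T^{*}T}\subseteq\dom{T}$, so $TB$ is an everywhere-defined linear operator $\H\to\H$ and $\ran{TB}=\ran{T(I+T^{*}T)^{-1}}$ is a genuine linear subspace.

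First I would verify the containment $\ran{TB}\subseteq\dom{T^{*}}\cap\ran{T}$. Given $h\in\H$, set $u:=Bh\in\dom{T^{*}T}\subseteq\dom{T}$. Then $TBh=Tu\in\ran{T}$, and since the statement $u\in\dom{T^{*}T}$ means precisely that $Tu\in\dom{T^{*}}$, we also get $TBh=Tu\in\dom{T^{*}}$. Hence $\ran{TB}\subseteq\dom{T^{*}}\cap\ran{T}$.

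Next I would show that $\ran{TB}$ is dense in $\H$; this is the only place the hypothesis that $\ran{T}$ is dense enters. Suppose $w\in\H$ is orthogonal to $\ran{TB}$, i.e. $\ip{w}{TBh}=0$ for every $h\in\H$. Since $B$ maps $\H$ onto $\dom{T^{*}T}$, this says $\ip{w}{Tu}=0$ for every $u\in\dom{T^{*}T}$. Because $\dom{T^{*}T}$ is a core for $T$, any $u\in\dom{T}$ is a limit $u=\lim_{n}u_{n}$ with $u_{n}\in\dom{T^{*}T}$ and $Tu_{n}\to Tu$; therefore $\ip{w}{Tu}=\lim_{n}\ip{w}{Tu_{n}}=0$ for all $u\in\dom{T}$, i.e. $w\perp\ran{T}$. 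As $\ran{T}$ is dense, $w=0$. Thus $\ran{TB}$ is dense, and a fortiori $\dom{T^{*}}\cap\ran{T}$ — which contains it by the previous paragraph — is dense, which is the assertion of the lemma.

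I do not expect a real obstacle here. The only points requiring care are the invocation of von Neumann's theorem for the boundedness and everywhere-definedness of $B=(I+T^{*}T)^{-1}$ together with the core property of $\dom{T^{*}T}$, and the routine limiting argument that upgrades orthogonality to $Tu$ on a core of $T$ to orthogonality on all of $\dom{T}$; the identification $\ran{T(I+T^{*}T)^{-1}}\subseteq\dom{T^{*}}$ is immediate once one unwinds what membership in $\dom{T^{*}T}$ means.
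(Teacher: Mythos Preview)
Your proof is correct and essentially identical to the paper's: both invoke von Neumann's theorem to introduce $B=(I+T^{*}T)^{-1}$, observe that $\ran{B}=\dom{T^{*}T}\subseteq\dom{T}$ so that $\ran{TB}\subseteq\dom{T^{*}}\cap\ran{T}$, and then use the core property of $\dom{T^{*}T}$ for $T$ together with density of $\ran{T}$ to conclude $\ran{TB}$ is dense. The only cosmetic difference is that you argue density via the orthogonal complement while the paper approximates an arbitrary $Ty$ directly by elements $Tx_n$ with $x_n\in\ran{B}$.
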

\begin{proof}
Set $\Delta _T := (I +T ^* T ) ^{-1}$, this is a strictly positive contraction \cite[Theorem 5.19]{AnalysisNow}. Moreover, $\ran{\Delta _T}$ is a core for $T$, so that the set of all pairs 
$ (x, Tx)$, for $x \in \ran{\Delta _T}$ is dense in the graph of $T$. In particular, given any $Ty \in \ran{T}$, one can find $(x _n , Tx_n ) $ with $x_n \in \ran{\Delta _T}$ so that $x_n \rightarrow y$ and $Tx_n \rightarrow Tx$. Since we assume that $\ran{T}$ is dense it follows that $T \Delta _T = T (I + T^*T) ^{-1}$ also has dense range. Moreover, again by \cite[Theorem 5.19]{AnalysisNow}, $T \Delta _T$ is a contraction and $\ran{\Delta _T} = \dom{T^*T}$ so that $\ran{T \Delta _T} \subset \dom{T^*}$. In conclusion $\ran{T \Delta _T} \subseteq \ran{T} \cap \dom{T^*}$ is dense.
\end{proof}
\begin{proof}{ (of Corollary \ref{intertwinesym})}
The proof goes through as in the case where $\mu$ is dominated by $m$, using that $X$ is closed operator, as in Lemma \ref{ubtwine}. In particular, $H^2 (\B ^d _\N ) \bigcap \scr{H} ^+ (H _\mu ) = \dom{\mr{e} _\mu }$ is dense, and $\mr{e} _\mu$ is by definition injective on its domain, and closed by the previous corollary. It follows that $\mr{e} _\mu ^*$ is also closed, densely-defined, and has dense range, so that $\ran{X _\mu }$ is also dense in $F^2 _d (\mu ).$ Since $X _\mu$ is a closed operator with dense range, the previous general lemma shows that $\dom{X_\mu ^*} \cap \ran{X_\mu}$ is dense. Lemma \ref{ubtwine} now implies that every vector in this dense set is a weak$-*$ continuous vector.
\end{proof}

\subsection{Asymmetric AC functionals}

Even more generally, suppose that $\mu \in WC (\A ^\dag ) _+$ is an arbitrary weak$-*$ continuous NC measure. By Corollary \ref{ACfun}, $\mu = m_{x,y} = m_{y,x} \geq 0$ is a vector state on the Fock space with $x,y \in F^2 _d$.

\begin{lemma}
Any $\mu \in WC (\A ^\dag ) _+$ has the form:
$$ \mu ( L^\alpha ) = \ip{h}{\tau L^\alpha h}_{F^2 _d}, $$ where $h$ is outer, \emph{i.e.} $L-$cyclic, and $\tau \geq 0$ is a bounded, positive semi-definite $L-$Toeplitz operator.
\end{lemma}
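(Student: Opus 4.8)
The plan is to start from Corollary \ref{ACfun}, which already says that a weak-$*$ continuous $\mu\in(\A^\dag)_+$ is a positive vector functional $m_{x,y}$ on the Fock space, and to massage this into the asserted shape in two moves: first replace the pair $(x,y)$ by a single vector $w$ at the cost of conjugating by a bounded positive $L$-Toeplitz operator, and then replace $w$ by an $L$-cyclic (outer) vector at the cost of conjugating by a right-inner multiplier, which will be seen to preserve the $L$-Toeplitz property.

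For the first move I would work in the GNS picture. Since $\mu$ is weak-$*$ continuous, its cyclic vector $I+N_\mu$ is a weak-$*$ continuous vector for $\Pi_\mu$, so Theorem \ref{DLP-intertwiner} (which gives $WC(\pi_\mu)=\chi(\mu)F^2_d$) supplies a bounded intertwiner $X\in\chi(\mu)$, with $X L_k=(\Pi_\mu)_k X$, and a vector $w\in F^2_d$ with $Xw=I+N_\mu$. Iterating the intertwining relation to $X L^\alpha=\Pi_\mu^\alpha X$, one gets
\[
\mu(L^\alpha)=\ip{I+N_\mu}{\Pi_\mu^\alpha(I+N_\mu)}_\mu=\ip{Xw}{X L^\alpha w}_\mu=\ip{w}{(X^*X)L^\alpha w}_{F^2 _d},
\]
and $T:=X^*X$ is bounded, positive semi-definite, and $L$-Toeplitz because $\Pi_\mu$ is a row isometry: $L_j^* T L_k=X^*(\Pi_\mu)_j^*(\Pi_\mu)_k X=\delta_{j,k}T$. (This single-vector representation is in fact essentially contained already in the proof of Corollary \ref{ACfun}: there $\mu=m_{F(R)y,G(R)y}$ gives $\mu(L^\alpha)=\ip{y}{F(R)^*G(R)L^\alpha y}_{F^2}$ with $F(R)^*G(R)=X^*X$ a bounded positive $L$-Toeplitz operator, so one may simply cite that and take $w=y$.)

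For the second move I would invoke the Davidson--Pitts inner--outer factorization recalled in Remark \ref{wlogouter}: write $w=\Theta(R)h$ where $h\in F^2 _d$ is $L$-cyclic (right-outer) and $\Theta(R)=M^R_{\Theta^\dag}$ is right-inner, i.e.\ an isometry; note $h=\Theta(R)^*w\in F^2 _d$ automatically since $w\in F^2 _d$. Because $\Theta(R)$ is a right multiplier it commutes with every $L^\alpha$ (so $\Theta(R)^*$ commutes with every $L_j^*$), and hence, setting $\tau:=\Theta(R)^*T\Theta(R)$,
\[
\mu(L^\alpha)=\ip{\Theta(R)h}{T L^\alpha\Theta(R)h}_{F^2 _d}=\ip{h}{\Theta(R)^*T\Theta(R)L^\alpha h}_{F^2 _d}=\ip{h}{\tau L^\alpha h}_{F^2 _d},
\]
where $\tau$ is clearly bounded and positive semi-definite, and is $L$-Toeplitz: $L_j^*\tau L_k=\Theta(R)^*(L_j^*T L_k)\Theta(R)=\delta_{j,k}\tau$. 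This is exactly the claimed form, with $h$ outer and $\tau$ a bounded positive semi-definite $L$-Toeplitz operator.

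I do not expect a serious obstacle; the argument is bookkeeping on top of results already in hand. The only two points that need care are that the intertwiner conjugate $T=X^*X$ really is $L$-Toeplitz (immediate from the row-isometry relations $(\Pi_\mu)_j^*(\Pi_\mu)_k=\delta_{j,k}I$) and that this property survives conjugation by $\Theta(R)$ — which works precisely because $\Theta(R)$ is a \emph{right} multiplier and therefore commutes with the left shifts $L_k$ and their adjoints, allowing $L_j^*$ and $L_k$ to pass through it in the computation of $L_j^*\tau L_k$.
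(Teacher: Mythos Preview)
Your proof is correct and follows essentially the same route as the paper: obtain an intertwiner $X$ with $Xw=I+N_\mu$, set $T=X^*X$ (bounded positive $L$-Toeplitz), then peel off a right-inner factor $\Theta(R)$ from $w$ to get an outer $h$ and set $\tau=\Theta(R)^*T\Theta(R)$. The only cosmetic difference is that the paper reaches the factorization $w=\Theta(R)h$ via the Smirnov decomposition $w(R)=N(R)D(R)^{-1}$ followed by inner--outer factoring $N(R)=\Theta(R)F(R)$, whereas you invoke the Davidson--Pitts inner--outer factorization of Remark~\ref{wlogouter} directly; these are the same fact, and your citation is arguably the cleaner one here.
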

\begin{proof}
This is as in the proof of Corollary \ref{ACfun}. Since $\mu$ is weak$-*$ continuous, every vector in $F^2 _d (\mu )$ is a weak$-*$ continuous vector. In particular, there is a $g \in F^2 _d (\mu )$, and a bounded intertwiner $X : F^2 _d \rightarrow F^2 _d (\mu )$ so that 
$$ X g = I + N _\mu. $$ Since $g \in F^2 _d$, $g = g(R) 1$, where $g(R) \sim R^\infty _d$ is an unbounded right multiplier, and $g(R)$ has the Smirnov factorization $g(R) = N(R) D(R) ^{-1}$, where $N, D \in [R^\infty _d ] _1$, and $D$ is outer. If $\Theta  (R) F (R)$ is the inner-outer factorization of $N(R)$, set $h := F (R) D(R) ^{-1} 1 \in F^2 _d$, and $\tau := \Theta (R) ^* X^* X \Theta (R) \geq 0$, a bounded, positive semi-definite $L-$Toeplitz operator. Then,
\ba \mu ( L^\alpha ) & = & \ip{I + N _\mu}{\Pi _\mu ^\alpha (I + N _\mu)}_{\mu} \nn \\
& = & \ip{Xg}{\Pi _\mu ^\alpha Xg}_{\mu} \nn \\
& = & \ip{\Theta (R) h}{X^*X L^\alpha \Theta (R) h } _{F^2 _d} \nn \\
& = & \ip{h}{\tau L^\alpha h}_{F^2 _d}. \nn \ea
\end{proof}

For any $\eps > 0$, define $\mu _\eps \in WC (\A ^\dag ) _+$ by 
\be \mu _\eps (L^\alpha ) := \ip{h}{(\tau +\eps I ) L^\alpha h}_{F^2 _d}, \label{mueps} \ee Since $\tau +\eps I$ is bounded below, Theorem \ref{Popfactor} implies that it is factorizable:
$$ \tau+\eps I = A_\eps (R) ^* A_\eps (R), $$ for some outer $A_\eps (R) \in R^\infty _d$. Hence, setting $g _\eps := A_\eps (R) h \in F^2 _d$, $\mu _\eps = m_{g _\eps}$ is a symmetric vector state, so that $\mu _\eps$ is absolutely continuous for any $\eps >0$ by Theorem \ref{embeddense}.

\begin{prop} \label{monSRprop}
Let $T_\eps$ be the closed, positive semi-definite $L-$Toeplitz operator so that $q_{T_\eps}$ is the closure of the form generated by $\mu _\eps$. Then $T_\eps$ is convergent in the strong resolvent sense to a closed, positive semi-definite $L-$Toeplitz $T$, where $q_T $ is the closure of the absolutely continuous part of $q_\mu$.
\end{prop}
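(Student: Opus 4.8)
The plan is to exploit the fact that the family $\{\mu _\eps\}$ of Equation (\ref{mueps}) is just $\mu _\eps = \mu + \eps\, m_h$, where $m_h := m_{h,h}$; in particular $\mu _\eps \downarrow \mu$ monotonically as $\eps \downarrow 0$. Each $\mu _\eps = m_{g _\eps}$ is a symmetric weak$-*$ continuous NC measure, hence absolutely continuous by Corollary \ref{embeddense}, so $q_{\mu _\eps}$ is closable with closure $q_{T_\eps}$ for a positive semi-definite $L-$Toeplitz $T_\eps$ (Theorem \ref{ACismaxclosed}). First I would check that these closed forms are non-increasing: for $\eps \geq \eps ' > 0$ one has $q_{\mu _\eps} = q_\mu + \eps\, m_h \geq q_\mu + \eps '\, m_h = q_{\mu _{\eps '}}$ on the common domain $\A$, and domination of positive closable forms passes to their closures, so $q_{T_\eps} \geq q_{T_{\eps '}}$. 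By the monotone convergence theorem for non-increasing quadratic forms (\cite[Chapter VIII, Section 3]{Kato}; \cite{Simon1}), the resolvents $(I+T_\eps )^{-1}$ increase in the strong operator topology to a positive, injective contraction, so that $T_\eps \to T$ in the strong resolvent sense for a uniquely determined closed, positive semi-definite $T$, and $q_T$ is the closure of the (Simon) \emph{regular part} $(q_\infty )_r$ of the monotone limit form
$$ q_\infty (u) := \lim _{\eps \downarrow 0} q_{T_\eps} (u), \quad u \in \dom{q_\infty} = \bigcup _{\eps > 0} \dom{\sqrt{T_\eps}} \supseteq \A \supseteq \fp. $$

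Next I would identify $q_T$ with $\ov{q_{\mu _{ac}}}$, the closure of the absolutely continuous part of $q_\mu$; by Theorem \ref{sameNCLD} this is the closure of the form of the NC measure $\mu _{ac}$, which is closable (since $\mu _{ac}$ is AC) with $\fp$ a core for the associated $L-$Toeplitz operator by \cite[Theorem 5.8]{JM-NCFatou}. For one inequality, $q_{\mu _\eps} \geq q_\mu \geq q_{\mu _{ac}}$ pointwise on $\A$, so passing to closures of these closable forms gives $q_{T_\eps} \geq \ov{q_{\mu _{ac}}}$; thus $(q_\infty )_r$ dominates the closable restriction of $\ov{q_{\mu _{ac}}}$ to $\dom{q_\infty}$, whose closure is $\ov{q_{\mu _{ac}}}$ itself because $\fp \subseteq \dom{q_\infty}$ is a core, and therefore $q_T = \ov{(q_\infty )_r} \geq \ov{q_{\mu _{ac}}}$. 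For the reverse, note $\A \subseteq \dom{q_\infty}$ and $q_\infty |_{\A} = \lim _{\eps \downarrow 0}(q_\mu + \eps\, m_h)|_{\A} = q_\mu$, so $q_T |_{\A} = (q_\infty )_r |_{\A}$ is a closable positive form bounded above by $q_\mu$; by the maximality of $q_{\mu _{ac}}$ among such forms (Theorem \ref{sameNCLD}), $q_T|_{\A} \leq q_{\mu _{ac}}$, and combined with the previous inequality this forces $q_T |_{\A} = q_{\mu _{ac}}$.

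To conclude, I would show that $\fp$ (hence $\A$) is a core for $q_T$: if $u \in \dom{(q_\infty )_r} = \dom{q_\infty}$ then $u \in \dom{\sqrt{T_{\eps _0}}}$ for some $\eps _0 > 0$; choosing $p_n \in \fp$ with $p_n \to u$ and $q_{T_{\eps _0}}(p_n - u) \to 0$ (using that $\fp$ is a core for $q_{T_{\eps _0}}$), monotonicity $q_{T_\eps} \leq q_{T_{\eps _0}}$ for $\eps \leq \eps _0$ gives $q_T(p_n - u) = (q_\infty )_r(p_n - u) \leq q_\infty (p_n - u) \leq q_{T_{\eps _0}}(p_n - u) \to 0$, i.e. $p_n \to u$ in the $q_T-$graph norm; since $\dom{(q_\infty )_r}$ is a core for its closure $q_T$, it follows that $\fp$ is a core for $q_T$, whence $q_T = \ov{\,q_T|_{\A}\,} = \ov{q_{\mu _{ac}}}$. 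Finally, since $\mu _{ac}$ is absolutely continuous, Theorem \ref{ACismaxclosed} applied to $\mu _{ac}$ identifies the operator associated to $\ov{q_{\mu _{ac}}}$ — namely $T$ — as $L-$Toeplitz. I expect the only genuine difficulty to be the careful use of the monotone convergence theorem in the \emph{non-increasing} regime — there the naive pointwise limit $q_\infty$ need not be closed, so one must pass to its regular part in the sense of Simon, and one must check that the core/approximation properties of the $q_{T_\eps}$ are inherited by $q_T$ — everything else reducing to routine bookkeeping with domination and closability of positive quadratic forms.
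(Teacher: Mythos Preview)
Your proposal is correct and follows essentially the same route as the paper: both arguments rest on Simon's monotone convergence theorem for decreasing nets of closed positive forms, applied to the family $q_{T_\eps}$ (noting, as you do, that $\mu _\eps = \mu + \eps\, m_h$ so the forms decrease and agree with $q_\mu$ on $\fp$ in the limit). The paper simply observes that $\fp$ is a common form core for all the $q_{T_\eps}$, that the forms are monotone decreasing, and that they converge pointwise on $\fp$ to $q_\mu$, then cites \cite[Theorem 3.2]{Simon1} to conclude; your version unpacks the identification of the SR limit with the closure of the regular (AC) part of $q_\mu$ more explicitly, in particular supplying the verification that $\fp$ remains a core for the limit form $q_T$ --- a point the paper absorbs into the citation.
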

This proposition is a straightforward consequence of the monotone convergence theorem for decreasing nets of positive semi-definite quadratic forms, due to B. Simon \cite[Theorem 3.2]{Simon1}. Recall here that a sequence of closed, positive semi-definite operators $T_n$ is said to converge to a closed, positive semi-definite operator $T \geq 0$ in the strong resolvent (SR) sense, if 
$$ (I + T_n ) ^{-1} \stackrel{SOT}{\rightarrow} (I + T) ^{-1}, $$ where $SOT$ denotes the strong operator topology \cite[Chapter VIII.7]{RnS1}.
\begin{proof}
Observe that the positive semi-definite forms $q_\eps := q_{T_\eps}$ all have the free polynomials, $\fp$, as a common form core, that 
$$ q_\eps (p,q ) \rightarrow q_\mu (p, q), $$ as $\eps \downarrow 0$,
and that the $q_\eps := q_{T_\eps}$ are monotonically decreasing as $\eps \downarrow 0$. The proposition statement is now an immediate consequence of \cite[Theorem 3.2]{Simon1} (see also \cite[Theorem S.16]{RnS1})  
\end{proof}

Our goal now is to show that $\mu = \mu _{ac}$ is absolutely continuous by showing that $q_T$ is the closure of $q_\mu$. The strategy is to `peel off' the adjunction by $h(R)$ and its adjoint from $T_\eps +I$, and to consider the invertible, positive operators: 
\be S_\eps := (h(R) ^{-1} ) ^* h(R) ^{-1} + \tau + \eps I; \quad \eps \geq 0, \label{Seps} \ee with common domain
$$ \dom{S_\eps} = \dom{ (h(R) ^{-1}) ^* h(R) ^{-1}} = \ran{ h(R) h(R) ^*}. $$ (Given any closed, self-adjoint operator $S$, and a bounded self-adjoint operator $A$, it is straightforward to verify that $S +A$ is closed, and self-adjoint on $\dom{S}$.)
Since each of the $S_\eps$ is invertible, the quadratic forms of their inverses are a monotonically increasing net of positive quadratic forms, and we can then apply B. Simon's second monotone convergence theorem for quadratic forms to conclude, ultimately, that $\ov{q_\mu} = q_T$.

For any $\eps \geq 0$ consider the positive quadratic form $Q_\eps := Q_{S_\eps}$:
\begin{align*} Q_\eps (x, x) & := \ip{ h(R) ^{-1} x }{h(R) ^{-1} x} _{F^2 _d} + \ip{x}{(\tau + \eps I ) x} _{F^2 _d} \\
 x \in \mc{D} & = \dom{h(R) ^{-1} } = \ran{h(R)}, \end{align*} where $h$ is as above, in Equation (\ref{mueps}). This is well-defined since $h(R)$ is outer, where $h = h(R) 1$ (note that $h(R) ^{-1}$ is also outer). Further observe that
$$ \dom{S_\eps ^{1/2} } = \dom{h(R) ^{-1} } = \ran{h(R)}, $$ for every $\eps \geq 0$ and that $S_\eps$ is bounded below by $\eps I$. 

\begin{lemma} \label{SnotSR}
    The strictly positive $L-$Toeplitz operators $S_\eps$ converge in the strong resolvent sense to $$ S_0 = (h(R) ^{-1} ) ^* h(R) ^{-1} + \tau \geq 0. $$ 
\end{lemma}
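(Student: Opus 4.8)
The plan is to reduce the statement to the observation that $S_\eps = S_0 + \eps I$, after which strong resolvent convergence is immediate from the spectral theorem. Comparing the defining formula $(\ref{Seps})$ for $\eps > 0$ with the one for $\eps = 0$, the two operators differ only by the bounded self-adjoint perturbation $\eps I$; since adding a bounded operator alters neither the domain nor closedness, $\dom{S_\eps} = \dom{S_0} = \ran{h(R)h(R)^*}$ and $S_\eps = S_0 + \eps I$ for every $\eps \ge 0$.

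First I would verify that $S_0$ is a genuine closed, positive semi-definite self-adjoint operator. The operator $T := h(R)^{-1}$ is closed (being the inverse of the closed, injective operator $h(R)$) and densely defined, since $h$ is outer so that $h(R)$, hence also $T$, has dense range and $\dom{T} = \ran{h(R)}$ is dense; consequently $T^*T = (h(R)^{-1})^* h(R)^{-1} \ge 0$ is closed, self-adjoint and positive semi-definite. Adding the bounded self-adjoint operator $\tau$ leaves it closed and self-adjoint, by the remark preceding the lemma, so $S_0 \ge 0$ is self-adjoint on $\dom{S_0} = \dom{T^*T} = \ran{h(R)h(R)^*}$.

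Since $S_0 \ge 0$ is self-adjoint, $I + S_\eps = (1+\eps)I + S_0$ is boundedly invertible for all $\eps \ge 0$, and the bounded Borel functional calculus for $S_0 = \int_{[0,\infty)} \lambda\, dE(\lambda)$ yields
\[
\big\| (I+S_\eps)^{-1} - (I+S_0)^{-1} \big\| \;=\; \sup_{\lambda \in \sigma(S_0)} \left| \frac{1}{1+\eps+\lambda} - \frac{1}{1+\lambda} \right| \;=\; \sup_{\lambda \in \sigma(S_0)} \frac{\eps}{(1+\eps+\lambda)(1+\lambda)} \;\le\; \eps,
\]
using $\sigma(S_0) \subseteq [0,\infty)$. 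Letting $\eps \downarrow 0$ shows that $S_\eps \to S_0$ in the norm resolvent sense, hence in particular in the strong resolvent sense, which is the assertion. (Alternatively, one could apply B. Simon's monotone convergence theorem \cite[Theorem 3.2]{Simon1} to the decreasing net of closed forms $Q_\eps$, which share the common form core $\ran{h(R)}$ and converge on it pointwise to $Q_0 = Q_{S_0}$; but since $S_0$ is already closed, the direct spectral estimate above is shorter.)

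There is essentially no obstacle here: the only step requiring (routine) care is the domain and self-adjointness bookkeeping that produces the clean identity $S_\eps = S_0 + \eps I$, and once that is in place the resolvent convergence drops out of the spectral theorem.
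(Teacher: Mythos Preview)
Your proof is correct and in fact proves more than the paper claims: you obtain norm resolvent convergence with the explicit rate $\|(I+S_\eps)^{-1}-(I+S_0)^{-1}\|\le\eps$, whereas the paper settles for strong resolvent convergence. The paper's argument is essentially the same starting observation---$S_\eps = S_0+\eps I$ on the common domain $\dom{S_0}$---but instead of the spectral estimate it notes that $S_\eps x\to S_0 x$ for every $x\in\dom{S_0}$ and then invokes \cite[Theorem~VIII.25(a)]{RnS1}, which converts strong convergence on a common core into strong resolvent convergence. Your route is slightly more self-contained and yields a sharper conclusion; the paper's route is one line shorter once the cited theorem is granted. Either way, the substance is the identity $S_\eps=S_0+\eps I$, and both arguments are sound.
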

\begin{proof}
    Since all of the $S_\eps$ have the same domain for $\eps \geq 0$, fix any $x \in \dom{S_\eps} = \dom{S_0} = \dom{ (h(R) ^{-1}) ^* h(R) ^{-1}}, $ and observe that
$$ S_\eps x = (h(R) ^{-1}) ^* h(R) ^{-1} x + (\tau + \eps I ) x, $$ which clearly converges to $S_0 x$ as $\eps \downarrow 0$. By \cite[Theorem VIII.25 (a)]{RnS1}, $S_\eps$ converges to $S_0$ in the strong resolvent sense.
\end{proof}

\begin{lemma}
For any $\eps >0$, the operator $S_\eps ^{1/2} h(R)$ is closed on $\dom{h(R)}$, and $(S_\eps ^{1/2} h(R) ) ^* = h(R) ^* S_\eps ^{1/2}$.
\end{lemma}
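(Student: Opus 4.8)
The plan is to exploit that $S_\eps\geq\eps I$, so that $S_\eps^{1/2}\geq\sqrt{\eps}\,I$ is bounded below and its inverse $S_\eps^{-1/2}$ is a bounded, everywhere-defined positive operator with $\|S_\eps^{-1/2}\|\leq\eps^{-1/2}$; moreover $S_\eps^{-1/2}S_\eps^{1/2}$ is the identity on $\dom{S_\eps^{1/2}}=\ran{h(R)}$ while $S_\eps^{1/2}S_\eps^{-1/2}=I$ on $F^2_d$. Since $\dom{S_\eps^{1/2}}=\ran{h(R)}\supseteq h(R)(\dom{h(R)})$, the composition $S_\eps^{1/2}h(R)$ is defined exactly on $\dom{h(R)}$, as claimed.

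For closedness, suppose $(f_n)\subseteq\dom{h(R)}$ with $f_n\to f$ and $S_\eps^{1/2}h(R)f_n\to g$ in $F^2_d$. Applying the bounded operator $S_\eps^{-1/2}$ gives $h(R)f_n=S_\eps^{-1/2}\big(S_\eps^{1/2}h(R)f_n\big)\to S_\eps^{-1/2}g$. Because $h(R)$ is closed, $f\in\dom{h(R)}$ and $h(R)f=S_\eps^{-1/2}g$; applying $S_\eps^{1/2}$ then yields $S_\eps^{1/2}h(R)f=S_\eps^{1/2}S_\eps^{-1/2}g=g$, so $S_\eps^{1/2}h(R)$ is closed. (This is the general fact that a closed operator pre-composed with an operator admitting a bounded, everywhere-defined inverse stays closed.) It is also useful to record the identity $\|S_\eps^{1/2}h(R)f\|^2=\|f\|^2+\|(\tau+\eps I)^{1/2}h(R)f\|^2$ for $f\in\dom{h(R)}$, obtained by evaluating the quadratic form of $S_\eps$ at $h(R)f$ and using $h(R)^{-1}h(R)f=f$; since $\eps I\leq\tau+\eps I\leq(\|\tau\|+\eps)I$, this shows the graph norms of $T:=S_\eps^{1/2}h(R)$ and of $h(R)$ are equivalent on $\dom{h(R)}$, so the free polynomials $\fp$ form a core for $T$.

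For the adjoint, the inclusion $T^*\supseteq h(R)^*(S_\eps^{1/2})^*=h(R)^*S_\eps^{1/2}$ is the standard product rule for adjoints of densely defined compositions. The reverse inclusion rests on the factorization $h(R)=S_\eps^{-1/2}T$ on $\dom{h(R)}=\dom T$: since $S_\eps^{-1/2}$ is bounded and everywhere defined, the rule $(BC)^*=C^*B^*$ (valid when $B$ is bounded) gives $h(R)^*=T^*(S_\eps^{-1/2})^*=T^*S_\eps^{-1/2}$, and composing on the right with $S_\eps^{1/2}$, using $S_\eps^{-1/2}S_\eps^{1/2}=I$ on $\dom{S_\eps^{1/2}}$, identifies $h(R)^*S_\eps^{1/2}$ with the restriction of $T^*$ to $\dom{S_\eps^{1/2}}$. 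Thus the whole lemma comes down to the domain statement $\dom{T^*}\subseteq\dom{S_\eps^{1/2}}=\ran{h(R)}$ — equivalently, that $h(R)^*S_\eps^{1/2}$ is already closed and needs no further extension. This is the step I expect to be the main obstacle: I would attack it by testing the defining relation of $T^*$ against the core $\fp$ and transferring $S_\eps^{1/2}$ onto the functional, which should pin any $u\in\dom{T^*}$ down to $\ran{h(R)}$, but making this precise will require careful use of both the core property of $\fp$ and the specific $L$-Toeplitz form of $S_\eps$. Once $\dom{T^*}\subseteq\dom{S_\eps^{1/2}}$ is established, the two operators agree on this common domain and $T^*=h(R)^*S_\eps^{1/2}$ follows.
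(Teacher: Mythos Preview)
Your treatment of closedness is correct and matches the paper's: both exploit that $S_\eps^{-1/2}$ is bounded (since $S_\eps\geq\eps I$) to reduce to the closedness of $h(R)$. Your additional observation $\|Tf\|^2=\|f\|^2+\|(\tau+\eps)^{1/2}h(R)f\|^2$ is also correct and useful.

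For the adjoint, however, you explicitly leave the key step unproved: you correctly isolate the obstruction as the inclusion $\dom{T^*}\subseteq\dom{S_\eps^{1/2}}$, but your proposed attack (``testing against the core $\fp$ and using the $L$-Toeplitz form of $S_\eps$'') is only a sketch with no actual argument. This is a genuine gap. The paper handles this step by a direct duality computation rather than via cores: given $x\in\dom{T^*}$ and writing any $y\in\dom{h(R)}$ as $y=h(R)^{-1}g$ with $g=h(R)y\in\dom{h(R)^{-1}}=\dom{S_\eps^{1/2}}$, one has
\[
\ip{T^*x}{y}=\ip{x}{S_\eps^{1/2}g},
\]
valid for every $g\in\dom{S_\eps^{1/2}}$. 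The paper then reads off $x\in\dom{S_\eps^{1/2}}$ from this identity, and once that is established, the same identity becomes $\ip{S_\eps^{1/2}x}{h(R)y}=\ip{T^*x}{y}$ for all $y\in\dom{h(R)}$, giving $S_\eps^{1/2}x\in\dom{h(R)^*}$ and $h(R)^*S_\eps^{1/2}x=T^*x$. So the paper's route is to move the adjoint through the factorization one factor at a time, first peeling off $S_\eps^{1/2}$ (using that it is self-adjoint with $\dom{S_\eps^{1/2}}=\dom{h(R)^{-1}}$), then $h(R)$; your route via $h(R)^*=T^*S_\eps^{-1/2}$ arrives at the same missing inclusion but gives no mechanism to establish it. To complete your argument you need exactly this peeling step, not a core argument.
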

\begin{proof}
First, $\ran{h(R)} = \dom{S_\eps ^{1/2}}$ so that $S_\eps ^{1/2} h(R)$ is densely-defined. If $h(R) ^{-1} x_n \rightarrow y$, and $S_\eps ^{1/2} h(R) h(R) ^{-1} x_n \rightarrow g$, then $x_n \rightarrow x$ is convergent since $S_\eps \geq \eps I$ is bounded below. Since $h(R) ^{-1}$ is closed on $\ran{h(R)}$, it follows that $x \in \dom{h(R) ^{-1}} = \ran{h(R)}$ and $h(R) ^{-1} x = y$. Also $S_\eps ^{1/2}$ is closed so that $S_\eps ^{1/2} x_n \rightarrow S_\eps ^{1/2} x  =g$. Since $x \in \dom{h^{-1}}$ it then follows that 
$$ g= S_\eps ^{1/2} x = S_\eps ^{1/2} h(R) h(R) ^{-1} x = S_\eps ^{1/2} h(R) y, $$ proving that $S_\eps ^{1/2} h(R)$ is closed on this domain. 

To prove the second statement, fix $x \in \dom{(S_\eps ^{1/2} h(R) ) ^*}$ and consider any $y = h(R) ^{-1} g \in \dom{S_\eps ^{1/2} h(R)}$. Then,
$$ \ip{(S_\eps ^{1/2} h(R) ) ^* x}{y}  =  \ip{x}{S_\eps ^{1/2} g}, $$ holds for any $g \in \dom{h(R) ^{-1}}$, so that $x \in \dom{S_\eps ^{1/2}}$ and the above is equal to
$$ \ip{S_\eps ^{1/2} x}{g} = \ip{S_\eps ^{1/2} x}{h(R) y}. $$ Again, this holds for every $y \in \dom{h} = \ran{h^{-1}}$ so that $S_\eps ^{1/2} x \in \dom{h(R) ^*}$, and the above is equal to
$$ \ip{h(R) ^* S_\eps ^{1/2} x}{y}, $$ proving the second claim.
\end{proof}

\begin{lemma}
For any $\eps >0$, $T_\eps +I = h(R) ^* S_\eps h(R)$, and $\dom{T_\eps ^{1/2}} = \dom{h(R)}$.
\end{lemma}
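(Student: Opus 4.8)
The plan is to pin down $T_\eps$ by an explicit formula and then recognize $h(R)^*S_\eps h(R)$ as $(S_\eps^{1/2}h(R))^*(S_\eps^{1/2}h(R))$, so that both operators are determined by a common closed quadratic form. First I would identify $T_\eps$ with $g_\eps(R)^*g_\eps(R)$. Since $\tau+\eps I = A_\eps(R)^*A_\eps(R)$ with $A_\eps(R)\in R^\infty_d$ bounded below (hence bounded and boundedly invertible) and $g_\eps = A_\eps(R)h = A_\eps(R)h(R)1$, the right multiplier $g_\eps(R) := A_\eps(R)h(R)$ is affiliated to $R^\infty_d$, has $\fp$ as a core (because $\fp$ is a core for $h(R)$ and $A_\eps(R)$ is everywhere defined), and $g_\eps$ is $L$-cyclic because $A_\eps(R)$ is onto; by essential uniqueness of Free Smirnov representations this is the operator ``$g_\eps(R)$'' attached to the symmetric state $\mu_\eps = m_{g_\eps}$. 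Then, exactly as in the symmetric theory behind Theorem~\ref{embeddense} and Theorem~\ref{ACismaxclosed}, the closure of $q_{\mu_\eps}$ is $q_{T_\eps}$ with $T_\eps = g_\eps(R)^*g_\eps(R)$. In particular $\dom{T_\eps^{1/2}} = \dom{g_\eps(R)} = \dom{A_\eps(R)h(R)} = \dom{h(R)}$, which already gives the second assertion of the lemma.

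Next I would exploit the previous lemma: $B := S_\eps^{1/2}h(R)$ is closed on $\dom{h(R)}$ with $B^* = h(R)^*S_\eps^{1/2}$, so $B^*B$ is self-adjoint and positive semi-definite, has form domain $\dom{B} = \dom{h(R)}$, and satisfies $q_{B^*B}(x,x) = \|Bx\|^2$. A routine domain chase using $B^* = h(R)^*S_\eps^{1/2}$ and $S_\eps^{1/2}S_\eps^{1/2} = S_\eps$ identifies $B^*B$ with the operator composition $h(R)^*S_\eps h(R)$ on its natural domain. To evaluate its form, fix $x\in\dom{h(R)}$ and set $y := h(R)x\in\ran{h(R)} = \dom{S_\eps^{1/2}}$; using $Q_\eps = Q_{S_\eps}$ and $h(R)^{-1}y = x$,
\[ \|Bx\|^2 = \|S_\eps^{1/2}y\|^2 = Q_\eps(y,y) = \|h(R)^{-1}y\|^2 + \ip{y}{(\tau+\eps I)y}_{F^2_d} = \|x\|^2 + \|A_\eps(R)h(R)x\|^2 = \|x\|^2 + \|g_\eps(R)x\|^2. \]
Since $\|g_\eps(R)x\|^2 = \|T_\eps^{1/2}x\|^2$ for $x\in\dom{g_\eps(R)} = \dom{T_\eps^{1/2}}$ by the first paragraph, this reads $\|Bx\|^2 = \|(I+T_\eps)^{1/2}x\|^2$ for every $x$ in the common domain $\dom{h(R)}$.

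Finally, polarizing this identity shows that the closed positive semi-definite forms $q_{B^*B}$ and $q_{I+T_\eps}$ have the same domain $\dom{h(R)}$ and the same values, so by the uniqueness part of the representation theorem for closed forms (\cite[Chapter VI, Theorem 2.23]{Kato}) we get $B^*B = I+T_\eps$, i.e.\ $h(R)^*S_\eps h(R) = T_\eps + I$.

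The main obstacle is the first paragraph: one must be sure that the product $A_\eps(R)h(R)$ genuinely is the right-multiplier $g_\eps(R)$ supplied by the Free Smirnov theory for $m_{g_\eps}$ — in particular that $\fp$ stays a core and that $T_\eps$, which is defined a priori only as the operator whose form is $\ov{q_{\mu_\eps}}$, actually equals $g_\eps(R)^*g_\eps(R)$ — together with the attendant unbounded-operator domain bookkeeping used to identify $B^*B$ with the composition $h(R)^*S_\eps h(R)$ in the second paragraph. Once $T_\eps = g_\eps(R)^*g_\eps(R)$ is established, the remainder is the short form computation above and the standard uniqueness of closed forms.
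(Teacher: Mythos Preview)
Your proposal is correct and follows essentially the same approach as the paper: identify $T_\eps = g_\eps(R)^*g_\eps(R)$ with $g_\eps(R) = A_\eps(R)h(R)$ to get $\dom{T_\eps^{1/2}} = \dom{h(R)}$, then show the closed forms of $I+T_\eps$ and $(S_\eps^{1/2}h(R))^*(S_\eps^{1/2}h(R))$ agree on $\dom{h(R)}$ via the computation $\|S_\eps^{1/2}h(R)x\|^2 = \|x\|^2 + \|A_\eps(R)h(R)x\|^2$, and conclude by uniqueness of closed forms (Kato). The paper simply asserts $T_\eps = g_\eps(R)^*g_\eps(R)$ as being ``essentially by definition,'' so your more careful discussion of this identification and of the core issue is just an expansion of the same step.
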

\begin{proof}
The last statement is essentially by definition, $T _\eps = g_\eps (R) ^* g_\eps (R)$, where $g_\eps (R) := A_\eps (R) h(R)$, and $A_\eps (R) ^* A_\eps (R) = \tau + \eps I$ is a bounded, invertible operator. By polar decomposition, $\dom{\sqrt{T_\eps}} = \dom{g_\eps (R)} = \dom{h(R)}$.

Let $q_{\eps} + m := q_{T_\eps + I}$, and as before $Q_\eps := q_{S_\eps}$. Then for any $x \in \dom{T_\eps ^{1/2}} = \dom{h(R)} = \ran{h(R) ^{-1}} $, $x = h(R) ^{-1} x' $, we have that
\ba (q_{\eps} +m)(x,x) & = & \ip{h(R) x}{(\tau + \eps I) h(R) x}_{F^2 _d} +  \ip{x}{x}_{F^2 _d} \nn \\
& = & \ip{x'}{(\tau +\eps I) x'}_{F^2 _d} + \ip{h(R) ^{-1} x'}{h(R) ^{-1} x' } _{F^2 _d} \nn \\
& = & Q_\eps (x' , x' ) \nn \\
& = & \ip{ S_\eps ^{1/2} h(R) x }{ S_\eps ^{1/2} h(R) x}. \nn \ea 
It follows that the positive operators $T_\eps + I$ and $h(R) ^* S_\eps h(R)$ define the same closed quadratic form, and hence, by uniqueness (see \cite[Chapter VI, Theorems 2.1, 2.23]{Kato}) we have that 
$$ h(R) ^* S_\eps h(R) = T_\eps +I. $$ 
\end{proof}

Consider the bounded, postive quadratic forms:
$$ q_\eps ^{-1} := q_{(I+T_\eps ) ^{-1}}, \quad \mbox{and} \quad Q_\eps ^{-1} = q_{S_\eps ^{-1}}. $$  Since the $T_\eps \geq 0$ are monotonically decreasing as $\eps \downarrow 0$, a result of Kato \cite[Chapter VI, Theorem 2.21]{Kato} implies that the bounded operators $0 \leq (I + T_\eps ) ^{-1}$ are monotonically increasing as $\eps \downarrow 0$. Moreover, $(I +T_\eps ) ^{-1}$ is a contraction and $(I+T_\eps ) ^{-1}$ converges in SOT to $(I +T) ^{-1}$ as $\eps \downarrow 0$ by Proposition \ref{monSRprop}. Notice that $S_\eps$ is positive and invertible for every $\eps >0$, and positive and injective for $\eps =0$. Since $Q_\eps$ is monotonically decreasing, the net $Q_\eps ^{-1}$ is a monotonically increasing net of bounded (but not uniformly bounded) positive quadratic forms, and the second monotone convergence theorem of B. Simon applies:

\begin{thm}{ (\cite[Theorem S.14]{RnS1}, \cite[Theorem 3.1, Theorem 4.1]{Simon1})} \label{Simonup}
Let $(q_k)$ be a monotonically non-decreasing sequence of closed, positive semi-definite quadratic forms which are densely defined in a Hilbert space, $\H$. Let 
$$\dom{q_\infty} := \{ \left. x \in \bigcap \dom{q_k} \right| \ \sup q_k (x,x) < + \infty \}, $$
and set 
$$ q_\infty (x,y) := \lim _{n\rightarrow \infty} q_k (x,y); \quad \quad x,y \in \dom{q_\infty}.$$ Then $q_\infty$ is also positive semi-definite, and closed on $\dom{q_\infty}$. If $q_\infty$ is densely-defined and if $T_k, T_\infty$ are the closed, densely-defined and positive semi-definite operators so that  $q_k = q_{T_k}$, $q_\infty = q_{T_\infty}$, then $T_k$ converges to $T_\infty$ in the strong resolvent sense.
\end{thm}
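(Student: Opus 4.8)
The plan is to establish the three assertions in turn. First, for $x \in \dom{q_\infty}$ the numerical sequence $q_k(x,x)$ is non-decreasing (by monotonicity of the forms) and bounded above, hence convergent, and $\dom{q_\infty}$ is a linear subspace of $\H$ because the estimate $q_k(x+y,x+y) \leq 2 q_k(x,x) + 2 q_k(y,y)$ keeps the defining supremum finite. Since the polarization identity writes $q_k(x,y)$ as a fixed linear combination of the convergent sequences $q_k(x \pm y, x \pm y)$ and $q_k(x \pm iy, x \pm iy)$ (all of whose entries lie over $\dom{q_\infty}$), the limit $q_\infty(x,y) := \lim_k q_k(x,y)$ exists and is sesquilinear, and $q_\infty(x,x) = \lim_k q_k(x,x) \geq 0$, so $q_\infty$ is positive semi-definite. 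Throughout I use the standard ordering of forms, so $q_k \leq q_{k+1}$ means $\dom{q_{k+1}} \subseteq \dom{q_k}$ and $q_k(x,x) \leq q_{k+1}(x,x)$ on $\dom{q_{k+1}}$; in particular $\dom{q_\infty} \subseteq \bigcap_k \dom{q_k}$ and $q_k(x,x) \le q_\infty(x,x)$ on $\dom{q_\infty}$.

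Next I would verify that $q_\infty$ is closed. Let $(x_n) \subset \dom{q_\infty}$ be Cauchy in $\| \cdot \|_{q_\infty + 1} := \sqrt{q_\infty(\cdot,\cdot) + \ip{\cdot}{\cdot}_{\H}}$; since $\| \cdot \|_{\H} \leq \| \cdot \|_{q_\infty + 1}$, there is $x \in \H$ with $x_n \to x$ in $\H$. For each fixed $k$ we have $\| x_n - x_m \|_{q_k+1} \leq \| x_n - x_m \|_{q_\infty+1} \to 0$, so closedness of $q_k$ forces $x \in \dom{q_k}$ and $x_n \to x$ in $\| \cdot \|_{q_k+1}$. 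Given $\eps > 0$, choose $N$ with $\| x_N - x_m \|_{q_\infty+1} < \eps$ for all $m \geq N$; then $q_k(x_N - x_m, x_N - x_m) < \eps^2$ for every $k$ and every $m \geq N$, and letting $m \to \infty$ gives $q_k(x_N - x, x_N - x) \leq \eps^2$, whence $\sqrt{q_k(x,x)} \leq \sqrt{q_\infty(x_N, x_N)} + \eps$ uniformly in $k$, so $x \in \dom{q_\infty}$. The same estimate yields $q_\infty(x_n - x, x_n - x) = \lim_k q_k(x_n - x, x_n - x) \leq \eps^2$ for $n \geq N$, i.e. $x_n \to x$ in $\| \cdot \|_{q_\infty + 1}$. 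Hence $q_\infty$ is closed.

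Finally, assume $q_\infty$ is densely defined, let $T_k \geq 0$ and $T_\infty \geq 0$ be the self-adjoint operators with $q_k = q_{T_k}$ and $q_\infty = q_{T_\infty}$, and set $\tilde{q}_k := q_k + \ip{\cdot}{\cdot}_{\H}$ (the form of $I + T_k$), similarly $\tilde{q}_\infty$. By the monotonicity principle for forms \cite[Chapter VI, Theorem 2.21]{Kato}, the resolvents $R_k := (I + T_k)^{-1}$ form a non-increasing sequence of positive contractions, so $R_k$ converges strongly to a positive contraction $R_\infty$ with $0 \leq R_\infty \leq R_k$; it remains to identify $R_\infty = (I + T_\infty)^{-1}$. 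I would use the variational formulas $\ip{R_k x}{x}_{\H} = \max\{ 2\re{\ip{y}{x}_{\H}} - \tilde{q}_k(y,y) : y \in \dom{q_k} \}$ (attained at $y = R_k x$) and $\ip{(I+T_\infty)^{-1} x}{x}_{\H} = \max\{ 2\re{\ip{y}{x}_{\H}} - \tilde{q}_\infty(y,y) : y \in \dom{q_\infty} \}$. Restricting the first maximum to the smaller set $\dom{q_\infty}$ and letting $k \to \infty$ (using $\tilde{q}_k(y,y) \uparrow \tilde{q}_\infty(y,y)$ for $y \in \dom{q_\infty}$) gives $\ip{R_\infty x}{x}_{\H} \geq \ip{(I+T_\infty)^{-1} x}{x}_{\H}$. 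For the rest, fix $m$: since $\tilde{q}_m(R_k x, R_k x) \leq \tilde{q}_k(R_k x, R_k x) = \ip{R_k x}{x}_{\H} \leq \ip{R_m x}{x}_{\H}$ for $k \geq m$, the net $(R_k x)_{k \geq m}$ is bounded in the form Hilbert space $(\dom{q_m}, \tilde{q}_m)$, hence some subsequence converges weakly there, and the weak limit must be $R_\infty x$ because $R_k x \to R_\infty x$ in $\H$. Therefore $R_\infty x \in \dom{q_m}$, and weak lower semicontinuity of the form norm gives $\tilde{q}_m(R_\infty x, R_\infty x) \leq \ip{R_\infty x}{x}_{\H}$ for every $m$, so $R_\infty x \in \dom{q_\infty}$ with $\tilde{q}_\infty(R_\infty x, R_\infty x) \leq \ip{R_\infty x}{x}_{\H}$. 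Substituting $y = R_\infty x + t z$ with $t \in \R$ and $z \in \dom{q_\infty}$ into the inequality $\ip{R_\infty x}{x}_{\H} \geq 2\re{\ip{y}{x}_{\H}} - \tilde{q}_\infty(y,y)$ and matching the $O(1)$ and $O(t)$ terms forces $\tilde{q}_\infty(R_\infty x, z) = \ip{x}{z}_{\H}$ for all $z \in \dom{q_\infty}$, i.e. $(I + T_\infty) R_\infty x = x$. Hence $R_\infty = (I + T_\infty)^{-1}$, which is precisely strong resolvent convergence of $T_k$ to $T_\infty$.

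I expect the main obstacle to be this last step: identifying the strong limit $R_\infty$ of the resolvents with $(I+T_\infty)^{-1}$. The inequality $R_\infty \geq (I+T_\infty)^{-1}$ follows softly from monotonicity, but the reverse requires showing that $R_\infty x$ actually belongs to $\dom{q_\infty}$ and solves the form equation defining $(I+T_\infty)^{-1}x$; the weak-compactness argument run simultaneously in the shrinking form Hilbert spaces $(\dom{q_m}, \tilde{q}_m)$ (whose norms increase with $m$), together with weak lower semicontinuity of the form norm, is the delicate ingredient that makes this work.
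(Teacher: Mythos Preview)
The paper does not provide its own proof of this theorem; it is quoted verbatim from the literature (Reed--Simon \cite[Theorem S.14]{RnS1} and Simon \cite[Theorem 3.1, Theorem 4.1]{Simon1}) and used as a black box. Your argument is a correct, self-contained proof, and it follows essentially the standard route from Simon's original paper: polarization to define $q_\infty$, a diagonal Cauchy argument in the nested form norms to prove closedness, and then the monotone-resolvent/variational identification $R_\infty = (I+T_\infty)^{-1}$ using weak compactness in the form Hilbert spaces together with lower semicontinuity of the form norm. One cosmetic point: in the closedness step you write the Cauchy condition only for the single index $x_N$ rather than for all $x_n$ with $n\geq N$; the final inequality $q_\infty(x_n-x,x_n-x)\leq \eps^2$ requires the two-index Cauchy estimate, so state it that way.
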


\begin{cor}
The quadratic forms of $(I +T) ^{-1}$ and $h(R) ^{-1} S_0 ^{-1} (h(R) ^*) ^{-1}$ agree on free polynomials, and $q_\mu $ is closable so that $\mu \in AC (\A ^\dag ) _+$.
\end{cor}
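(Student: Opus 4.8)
The plan is to pass to the limit $\eps\downarrow 0$ in the factorization $T_\eps + I = h(R)^* S_\eps h(R)$ of the previous lemma, using the two monotone/strong-resolvent convergences already established. Since $S_\eps\geq\eps I$ and $T_\eps\geq 0$, both $S_\eps$ and $T_\eps + I$ are boundedly invertible, and together with the preceding lemmas (in particular $(S_\eps^{1/2}h(R))^* = h(R)^* S_\eps^{1/2}$, and the fact that $h(R)$ is injective with dense range so that $(h(R)^{-1})^* = (h(R)^*)^{-1}$) this identity inverts to
$$ (I + T_\eps)^{-1} = h(R)^{-1} S_\eps^{-1} (h(R)^*)^{-1} , \qquad \eps>0 . $$
Evaluating the associated bounded positive quadratic forms at a free polynomial $p\in\fp\subseteq\dom{h(R)}$, and using $h(R)^{-1}h(R)p = p$, gives
$$ \ip{(I+T_\eps)^{-1}p}{p}_{F^2 _d} = q_{S_\eps^{-1}}\left((h(R)^*)^{-1}p,\ (h(R)^*)^{-1}p\right) . $$

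Next I would let $\eps\downarrow 0$ on each side. On the left, $(I+T_\eps)^{-1}$ increases (Kato monotonicity, as recorded before Theorem \ref{Simonup}) and converges to $(I+T)^{-1}$ in the strong operator topology by Proposition \ref{monSRprop}, so the left side tends to $\ip{(I+T)^{-1}p}{p}$. On the right, $S_\eps = S_0 + \eps I$ decreases to the \emph{injective}, positive operator $S_0 = (h(R)^{-1})^* h(R)^{-1} + \tau$ (injective because $(h(R)^{-1})^*h(R)^{-1}>0$); hence $S_0^{-1}$ is a genuine densely-defined positive self-adjoint operator, $q_{S_0 ^{-1}}$ is closed, $S_\eps^{-1}$ increases, and $S_\eps^{-1}\to S_0^{-1}$ in the strong resolvent sense by Lemma \ref{SnotSR} together with the identity $(I+S_\eps^{-1})^{-1}=I-(I+S_\eps)^{-1}$. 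Simon's second monotone convergence theorem (Theorem \ref{Simonup}) then identifies the increasing limit of the forms $q_{S_\eps^{-1}}$ as $q_{S_0 ^{-1}}$. Passing to the limit in the displayed identity yields, for every $p\in\fp$,
$$ \ip{(I+T)^{-1}p}{p}_{F^2 _d} = q_{S_0 ^{-1}}\left((h(R)^*)^{-1}p,\ (h(R)^*)^{-1}p\right) = q_{h(R)^{-1}S_0 ^{-1}(h(R)^*)^{-1}}(p,p), $$
which is the first assertion of the corollary.

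For closability, I would invert this identity back. The bounded positive operators $(I+T)^{-1}$ and $h(R)^{-1}S_0 ^{-1}(h(R)^*)^{-1}$ have forms agreeing on the dense subspace $\fp$, hence coincide, so $I+T = h(R)^* S_0 h(R)$ in the form sense; cancelling $h(R)^*(h(R)^{-1})^*h(R)^{-1}h(R)=I$ this reads $T = h(R)^*\tau h(R)$, i.e. $q_T$ is the closed form generated by $p\mapsto\|\sqrt{\tau}\,h(R)p\|_{F^2 _d}^2$. Finally, for $p\in\fp$ we have $h(R)p = p(L)h$ and $\|\sqrt{\tau}\,p(L)h\|^2 = \ip{\tau p(L)h}{p(L)h} = \lim_{\eps\downarrow 0} q_{\mu_\eps}(p,p) = q_\mu(p,p)$; thus $q_T$ is a closed extension of $q_\mu$ (on $\fp$, hence on $\A$ by density in the form norm), so $q_\mu$ is closable. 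By Theorem \ref{ACismaxclosed}, $\mu\in AC (\A ^\dag ) _+$, and combined with the reduction to the symmetric case (Theorem \ref{CTF2}, Corollary \ref{embeddense}) this proves that every weak$-*$ continuous NC measure is absolutely continuous.

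The step I expect to be the main obstacle is transferring the strong-resolvent/monotone convergence $S_\eps^{-1}\to S_0^{-1}$ through the \emph{unbounded} conjugation by $h(R)^{\pm1}$: one must check that $(h(R)^*)^{-1}p$ is meaningful for the free polynomials in play and, more seriously, that the a priori only form-theoretic operators $h(R)^{-1}S_\eps^{-1}(h(R)^*)^{-1}$ genuinely converge to, and invert to, the claimed objects. The injectivity — but not bounded invertibility — of $S_0$ is exactly what rescues this: it is what makes $S_0^{-1}$ self-adjoint and $q_{S_0 ^{-1}}$ closed, so that Theorem \ref{Simonup} applies and pins down the limit form, and hence identifies $T$ and the value of $q_T$ on $\fp$ with $q_\mu$.
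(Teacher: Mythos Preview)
Your approach is essentially the same as the paper's: both pass to the limit $\eps\downarrow 0$ in the factorization $T_\eps+I=h(R)^*S_\eps h(R)$, use Simon's monotone convergence theorem together with the strong-resolvent convergence $S_\eps\to S_0$ (and the resolvent identity $(I+S_\eps^{-1})^{-1}=I-(I+S_\eps)^{-1}$) to identify the limiting form with $q_{S_0^{-1}}$, and then conclude $q_T=q_\mu$ on $\fp$.

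The one place where the paper is more careful than your write-up is the inversion step. You pass from agreement of the bounded forms $q_{(I+T)^{-1}}$ and $q_{h(R)^{-1}S_0^{-1}(h(R)^*)^{-1}}$ on $\fp$ to ``$I+T=h(R)^*S_0 h(R)$ in the form sense'' by an informal cancellation. The paper instead observes that the contraction $Y^*:=\overline{S_0^{-1/2}(h(R)^{-1})^*}$ satisfies $q_{YY^*}=q_{(I+T)^{-1}}$, takes a polar decomposition $UY^*=(I+T)^{-1/2}$ with $U$ unitary, checks that $(Y^*)^{-1}=\overline{h(R)^*S_0^{1/2}}$ is closed and densely defined (using $\dom{h(R)^*}=\dom{S_0^{-1/2}}$), and from $(Y^*)^{-1}=(I+T)^{1/2}U$ reads off $q_{I+T}=q_{(Y^*)^{-1}Y^{-1}}$ on $\dom{h(R)}$. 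This is what rigorously justifies the identity $q_T(x,x)=q_\mu(x,x)$ for all $x\in\dom{h(R)}\supseteq\fp$. Your final paragraph correctly flags this as the delicate point; the polar-decomposition trick is the missing mechanism that makes the ``cancelling'' precise.
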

\begin{proof}
We have shown that $(I+T_\eps) = h(R) ^* S_\eps h(R)$, for any $\eps >0$. Since $S_\eps ^{1/2} h(R)$ and $h(R) ^* S_\eps ^{1/2}$ are closed and bounded below by $1$ on their domains, it follows that $S_\eps ^{-1/2} (h(R))^{-1}) ^* $ is bounded, and extends by continuity to a contraction.
Given any free polynomial, $p \in \fp$,
\ba q_\eps ^{-1} (p, p ) & = & Q_\eps ^{-1} ( (h(R) ^{-1} ) ^* p , (h(R) ^{-1} ) ^* p ) \nn \\
& = & Q_\eps ^{-1} (p_h ,p_h), \quad \quad p_h := (h(R) ^{-1} ) ^* p \in \fp. \nn \ea
This remains bounded as $\eps \downarrow 0$, and,
$$ \mc{D} _0 := \bigvee (h(R) ) ^{-1} ) ^* \fp, $$ is dense in $F^2 _d$ since $h(R) ^{-1}$ is right-Smirnov, so that the free polynomials are a core for its adjoint, and $h(R) ^{-1}$ is injective so that its adjoint has dense range \cite[Corollary 3.13, Corollary 3.15, Remark 3.16]{JM-freeSmirnov}. The previous Theorem \ref{Simonup}, then implies that 
$$ Q_0 ^{-1}  (x,y) := \lim _{\eps \downarrow 0} Q_\eps ^{-1} (x,y), $$ is a closed, densely-defined, positive semi-definite quadratic form on some form domain $\dom{Q_0 ^{-1} } \supseteq \mc{D} _0$. Since $Q_0 ^{-1}$ is closed, it is the quadratic form of some closed $\wt{S} _0 ^{-1}$, and Theorem \ref{Simonup} implies that $S_\eps ^{-1}$ converges in the strong resolvent sense to $\wt{S} _0 ^{-1}$.  However, by Lemma \ref{SnotSR}, $S_\eps$ converges in the strong resolvent sense to $S_0$, where $S_0$ is injective so that $S_0 ^{-1}$ is densely-defined and closed. In particular,
$$ S_\eps (I + S_\eps ) ^{-1} = I - (I + S_\eps ) ^{-1} \stackrel{SOT}{\rightarrow} S_0 (I + S_0 ) ^{-1}. $$ However,
$$ S_{\eps} (I + S_\eps ) ^{-1} = S_\eps \left( (S_\eps ^{-1} +I ) S_\eps \right) ^{-1} =  (I + S_\eps ^{-1} ) ^{-1}, $$ for
any $\eps \geq 0$. It follows that $S_\eps ^{-1}$ converges in the strong resolvent sense to $S_0 ^{-1}$, so that $S_0 ^{-1} = \wt{S} _0 ^{-1}$. That is, $Q_0 ^{-1}$ is the quadratic form of $S_0 ^{-1}$. Hence, $p_h = (h(R) ^{-1} ) ^* p \in \dom{S_0 ^{-1/2}}$ for any $p \in \fp$, and \ba q_0 ^{-1} (p, p ) & = & q _{(I+T) ^{-1} } (p, p )  \nn \\
& = & Q_0 ^{-1} (p _h , p_h ) \nn \\
& = & q _{(S_0 ^{-1/2}) ^* S_0 ^{-1/2}} (p_h , p_h) \nn \\
& = & \ip{ S_0 ^{-1/2} (h(R) ^{-1}) ^* p }{S_0 ^{-1/2} (h(R) ^{-1}) ^* p} _{F^2 _d}. \nn \ea
Hence, $Y^* := \ov{S_0 ^{-1/2} (h(R)^{-1}) ^*} $ is a contraction so that $q_{YY^*} = q_{(I+T) ^{-1}}$. 

By polar decomposition, there is a unitary, $U$ so that $UY^* = \sqrt{I+T} ^{-1}$. Recall that,
$$ \dom{h(R)^*} = \ran{(h(R) ^* ) ^{-1}} = \dom{S_0 ^{-1/2}} = \ran{S_0 ^{1/2}}, $$ so that the 
operator 
$$ (Y^*) ^{-1} = \ov{h(R) ^* S_0 ^{1/2}}, $$ is well-defined, closed, and densely-defined, and 
$$ (Y^*) ^{-1} = \sqrt{I+T} U. $$ It follows that 
$$ q_{I+T} = q_{(Y^*) ^{-1} Y ^{-1}}, $$ so that for any $x \in \dom{h(R)}$, 
\ba q_T (x,x) + \ip{x}{x} _{F^2 _d} & = & q_{(Y^*) ^{-1} Y ^{-1}} (x,x) \nn \\
& = & q _{S_0} (h(R) x , h(R) x ) \nn \\
& = & q _{(h^{-1} ) ^* h^{-1}} (h(R) x,h (R) x) + q _\tau (h(R) x , h(R) x ) \nn \\
& = & \ip{x}{x} _{F^2 _d} + q_\mu (x,x). \nn \ea 
It follows that for all $x \in \dom{h(R)}$, 
$$ q_T (x,x) = q_\mu (x,x), $$ and since $q_T$ is closable, this proves that $q_\mu$, with domain $\dom{q_\mu} = \dom{h(R)} \supset \fp$ is a closable form.  By Theorem \ref{ACismaxclosed}, $\mu$ is an absolutely continuous AC measure.
\end{proof}

\begin{cor} \label{wcisac}
Any weak$-*$ continuous NC measure $\mu \in WC(\A ^\dag ) _+$ is an absolutely continuous NC measure, $WC (\A ^\dag ) _+ \subseteq AC (\A ^\dag ) _+$.
\end{cor}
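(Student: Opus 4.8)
The plan is to reduce everything to the closability criterion of Theorem \ref{ACismaxclosed}: a positive NC measure $\mu$ is absolutely continuous if and only if the positive semi-definite form $q_\mu$ on $\fp$ is closable. So it suffices to produce, for an \emph{arbitrary} $\mu \in WC(\A^\dag)_+$, a closed positive semi-definite form agreeing with $q_\mu$ on $\fp$, and the remainder is essentially the packaging of the lemmas in the two preceding subsections.

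First I would put $\mu$ into canonical form via the structure lemma for weak-$*$ continuous NC measures (the Lemma preceding Equation (\ref{mueps})): since every vector of $F^2_d(\mu)$ is a weak-$*$ continuous vector (Corollary \ref{ACfun}, Theorem \ref{DLP-intertwiner}), the Smirnov and inner-outer factorizations of right multipliers let one write $\mu(L^\alpha) = \langle h , \tau L^\alpha h\rangle_{F^2_d}$ with $h = h(R)1$ outer (hence $L$-cyclic) and $\tau \geq 0$ a bounded positive $L$-Toeplitz operator. I would then regularize: for $\eps > 0$, $\mu_\eps(L^\alpha) := \langle h , (\tau + \eps I)L^\alpha h\rangle_{F^2_d}$ is again weak-$*$ continuous, and since $\tau + \eps I$ is bounded below, Popescu's factorization (Theorem \ref{Popfactor}) gives $\tau + \eps I = A_\eps(R)^* A_\eps(R)$ with $A_\eps(R)$ outer, so $\mu_\eps = m_{g_\eps}$, $g_\eps := A_\eps(R)h$, is \emph{symmetric} and therefore absolutely continuous by the symmetric case already established in Theorem \ref{symistypeL} and Corollary \ref{embeddense}. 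Writing $q_{T_\eps}$ for the closure of $q_{\mu_\eps}$, these forms decrease to $q_\mu$ on the common core $\fp$ as $\eps \downarrow 0$; ``peeling off'' the (possibly unbounded) outer multiplier $h(R)$ and using $T_\eps + I = h(R)^* S_\eps h(R)$ with $S_\eps = (h(R)^{-1})^* h(R)^{-1} + \tau + \eps I$, B. Simon's monotone convergence theorems for decreasing and increasing nets of positive forms (Proposition \ref{monSRprop}, Lemma \ref{SnotSR}, Theorem \ref{Simonup}) give strong-resolvent convergence $S_\eps \to S_0$ and, after inverting, $S_\eps^{-1} \to S_0^{-1}$; this forces $q_\mu$ to agree on $\fp$ with the closed form of the strong-resolvent limit $T$. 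Hence $q_\mu$ is closable and Theorem \ref{ACismaxclosed} yields $\mu \in AC(\A^\dag)_+$, which is the asserted inclusion since $\mu$ was arbitrary.

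I expect the genuine difficulty to lie entirely in the limiting argument rather than in this final corollary. The intertwiner realizing $\mu$ as a vector state is bounded only when $\mu \leq t^2 m$, so in general $h(R)$, $T$ and $S_0$ are merely closed densely-defined operators; one must check that $S_\eps^{1/2} h(R)$ and its adjoint $h(R)^* S_\eps^{1/2}$ are closed and bounded below on their domains, that $\dom{S_\eps^{1/2}} = \ran{h(R)}$ is independent of $\eps$, and that strong-resolvent convergence is preserved under inversion (via the identity $(I + S_\eps)^{-1} = I - (I + S_\eps^{-1})^{-1}$). Once those unbounded-operator technicalities are in place, Corollary \ref{wcisac} is simply the observation that this construction applies verbatim to every weak-$*$ continuous NC measure.
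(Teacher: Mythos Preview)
Your proposal is correct and follows essentially the same route as the paper: Corollary \ref{wcisac} has no separate proof in the paper and is simply the end-point of the argument in Section \ref{WCvsAC}, which proceeds exactly as you outline (canonical form via the structure lemma, regularization $\mu_\eps$, the symmetric case via Corollary \ref{embeddense}, the peeling-off identity $T_\eps + I = h(R)^* S_\eps h(R)$, and Simon's monotone convergence theorems to pass to the limit). Your identification of where the genuine work lies---the unbounded-operator technicalities handled in the intermediate lemmas---is also accurate.
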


\begin{remark}
The above result is in contrast to \cite[Theorem 4.4]{HdeSnoo}, which implies that if $q$ is any closable quadratic form which is densely-defined in a Hilbert space, $\H$, that either $q$ is bounded, or $q$ has a decomposition $q= q_1  +q_2$ where $q_1$ is again closable, and $q_2$ is singular. Since the positive cone of all weak$-*$ continuous NC measures is hereditary, if $q= q_\mu$ is not bounded, then $q_2$ cannot be the quadratic form of an NC measure, $\ga$, since $\ga$ would necessarily be weak$-*$ continuous so that $q_2$ would be a closable quadratic form by the above results. One can check that the decomposition in \cite[Theorem 4.4]{HdeSnoo} applied to $q_\mu$ can never yield $L-$Toeplitz forms $q_1$ and $q_2$. 

It was observed already in \cite[Section 2, Remark 2]{Simon1} that the set of all absolutely continuous (\emph{i.e.} closable) positive semi-definite quadratic forms with dense domain in a separable Hilbert space, is not hereditary. Namely, \cite[Section 2]{Simon1} provides an explicit example of a singular (positive semi-definite) quadratic form $q_1$, and a bounded positive semi-definite (hence absolutely continuous/ closable) quadratic form $q_2$ whose sum is absolutely continuous.  It is the extra $L-$Toeplitz structure of the quadratic forms we consider (\emph{i.e.} the fact that our quadratic forms correspond to NC measures) that ensures we obtain more precise analogues of Lebesgue decomposition theory.
\end{remark}

\section{The NC Lebesgue Decomposition}
\label{singsect}

\begin{thm} \label{aciswc}
If $\mu \in AC (\A ^\dag ) _+$ is absolutely continuous, then it is weak$-*$ continuous so that the positive cones of weak$-*$ continuous and absolutely continuous measures coincide.
\end{thm}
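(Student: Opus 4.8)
The plan is to show directly that if $\mu \in AC(\A ^\dag )_+$ then its GNS row isometry $\Pi _\mu$ is a weak$-*$ continuous row isometry; by Corollary~\ref{ACfun} this is equivalent to $\mu \in WC(\A ^\dag )_+$, and combined with Corollary~\ref{wcisac} it yields $AC(\A ^\dag )_+ = WC(\A ^\dag )_+$. The main tool will be the abstract unbounded--intertwiner machinery already assembled in Lemma~\ref{ubtwine} and Lemma~\ref{denserangelem}.

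First I would exhibit a closed, densely--defined, dense--range intertwiner from $F^2 _d$ into $F^2 _d (\mu )$. Let $E_\mu$ be the map densely defined on free monomials by $E_\mu (p(L) + N_m ) := p(L) + N_\mu$; here $F^2 _d (m) = F^2 _d$, and $\| E_\mu (p(L)1) \| ^2 _\mu = q_\mu (p,p)$, so the graph norm of $E_\mu$ is the form norm of $q_\mu$ on $\fp$. Since $\mu$ is absolutely continuous, Theorem~\ref{ACismaxclosed} gives that $q_\mu \geq 0$ is closable (with $\fp$ a form core for its closure), and the closability criterion for positive forms \cite[Ch.~VI]{Kato} is precisely the statement that $E_\mu$ is a closable operator; write $\ov{E_\mu }$ for its closure. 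On the polynomial core one has $E_\mu L_k (p(L)1 ) = L_k p(L) + N_\mu = \Pi _{\mu , k} E_\mu (p(L)1)$, and since $L_k$ and $\Pi _{\mu , k}$ are bounded this relation passes to the closure: $\dom{\ov{E_\mu}}$ is $L$-invariant and $\ov{E_\mu } L_k = \Pi _{\mu , k } \ov{E_\mu}$ on $\dom{\ov{E_\mu}}$, so $\ov{E_\mu}$ is a closed, densely--defined intertwiner in the sense of the definition preceding Lemma~\ref{ubtwine}. It has dense range because $\ran{\ov{E_\mu}} \supseteq \{ p(L) + N_\mu \}$, which is dense in $F^2 _d (\mu )$.

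Then I would run the abstract argument. By Lemma~\ref{denserangelem} applied to $\ov{E_\mu}$, the set $\dom{\ov{E_\mu} ^*} \cap \ran{\ov{E_\mu}}$ is dense in $F^2 _d (\mu )$ (it contains $\ran{\ov{E_\mu}(I + \ov{E_\mu}^* \ov{E_\mu})^{-1}}$), and by Lemma~\ref{ubtwine} every vector of this set is a weak$-*$ continuous vector for $\Pi _\mu$. Hence $WC(\Pi _\mu )$ is dense in $F^2 _d (\mu )$; since $WC(\Pi _\mu )$ is closed by Theorem~\ref{DLP-intertwiner}, $WC(\Pi _\mu ) = F^2 _d (\mu )$. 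By Theorem~\ref{ACrowiso} this means $\Pi _\mu$ is a weak$-*$ continuous row isometry, so $\mu$ is weak$-*$ continuous by Corollary~\ref{ACfun}, completing the proof.

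I expect the only real work to be the bridge between ``$q_\mu$ closable'' and ``$\ov{E_\mu}$ is a bona fide intertwiner'', i.e.\ checking that closing up the naive polynomial embedding preserves the intertwining relation and produces an $L$-invariant domain; after that, the conclusion is bookkeeping with Lemmas~\ref{ubtwine} and~\ref{denserangelem}. As an alternative one could instead take the closed densely--defined embedding $\mr{e} _\mu : \scr{H} ^+ (H _\mu ) \cap H^2 (\B ^d _\N ) \hookrightarrow H^2 (\B ^d _\N )$ (closed by the argument of Corollary~\ref{embeddense}, and densely defined exactly because $\mu$ is AC) and set $X_\mu := \mc{C} _\mu ^* \mr{e} _\mu ^*$ as in Corollary~\ref{intertwinesym}; the same two lemmas then finish the argument verbatim.
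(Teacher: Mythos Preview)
Your proposal is correct and lands on essentially the same argument as the paper. Your ``alternative'' route---take the closed, densely--defined embedding $\mr{e}_\mu : \scr{H}^+(H_\mu)\cap H^2(\B^d_\N)\hookrightarrow H^2(\B^d_\N)$, set $X_\mu := \mc{C}_\mu^*\,\mr{e}_\mu^*$, and apply Lemmas~\ref{ubtwine} and~\ref{denserangelem}---is exactly what the paper does (with the intertwining of $\mr{e}_\mu^*$ verified by a kernel--vector computation rather than by closure from polynomials).

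Your ``main'' route is the GNS--side transcription of the same argument: under free Cauchy transform the two embeddings are related by $E_\mu = \mc{C}_\mu^*\,\mr{e}_\mu^*\,\mc{C}_m$ (cf.\ Lemma~\ref{embeddings} and Corollary~\ref{intertwinesym}), so you are really constructing the same unbounded intertwiner. The small difference is packaging: you obtain closability of $E_\mu$ by invoking Theorem~\ref{ACismaxclosed} (AC $\Leftrightarrow$ $q_\mu$ closable) and then pass the intertwining relation to the closure, whereas the paper verifies closedness of $\mr{e}_{ac}$ directly from pointwise evaluation and checks the intertwining on NC kernel vectors. Your version is marginally cleaner in that it avoids the kernel computation; the paper's version is a bit more self--contained since it does not lean on the form--theoretic equivalence of Theorem~\ref{ACismaxclosed}. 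Either way, the endgame (Lemmas~\ref{ubtwine}, \ref{denserangelem}, Theorem~\ref{DLP-intertwiner}, Theorem~\ref{ACrowiso}) is identical.
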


\begin{proof}
That $WC (\A ^\dag ) _+ \subseteq AC (\A ^\dag ) _+$ was proven in Corollary \ref{wcisac}.
If $\mu$ is absolutely continuous, then by definition the intersection space:
$$ \mr{int} (\mu , m ) := \scr{H} ^+ (H _\mu ) \bigcap H^2 (\B ^d _\N ), $$ is dense in $\scr{H} ^+ (H_\mu )$, and the embedding, $\mr{e} _{ac} : \mr{int} (\mu , m) \hookrightarrow H^2 (\B ^d _\N ) \simeq F^2 _d$ is densely-defined. As in the proof of Corollary \ref{embeddense}, it is straightforward to verify that $\mr{e} _{ac}$, with domain $\mr{int} (\mu , m)$ is closed. Notice also that $\mr{e} _{ac}$ is trivially a multiplier by the constant NC function $\mr{e} _{ac} (Z) = I_n$, for $Z \in \B ^d _n$. It follows that all of the kernel vectors $K \{Z , y , v \} $ belong to the domain of $\mr{e} _{ac} ^*$, and that 
$$ \mr{e} _{ac} ^* K \{ Z, y , v \} = K^{\mu} \{ Z , y , v \}. $$ It further follows that $\mr{e} _{ac} ^*$ intertwines $L$ and $V_\mu$: \ba \mr{e} _{ac} ^* L K_Z  Z^* &=& \mr{e} _{ac} ^* ( K  _Z - K _{0_n} ) \nn \\
& = & K^{\mu } _Z - K ^{\mu } _{0_n} =  (K^\mu _Z - K^\mu _{0_n} ) \nn \\
& = &  V_\mu K^\mu _Z Z ^* \nn \\
& = &  V_\mu \mr{e} _{ac} ^* K_Z Z^*. \nn \ea 
Since $\mr{e} _{ac}$ is closed and densely-defined, so is its adjoint, and it follows that $X := \mc{C} _\mu ^* \mr{e} _{ac} ^*$ is a closed, densely-defined intertwiner with dense range in $F^2 _d (\mu)$. By Lemma \ref{ubtwine} and Lemma \ref{denserangelem}, $\ran{X} \cap \dom{X ^*}$ is dense in $F^2 _d (\mu )$, and every vector in this set is a weak$-*$ continuous vector for $\mu$. Since $WC (\mu)$ is always closed, it follows that $F^2 _d (\mu ) = F^2 _d (\mu _{wc} )$ so that $\mu$ is a weak$-*$ continuous NC measure.
\end{proof}

\begin{defn}
A vector $x \in F^2 _d (\mu )$ is a \emph{weak$-*$ analytic vector} for $\Pi _\mu$ if the Free Cauchy Transform of $x$ belongs to $H^2 (\B ^d _\N )$.
\end{defn}

\begin{cor}
Any weak$-*$ analytic vector for $\Pi _\mu$ is a weak$-*$ continuous vector for $\Pi _\mu$, and the set of all weak$-*$ analytic vectors for $\Pi _\mu$ is dense in $F^2 _d (\mu _{ac} )$, the largest $\Pi _\mu -$reducing subspace of weak$-*$ continuous vectors for $\Pi _\mu$.
\end{cor}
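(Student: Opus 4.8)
The plan is to pin down the set of weak$-*$ analytic vectors explicitly, read the density statement off from the definition of $\mu _{ac}$, and deduce weak$-*$ continuity by passing to the reducing summand $F^2 _d (\mu _{ac} )$, where Theorem \ref{aciswc} and the row--isometry theory of Section \ref{NCLDsect} apply. First, since $\mc{C} _\mu : F^2 _d (\mu ) \to \scr{H} ^+ (H _\mu )$ is unitary and $\mc{C} _\mu x$ always lies in $\scr{H} ^+ (H _\mu )$, a vector $x$ is weak$-*$ analytic for $\Pi _\mu$ exactly when $\mc{C} _\mu x \in \scr{H} ^+ (H _\mu ) \cap H^2 (\B ^d _\N ) = \mr{int} (\mu , m )$; thus the weak$-*$ analytic vectors are precisely $\mc{C} _\mu ^{-1} (\mr{int} (\mu , m ))$. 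By Theorem \ref{ncld2thm}, $\mr{int} (\mu , m )$ is contained in, and norm-dense in, the closed subspace $\scr{H} ^+ (H _{\mu _{ac}} )$ of $\scr{H} ^+ (H _\mu )$, and $\mc{C} _\mu$ carries $F^2 _d (\mu _{ac} )$ unitarily onto $\scr{H} ^+ (H _{\mu _{ac}} )$; hence the weak$-*$ analytic vectors form a dense subset of $F^2 _d (\mu _{ac} )$, which is the density claim.

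Next I would show any $x \in F^2 _d (\mu _{ac} )$ is a weak$-*$ continuous vector for $\Pi _\mu$. The subspace $F^2 _d (\mu _{ac} )$ is $\Pi _\mu$-reducing (Theorem \ref{ncld2thm}) with $\Pi _\mu | _{F^2 _d (\mu _{ac} )} \simeq \Pi _{\mu _{ac}}$. By Theorem \ref{aciswc} the NC measure $\mu _{ac}$ is weak$-*$ continuous, so $\Pi _{\mu _{ac}}$ is a weak$-*$ continuous row isometry by Corollary \ref{ACfun}, whence $F^2 _d (\mu _{ac} ) = WC (\mu _{ac} )$ by Theorem \ref{ACrowiso}. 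Since $F^2 _d (\mu _{ac} )$ is reducing, for $x$ in it the functional $L^\alpha \mapsto \ip{x}{\Pi _\mu ^\alpha x} _\mu$ equals the corresponding functional in the $\Pi _{\mu _{ac}}$-picture, which is weak$-*$ continuous on $\A$; hence every vector of $F^2 _d (\mu _{ac} )$ --- in particular every weak$-*$ analytic vector --- is a weak$-*$ continuous vector for $\Pi _\mu$. This gives the first assertion.

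Finally, to identify $F^2 _d (\mu _{ac} )$ as the \emph{largest} $\Pi _\mu$-reducing subspace of weak$-*$ continuous vectors: it is one such by the previous step, so let $\mc{M}$ be any $\Pi _\mu$-reducing subspace of weak$-*$ continuous vectors. Then $\mc{C} _\mu \mc{M}$ is $V_\mu$-reducing, so Theorem \ref{redintersect} gives $\gamma \leq \mu$ with $\mc{C} _\mu \mc{M} = \scr{H} ^+ (H _\gamma )$ \emph{isometrically} inside $\scr{H} ^+ (H _\mu )$ and $\Pi _\mu | _{\mc{M}} \simeq \Pi _\gamma$. Every vector of $\mc{M}$ being weak$-*$ continuous for $\Pi _\mu$, hence for $\Pi _\gamma$, we get $F^2 _d (\gamma ) = WC (\gamma )$, so $\Pi _\gamma$ is weak$-*$ continuous (Theorem \ref{ACrowiso}), so $\gamma$ is a weak$-*$ continuous NC measure (Corollary \ref{ACfun}), hence absolutely continuous (Corollary \ref{wcisac}); therefore $\mr{int} (\gamma , m )$ is dense in $\scr{H} ^+ (H _\gamma )$. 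Since $\scr{H} ^+ (H _\gamma )$ sits isometrically as a closed subspace of $\scr{H} ^+ (H _\mu )$, one has
$$ \mr{int} (\gamma , m ) = \scr{H} ^+ (H _\gamma ) \cap H^2 (\B ^d _\N ) \subseteq \scr{H} ^+ (H _\mu ) \cap H^2 (\B ^d _\N ) = \mr{int} (\mu , m ) \subseteq \scr{H} ^+ (H _{\mu _{ac}} ), $$
and taking closures inside the closed subspace $\scr{H} ^+ (H _{\mu _{ac}} )$ gives $\scr{H} ^+ (H _\gamma ) \subseteq \scr{H} ^+ (H _{\mu _{ac}} )$, i.e.\ $\mc{M} \subseteq F^2 _d (\mu _{ac} )$.

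The only genuinely delicate point I anticipate is the bookkeeping in the last step: one must track which Herglotz-space embeddings are merely contractive and which are isometric, and apply Theorem \ref{redintersect} precisely so that $\scr{H} ^+ (H _\gamma )$ is realized \emph{isometrically} in $\scr{H} ^+ (H _\mu )$, so that norm-density in $\scr{H} ^+ (H _\gamma )$ persists as closure inside the closed subspace $\scr{H} ^+ (H _{\mu _{ac}} )$. (If this corollary is stated after the identification $\mu _{wc} = \mu _{ac}$ of Theorem \ref{sameLD}, the last step reduces to a one-line citation of that theorem.)
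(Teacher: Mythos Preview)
Your proof is correct and follows essentially the same route as the paper. The paper's one-line proof (``This follows immediately from the proof of the previous theorem'') points to the intertwiner $X = \mc{C}_\mu^* \mr{e}_{ac}^*$ constructed in the proof of Theorem~\ref{aciswc}: applied on the summand $F^2_d(\mu_{ac})$, Lemma~\ref{ubtwine} and Lemma~\ref{denserangelem} yield a dense set of weak$-*$ continuous vectors there, and closedness of $WC(\mu)$ gives the rest. You instead invoke the \emph{statement} of Theorem~\ref{aciswc} (that $\mu_{ac}$ is weak$-*$ continuous) and then thread through Corollary~\ref{ACfun} and Theorem~\ref{ACrowiso} to reach the same conclusion that every vector of $F^2_d(\mu_{ac})$ is weak$-*$ continuous --- a slightly more indirect but entirely valid chain. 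Your additional verification of the ``largest reducing subspace'' description is more than the paper does at this point (the paper effectively defers that identification to Theorem~\ref{sameLD}), and your caution about tracking isometric versus contractive embeddings in that step is well-placed and correctly handled via Theorem~\ref{redintersect}.
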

\begin{proof}
This follows immediately from the proof of the previous theorem.
\end{proof}

\begin{thm} \label{sameLD}
A positive NC measure $\mu \in (\A ^\dag ) _+$ is weak$-*$ continuous if and only if it is absolutely continuous and weak$-*$ singular if and only if it is singular. In particular, if 
$$ \mu =\mu _{ac} + \mu _s = \mu _{wc} + \mu _{ws}, $$ are the Lebesgue Decomposition and weak$-*$ Lebesgue Decomposition of $\mu$, then $\mu _{ac} = \mu _{wc}$ and $\mu _{s} = \mu _{ws}$.

\end{thm}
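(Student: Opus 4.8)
The plan is to reduce everything to the single identity $\mu_{ac}=\mu_{wc}$ between the absolutely continuous part of $\mu$ (Theorem~\ref{ncld2thm}) and its weak$-*$ continuous part (Definition~\ref{NCLDdef2}); once this is established, $\mu_s=\mu-\mu_{ac}=\mu-\mu_{wc}=\mu_{ws}$ follows at once, and the two ``if and only if'' statements drop out since $\mu$ is absolutely continuous (resp.\ singular) precisely when $\mu_s=0$ (resp.\ $\mu_{ac}=0$), while $\mu$ is weak$-*$ continuous (resp.\ weak$-*$ singular) precisely when $\mu_{ws}=0$ (resp.\ $\mu_{wc}=0$) --- the latter using Corollary~\ref{ACfun} and Theorem~\ref{ACrowiso} together with uniqueness of the Kennedy--Wold decomposition. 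The first step is to record that the cones agree: $WC(\A^\dag)_+\subseteq AC(\A^\dag)_+$ is Corollary~\ref{wcisac} and $AC(\A^\dag)_+\subseteq WC(\A^\dag)_+$ is Theorem~\ref{aciswc}. In particular $AC(\A^\dag)_+$ is hereditary, since $WC(\A^\dag)_+$ is by Lemma~\ref{domAC}, and the only NC measure that is simultaneously absolutely continuous and singular is $0$ (its Herglotz space would be both dense and trivial in itself).

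Next I would prove $\mu_{wc}\le\mu_{ac}$ by showing $\mu_{ac}$ is the \emph{maximal} absolutely continuous NC measure bounded above by $\mu$. If $\la\in AC(\A^\dag)_+$ and $\la\le\mu$, then $q_\la$ is closable by Theorem~\ref{ACismaxclosed} and $q_\la\le q_\mu$; by B.\ Simon's form decomposition together with Theorem~\ref{sameNCLD}, $q_{\mu_{ac}}$ is the maximal closable form dominated by $q_\mu$, so $q_\la\le q_{\mu_{ac}}$ on $\fp$. Since positive elements of the free disk system are norm-limits of sums of squares of free polynomials (\cite[Lemma~4.6]{JM-freeAC}) and both functionals are bounded, this upgrades to $\la\le\mu_{ac}$. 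Applying this with $\la=\mu_{wc}$ --- which is weak$-*$ continuous, hence absolutely continuous by the cone equality --- gives $\mu_{wc}\le\mu_{ac}$.

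For the reverse inequality I would compare reducing subspaces in the GNS model. By Theorem~\ref{ncld2thm}, $F^2_d(\mu_{ac})$ is $\Pi_\mu$-reducing with reducing projection $P_{ac}$, and by Theorem~\ref{redintersect} one has $\mu_{ac}(L^\alpha)=\ip{I+N_\mu}{P_{ac}\Pi_\mu^\alpha(I+N_\mu)}_\mu$, and likewise $\mu_{wc}(L^\alpha)=\ip{I+N_\mu}{P_{wc}\Pi_\mu^\alpha(I+N_\mu)}_\mu$ with $P_{wc}$ reducing. The corollary following Theorem~\ref{aciswc} supplies a dense set of weak$-*$ continuous vectors inside $F^2_d(\mu_{ac})$, and since $WC(\mu)$ is closed (Theorem~\ref{DLP-intertwiner}) this forces $F^2_d(\mu_{ac})\subseteq WC(\mu)$. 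As the corollary following Definition~\ref{NCLDdef2} identifies $F^2_d(\mu_{wc})$ as the \emph{largest} $\Pi_\mu$-reducing subspace contained in $WC(\mu)$, we conclude $\ran{P_{ac}}\subseteq\ran{P_{wc}}$. Then $P_{wc}-P_{ac}$ is again a $\Pi_\mu$-reducing projection, so Theorem~\ref{redintersect} shows that $\mu_{wc}-\mu_{ac}$ agrees on the generators $L^\alpha$ with a positive NC measure, hence $\mu_{ac}\le\mu_{wc}$. Combining the two inequalities yields $\mu_{ac}=\mu_{wc}$.

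The main obstacle is not analytic: the essential hard content --- that weak$-*$ continuity is equivalent to absolute continuity at the level of the cones --- has already been carried out in Corollary~\ref{wcisac} and Theorem~\ref{aciswc}. What remains is a bookkeeping exercise whose subtlety is that the two candidate ``$ac$-parts'' are produced by genuinely different mechanisms: $\mu_{ac}$ via the reducing subspace of $\scr{H}^+(H_\mu)$ obtained from its intersection with the Free Hardy Space, and $\mu_{wc}$ via the Kennedy--Wold structure theory of the row isometry $\Pi_\mu$. The delicate points to be careful about are that every ``compression of $\mu$ to a $\Pi_\mu$-reducing subspace'' really is a positive NC measure (Theorem~\ref{redintersect}), that $P_{ac}\le P_{wc}$ as projections translates into $\mu_{ac}\le\mu_{wc}$ as functionals on the full free disk system (and not merely on $\A$), and that the maximality of $\mu_{ac}$ is legitimately imported from the quadratic-form side via Theorems~\ref{ACismaxclosed} and \ref{sameNCLD}.
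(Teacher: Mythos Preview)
Your proposal is correct and follows essentially the same route as the paper's proof: both establish $\mu_{ac}\le\mu_{wc}$ by showing $F^2_d(\mu_{ac})$ is a $\Pi_\mu$-reducing subspace of weak$-*$ continuous vectors and invoking maximality of $F^2_d(\mu_{wc})$, and then deduce $\mu_s=\mu_{ws}$ from the orthogonal complements. The one minor difference is in the direction $\mu_{wc}\le\mu_{ac}$: you route through the quadratic-form maximality of $q_{\mu_{ac}}$ via Theorem~\ref{sameNCLD}, whereas the paper argues more directly that since $\mu_{wc}$ is absolutely continuous, $\scr{H}^+(H_{\mu_{wc}})\cap H^2(\B^d_\N)$ is dense in $\scr{H}^+(H_{\mu_{wc}})$, and since $\scr{H}^+(H_{\mu_{wc}})$ sits isometrically inside $\scr{H}^+(H_\mu)$ (as a reducing subspace), this forces $\scr{H}^+(H_{\mu_{wc}})\subseteq\scr{H}^+(H_{\mu_{ac}})$ by the very definition of $\mu_{ac}$ in Theorem~\ref{ncld2thm}. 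Your detour through Theorem~\ref{sameNCLD} is perfectly valid but slightly heavier; the paper's RKHS argument avoids invoking the form-theoretic identification altogether.
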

\begin{proof}
Corollary \ref{wcisac} and Theorem \ref{aciswc} imply that $\mu$ is weak$-*$ continuous if and only if it is absolutely continuous. In particular given any $\mu \in (\A ^\dag ) _+$, $F^2 _d (\mu _{wc} )$ is the largest reducing subspace of weak$-*$ continuous vectors for $\Pi _\mu$, and the previous theorem shows that $F^2 _d (\mu _{ac} ) \subseteq F^2 _d (\mu _{wc} )$ so that $\mu _{ac} \leq \mu _{wc}$. Conversely, Corollary \ref{wcisac} shows that 
$$ \scr{H} ^+ (H _{\mu _{wc} } ) \bigcap H^2 (\B ^d _\N ), $$ is dense in $\scr{H} ^+ (H _{\mu _{wc} } )$ so that by definition, $\scr{H} ^+ (H _{\mu _{wc} } ) \subseteq \scr{H} ^+ (H _{\mu _{ac} } )$ and $\mu _{wc} \leq \mu _{ac}$. 

Comparing the two direct sum decompositions,
$$ \begin{array}{cccc} F^2 _d (\mu ) = & F^2 _d ( \mu _{ac} ) & \oplus  & F^2 _d (\mu _s ) \\
\verteq & \verteq & & \\
F^2 _d  (\mu ) = & F^2 _d (\mu _{wc} ) & \oplus & F^2 _d (\mu _{ws} ), \end{array} $$
shows that $F^2 _d (\mu _{s} ) = F^2 _d (\mu _{ws} )$, and we conclude that $\mu _{s} = \mu _{ws}$.
\end{proof}

The weak$-*$ Lebesgue Decomposition of any NC measure $\mu \in (\A ^\dag ) _+$ clearly recovers the classical Lebesgue decomposition of any finite, positive, regular Borel measure on the circle (with respect to normalized Lebesgue measure), in the single-variable case of $d=1$. Since the weak$-*$ Lebesgue decomposition and the Lebesgue Decomposition of any $\mu \in (\A ^\dag ) _+$ are the same by the above theorem, it follows that our reproducing kernel approach to Lebesgue decomposition theory provides a new proof of Lebesgue decomposition of positive measures on the circle:

\begin{cor} \label{classLD}
Let $\mu$ be a positive, finite, and regular Borel measure on the unit circle $\partial \D$. If 
$ \mu = \mu _{ac} + \mu _s,$ is the classical Lebesgue decomposition of $\mu$ into absolutely continuous and singular parts, then, 
$$ \scr{H} ^+ (H _\mu ) = \scr{H} ^+ (H _{\mu _{ac}}) \oplus \scr{H} ^+ (H _{\mu _s} ), $$ where $$ \scr{H} ^+ (H _{\mu _{ac}} ) = \left( \scr{H} ^+ (H _\mu ) \bigcap H^2 (\D ) \right) ^{-\| \cdot \| _{H_\mu}}, \quad \mbox{and} \quad \scr{H} ^+ (H _{\mu _s} ) \bigcap H^2 (\D ) = \{ 0 \}. $$ 
\end{cor}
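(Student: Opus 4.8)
The plan is to reduce everything to the $d=1$ specializations of results already proved and then invoke uniqueness of the \emph{classical} Lebesgue decomposition. For $d=1$ the Free Disk System is the norm closure of $\mc{A}(\D)+\mc{A}(\D)^*$, so $F^2_1 = H^2(\D)$, $\scr{H}^+(H_m) = H^2(\D)$, and the left Toeplitz system $\left( L^\infty_1 + (L^\infty_1)^* \right)^{-\mr{WOT}}$ equals $L^\infty(\partial\D)$; via Riesz--Markov a positive NC measure on this system is exactly a finite positive regular Borel measure $\mu$ on $\partial\D$. Throughout I write $\mu = \mu_{ac}^{\mr{nc}} + \mu_s^{\mr{nc}}$ for the NC Lebesgue decomposition of Theorem~\ref{ncld2thm}, to distinguish it from the classical $\mu = \mu_{ac}+\mu_s$.

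The key step is a \emph{dictionary lemma}: a finite positive Borel measure $\nu$ on $\partial\D$ is weak$-*$ continuous in the sense of Definition~\ref{DLP-AC} if and only if $\nu \ll m$. This is the content of the discussion in the Introduction: a weak$-*$ continuous extension of $\hat\nu$ to $L^\infty(\partial\D)=\left( L^\infty_1+(L^\infty_1)^*\right)^{-\mr{WOT}}$ is precisely an element of $L^\infty(\partial\D)_* \simeq L^1(\partial\D)$ restricting to $\hat\nu$, and by density of $\mc{A}(\D)+\mc{A}(\D)^*$ in $\scr{C}(\partial\D)$ together with Radon--Nikodym this exists if and only if $d\nu/dm \in L^1(\partial\D)$. (A self-contained alternative: writing $\nu_{ac}=w\,dm$ with $w\in L^1$, multiplication by $\sqrt w\in L^2(\partial\D)$, defined first on analytic polynomials as a map into $L^2(\partial\D)$, is closable because $\sqrt w$ is $m$-a.e.\ finite; hence $q_\nu$ is closable and $\nu$ is absolutely continuous by Theorem~\ref{ACismaxclosed}.) Combining this dictionary with Theorem~\ref{sameLD} gives, for $d=1$: $\nu$ is an absolutely continuous NC measure $\iff$ $\nu$ is weak$-*$ continuous $\iff$ $\nu\ll m$.

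With the dictionary in hand the rest is bookkeeping on top of Theorem~\ref{ncld2thm}, which provides $0\le \mu_{ac}^{\mr{nc}},\mu_s^{\mr{nc}}\le\mu$ with $\mu_{ac}^{\mr{nc}}$ absolutely continuous and $\mu_s^{\mr{nc}}$ singular, $\scr{H}^+(H_{\mu_{ac}^{\mr{nc}}})$ the $H_\mu$-closure of $\scr{H}^+(H_\mu)\cap H^2(\D)$, $\scr{H}^+(H_{\mu_s^{\mr{nc}}})\cap H^2(\D)=\{0\}$, and $\scr{H}^+(H_\mu)=\scr{H}^+(H_{\mu_{ac}^{\mr{nc}}})\oplus\scr{H}^+(H_{\mu_s^{\mr{nc}}})$. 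By the dictionary $\mu_{ac}^{\mr{nc}}\ll m$. For the singular part, its \emph{classical} absolutely continuous piece $\left(\mu_s^{\mr{nc}}\right)_{ac}$ is, again by the dictionary, an absolutely continuous NC measure dominated by $\mu_s^{\mr{nc}}$; since $\mu_s^{\mr{nc}}$ is singular and the singular NC measures form a hereditary cone (Lemma~\ref{singhered}), $\left(\mu_s^{\mr{nc}}\right)_{ac}$ is both absolutely continuous and singular, hence its space of Cauchy transforms is both dense in and trivial in itself, so it is $\{0\}$ and the measure vanishes (a nonzero NC measure has nonzero Herglotz space). Thus $\mu_s^{\mr{nc}}\perp m$.

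Finally, $\mu=\mu_{ac}^{\mr{nc}}+\mu_s^{\mr{nc}}$ now exhibits $\mu$ as a sum of a classically absolutely continuous and a classically singular positive measure, so uniqueness of the classical Lebesgue decomposition forces $\mu_{ac}^{\mr{nc}}=\mu_{ac}$ and $\mu_s^{\mr{nc}}=\mu_s$; substituting into the decomposition of Theorem~\ref{ncld2thm} yields exactly the asserted identities. I expect the one genuine obstacle to be the dictionary lemma --- pinning down that weak$-*$ continuity of $\hat\mu$ over $L^\infty(\partial\D)$ coincides with classical absolute continuity --- which requires handling the identifications $\left( L^\infty_1+(L^\infty_1)^*\right)^{-\mr{WOT}}=L^\infty(\partial\D)$, $L^\infty(\partial\D)_*=L^1(\partial\D)$, and Riesz--Markov with some care; everything after that is a formal consequence of Theorems~\ref{ncld2thm} and~\ref{sameLD} and Lemma~\ref{singhered}.
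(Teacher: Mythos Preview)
Your proposal is correct and follows the same route as the paper. The paper's proof is essentially the one-sentence paragraph preceding the corollary: the weak$-*$ Lebesgue decomposition ``clearly recovers'' the classical one for $d=1$, and by Theorem~\ref{sameLD} the weak$-*$ and NC Lebesgue decompositions coincide. Your dictionary lemma (weak$-*$ continuity on $L^\infty(\partial\D)$ $\iff$ $\nu\ll m$) is exactly the identification the paper invokes in Subsection~\ref{classmeas}, and your heredity argument for $\mu_s^{\mr{nc}}\perp m$ simply makes explicit what the paper leaves to the reader. The only minor comment is that the paper's implicit argument runs through the Kennedy--Wold decomposition of the GNS isometry (which for $d=1$ is just the classical Wold decomposition of $M_\zeta$ on $H^2(\mu)$), whereas you bypass this by going directly through the predual identification $L^\infty(\partial\D)_*\simeq L^1(\partial\D)$; both amount to the same thing.
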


\subsection{The cone of singular NC measures}

We have seen that $AC (\A ^\dag ) _+ = WC (\A ^\dag ) _+$ is a positive hereditary cone. It remains to show that $\mr{Sing} (\A ^\dag ) _+ = WS (\A ^\dag ) _+$ is also a positive cone (that it is hereditary was already proven in Lemma \ref{singhered}).

\begin{lemma} \label{WCisS}
If $\mu ,\la \in (\A ^\dag ) _+$, $\mu$ is singular and $\la$ is type$-L$, then 
$$ \scr{H} ^+ (H _\mu ) \bigcap \scr{H} ^+ (H _\la ) = \{ 0 \}. $$
\end{lemma}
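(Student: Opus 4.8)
The plan is to realize the intersection space as a Herglotz space of a sub-measure of $\mu$, and then to bring in the irreducibility of the left free shift together with the type-$L$ hypothesis on $\la$. One may assume $\mu,\la\neq 0$, the claim being trivial otherwise.

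First I would normalize $\la$. Since every GNS row isometry is cyclic and $\la$ is type-$L$, $\Pi_\la$ has one-dimensional wandering space, so $\Pi_\la\simeq L$; by Corollary~\ref{symchar} and Remark~\ref{wlogouter} we may write $\la=m_x$ with $x\in F^2_d$ outer ($L$-cyclic). Then $x(0)\neq 0$: otherwise $\langle x(R)L^\alpha 1,1\rangle_{F^2_d}=\langle L^\alpha x,1\rangle_{F^2_d}=\delta_{\alpha,\emptyset}\,x(0)=0$ for all $\alpha$, and since $\fp$ is a core for $x(R)$ this gives $1\perp\ran{x(R)}$, contradicting outerness of $x$. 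By Theorem~\ref{symistypeL} the distance from $I+N_\la$ to $F^2_d(\la)_0$ is $|x(0)|>0$, i.e.\ $\la$ is not column-extreme, and hence (by the description of $\ran{V_\la}^\perp$ in Subsection~\ref{CTofGNS}) $\scr{H}^+(H_\la)$ contains all constant NC functions.

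Now by Theorem~\ref{NCintersect} and Proposition~\ref{reduceprop} (applied with second measure $\la$, which contains the constants), the linear manifold $\mr{int}(\mu,\la)$ is invariant and co-invariant for $V_\mu$, so its $\|\cdot\|_{H_\mu}$-closure $\mc{M}$ is a closed $V_\mu$-reducing subspace; by Theorem~\ref{redintersect}, $\mc{M}=\scr{H}^+(H_\gamma)$ for an NC measure $\gamma\le\mu$, and by Lemma~\ref{singhered} the measure $\gamma$ is singular. It now suffices to show $\gamma=0$, for then $\mr{int}(\mu,\la)\subseteq\mc{M}=\{0\}$.

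To force $\gamma=0$ I would show that $\mr{int}(\mu,\la)$ is also $\ast$-invariant for $V_\la$, so that its $\|\cdot\|_{H_\la}$-closure $\mc{M}'$ reduces $V_\la$. Co-invariance is Theorem~\ref{NCintersect}; invariance under each $V_{\la,k}$ comes from the identity $(V_{\la,k}f)(Z)=Z_kf(Z)+(V_{\la,k}f)(0)=(V_{\mu,k}f)(Z)+c\,I_n$ (a constant correction), combined with $V_{\mu,k}f\in\scr{H}^+(H_\mu)$ and the presence of constants in $\scr{H}^+(H_\mu)$. Granting that $\mc{M}'$ reduces $V_\la\simeq L$: the left free shift (the unilateral shift when $d=1$) is irreducible, so $\mc{M}'$ is $\{0\}$ or all of $\scr{H}^+(H_\la)$. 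In the first case $\mr{int}(\mu,\la)=\{0\}$ and we are done. The second case — $\mr{int}(\mu,\la)$ dense in $\scr{H}^+(H_\la)$ — must be excluded: since $\la$ is type-$L$, hence weak-$\ast$ continuous, hence absolutely continuous (Corollary~\ref{wcisac}), $\scr{H}^+(H_\la)$ contains a dense set of functions in $H^2(\B^d_\N)$, whereas $\mr{int}(\mu,\la)\cap H^2(\B^d_\N)\subseteq\scr{H}^+(H_\mu)\cap H^2(\B^d_\N)=\{0\}$ because $\mu$ is singular; from this, together with the closed-graph property of the set-theoretic inclusion $\mr{int}(\mu,\la)\hookrightarrow\scr{H}^+(H_\mu)$, one deduces $\scr{H}^+(H_\la)\subseteq\scr{H}^+(H_\mu)$, whence $\la\le t^2\mu$ by Theorem~\ref{NCdom} and Lemma~\ref{embeddings}, so $\la$ is singular by Lemma~\ref{singhered} — impossible for a nonzero absolutely continuous measure. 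The main obstacle lies precisely in this last paragraph: securing the $V_\la$-invariance of $\mr{int}(\mu,\la)$ when constants may fail to lie in $\scr{H}^+(H_\mu)$ (the column-extreme case of $\mu$), and ruling out the degenerate alternative $\mc{M}'=\scr{H}^+(H_\la)$; the remaining steps are routine given the cited structural results.
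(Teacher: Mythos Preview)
Your proposal has two genuine gaps, both of which you yourself flag but do not resolve.

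First, the $V_\la$-invariance step requires that the constant NC functions lie in $\scr{H}^+(H_\mu)$, and this fails precisely when $\mu$ is column-extreme. But singular NC measures are \emph{typically} column-extreme: any NC measure whose GNS row isometry is Cuntz (dilation-type or von Neumann type) is column-extreme, and by Theorem~\ref{sameLD} singular means exactly that $\Pi_\mu$ is a direct sum of such summands. So the case you cannot handle is essentially the only case, and your reducing-subspace-for-$V_\la$ plan breaks down immediately.

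Second, even granting that $\mc{M}'$ reduces $V_\la$, your exclusion of the alternative $\mc{M}'=\scr{H}^+(H_\la)$ does not work. Density of $\mr{int}(\mu,\la)$ in $\scr{H}^+(H_\la)$ together with closedness of the set-theoretic inclusion into $\scr{H}^+(H_\mu)$ gives only a closed, densely-defined (generally unbounded) embedding, not bounded containment $\scr{H}^+(H_\la)\subseteq\scr{H}^+(H_\mu)$; you cannot conclude $\la\le t^2\mu$ from this. The auxiliary observation $\mr{int}(\mu,\la)\cap H^2(\B^d_\N)=\{0\}$ does not help either, since two dense subspaces of $\scr{H}^+(H_\la)$ can have trivial intersection.

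The paper avoids both obstacles by a different mechanism. It works on the $\mu$-side only (where the intersection \emph{is} reducing, since $\scr{H}^+(H_\la)$ contains constants), takes the closed adjoint $\mr{e}_{ac}^*$ of the embedding $\mr{Int}_\mu(\la)\hookrightarrow\scr{H}^+(H_\la)$, and composes it with a bounded intertwiner $Y:F^2_d\to F^2_d(\la)$ supplied by the weak-$*$ continuity of $\la$ (and an inner factor from the NC Beurling theorem) to build a closed, densely-defined intertwiner $X:F^2_d\to F^2_d(\mu)$. Lemma~\ref{ubtwine} then produces a $\Pi_\mu$-reducing subspace of weak-$*$ continuous vectors whenever $X\neq 0$, contradicting singularity of $\mu$. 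No hypothesis on constants in $\scr{H}^+(H_\mu)$ is needed, and no dichotomy on the $\la$-side arises.
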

In particular, by Theorem \ref{NCintersect}, this implies that that $F^2 _d (\mu + \la ) = F^2 _d (\mu ) \oplus F^2 _d (\la).$
\begin{proof}
Consider the closure of the intersection space in $\scr{H} ^+ (H _\mu )$:
$$ \mr{Int} _\mu (\la ):= \left( \scr{H} ^+ (H _\mu ) \bigcap \scr{H} ^+ (H_\la ) \right) ^{-\| \cdot \| _{H_\mu}}. $$ By Corollary \ref{symchar}, if $\la$ is pure type$-L$, then $\Pi _\la$ is pure type$-L$, \emph{i.e.} $\Pi _\la$ is unitarily equivalent to $L$ and hence has no direct summand of Cuntz type. By \cite[Theorem 6.4]{JM-freeCE}, $\la$ is not column-extreme and $\scr{H} ^+ (H _\la )$ then contains the constant functions so that Proposition \ref{reduceprop} applies. By Theorem \ref{redintersect} and Proposition \ref{reduceprop}, $\mr{Int} _\mu (\la)$ is closed and $V_\mu-$reducing, so the orthogonal projection $P_{\mu \cap \la} : \scr{H} ^+ (H_\mu ) \rightarrow \mr{Int} _\mu (\la )$ commutes with $V_\mu , V_\mu ^*$. Let 
$$ \mr{e}_{ac} : \mr{Int} _\mu  ( \la ) \hookrightarrow \scr{H} ^+ (H _\la ), $$ be the densely-defined embedding. As before (see the proof of Corollary \ref{embeddense}) it is easy to check that $\mr{e} _{ac}$ is closed on its maximal domain, $\dom{\mr{e} _{ac} } = \scr{H} ^+ ( H_\mu ) \bigcap \scr{H} ^+ (H _\la )$. Also as in the proof of Theorem \ref{aciswc}, since $\mr{e} _{ac}$ is trivially a multiplier by the constant NC function $\mr{e} _{ac} (Z) = I_n$, it follows that $\mr{e} _{ac} K^\la _\alpha = K^{\mu \cap \la} _\alpha$, and $\mr{e} _{ac} ^*$ intertwines $V_\la$ and $V_\mu | _{\mr{Int} _\mu (\la )}$:
$$ \mr{e} _{ac} ^* V_\la K_Z ^\la Z^*  =  V_\mu K^{\mu \cap \la} _Z Z^* \ = V_\mu \mr{e} _{ac} ^* K^\la _Z Z^*. $$ 
Since $\la$ is AC, the vector $I + N _\la \in F^2 _d (\la)$ is a WC vector and is in the range of a bounded intertwiner, $ Y : F^2 _d \rightarrow F^2 _d (\la )$, $ Y y = I + N_\la$ for some $y \in F^2 _d$. If the vector $y$ is not $L-$cyclic, then consider the $L-$invariant subspace
$$ F^2 _d y := \bigvee L^\alpha y. $$ (Here $\bigvee$ denotes closed linear span.)  Then, since $y$ is a cyclic vector for $L| _{F^2 _d y \otimes \C ^d }$, the NC Beurling Theorem \cite[Theorem 2.1]{DP-inv}, \cite[Theorem 2.3]{AriasPop} implies that 
$$ F^2 _d y = \ran{\Theta _y (R)}, $$ for some right-inner (isometric) $\Theta _y (R) \in R^\infty _d $.  Let $y' \in F^2 _d$ be such that $\Theta _y (R) y' = y$ and define 
$$ X:= \mc{C} _\mu ^* \mr{e} _{ac} ^* \mc{C} _\la Y \Theta _y (R), \quad \quad \dom{X} := \C \{ L_1 , ... , L_d \} y' \subseteq F^2 _d. $$ This operator is well-defined since:
\ba X L^\alpha y' & = &  \mc{C} _\mu ^* \mr{e} _{ac} ^* \mc{C} _\la  Y \Theta _y (R) L^\alpha y' \nn \\
& = & \mc{C} _\mu ^* \mr{e} _{ac} ^* \mc{C} _\la  Y L^\alpha y \nn \\
& = & \mc{C} _\mu ^* \mr{e} _{ac} ^* \mc{C} _\la (L^\alpha + N_\la) \nn \\
& = & \mc{C} _\mu ^* \mr{e} _{ac} ^* K_\alpha ^\la \nn \\
& = & \mc{C} _\mu ^* K_\alpha ^{\mu \cap \la}. \nn \ea
The operator $X$ is densely-defined since $y' \in F^2 _d$ must be $L-$cyclic: If $x \in F^2 _d$ is orthogonal to $\bigvee L^\alpha y'$ then $\Theta _y (R) x \perp \ran{\Theta _y (R)}$ so that $x \equiv 0$ since $\Theta _y (R)$ is an isometry. Finally, $X$ is also closable. This is a consequence of a general fact: if $T$ is a densely-defined closed operator, $C$ is a bounded operator, and $TC$ is densely-defined, then it is necessarily closed on 
$$\{ x \in \dom{C} | \ Cx \in \dom{T} \}. $$ Indeed, if $p_n (L) y' \in \dom{X}$, $p_n \in \fp$ is such that $p_n (L) y ' \rightarrow 0$ and $X p_n y \rightarrow g$, then since $Y' := \mc{C} _\la Y \Theta _y (R) $ is bounded, $Y ' p_n y '  \rightarrow 0 $. Since $\mr{e} _{ac} ^*$ is the adjoint of the closed operator $\mr{e} _{ac}$, it is closed, and since $y_n := Y' p_n y' \in \dom{e_{ac} ^*}$ obeys $y_n \rightarrow 0$, and $\mr{e} _{ac}^* y_n \rightarrow \mc{C} _\mu g$, it must be that $g =0$. This proves that $X$ is closable, and that $\fp y '$ is a core for its closure, $\ov{X}$, which is densely defined in $F^2 _d$.

For simplicity of notation, write $X$ in place of its closure, $\ov{X}$. One can check (using that $\mr{Int} _\mu (\la )$ is reducing for $V_\mu$) that $X$ intertwines $L$ and $\Pi _\mu$.  By Lemma \ref{ubtwine}, if $X \neq 0$, then $\ov{\ran{X}} \subseteq F^2 _d (\mu )$ is a non-empty $\Pi _\mu -$reducing subspace of weak$-*$ continuous vectors. Since $\mu$ is weak$-*$ singular, this is not possible and we conclude that $ \scr{H} ^+ (H _\mu ) \bigcap \scr{H} ^+ (H _\la ) = \{ 0 \}$.
\end{proof}

\begin{cor} \label{singacsing2}
Let $\mu \in \mr{Sing}(\A ^\dag ) _+$ be singular. If $\ga \in (\A ^\dag ) _+$ is such that $\ga \geq \mu$ has Lebesgue Decomposition $\ga = \ga _{ac} + \ga_s$ then  $\mu \leq \ga _s$.
\end{cor}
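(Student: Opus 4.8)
The plan is to work in the GNS picture. Since $\mu\le\gamma$, Lemma \ref{embeddings} furnishes a contractive intertwiner $E_\mu\colon F^2 _d(\gamma)\to F^2 _d(\mu)$, $p(L)+N_\gamma\mapsto p(L)+N_\mu$, with dense range, and $D:=E_\mu^\ast E_\mu$ is a positive $\gamma$-Toeplitz contraction with $\mu(a)=\langle I+N_\gamma,\,D\,\pi_\gamma(a)(I+N_\gamma)\rangle_\gamma$. Let $P_{ac}$ be the reducing projection of $F^2 _d(\gamma)$ onto $F^2 _d(\gamma)_{ac}\cong F^2 _d(\mu_{ac})$ from Theorem \ref{ncld2thm}. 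Comparing the block form of $D$ against $I-P_{ac}$ one checks that $\mu\le\gamma_s$ is equivalent to $D\le I-P_{ac}$, and since $D\ge 0$ this is in turn equivalent to $P_{ac}DP_{ac}=0$, i.e.\ to the vanishing of the bounded intertwiner $E_{ac}:=E_\mu|_{F^2 _d(\mu_{ac})}\colon F^2 _d(\mu_{ac})\to F^2 _d(\mu)$; equivalently, via the free Cauchy transform, to $\scr H^+(H_\mu)\perp\scr H^+(H_{\mu_{ac}})$ inside $\scr H^+(H_\gamma)$. So everything reduces to proving $E_{ac}=0$.

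First I would show that $\scr H^+(H_\mu)\cap\scr H^+(H_{\mu_{ac}})=\{0\}$. Although $\scr H^+(H_{\mu_{ac}})$ need not contain the constant NC functions (its Cuntz type-$L$ summand is column-extreme), the measure $\mu_{ac}+m$ is absolutely continuous by Theorem \ref{ACcone}, and by Theorem \ref{NCsum} its Herglotz space $\scr H^+(H_{\mu_{ac}+m})=\scr H^+(H_{\mu_{ac}})+H^2(\B^d_\N)$ does contain the constants. Hence Proposition \ref{reduceprop} applies with $\lambda=\mu_{ac}+m$, so $\scr H^+(H_\mu)\cap\scr H^+(H_{\mu_{ac}+m})$ is reducing for $V_\mu$, and by Theorem \ref{redintersect} it equals $\scr H^+(H_\rho)$ for some $\rho\le\mu$ with $F^2 _d(\rho)\subseteq F^2 _d(\mu)$ reducing for $\Pi_\mu$. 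Running the argument of Lemma \ref{WCisS} with $\mu_{ac}+m$ in place of the type-$L$ measure $\lambda$ there — the only properties used are that $\mu_{ac}+m$ is absolutely continuous, so $I+N_{\mu_{ac}+m}$ is a weak-$*$ continuous vector and (after a right-inner Beurling factorization) the image of an $L$-cyclic vector under a bounded intertwiner, and that its Herglotz space contains the constants — produces a closed densely-defined intertwiner $X\colon F^2 _d\to F^2 _d(\mu)$ with $\overline{\ran{X}}=\mc C_\mu^\ast\,\scr H^+(H_\rho)=F^2 _d(\rho)$; by Lemma \ref{ubtwine} and Lemma \ref{denserangelem} a dense subset of $F^2 _d(\rho)$ consists of weak-$*$ continuous vectors for $\Pi_\mu$. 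Since $F^2 _d(\rho)$ is reducing, it is then a reducing subspace of weak-$*$ continuous vectors, hence contained in $F^2 _d(\mu_{wc})=\{0\}$ as $\mu$ is singular; so $\rho=0$ and a fortiori $\scr H^+(H_\mu)\cap\scr H^+(H_{\mu_{ac}})=\{0\}$ (and likewise $\scr H^+(H_\mu)\cap\scr H^+(H_{\mu_{L}})=\{0\}$ for the type-$L$ part, directly from Lemma \ref{WCisS}).

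It remains to upgrade trivial intersection to the containment $\scr H^+(H_\mu)\subseteq\scr H^+(H_{\mu_{s}})$, i.e.\ $E_{ac}=0$; this is the main obstacle. By Theorem \ref{DLP-intertwiner} the range of $E_{ac}$ already lies in the weak-$*$ continuous vectors of $\Pi_\mu$, because $\Pi_{\mu_{ac}}$ is a weak-$*$ continuous row isometry; the task is to see that $\overline{\ran{E_{ac}}}$ is an entire $\Pi_\mu$-reducing subspace, so that singularity of $\mu$ forces it to be $\{0\}$. Writing $\mu_{ac}=\mu_L+\mu_{C\text-L}$ with $\mu_{C\text-L}$ of Cuntz type-$L$, so that $\Pi_{\mu_{C\text-L}}$ is onto, one first sees that $\overline{\ran{E_\mu|_{F^2 _d(\mu_{C\text-L})}}}$ equals $\bigoplus_k(\Pi_\mu)_k\,\overline{\ran{E_\mu|_{F^2 _d(\mu_{C\text-L})}}}$, hence is reducing; being made of weak-$*$ continuous vectors and $\mu$ singular, it is $\{0\}$, so $DP_{C\text-L}=0$. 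For the remaining type-$L$ block, with $\mu_L=m_x$ symmetric type-$L$ (Corollary \ref{symchar}), one uses the identity $\mc C_\mu E_\mu=\mc C_\gamma D$ to write, for $v\in F^2 _d(\mu_L)$, $\mc C_\mu(E_\mu v)=\mc C_\gamma(P_L Dv)+\mc C_\gamma(P_s Dv)$ with the first summand in $\scr H^+(H_{\mu_L})$, and then argues that a nonzero such summand would produce a nonzero element of $\scr H^+(H_\mu)\cap\scr H^+(H_{\mu_L})$, contradicting the previous paragraph. The delicate point — and where the $L$-Toeplitz structure of $D$ and the hereditary property of the singular cone (Lemma \ref{singhered}) must be combined — is controlling the $\scr H^+(H_{\mu_s})$-component $\mc C_\gamma(P_s Dv)$, equivalently showing that the $\gamma$-Toeplitz operator $D$ cannot have a nonzero diagonal block on $F^2 _d(\mu_L)$ while satisfying $P_{C\text-L}D=0$. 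Once $E_{ac}=0$ is established, $\mu\le\gamma_s$, and consequently also $\gamma-\mu\ge\gamma-\gamma_s=\gamma_{ac}$.
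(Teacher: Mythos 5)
Your opening reduction is the paper's: since $\mu \leq \ga$, the Radon--Nikodym operator $D = E_\mu^* E_\mu$ is a positive $\ga$-Toeplitz contraction, and $\mu \leq \ga_s$ follows once $E_\mu P_{ac} = 0$, where $P_{ac} = P_L + P_{C-L}$ is the reducing projection onto the absolutely continuous part \emph{of $\ga$} (note the persistent notational slip: the summands you call $\mu_{ac}, \mu_L, \mu_{C-L}, \mu_s$ must be $\ga_{ac}, \ga_L, \ga_{C-L}, \ga_s$ --- for singular $\mu$ one has $\mu_{ac} = 0$ and your statements would be vacuous as written). Your handling of the Cuntz type-$L$ block is correct and is essentially the paper's: the identity $E (\Pi_{C-L})_k^* = (\Pi_\mu)_k^* E$ for the bounded intertwiner $E = E_\mu P_{C-L}$ (this is the first half of the paper's Lemma \ref{interCuntz}) makes $\ov{\ran{E}}$ a $\Pi_\mu$-reducing subspace of weak-$*$ continuous vectors, which must vanish since $\mu$ is singular; the paper instead phrases this as a positive absolutely continuous functional $\varphi \leq \mu$ annihilated by Lemma \ref{singhered}, but either route gives $D P_{C-L} = 0$. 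Your first paragraph, rerunning Lemma \ref{WCisS} with $\ga_{ac} + m$ in place of a type-$L$ measure, is also plausible (only absolute continuity and the presence of the constants are used there), though it delivers only trivial intersection, which is weaker than what the corollary requires.

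The genuine gap is the type-$L$ block, and you flag it yourself without closing it: nothing in your proposal shows that the $\ga$-Toeplitz contraction $D$ has vanishing block on $F^2_d(\ga_L)$, i.e.\ $D P_L = 0$. Trivial intersection $\scr{H}^+(H_\mu) \bigcap \scr{H}^+(H_{\ga_L}) = \{0\}$ only says, via Theorem \ref{NCsum}, that $F^2_d(\mu + \ga_L) \simeq F^2_d(\mu) \oplus F^2_d(\ga_L)$; what is needed is orthogonality of the images of these spaces inside $F^2_d(\ga)$ under the non-isometric embeddings, and your sketch (splitting $\mc{C}_\mu(E_\mu v) = \mc{C}_\ga(P_L D v) + \mc{C}_\ga(P_s D v)$ and hoping to contradict trivial intersection) founders exactly on the $\scr{H}^+(H_{\ga_s})$-component, as you admit. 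The paper supplies the missing idea with a norm argument: set $\varphi := \mu + \ga_L$; by Lemma \ref{WCisS} and Theorem \ref{NCsum}, $\scr{H}^+(H_\varphi) \simeq \scr{H}^+(H_\mu) \oplus \scr{H}^+(H_{\ga_L})$ carries the direct-sum norm, and the paper argues that the embedding $\mr{e}_\varphi \simeq \mr{e}_\mu \oplus \mr{e}_{\ga_L}$ into $\scr{H}^+(H_\ga)$ is a contraction because both summands are contractive; hence $D_\varphi = D + P_L \leq I$, and for $x \in F^2_d(\ga_L)$ this gives $0 \leq \ip{x}{D x}_\ga + \ip{x}{x}_\ga \leq \ip{x}{x}_\ga$, forcing $D P_L = 0$ and so $D = D P_s \leq P_s$, i.e.\ $\mu \leq \ga_s$. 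Without this step, or some substitute that genuinely exploits the Toeplitz structure of $D$ on the type-$L$ block, your proposal proves only $D P_{C-L} = 0$ and does not establish the corollary.
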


If $\mu , \la \in (\A ^\dag ) _+$ are NC measures so that $\la $ dominates $\mu$, $\mu \leq t^2 \la$ for some $t>0$, then the bounded operator $D_\mu := E_\mu ^* E_\mu$ is $\la-$Toeplitz (and has norm at most $t^2$), \emph{i.e.},
$$ \pi _\la (L_k) ^* D_\mu \pi _\la (L_j ) = \delta _{k,j} D_\mu, $$ and we have that
$$ \mu (a) = \ip{I + N_\mu }{D_\mu \pi _\la (a) (I + N_\mu ) } _{\la }; \quad \quad a \in \A. $$ The positive semi-definite operator $D_\mu$ will be called the Arveson-Radon-Nikodym derivative of $\mu$ with respect to $\la$. There is a special case where our Arveson-Radon-Nikodym derivative belongs to the commutant of the GNS representation $\pi _\mu$, this happens when $\Pi _\mu$ is a Cuntz row isometry (\emph{i.e.} if $\mu$ is a column-extreme NC measure, see \cite{JM-freeCE}):

\begin{lemma} \label{interCuntz}
Let $\Pi, \sigma$ be row isometries on $\H , \J$, respectively, and suppose that $X : \H \rightarrow \J$ is a bounded $(\Pi, \sigma )-$intertwiner, $X \Pi ^\alpha = \sigma ^\alpha X$.
If $\Pi$ is a Cuntz unitary then also
$$ X^* \sigma ^\alpha = \Pi ^\alpha X^*, $$ so that $D := X^* X$ belongs to the commutant of the von Neumann algebra generated by $\Pi$, $\mr{vN} (\Pi )$, and $D' = X X ^*$ belongs to the commutant of $\mr{vN} (\sigma )$.
\end{lemma}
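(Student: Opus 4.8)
The plan is to first upgrade the one-sided intertwining relation $X\Pi^\alpha = \sigma^\alpha X$ to the two-sided relation $X^*\sigma^\alpha = \Pi^\alpha X^*$, and then read off the two commutation statements by a short computation. The one place the hypothesis that $\Pi$ is a Cuntz unitary enters is the resolution of the identity $\sum_{j=1}^d \Pi_j\Pi_j^* = I$. First I would multiply $X\Pi_j = \sigma_j X$ on the right by $\Pi_j^*$ and sum over $j$ to get
\[
X = X\sum_{j=1}^d \Pi_j\Pi_j^* = \sum_{j=1}^d \sigma_j X \Pi_j^*,
\]
and then apply $\sigma_k^*$ on the left. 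Since $\sigma$ is a row isometry, $\sigma_k^*\sigma_j = \delta_{k,j} I$, so the sum collapses to a single term and $\sigma_k^* X = X\Pi_k^*$ for every $1\le k\le d$. Taking adjoints gives $X^*\sigma_k = \Pi_k X^*$, and iterating letter-by-letter along a word yields $X^*\sigma^\alpha = \Pi^\alpha X^*$ for all $\alpha\in\F^d$.

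With the adjoint intertwiner in hand, the rest is routine. For $D := X^*X$ and any $1\le k\le d$,
\[
D\Pi_k = X^*X\Pi_k = X^*\sigma_k X = \Pi_k X^* X = \Pi_k D,
\]
using $X\Pi_k = \sigma_k X$ and then $X^*\sigma_k = \Pi_k X^*$. As $D = D^*$, taking adjoints also gives $D\Pi_k^* = \Pi_k^* D$; since $\mr{vN}(\Pi)$ is generated (as a von Neumann algebra) by $\Pi_1,\dots,\Pi_d$, it follows that $D$ lies in $\mr{vN}(\Pi)'$. Symmetrically, for $D' := XX^*$ one computes $D'\sigma_k = XX^*\sigma_k = X\Pi_k X^* = \sigma_k XX^* = \sigma_k D'$, and hence $D'\in\mr{vN}(\sigma)'$; positivity of $D$ and $D'$ is automatic since each is of the form $Y^*Y$.

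I do not expect a real obstacle here: all the content is in the first step, where the Cuntz (onto) hypothesis on $\Pi$ is used in an essential way to convert the one-sided intertwining relation into a two-sided one. It is worth flagging that this hypothesis cannot be dropped — without $\sum_j \Pi_j\Pi_j^* = I$ the adjoint $X^*$ need not intertwine $\sigma$ and $\Pi$ at all, and $X^*X$ need not land in the commutant — so the statement is genuinely special to the Cuntz case (equivalently, to column-extreme $\mu$ in the application).
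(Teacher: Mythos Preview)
Your proof is correct and follows essentially the same approach as the paper's: both use the Cuntz identity $\sum_j \Pi_j\Pi_j^* = I$ to write $X = \sum_j \sigma_j X\Pi_j^*$, then apply $\sigma_k^*$ and the row isometry relation $\sigma_k^*\sigma_j = \delta_{k,j}I$ to obtain $\sigma_k^* X = X\Pi_k^*$, and finish with the same commutation computations. The only difference is notational---the paper writes the argument in row/column matrix form rather than as explicit sums.
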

\begin{proof}
Using that $\Pi $ is Cuntz, 
$$ X  =  X \Pi  \Pi  ^* = \sigma X \otimes I_d \Pi  ^*. $$ Hence,
$$ \sigma ^* X  =  \bpm \sigma _1 ^*  X \\ \vdots \\ \sigma _d  ^* X \epm 
= \sigma ^* \sigma (X \otimes I_d ) \Pi ^* 
=  \bpm X \Pi _1  ^* \\ \vdots \\ X \Pi _d ^* \epm. $$
This proves that $X \Pi _k  ^* = \sigma _k ^* X$, and taking adjoints yields the first claim: 
$$ \Pi ^\alpha X^* = X^* \sigma ^\alpha. $$ The commutation formulas are then easily verified:
$$ D \Pi ^\alpha  =  X^* X \Pi ^\alpha  =  X^* \sigma ^\alpha X  = \Pi ^\alpha D. $$ Since $D = X^* X \geq 0$, it follows that $D$ also commutes with $(\Pi ^\alpha ) ^*$. Similarly, 
$$ D ' \sigma ^\alpha  =  XX ^* \sigma ^\alpha = X \Pi ^\alpha X^* = \sigma ^\alpha D'. $$
\end{proof}

\begin{remark}
There is a theory of absolute continuity, Radon-Nikodym derivatives and Lebesgue Decomposition for completely positive operator-valued maps on a $C^*-$algebra initiated by Arveson \cite{ArvI,GK-ncld}. In this theory, if $\mu , \la$ are positive linear functionals on a $C^*-$algebra $\mc{E}$ and $\mu \leq \la$, then the Arveson-Radon-Nikodym derivative $D_\mu$, defined as above, always belongs to the commutant of the left regular GNS representation $\pi _\la$. 

In our theory, since $\A$ is not a $C^*-$algebra, this fails to be true in general.  If $\la$ is such that $\Pi _\la$ is not a Cuntz row isometry, and $\la \geq \mu$, the Arveson-Radon-Nikodym derivative $D_\mu$ is a positive semi-definite $\la-$Toeplitz contraction, but it is generally not in the commutant of $\Pi _\la$. For example, if $\la =m$, is NC Lebesgue measure and $\mu = m_x$ where $x = x(R) 1$ and $x(R) \in R^\infty _d$ is bounded, then $\mu$ is dominated by $m$ and the Arveson-Radon-Nikodym derivative $D_\mu = x(R) ^* x(R)$ is not in the commutant of $\mc{E} _d = C^* (I , L)$ where here we are identifying $\Pi _m \simeq L$. Indeed, the commutant of $C^* (I, L)$ is trivial. Nevertheless, we expect the Lebesgue decomposition theory for completely positive maps to play a role in the development of the Lebesgue decomposition of a positive NC measure $\mu$ with respect to an arbitrary positive NC measure $\la$.  
\end{remark}

\begin{proof}{ (of Corollary \ref{singacsing2})}
Since absolutely continuous and weak$-*$ continuous are the same, we have that $P_{ac} = P_L + P _{C-L} = P _{wc}$, where $P_L, P_{C-L}$ are the $\Pi _\ga -$reducing projections onto the type$-L$ and Cuntz type$-L$ subspaces of $F^2 _d (\ga )$. We first prove that $D_\mu P _{C-L} =0$. Define $E := E_\mu P_{C-L}$, and $D := E E^*$, a positive semi-definite contraction on $F^2 _d (\mu)$. Observe that $E : F^2 _d (\ga ) \hookrightarrow F^2 _d (\mu )$
intertwines the Cuntz unitary $\Pi _{C-L} := \Pi _\ga P _{C-L}$ and the row isometry $\Pi _\mu$. By Lemma \ref{interCuntz}, $D = E E^* = E_\mu P_{C-L} E_\mu ^*$ is in the commutant of the von Neumann algebra generated by $\Pi _\mu$, 
$$ D \Pi _\mu ^\alpha = \Pi _\mu ^\alpha D . $$ 
It follows that if we define
$$\varphi (L^\alpha ) := \ip{I + N _\mu }{D \Pi _\mu ^\alpha (I + N_\mu ) } _\mu, $$ then $\varphi \in (\A ^\dag )_+$ and $\varphi \leq \mu$. Indeed if $p (L) ^* p (L) = u (L)  ^* + u (L )$ for free polynomials $p, u$, then if we extend $\varphi$ to $\A ^*$ in the canonical way by 
$$ \varphi ( a ^* ) := \varphi (a) ^*, $$ then 
\ba \varphi (p ^* p) & = & \ov{\ip{I +N_\mu }{D u (\Pi _\mu ) (I + N_\mu )} _\mu} + \ip{I + N _\mu }{D u(\Pi _\mu ) (I + N_\mu )} _\mu \nn \\
& = & \ip{I + N_\mu}{ \left( u (\Pi _\mu ) ^* D + D u (\Pi _\mu ) \right) (I + N_\mu) } _\mu \nn \\
& = & \ip{I + N_\mu}{D \left( u (\Pi _\mu ) ^* + u (\Pi _\mu ) \right) (I + N _\mu )} _\mu \quad \quad \mbox{(Since $\Pi _\mu ^*$ commutes with $D$.)} \nn \\
& = & \ip{I+N_\mu }{D p (\Pi _\mu ) ^* p (\Pi _\mu ) (I + N _\mu )} _\mu \quad \quad \mbox{(Since $\Pi _\mu$ is a row isometry.)} \nn \\
& = & \ip{I + N _\mu }{p (\Pi _\mu ) ^* D p (\Pi _\mu ) (I+ N_\mu )}_\mu \geq 0. \nn \ea This proves that $\varphi$ is positive so that $\varphi \in (\A ^\dag ) _+$ is an NC measure. Also since $D$ is a positive contraction, it is clear that $\varphi \leq \mu$. However, by construction,
$$ \varphi (L^\alpha ) = \ip{P_{C-L} E_\mu ^* (I + N _\ga ) }{ \Pi _\ga ^\alpha P _{C-L} E_\mu ^* (I + N_\ga ) } _\ga, $$ is an absolutely continuous NC measure since any vector in the range of $P_{C-L}$ is an AC vector. Since $\varphi $ is also dominated by the singular NC measure $\mu$, Lemma \ref{singhered} implies that $\varphi \equiv 0$, and $P_{C-L} E_\mu ^* = 0$ so that $P_{C-L} D_\mu = P_{C-L} E_\mu ^* E_\mu =0$. \\

Now consider $\la := \ga _L$, the type$-L$ part of $\ga$. By Proposition \ref{WCisS}, we have that $\scr{H} ^+ (H _\la ) \bigcap \scr{H} ^+ (H_\mu ) = \{ 0 \}$. Define,
$$ \varphi (L ^\alpha ) := (\ga _L + \mu ) ( L ^\alpha ) = \ip{I + N _\ga }{(D_\mu + P_L ) \Pi _\ga ^\alpha (I + N _\ga )}. $$ 
It follows that $D_\varphi = D_\mu + P_L$ and $D_\varphi = E_\varphi ^* E_\varphi$ where $
E_\varphi = \mr{C} _\varphi ^* \mr{e} _{\varphi } ^* \mc{C} _\ga $ is a bounded embedding of norm at most $\sqrt{2}$, and $\mr{e} _{\varphi} : \scr{H} _+ (H _\varphi ) \hookrightarrow \scr{H} ^+ (H_\ga )$ is the bounded embedding of norm at most $\sqrt{2}$. However by Proposition \ref{WCisS},
$$ \scr{H} ^+ (H _\varphi ) \simeq \scr{H} ^+ (H _\mu ) \oplus \scr{H} ^+ (H_\la ), $$ so that 
$\mr{e} _\varphi \simeq \mr{e} _\mu \oplus \mr{e} _\la $ must be a contraction since both $\mr{e} _\mu , \mr{e} _\la$ are contractive embeddings. (Here, recall that we defined $\la := \ga _L$.) This proves that $D = D_\mu + P_L$ is a contraction. In particular, for any $x \in F^2 _d (\ga _L )  = F^2 _d (\la )$, 
$$ 0 \leq \ip{x}{Dx} _\ga = \ip{x}{D _\mu x } _\ga + \ip{x}{x} _\ga \leq \ip{x}{x} _\ga, $$ and this proves that $D _\mu P_L =0$. 

In conclusion, $D_\mu P_{ac} =0$ so that $D_\mu = D_\mu P_{ac} + D_\mu P_s = D_\mu P_s$, and $\mu \leq \ga _s$.
\end{proof}

\begin{cor} \label{singcone}
The sets $AC ( \A ^\dag ) _+$ and $\mr{Sing} (\A ^\dag ) _+$ of absoutely continuous and singular positive NC measures on the Free Disk System are positive hereditary cones.
\end{cor}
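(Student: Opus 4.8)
The plan is to put the corollary together from results already in hand. For $AC(\A^\dag)_+$ there is essentially nothing left to do: it is a positive cone by Theorem \ref{ACcone}, it equals $WC(\A^\dag)_+$ by Theorem \ref{sameLD}, and $WC(\A^\dag)_+$ is hereditary by Lemma \ref{domAC}; closure of either of the two sets under multiplication by a positive scalar is immediate from Definition \ref{ncldDef} since $K^{c\mu}=cK^\mu$ for $c>0$ and rescaling a CPNC kernel by a positive constant does not change the underlying space of functions. The heredity of $\mr{Sing}(\A^\dag)_+$ is Lemma \ref{singhered}, so the only real content of the corollary is that $\mr{Sing}(\A^\dag)_+$ is closed under addition.

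To prove that, I would take two singular NC measures $\mu,\la$, set $\ga:=\mu+\la$, and take its Lebesgue decomposition $\ga=\ga_{ac}+\ga_s$, with $F^2_d(\ga)=F^2_d(\ga_{ac})\oplus F^2_d(\ga_s)$ and reducing projections $P_{ac},P_s$ (Theorem \ref{ncld2thm}). Since $\mu,\la\leq\ga$, both carry Arveson-Radon-Nikodym derivatives $D_\mu=E_\mu^*E_\mu$ and $D_\la=E_\la^*E_\la$ with respect to $\ga$, where $E_\mu,E_\la$ are the contractive embeddings $p(L)+N_\ga\mapsto p(L)+N_\mu$ and $p(L)+N_\ga\mapsto p(L)+N_\la$; these $D_\mu,D_\la$ are positive $\ga$-Toeplitz contractions on $F^2_d(\ga)$. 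From $\langle p(L)+N_\ga,D_\mu(q(L)+N_\ga)\rangle_\ga=\mu(p(L)^*q(L))$, the analogous identity for $\la$, and $\mu+\la=\ga$, one obtains $\langle p(L)+N_\ga,(D_\mu+D_\la)(q(L)+N_\ga)\rangle_\ga=\langle p(L)+N_\ga,q(L)+N_\ga\rangle_\ga$ for all free polynomials $p,q$; since the vectors $p(L)+N_\ga$ are dense in $F^2_d(\ga)$ this forces $D_\mu+D_\la=I$.

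Finally I would feed the singular measure $\mu$ and the domination $\ga\geq\mu$ into Corollary \ref{singacsing2}: the argument in its proof yields $D_\mu P_{ac}=0$, and the same applied to $\la$ gives $D_\la P_{ac}=0$, so $P_{ac}=(D_\mu+D_\la)P_{ac}=0$. Hence $\ga_{ac}=0$ and $\ga=\ga_s$ is singular, which completes the proof. I expect the only step needing real care to be the bookkeeping around the Arveson-Radon-Nikodym derivative: confirming that $E_\mu$ above is exactly the intertwiner entering Corollary \ref{singacsing2} with $\ga=\mu+\la$ in the role of the dominating measure, so that the conclusion $D_\mu P_{ac}=0$ of that proof applies verbatim. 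If one prefers to invoke only the \emph{statement} $\mu\leq\ga_s$ (and $\la\leq\ga_s$) of Corollary \ref{singacsing2}, an alternative closing step is: since the Arveson-Radon-Nikodym derivative of $\ga_s$ with respect to $\ga$ is $P_s$, positivity of $\ga_s-\mu$ and $\ga_s-\la$ tested on the dense set of vectors $\{p(L)+N_\ga\}$ upgrades to the operator inequalities $D_\mu\leq P_s$ and $D_\la\leq P_s$, whence $I=D_\mu+D_\la\leq 2P_s$ forces the projection $P_s$ to equal $I$, that is $P_{ac}=0$.
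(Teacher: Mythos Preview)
Your proposal is correct and follows essentially the same route as the paper: for $AC(\A^\dag)_+$ you assemble Theorem \ref{ACcone}, Theorem \ref{sameLD}, and Lemma \ref{domAC} just as the paper does, and for $\mr{Sing}(\A^\dag)_+$ you take $\ga=\mu+\la$, establish $D_\mu+D_\la=I$, and invoke the conclusion $D_\mu P_{ac}=0$ from the proof of Corollary \ref{singacsing2} to force $P_{ac}=0$, which is exactly the paper's argument (the paper leaves the identity $D_{\mu_1}+D_{\mu_2}=I$ implicit). Your alternative closing via $D_\mu,D_\la\leq P_s\Rightarrow I\leq 2P_s\Rightarrow P_s=I$ is also valid and has the minor advantage of using only the \emph{statement} of Corollary \ref{singacsing2}.
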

Recall that hereditary means that if $\la , \mu \in (\A ^\dag ) _+$, $\la \geq \mu$, and $\la$ is AC or singular, then $\mu$ is also AC or singular, respectively.
\begin{proof}
The set of positive absolutely continuous NC measures, $AC (\A ^\dag ) _+$, is a hereditary positive cone by Lemma \ref{domAC}. Lemma \ref{singhered} also proved that $\mr{Sing} (\A ^\dag ) _+$ is hereditary in $(\A ^\dag ) _+$, and it remains to show the set of singular NC measures is a positive cone. 

Suppose that $\mu _1 , \mu _2 \in \mr{Sing} (\A ^\dag ) _+$ are singular and let $\ga = \mu _1 + \mu _2$. Then by Corollary \ref{singacsing2}, if $P_{ac}, P_s$ denote the Lebesgue Decomposition reducing projections for $\Pi _\ga$,
$$ P_{ac} = (D_{\mu _1} + D_{\mu _2} ) P_{ac} = 0, $$ so that $I_{\ga} = P_s$ and $\ga = \mu _1 + \mu _2$ is singular.
\end{proof}
\begin{cor} \label{acsingchar}
Let $\mu \in (\A ^\dag ) _+$ be a positive NC measure. The following are equivalent:
\bn
    \item $\mu \in AC (\A ^\dag ) _+$ is absolutely continuous.
    \item $\scr{H} ^+ (H _\mu ) \bigcap H^2 (\B ^d _\N )$ is dense in $\scr{H} ^+ (H _\mu )$.
    \item The GNS row isometry $\Pi _\mu$ is weak$-*$ continuous, \emph{i.e.} the direct sum of type$-L$ and Cuntz type$-L$ row isometries. 
    \item Every vector $x \in F^2 _d (\mu )$ is a weak$-*$ continuous vector for $\mu$. 
    \item The quadratic form $q_\mu$ with form domain $\A 1 \subset F^2 _d$ is closable. 
\en
\end{cor}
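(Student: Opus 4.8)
The plan is to recognize that Corollary~\ref{acsingchar} is a compilation of equivalences already established in the preceding sections, so the proof reduces to chaining them together around a single hub, namely absolute continuity, with essentially no new argument.

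First I would dispatch the equivalence of (1) and (2): this is immediate from Definition~\ref{ncldDef}, which declares $\mu$ to be absolutely continuous exactly when $\scr{H}^+(H_\mu)\bigcap H^2(\B^d_\N)$ is dense in $\scr{H}^+(H_\mu)$, recalling that $\scr{H}^+(H_m)=H^2(\B^d_\N)$. Next, the equivalence of (1) and (5) is precisely the statement of Theorem~\ref{ACismaxclosed}, which asserts that $\mu\in(\A^\dag)_+$ is absolutely continuous if and only if the $L$-Toeplitz form $q_\mu$ with form domain $\A 1\subset F^2_d$ is closable. So after these two observations it remains only to connect (1) to (3) and (3) to (4).

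For (1)$\Leftrightarrow$(3) I would first invoke Theorem~\ref{sameLD}, which identifies the cones $AC(\A^\dag)_+$ and $WC(\A^\dag)_+$, so that $\mu$ is absolutely continuous if and only if it is weak$-*$ continuous as a functional on the Toeplitz system; then Corollary~\ref{ACfun} records that a positive NC measure is weak$-*$ continuous if and only if its GNS row isometry $\Pi_\mu$ is a weak$-*$ continuous row isometry. Composing these gives that (1) holds if and only if $\Pi_\mu$ is weak$-*$ continuous, which is (3). For (3)$\Leftrightarrow$(4) I would apply Theorem~\ref{ACrowiso} directly to the row isometry $\Pi_\mu$ acting on $F^2_d(\mu)$: the equivalence of items (1) and (3) there states that $\Pi_\mu$ is weak$-*$ continuous if and only if $F^2_d(\mu)=WC(\Pi_\mu)$, i.e. every vector in $F^2_d(\mu)$ is a weak$-*$ continuous vector for $\Pi_\mu$ (equivalently, for $\mu$), which is exactly (4).

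I do not anticipate any genuine obstacle here, since each link in the chain is a theorem or corollary proved earlier; the only point requiring a moment of care is the bookkeeping between ``$\mu$ weak$-*$ continuous as a functional on the Toeplitz system'' (Definition~\ref{DLP-AC}) and ``$\Pi_\mu$ weak$-*$ continuous as a row isometry'' (Definition~\ref{rowisotype}), but Corollary~\ref{ACfun} already packages exactly that translation, so nothing further is needed.
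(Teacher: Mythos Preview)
Your proposal is correct and matches the paper's approach: Corollary~\ref{acsingchar} is stated in the paper without proof, as it is simply a summary of equivalences already established in Definition~\ref{ncldDef}, Theorem~\ref{ACismaxclosed}, Theorem~\ref{sameLD}, Corollary~\ref{ACfun}, and Theorem~\ref{ACrowiso}. Your chaining of these results is exactly the intended argument, and your observation about using Corollary~\ref{ACfun} to translate between weak$-*$ continuity of $\mu$ and of $\Pi_\mu$ is the only nontrivial bookkeeping step.
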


\begin{cor}
Given an NC measure $\mu \in (\A ^\dag ) _+$, the following are equivalent:
\bn
    \item $\mu \in \mr{Sing} (\A ^\dag ) _+$ is singular.
    \item $\scr{H} ^+ ( H _\mu ) \bigcap H^2 (\B ^d _\N ) = \{ 0 \}.$
    \item $F^2 _d (\mu + m ) = F^2 _d (\mu ) \oplus F^2 _d$.
    \item $\Pi _\mu$ is the direct sum of dilation type and von Neumann type row isometries.
    \item $q_\mu$ with dense form domain $\A 1 \subseteq F^2 _d$ is a singular form.
\en
\end{cor}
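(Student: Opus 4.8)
The plan is to prove the equivalence of all five conditions by showing each is equivalent to (1), the definition of a singular NC measure; every implication is a direct appeal to a result already established, so the proof is essentially bookkeeping.

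First, (1) $\Leftrightarrow$ (2) is immediate from Definition~\ref{ncldDef}. For (2) $\Leftrightarrow$ (3), I would invoke Theorem~\ref{NCsum} with $\la = m$, recalling from Section~\ref{CTofGNS} that $\scr{H}^+(H_m) = H^2(\B^d_\N)$ and $F^2_d(m) = F^2_d$ (with $\Pi_m \simeq L$). Theorem~\ref{NCsum} gives $\scr{H}^+(H_{\mu+m}) \simeq \scr{H}^+(H_\mu) \oplus H^2(\B^d_\N)$ if and only if $\mr{int}(\mu, m) = \scr{H}^+(H_\mu) \cap H^2(\B^d_\N) = \{0\}$, and transporting this orthogonal decomposition through the inverse Free Cauchy Transform --- an onto isometry, so it respects orthogonal direct sums --- yields the equivalence with $F^2_d(\mu + m) = F^2_d(\mu) \oplus F^2_d$.

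Next, (1) $\Leftrightarrow$ (4). By Theorem~\ref{sameLD}, the singular part $\mu_s$ equals the weak$-*$ singular part $\mu_{ws}$ of $\mu$; hence $\mu$ is singular if and only if $\mu = \mu_{ws}$, which by Definition~\ref{NCLDdef2} holds exactly when $F^2_d(\mu) = F^2_d(\mu)_{ws} = F^2_d(\mu)_{vN} \oplus F^2_d(\mu)_{dil}$, i.e. when $\Pi_\mu$ has no type$-L$ and no Cuntz type$-L$ direct summand. By Definition~\ref{rowisotype} this is precisely the statement that $\Pi_\mu$ is a direct sum of a von Neumann type and a dilation type row isometry, which is (4).

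Finally, (1) $\Leftrightarrow$ (5), which I expect to be the only step with any real content. By Theorem~\ref{sameNCLD}, the quadratic form $q_{\mu_{ac}}$ is the maximal closable quadratic form bounded above by $q_\mu$. By B.~Simon's Lebesgue decomposition of quadratic forms (the form decomposition $q_\mu \leftrightarrow \hat{q}_{ac} + \hat{q}_{s}$ recalled before Theorem~\ref{ACismaxclosed}), $q_\mu$ is singular exactly when this maximal closable subform vanishes, i.e. when $q_{\mu_{ac}} = 0$; and $q_{\mu_{ac}} = 0$ forces $\mu_{ac}(a^*a) = q_{\mu_{ac}}(a,a) = 0$ for every $a \in \A$, hence $\mu_{ac} = 0$ by positivity of $\mu_{ac}$. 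Conversely, by Theorem~\ref{ncld2thm} the Herglotz space $\scr{H}^+(H_{\mu_{ac}})$ is the $\|\cdot\|_{H_\mu}$-closure of $\mr{int}(\mu, m)$, so $\mu_{ac} = 0$ if and only if $\mr{int}(\mu, m) = \{0\}$, i.e. if and only if $\mu$ is singular. The main (and only mildly nontrivial) obstacle is thus bridging ``singular'' in Simon's form-theoretic sense with ``$\mu_{ac} = 0$'', which is exactly what the identification in Theorem~\ref{sameNCLD} supplies; the remaining equivalences are pure assembly of earlier results, and one could alternatively phrase (5) via Theorem~\ref{ACismaxclosed} applied to the maximal closable subform.
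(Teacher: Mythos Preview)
Your proposal is correct, and since the paper states this corollary without proof (treating it as an immediate consequence of the preceding results), your argument is exactly the kind of assembly the authors have in mind. Each equivalence you give is the natural one: (1)$\Leftrightarrow$(2) is Definition~\ref{ncldDef}, (2)$\Leftrightarrow$(3) is Theorem~\ref{NCsum} transported through the Free Cauchy Transform, (1)$\Leftrightarrow$(4) is Theorem~\ref{sameLD} combined with Definition~\ref{NCLDdef2}, and (1)$\Leftrightarrow$(5) is Theorem~\ref{sameNCLD} together with Simon's definition of a singular form.
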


\begin{cor} \label{ncldadd}
If $\mu, \la \in (\A ^\dag ) _+$ with (unique) Lebesgue decompositions $\mu = \mu _{ac} + \mu _s$, $\la = \la _{ac} + \la _s$ then 
$$ (\mu +\la) _{ac} = \mu _{ac} +\la _{ac}, \quad \mbox{and} \quad (\mu +\la )_s = \mu _s +\la _s. $$ 
\end{cor}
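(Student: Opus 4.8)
The plan is to reduce the whole statement to two facts already in hand: (i) both $AC (\A ^\dag ) _+$ and $\mr{Sing} (\A ^\dag ) _+$ are positive \emph{hereditary} cones (Corollary~\ref{singcone}), and (ii) the Lebesgue Decomposition $\gamma = \gamma _{ac} + \gamma _s$ of any positive NC measure $\gamma$ (Theorem~\ref{ncld2thm}) is the \emph{only} way to write $\gamma$ as a sum of an absolutely continuous and a singular positive NC measure. Granting (i) and (ii), the corollary is immediate: with $\gamma := \mu + \lambda$, the cone property makes $\mu _{ac} + \lambda _{ac}$ absolutely continuous and $\mu _s + \lambda _s$ singular, these sum to $\gamma$, so by uniqueness they coincide with $\gamma _{ac}$ and $\gamma _s$.

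So the real content is establishing (ii). First I would record the trivial observation that the only NC measure that is both absolutely continuous and singular is $0$: by Definition~\ref{ncldDef} the two requirements together force $\scr{H} ^+ (H _\rho ) = \{ 0 \}$, hence $F^2 _d (\rho ) = \{ 0 \}$ through the onto isometry $\mc{C} _\rho$, hence $I + N _\rho = 0$ and $\rho (I) = 0$; and a positive linear functional on the unital operator system $\A + \A ^*$ with $\rho (I) = 0$ is zero.

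Then, given any decomposition $\gamma = \rho + \nu$ with $\rho$ absolutely continuous and $\nu$ singular, I would compare it against $\gamma = \gamma _{ac} + \gamma _s$. Since $\nu \leq \gamma$ and $\nu$ is singular, Corollary~\ref{singacsing2} yields $\nu \leq \gamma _s$; putting $\delta := \gamma _s - \nu \geq 0$ we get $\rho = \gamma - \nu = \gamma _{ac} + \delta$, so $0 \leq \delta \leq \rho$ and $0 \leq \delta \leq \gamma _s$. Heredity of the two cones then forces $\delta$ to be simultaneously absolutely continuous and singular, so $\delta = 0$ by the previous step, giving $\rho = \gamma _{ac}$ and $\nu = \gamma _s$. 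Applying this to $\rho = \mu _{ac} + \lambda _{ac}$ and $\nu = \mu _s + \lambda _s$ finishes the proof.

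There is no genuinely hard step here; the argument is careful bookkeeping with the cone and heredity properties, the point being only that Corollary~\ref{singacsing2} and the heredity of $AC (\A ^\dag ) _+$ and $\mr{Sing} (\A ^\dag ) _+$ are available independently of the additivity being proved (which they are, having been obtained from the weak$-*$/absolute-continuity dictionary and the Arveson-Radon-Nikodym derivative). If one preferred not to use Corollary~\ref{singacsing2}, the same conclusion follows from the fact that $\gamma _{ac}$ is the maximal absolutely continuous NC measure dominated by $\gamma$---a consequence of Theorem~\ref{sameNCLD} together with the equivalence between domination of NC measures and domination of their quadratic forms (which in turn uses that positive elements of the Free Disk System are norm-limits of sums of squares of free polynomials)---but Corollary~\ref{singacsing2} is the most economical route.
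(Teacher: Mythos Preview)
Your proof is correct and uses essentially the same ingredients as the paper's: the cone/heredity properties of Corollary~\ref{singcone} together with Corollary~\ref{singacsing2}. The only cosmetic difference is that the paper invokes maximality of $\gamma_{ac}$ directly (getting $\mu_{ac}+\lambda_{ac}\leq\gamma_{ac}$) and then Corollary~\ref{singacsing2} for $\mu_s+\lambda_s\leq\gamma_s$, concluding equality from $\gamma_{ac}+\gamma_s=(\mu_{ac}+\lambda_{ac})+(\mu_s+\lambda_s)$, whereas you package the same comparison as a uniqueness statement for the decomposition; these are equivalent rearrangements of one argument.
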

\begin{proof}
Set $\ga := \mu + \la = (\mu _{ac} + \la _{ac} ) + (\mu _s + \la _s ) =  \ga _{ac} + \ga _{s}$. Then by maximality $\mu _{ac} + \la _{ac} \leq \ga _{ac}$, and also by Corollary \ref{singcone} and Corollary \ref{singacsing2}, since $\mu _s + \la _s$ is singular, $\mu_{s} + \la _s \leq \ga _s$, and it follows that equality must hold in both cases.
\end{proof}

\section{Example: An NC measure of dilation type}

Recall that there is a natural bijection between (positive) NC measures and (right) NC Herglotz functions, $\mu \leftrightarrow H_\mu$. The transpose map $\dag$ also defines a natural involution which takes the right NC Herglotz class onto the left NC Herglotz class of all locally bounded NC functions in $\B ^d _\N$ with non-negative real part, see \cite[Section 3.9]{JM-NCFatou}. The Cayley Transform then implements a bijection between the left NC Schur class of contractive NC functions in $\B ^d _\N$ and and the left NC Herglotz class. If $\mu \in (\A ^\dag ) _+$ is the (essentially) unique NC measure corresponding to the contractive NC function $B \in [ H^\infty (\B ^d _\N ) ]_1$, we write $\mu = \mu _B$, and $\mu _B$ is called the NC Clark measure of $B$, see \cite[Section 3]{JM-NCFatou} for details. 

By \cite[Corollary 7.25]{JM-NCFatou}, if $B \in \scr{L} _d$ is inner, then its NC Clark measure is singular, so that its GNS representation $\Pi _B := \Pi _{\mu _B}$ is a Cuntz row isometry which can be decomposed as the direct sum of a dilation-type row isometry and a von Neumann type row isometry. 

Classically, any sum of Dirac point masses is singular with respect to Lebesgue measure on the circle. Motivated by this,  consider the positive linear functional $\mu \in ( \mc{A} _2 ^\dag ) _+$ defined by 
$$ \mu (L ^\alpha ) = \left\{ \begin{array}{cc} 0 & 2 \in \alpha \\ 1 & 2 \notin \alpha \end{array} \right. ; \quad \quad \alpha \in \F ^2,  $$ and $\mu (I) = 1$. This is a `Dirac point mass' at the point $(1,0) \in \B ^2 _1 $ on the boundary, $\partial \B^2 _\N$ of the NC unit ball. Results of Popescu imply that since $Z := (1, 0 )$ is a row contraction, that the map $\mu$, sending 
$$ L^\alpha \mapsto Z ^\alpha, $$ extends to a positive linear functional on $\mc{A} _2$ \cite[Theorem 2.1]{Pop-NCdisk}. 

\begin{claim}
$L_2 +N_\mu$ is a wandering vector for $\Pi _\mu$ and $\Pi _\mu$ has vanishing von Neumann part.
\end{claim}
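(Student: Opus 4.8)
The plan is to work directly with the moments of $\mu$ in the GNS space $F^2_d(\mu)$ and then to invoke the structure theory of weak$-*$ continuous row isometries from Section~\ref{NCLDsect}. First I would show that $L_2 + N_\mu$ is wandering. Since $(\Pi_\mu)^\alpha(I + N_\mu) = L^\alpha + N_\mu$, the orbit inner products are $\ip{(\Pi_\mu)^\alpha(L_2 + N_\mu)}{(\Pi_\mu)^\beta(L_2 + N_\mu)}_\mu = \mu\big(L_2^*(L^\alpha)^* L^\beta L_2\big)$. Using the Cuntz--Toeplitz relations $L_i^* L_j = \delta_{i,j} I$, the word product $(L^\alpha)^* L^\beta$ equals $L^\gamma$ when $\beta = \alpha\gamma$, equals $(L^\gamma)^*$ when $\alpha = \beta\gamma$, and is $0$ otherwise. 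When $\alpha = \beta$ this yields $\mu(L_2^* L_2) = \mu(I) = 1$; in every other case one more application of $L_i^* L_j = \delta_{i,j} I$ reduces the expression to $\mu(L^\delta)$ or its conjugate, for a word $\delta$ that contains the letter $2$, or to $0$ outright, and $\mu(L^\delta) = 0$ whenever $2\in\delta$ by definition of $\mu$. Hence $\{(\Pi_\mu)^\alpha(L_2 + N_\mu)\}_{\alpha\in\F^2}$ is orthonormal, and since $\|L_2 + N_\mu\|_\mu^2 = \mu(I) = 1$, the vector $L_2 + N_\mu$ is a (unit) wandering vector for $\Pi_\mu$.

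Next I would note that $L_2 + N_\mu$ is a weak$-*$ continuous vector: being wandering, its associated functional is $\alpha\mapsto \ip{L_2 + N_\mu}{(\Pi_\mu)^\alpha(L_2 + N_\mu)}_\mu = \delta_{\alpha,\emptyset}$, which is exactly NC Lebesgue measure $m$, and $m$ is weak$-*$ continuous by Corollary~\ref{ACfun}. I would then establish that every nonzero $\Pi_\mu$-reducing subspace contains a nonzero weak$-*$ continuous vector. Let $\mc{R}$ be such a subspace with projection $P$, so $P$ commutes with each $(\Pi_\mu)_k$. Because the analytic polynomials are norm-dense in $\A$, the set $\{(\Pi_\mu)^\alpha(I + N_\mu)\} = \{L^\alpha + N_\mu\}$ spans $F^2_d(\mu)$, so $I + N_\mu$ is cyclic for $\Pi_\mu$ and thus $v := P(I + N_\mu)\neq 0$. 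Then $(\Pi_\mu)_2 v = P\,(\Pi_\mu)_2(I + N_\mu) = P(L_2 + N_\mu)$ is nonzero since $(\Pi_\mu)_2$ is an isometry, and it is weak$-*$ continuous because reducing projections preserve such vectors: if $w = Xy$ with $X$ a bounded intertwiner (Theorem~\ref{DLP-intertwiner}), then $PX$ is again a bounded intertwiner and $Pw = (PX)y\in WC(\Pi_\mu)$. So $(\Pi_\mu)_2 v\in\mc{R}$ is the required nonzero weak$-*$ continuous vector.

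Finally I would deduce vanishing von Neumann part. If $\Pi$ is any row isometry of von Neumann type then $WC(\Pi) = 0$: otherwise $WC(\Pi)$ is a nonzero $\Pi$-invariant subspace on which $\ip{w}{\Pi^\alpha w} = \ip{w}{(\Pi|_{WC(\Pi)})^\alpha w}$ for $w\in WC(\Pi)$, so every vector of $WC(\Pi)$ is weak$-*$ continuous for $\Pi|_{WC(\Pi)}$, whence $\Pi|_{WC(\Pi)}$ is weak$-*$ continuous by Theorem~\ref{ACrowiso}, contradicting von Neumann type. In the Kennedy--Wold--Lebesgue decomposition of $\mu$ the summand $F^2_d(\mu)_{vN}$ is $\Pi_\mu$-reducing and $\Pi_\mu|_{F^2_d(\mu)_{vN}}$ is of von Neumann type, hence has no nonzero weak$-*$ continuous vectors; by the previous step this forces $F^2_d(\mu)_{vN} = 0$, i.e.\ $\Pi_\mu$ has vanishing von Neumann part. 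I expect the routine-but-fiddly obstacle to be the moment bookkeeping in the first step (tracking the Cuntz--Toeplitz cancellations), while the one genuinely structural observation is that pushing the cyclic vacuum vector forward by $(\Pi_\mu)_2$ always lands inside $WC(\Pi_\mu)$ — this is precisely what prevents any reducing subspace, in particular the von Neumann summand, from being nonzero.
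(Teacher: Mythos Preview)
Your proof is correct and follows essentially the same approach as the paper. Both arguments compute $\mu(L_2^* L^\alpha L_2) = \delta_{\alpha,\emptyset}$ to show $L_2 + N_\mu$ is wandering (hence weak$-*$ continuous), and both exploit the relation $(\Pi_\mu)_2 (I + N_\mu) = L_2 + N_\mu$ together with cyclicity of $I + N_\mu$; the paper phrases the conclusion as ``the smallest reducing subspace containing $L_2 + N_\mu$ is everything, so $F^2_d(\mu_{vN}) = \{0\}$,'' while you take the dual route of projecting $L_2 + N_\mu$ into an arbitrary nonzero reducing subspace to produce a nonzero WC vector there --- same idea, contrapositive packaging.
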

\begin{proof}
Note that any wandering vector for $\Pi _\mu$ is always a weak$-*$ continuous vector. Indeed, if $w$ is wandering for $\Pi _\mu$, then 
\ba \mu _w (L^\alpha ) & = & \ip{w}{\Pi_\mu ^\alpha w}_{F^2 _d (\mu)}  \nn \\
& = & \| w\| ^2 _{F^2 _d (\mu)} \delta _{\alpha, \emptyset} = \| w \| ^2 m (L^\alpha), \nn \ea is a constant multiple of NC Lebesgue measure, and hence belongs to $WC(\A ^\dag ) _+$. 

To see that $L_2 + N_\mu$ is wandering for $\Pi _\mu$, calculate
$$ \ip{L_2 +N_\mu }{\Pi _\mu ^\alpha (L_2 + N_\mu )} _\mu = \mu ( L_2 ^* L^\alpha L_2 ) = \delta _{\alpha, \emptyset}. $$ However $L_2 + N_\mu$ is also cyclic for $\Pi _\mu$ since $\pi _\mu (L_2 ) ^* (L_2 +N_\mu) =I + N_\mu$ which is cyclic for $\Pi _\mu$. This means that the smallest reducing subspace which contains the AC vector $L_2 + N_\mu$ is all of $F^2 _d (\mu)$, so that $F^2 _d (\mu _{vN} ) = \{ 0 \}$. 
\end{proof}

\begin{claim} The NC measure $\mu$ is the NC Clark measure of $B_\mu (Z) = Z_1$, a left-inner NC function.  Hence $\Pi _\mu$ is purely of dilation-type.
\end{claim}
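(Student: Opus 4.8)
The plan is to first identify $\mu$ as the NC Clark measure of the function $B_\mu(Z) = Z_1$, and then to invoke \cite[Corollary 7.25]{JM-NCFatou} together with the preceding claim to pin down the type of $\Pi_\mu$. That $B(Z) := Z_1$ is left-inner is immediate: $\sup_{Z \in \B ^2 _\N} \| Z_1 \| \leq 1$, so $B$ lies in $[H^\infty (\B ^2 _\N )]_1$ (indeed in the left NC Schur class $\scr{L}_2$), and the corresponding left multiplication operator $M^L_{Z_1} = L_1$ is one component of the left free shift row isometry, hence an isometry on $H^2 (\B ^2 _\N )$; note also $B(0) = 0$.

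To see that $\mu = \mu_B$, I would compute the (right) NC Herglotz function of $\mu$ directly from its moments. Since $\mu (L^\alpha ) = 1$ precisely when $\alpha$ is a word in the single letter $1$ (including the empty word), and $\mu(L^\alpha) = 0$ as soon as the letter $2$ occurs in $\alpha$, one gets
\be
H_\mu (Z) = \mu (I) + \frac{1}{2} \sum_{\alpha \neq \emptyset} Z^\alpha \mu(L^\alpha )^* = I_n + \frac{1}{2} \sum_{k \geq 1} Z_1 ^k = I_n + \frac{1}{2} Z_1 (I_n - Z_1) ^{-1},
\ee
where the geometric series converges in operator norm since $\| Z_1 \| < 1$ for every $Z \in \B ^2 _n$. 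Thus $H_\mu$ depends only on the single variable $Z_1$, so in particular the transpose involution relating the right and left NC Herglotz classes acts trivially on it. It then remains to recognize $H_\mu = (I - \tfrac{1}{2}B)(I - B)^{-1}$ with $B = Z_1$ as the image of $B$ under the Cayley transform used in \cite[Section 3]{JM-NCFatou} to define NC Clark measures; because $H_\mu$ involves only $Z_1$, this reduces to the classical single-variable fact that the Clark measure of $z \mapsto z$ on $\partial \D$ is the unit point mass at $1$, whose Herglotz/moment data match the display above. Since $B \mapsto \mu_B$ is a bijection, this identifies $\mu = \mu_B$ with $B(Z) = Z_1$, a left-inner NC function.

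With this in hand the second assertion is quick. Since $B = Z_1$ is inner, \cite[Corollary 7.25]{JM-NCFatou} shows that $\mu = \mu_B$ is a singular NC measure and that $\Pi_\mu = \Pi_{\mu_B}$ is a Cuntz row isometry which decomposes as $\Pi_\mu \simeq \Pi_{\mu_{dil}} \oplus \Pi_{\mu_{vN}}$ (equivalently: $\mu$ singular forces $\mu_L = \mu_{C-L} = 0$, so only the dilation-type and von Neumann-type summands of the Kennedy-Wold-Lebesgue decomposition survive). By the preceding claim, $F^2 _d (\mu_{vN}) = \{0\}$, so the von Neumann summand is trivial and $\Pi_\mu$ is purely of dilation type, as asserted.

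The one genuinely fiddly point is the middle step: matching the computed $H_\mu$ against the precise Cayley-transform and transpose normalizations of \cite{JM-NCFatou}, i.e. confirming that $\mu$ is exactly $\mu_{Z_1}$ and not $\mu_{c Z_1}$ for some scalar $c$. Once the $d = 1$ reduction above is recorded this is routine bookkeeping; all of the structural content — that an inner $B$ yields a singular, Cuntz, dilation-plus-von-Neumann GNS row isometry — is supplied by \cite[Corollary 7.25]{JM-NCFatou}, and the vanishing of the von Neumann part is precisely the content of the preceding claim.
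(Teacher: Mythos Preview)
Your overall strategy is the paper's: compute the Herglotz function of $\mu$, read off $B_\mu = Z_1$ via Cayley transform, then combine \cite[Corollary 7.25]{JM-NCFatou} with the preceding claim. The treatment of ``$Z_1$ is left-inner'' and of the final step (singular $\Rightarrow$ dilation $\oplus$ von Neumann, and the von Neumann part vanishes) matches the paper and is fine.

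The gap is the step you yourself flag as ``fiddly.'' Using the right Herglotz series from Section~\ref{NCmeasuresect} you get $H_\mu = I_n + \tfrac{1}{2}Z_1(I_n - Z_1)^{-1} = (I_n - \tfrac{1}{2}Z_1)(I_n - Z_1)^{-1}$, but this is \emph{not} the standard Cayley transform $H = (I+B)(I-B)^{-1}$ of $B = Z_1$; applying the inverse Cayley transform $(H-I)(H+I)^{-1}$ to your $H_\mu$ yields $\tfrac14 Z_1 (I - \tfrac34 Z_1)^{-1}$, which is not inner and not of the form $cZ_1$. So ``routine bookkeeping'' does not close this, and appealing to the one-variable Clark measure of $z$ does not help because the mismatch is in the $\tfrac12$ normalization, not in the transpose. (The coefficient $\tfrac12$ in the displayed series of Section~\ref{NCmeasuresect} appears to be a misprint for $2$; with $2$ your computation gives $(I+Z_1)(I-Z_1)^{-1}$ and $B_\mu = Z_1$ immediately.) The paper avoids this by computing the \emph{left} Herglotz function directly,
\[
H_\mu(Z) \;=\; (\mr{id}_n \otimes \mu)\bigl((I+ZL^*)(I-ZL^*)^{-1}\bigr) \;=\; 2\sum_{\alpha} Z^\alpha \mu(L^{\alpha^\dag})^* - I_n \;=\; (I+Z_1)(I-Z_1)^{-1},
\]
from which $B_\mu = Z_1$ is read off at once via the standard Cayley transform. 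Replace your deferred bookkeeping with this computation (or correct the $\tfrac12$ to $2$) and your argument is complete.
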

\begin{proof}
    The (left) Herglotz function, $H_\mu$ of $\mu$ is:
\ba H_\mu (Z) & = & ( \mr{id} _n \otimes \mu ) \left( ( I_n \otimes I _{F^2} +ZL^* ) ( I_n \otimes I_{F^2}  - ZL ^* ) ^{-1} \right) \nn \\
& = & 2 \sum _{\alpha} Z^\alpha \mu \left( L ^{\alpha ^\dag}  \right) ^* -I_n \nn \\
& = & 2 \sum _{k=0} ^\infty Z_1 ^k  - I_n = 2 (I - Z_1 ) ^{-1} - I_n \nn \\
& = & (I + Z_1) (I -Z_1 ) ^{-1}. \nn \ea 
It follows that the Cayley Transform, $B_\mu $, of $H_\mu$ is $B_\mu (Z) = Z_1$, which is inner.
By \cite[Corollary 7.25]{JM-NCFatou}, $\Pi _\mu$ is the direct sum of a dilation-type and a von Neumann-type row isometry, and the previous claim shows that the von Neumann part vanishes.
\end{proof}

\footnotesize


\end{document}